\newcommand{\fg}{\mathfrak{g}}
\newcommand{\cE}{\mathcal{E}}
\newcommand{\cH}{\mathcal{H}}
\newcommand{\cK}{\mathcal{K}}
\newcommand{\cL}{\mathcal{L}}
\newcommand{\cM}{\mathcal{M}}
\newcommand{\cN}{\mathcal{N}}
\newcommand{\cO}{\mathcal{O}}
\newcommand{\cP}{\mathcal{P}}
\newcommand{\cR}{\mathcal{R}}
\newcommand{\cS}{\mathcal{S}}
\newcommand{\cT}{\mathcal{T}}
\newcommand{\cU}{\mathcal{U}}
\newcommand{\cX}{\mathcal{X}}
\newcommand{\cY}{\mathcal{Y}}
\newcommand{\cZ}{\mathcal{Z}}
\newcommand{\bB}{\mathbf{B}}
\newcommand{\bD}{\mathbf{D}}
\newcommand{\bF}{\mathbf{F}}
\newcommand{\bL}{\mathbf{L}}
\newcommand{\bS}{\mathbf{S}}
\newcommand{\bV}{\mathbf{V}}
\newcommand{\bg}{\mathbf{g}}
\newcommand{\bbS}{\mathbf{S}}
\newcommand{\RP}{{\mathbb{RP}}}
\newcommand{\R}{\mathbb R}
\newcommand{\C}{\mathbb C}
\newcommand{\Z}{\mathbb Z}
\newcommand{\D}{\mathbb D}
\newcommand{\id}{\mathrm {id}}
\newcommand{\Hom}{\mathrm {Hom}}
\newcommand{\area}{\operatorname{area}}
\newcommand{\dist}{\operatorname{dist}}
\newcommand{\dmn}{\mathrm{dmn}}
\newcommand{\UConf}{\mathrm{UConf}}
\newcommand{\hConf}{\widehat\UConf}
\renewcommand{\Re}{\mathrm{Re}}
\renewcommand{\tilde}{\widetilde}
\newcommand{\link}{\operatorname{link}}
\newcommand{\spt}{\operatorname{spt}}
\newcommand{\sing}{\operatorname{sing}}
\newcommand{\x}{\times}
\newcommand{\ins}{\mathrm{in}}
\newcommand{\out}{\mathrm{out}}
\newcommand{\Ker}{\mathrm{ker}}
\newcommand{\Diff}{\operatorname{Diff}}
\newcommand{\GS}{\mathcal{S}}
\newcommand{\inte}{\operatorname{int}}
\newcommand{\Int}{\operatorname{int}}
\newcommand{\Imag}{\operatorname{Im}}
\newcommand{\Span}{\operatorname{span}}
\newcommand{\ord}{\operatorname{ord}}
\newcommand{\Neg}{\mathfrak{n}}
\newcommand{\bq}{\mathbf{q}}
\newcommand{\bx}{\mathbf{x}}
\title{An enumerative min-max theorem for minimal surfaces}
\author{Adrian Chun-Pong Chu, Yangyang Li, Zhihan Wang} 
\address{Cornell University, Ithaca, NY 14853, USA}
\email{cc2938@cornell.edu}
\address{University of Notre Dame,
Notre Dame, IN 46556, USA}
\email{yangyangli@nd.edu}
\address{Cornell University, Ithaca, NY 14853, USA}
\email{zw782@cornell.edu}
\date{\today}
\numberwithin{equation}{section}
\newtheorem{thm}{Theorem}[section]
\newtheorem{cor}[thm]{Corollary}
\newtheorem{prop}[thm]{Proposition}
\newtheorem{lem}[thm]{Lemma}
\theoremstyle{definition}
\newtheorem{defn}[thm]{Definition}
\newtheorem{rmk}[thm]{Remark}
\newtheorem{nota}[thm]{Notation}
\newtheorem{cond}{Condition}
\newtheorem*{question*}{Question}
\begin{document}

\begin{abstract} We prove an enumerative min-max theorem that relates the number of genus $g$ minimal surfaces in  3-manifolds of positive Ricci curvature to topological properties of the set of embedded surfaces of genus $\leq g$, possibly with finitely many singularities. This completes a central component of our program of using topological methods to enumerating minimal surfaces with prescribed genus \cite{chuLi2024fiveTori,chu2025arbitraryGenus,chuLiWang2025optimalFamily}.

As an application, we show that every  3-sphere of positive Ricci curvature contains at least 4 embedded minimal surfaces of genus 2.
\end{abstract}
\maketitle

\setcounter{tocdepth}{1}
\tableofcontents
  
\section{Introduction}
Enumerative problems in differential geometry have a long history. Here are some well-known conjectures. 
\begin{itemize}
    \item Every Riemannian $2$-sphere has at least $3$ simple closed geodesics (Poincar\'e, 1905).  
    \item Every Riemannian $3$-sphere has at least $4$ embedded minimal spheres (S.-T. Yau \cite{Yau82}). 
    \item Every Riemannian $3$-sphere has at least $5$ embedded minimal tori (B. White \cite{White89}). 
\end{itemize}
Among these three conjectures, only the first one was fully solved: Building on the work of Lyusternik-Schnirelmann \cite{LS47}, M. Grayson gave a proof \cite{Grayson89}  using  curve shortening flow (see also \cite{Lusternik1947,Klingenberg1977,Ballmann3geodesic1978,Jost3geodesic1989,Taimanov3geodesic1992}). Yau's conjecture was solved in the case of bumpy metric and the case of positive Ricci curvature by Wang--Zhou \cite{WangZhou23FourMinimalSpheres}: They developed a multiplicity one theorem for Simon--Smith min-max theory, using  regularity results in \cite{wangZhou2025improvedMembranes}, and   
~\cite{SS23} by Sarnataro--Stryker (see also \cite{ Smith82, Whi91, HK19}). Recently, the first two authors proved  B. White's conjecture for 3-spheres with positive Ricci curvature \cite{chuLi2024fiveTori}. Regarding more works on the construction of minimal submanifolds of controlled topological types under general metrics, see also   \cite{Str84, GJ86,Grayson89, Whi91, Zho16, HK19, bettiolPiccione2023bifurcationsCliffordTorus, HK23, Ko23a, Ko23b,bettiolPiccione2024nonplanarMinSpheres,LiWang2024NineTori,liWangYao2025minimalLensSpace}.

In a series of papers \cite{chuLi2024fiveTori,chu2025arbitraryGenus,chuLiWang2025optimalFamily}\footnote{The current paper and \cite{chuLiWang2025optimalFamily}  together supersede  \cite{chuLiWang2025genus2PartI}.}, the authors proposed a program that constructs minimal surfaces with prescribed genus based on the topological structure of the  set of genus $g$ embedded surfaces, possibly with finitely many singularities. Here is the setting:
Let $M$ be any closed orientable  3-manifold, and $g$ a non-negative integer. Let $\cS(M)$ denote the set of all orientable, separating, finite-area surfaces in $M$ that are smoothly embedded  except at possibly  finitely many points (see \S 2 for the precise definition). Then, we consider the subset $\cS_{\leq g}(M)\subset\cS(M)$ of elements with    genus $\leq g$. In \cite{chuLiWang2025optimalFamily}, we showed that given any closed orientable Riemannian 3-manifold $(M,\bg)$ of positive Ricci curvature, if there exists some map $\Phi:X\to \cS_{\leq g}(M)$ that {\it cannot be deformed via  pinch-off processes} to become a map into $\cS_{\leq g-1}(M)$ (see \S 2 therein), then $(M,\bg)$ contains  {\it at least one}   minimal surface of genus $g$.  

In this paper we   generalize the above result. Namely, by further utilizing the relative topological structure of the pair $(\cS_{\leq g}(M),\cS_{\leq g-1}(M))$, we  produce {\it multiple} minimal surfaces of genus $g$.   This completes a central component
of our program to develop a topological machinery for  enumerating minimal surfaces with  prescribed genus.

Below, we consider  homology groups and  cohomology groups  under $\Z_2$-coefficients. Note, the cap product $\frown$ in item (1) is indeed well-defined, as we show in Lemma \ref{lem:cupProdWellDef}. 
Readers may refer to Figure \ref{fig:enuMinMax} for a schematic.  
 
\begin{thm}[Enumerative min-max theorem]\label{thm:enumerMinMax}
    Let $(M,\bg)$ be a closed orientable Riemannian $3$-manifold with positive Ricci curvature, and let $g$ be a positive integer. Let $$\Phi:(X,Z)\to (\cS_{\leq g}(M),\cS_{\leq g-1}(M))$$ be a Simon--Smith family, where $X$ is a finite simplicial complex and $Z\subset X$ a subcomplex. Suppose there exist some homology class $w\in H_k(X,Z)$ and $p$ cohomology classes $\lambda_i\in H^{k_i}(X\backslash Z)$, $i=1,...,p$, with the following properties.
    \begin{enumerate}
        \item\label{item:enuMinMaxY} Let $(Y,\partial Y)\subset (X,Z)$ be any relative $(k-k_1-\dots-k_p)$-subcomplex such that $$[Y]=w\frown(\lambda_1\smile\dots\smile\lambda_p)\in H_{k-k_1-\dots-k_p}(X,Z).$$ Then the subfamily $\Phi|_{Y}$ cannot be deformed via pinch-off processes to a map into $\cS_{\leq g-1}(M)$.
        \item\label{item:enuMinMaxC} For any embedded smooth genus $g$ surface $S\subset M$, there exists some $r>0$ such that: For each $i=1,\dots,p$,  if $C\subset X\backslash Z$ is any $k_i$-cycle such that   $\lambda_i([C])\ne 0$, then $\Phi|_C$ cannot be deformed via pinch-off processes to a family $r$-close to $S$ under the $\bF$-distance.
    \end{enumerate}
    Then $(M,\bg)$ contains at least $p+1$ orientable embedded minimal surfaces with genus $g$ and area at most $\displaystyle\max_{x\in X}\area(\Phi(x))$.
\end{thm}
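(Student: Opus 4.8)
The plan is to run a Lusternik--Schnirelmann argument on a decreasing chain of min-max widths cut out of $X$ by $w$ and the classes $\lambda_i$, using hypothesis~(\ref{item:enuMinMaxC}) exactly where the classical theory needs that no cohomology class is ``supported near a single critical point''. For a relative subcomplex $(Y,\partial Y)\subset(X,Z)$ let $\bL(\Phi|_Y)$ denote the infimum of $\sup_{x\in Y}\area(\Phi'(x))$ over all Simon--Smith families $\Phi'\colon(Y,\partial Y)\to(\cS_{\leq g}(M),\cS_{\leq g-1}(M))$ obtained from $\Phi|_Y$ by pinch-off processes, and for $j=0,1,\dots,p$ set
\[
\omega_j \;=\; \inf\big\{\,\bL(\Phi|_Y)\ :\ (Y,\partial Y)\subset(X,Z),\ [Y]=w\frown(\lambda_1\smile\dots\smile\lambda_j)\,\big\},
\]
with the convention that $j=0$ means $[Y]=w$. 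Capping a representative $Y_{j-1}$ with $\lambda_j$ to produce a subcomplex $Y_j\subset Y_{j-1}$ carrying the next class (legitimate by Lemma~\ref{lem:cupProdWellDef}) yields
\[
\max_{x\in X}\area(\Phi(x))\ \geq\ \omega_0\ \geq\ \omega_1\ \geq\ \dots\ \geq\ \omega_p .
\]
The same ``cap off the remaining classes and restrict the pinch-off deformation'' trick upgrades hypothesis~(\ref{item:enuMinMaxY}) to the statement that \emph{for every $j$ and every relative subcomplex $Y$ representing $w\frown(\lambda_1\smile\dots\smile\lambda_j)$, the family $\Phi|_Y$ cannot be pinched off into $\cS_{\leq g-1}(M)$}; in particular no near-minimizer for any $\omega_j$ is so pinchable, and since a Simon--Smith family of sufficiently small width can be pinched off, at small area, into $\cS_{\leq g-1}(M)$ (here $g\geq 1$), all $\omega_j>0$.

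Next I would promote each $\omega_j$ to a minimal surface via the min-max machinery of this program. By the Simon--Smith min-max construction together with the genus lower bound of \cite{chuLiWang2025optimalFamily} (which upgrades the non-pinchability of near-minimizing slices to \emph{genus exactly $g$}, using the catenoid estimate and, via positive Ricci curvature, the multiplicity-one conclusion in the spirit of \cite{SS23,wangZhou2025improvedMembranes}), and using the index bounds from positive Ricci curvature to control the relevant compactness, each $\omega_j$ equals $\area(\Sigma_j)$ for some smooth orientable embedded minimal surface $\Sigma_j\subset M$ of genus exactly $g$, with $\area(\Sigma_j)\le\max_X\area\circ\Phi$. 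If $\omega_0,\dots,\omega_p$ are pairwise distinct, the $\Sigma_j$ are $p+1$ distinct genus $g$ minimal surfaces and we are done.

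It remains to handle coincidences, where hypothesis~(\ref{item:enuMinMaxC}) enters. Assume toward a contradiction that $M$ contains only finitely many orientable embedded genus $g$ minimal surfaces of area $\le\max_X\area\circ\Phi$, partition $\{0,\dots,p\}$ into maximal runs on which $\omega_j$ is constant, and observe that distinct maximal runs have distinct common widths, hence disjoint ``witness'' sets of minimal surfaces; it therefore suffices to show that a run $\omega_a=\omega_{a+1}=\dots=\omega_b=c$ of length $L=b-a+1$ is witnessed by at least $L$ distinct genus $g$ minimal surfaces of area $c$. Suppose instead there are only $q\le b-a$ of them, $T_1,\dots,T_q$, and pick pairwise disjoint $\bF$-neighborhoods $W_s\ni T_s$, each of $\bF$-radius less than the constant $r$ from hypothesis~(\ref{item:enuMinMaxC}) applied to $S=T_s$, small enough that the deformation theorem below applies, and (shrinking further, using that the deformation $\Phi'$ below maps $\partial Y_a$ into $\cS_{\leq g-1}(M)$) so that their preimages avoid $Z$; set $W=\bigcup_sW_s$. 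Choose a near-minimizer $Y_a$ for $\omega_a$ and a pinch-off deformation $\Phi'$ of $\Phi|_{Y_a}$ with $\sup_{Y_a}\area(\Phi')<c+\delta$, then apply the relative Lusternik--Schnirelmann form of the pull-tight/deformation theorem of the program to further pinch-off-deform $\Phi'$ to a family $\Phi''$ with $\area(\Phi''(x))<c-\varepsilon$ whenever $x\in Y_a$ and $\Phi''(x)\notin W$, for some $\varepsilon>0$. Put $V_s=(\Phi'')^{-1}(W_s)$ and $A=\{x\in Y_a:\area(\Phi''(x))<c-\varepsilon\}$; these open sets cover $Y_a$, and on $V_s$ the family $\Phi''$ is $r$-close to $T_s$ in the $\bF$-distance, so the contrapositive of hypothesis~(\ref{item:enuMinMaxC}) (with $\Z_2$-coefficients) forces $\lambda_{a+s}|_{V_s}=0$, whence $\lambda_{a+s}$ lifts to a class in $H^{k_{a+s}}(Y_a,V_s)$. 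Forming the relative cup product of these $q$ lifts in $H^\ast(Y_a,\bigcup_sV_s)$, capping with $[Y_a]$, additionally cupping with $\lambda_{a+q+1},\dots,\lambda_b$, and applying excision to the open cover $\{A,\bigcup_sV_s\}$ of $Y_a$, one obtains (after a subdivision of $Y_a$) a relative subcomplex $(Y',\partial Y')\subset(X,Z)$ with $Y'\subset A$ and $[Y']=w\frown(\lambda_1\smile\dots\smile\lambda_b)$. Since $\Phi''$ is a pinch-off deformation of $\Phi|_{Y'}$ and $\area(\Phi''(\cdot))<c-\varepsilon$ on $Y'$, this gives $\omega_b\le\bL(\Phi|_{Y'})<c-\varepsilon=\omega_b$, a contradiction. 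Hence every maximal run of length $L$ is witnessed by at least $L$ distinct genus $g$ minimal surfaces of its common area, and summing over runs gives at least $p+1$ such surfaces, each of area $\le\max_X\area\circ\Phi$.

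The main obstacle is the geometric input invoked above, in two intertwined forms. First, the genus-realization step: showing that the width $\omega_j$ of a non-pinchable family is attained by a minimal surface of genus \emph{exactly} $g$ --- rather than one of lower genus created by neck-degeneration in a limit --- is itself delicate and rests on the catenoid estimate together with the multiplicity-one theorem available under positive Ricci curvature. Second, the relative Lusternik--Schnirelmann deformation theorem: one must drive down the areas of a Simon--Smith family through pinch-off processes away from the finitely many genus $g$ minimal surfaces at level $c$ while (i) staying inside $\cS_{\leq g}(M)$ and not pinching off genus near those surfaces, and (ii) pushing the family past any other stationary integral varifolds of mass $c$ --- of lower genus or higher index --- that might otherwise obstruct the deformation, which again is where positive Ricci curvature and the multiplicity-one/catenoid technology are indispensable. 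Interfacing these geometric deformations with the soft relative cup-product/excision combinatorics --- in particular checking that pinch-off deformations restrict to subcomplexes compatibly with the pair $(X,Z)$ and with the near-minimizing property --- is the bookkeeping that ties the argument together.
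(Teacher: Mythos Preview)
Your Lusternik--Schnirelmann template is natural, but the deformation step in the coincidence case has a genuine gap. You need a pinch-off deformation $\Phi''$ with $\area(\Phi''(x))<c-\varepsilon$ whenever $\Phi''(x)\notin W$, where $W$ is a small $\bF$-neighborhood of the finitely many \emph{genus $g$} minimal surfaces of area $c$. But the family may accumulate on a minimal surface $\Sigma$ of area $c$ and genus $\leq g-1$. To ``push past'' such $\Sigma$ via pinch-off you must pinch a neck (using the catenoid estimate), which drops the genus of the family to $\leq g-1$ on that piece; it does not drop the area below $c$. So after your deformation, the set $A=\{\area<c-\varepsilon\}$ together with $\bigcup_s V_s$ need not cover $Y_a$: there is a third region where genus has dropped but area has not, and on that region your width argument gives nothing. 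You flag this yourself at the end, but it is not a bookkeeping issue---it is the crux.

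The paper sidesteps this entirely by not tracking area at all. After a metric perturbation preserving $\#\cO_{g,\leq L}$, it applies the optimal family theorem (Theorem~\ref{thm:repetitiveMinMax}) once to $\Phi$ itself, obtaining a pinch-off deformation $\Phi'$ and a cover $X=D_0\cup D_1\cup\dots\cup D_l$ such that: on each $D_j$ with $1\le j\le m$ (those associated to genus $g$ minimal surfaces $\Gamma_j$), the deformation passes through $\cS_g(M)\cap\bB^\bF_r(\Gamma_j)$; and on $D:=D_0\cup D_{m+1}\cup\dots\cup D_l$, the family $\Phi'|_D$ already lies in $\cS_{\leq g-1}(M)$. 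Hypothesis~(\ref{item:enuMinMaxC}) then forces $\lambda_i|_{D_j\setminus Z}=0$ for every $i$ and every $j\le m$, so if $m\le p$ the cup product $\lambda_1\smile\dots\smile\lambda_p$ vanishes on $(D_1\cup\dots\cup D_m)\setminus Z$. A single relative-cap-product/excision lemma (your same algebraic manoeuvre, done once rather than inside a width argument) then produces a subcomplex $Y\subset D$ with $[Y]=w\frown(\lambda_1\smile\dots\smile\lambda_p)$; since $\Phi'|_Y$ maps into $\cS_{\leq g-1}(M)$, this contradicts hypothesis~(\ref{item:enuMinMaxY}). No widths $\omega_j$, no runs of coincidences, no area comparison---the contradiction comes from the \emph{genus} drop on $D$, which is exactly what the optimal family theorem delivers and what your area-based deformation cannot.
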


\begin{figure}[h] 
    \centering
\includegraphics[width=0.6\linewidth]{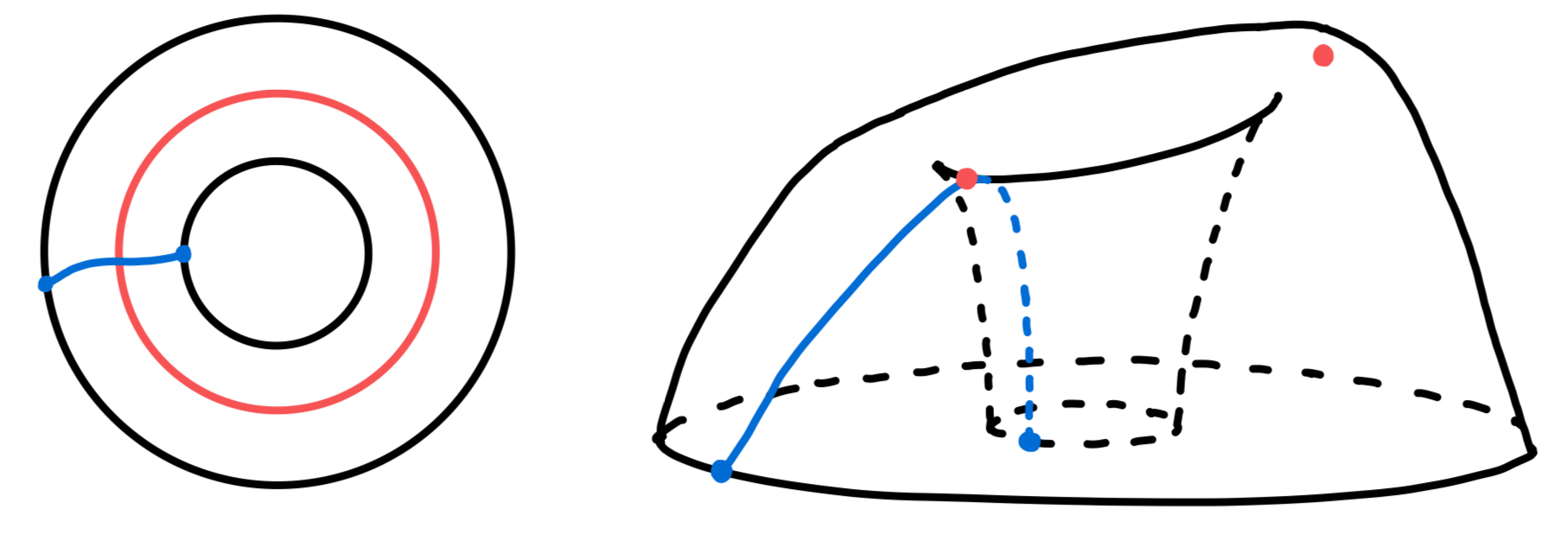}
\caption{On the left is an example where $X$ is the annulus, $Z$ is the two boundary circles, $Y$ is the blue path, $w$ is the fundamental class $[X]$, $p=1$, and $\lambda_1$ is the Poincar\'e dual of  $[Y]$,  meaning $[Y]=[X]\frown \lambda_1$. Note the pairing of $\lambda_1$ with the red loop is 1. On the right, we have a schematic where the graphical surface represents $\cS_{\leq g}(M)$, with the two boundary circles being $\cS_{\leq g-1}(M)$. The blue path is the family $\Phi|_Y$, and the two red critical points are the minimal surfaces we are searching for.}
\label{fig:enuMinMax}
\end{figure}

Importantly, it can be readily checked  that assumptions \eqref{item:enuMinMaxY} and \eqref{item:enuMinMaxC}   above  are purely topological: No geometric information about the ambient metric $\bg$ is needed. Note, the $\bF$-distance is the borrowed from the space $\cZ_2(M;\bF;\Z_2)$ of mod 2 cycles, which is the sum of the flat distance and the varifold distance. 
The proof of this theorem crucially relies on \cite[Theorem 1.4]{chuLiWang2025optimalFamily}, which states that in a generic metric with positive Ricci curvature, every Simon--Smith family can be deformed via pinch-off processes into  a certain ``topologically optimal family" \cite[Theorem 1.4]{chuLiWang2025optimalFamily}. This result is based on numerous previous foundational works on min-max theory \cite{Smith82, CD03, DP10,   Ket19, Zho20, MN21,SarnataroStryker2023Optimal,WangZhou23FourMinimalSpheres}. We note that in \cite{LiWang2024NineTori} Xingzhe Li and Zhichao Wang also utilized certain relative cohomology groups to construct minimal tori.

\begin{rmk} Let us compare Theorem \ref{thm:enumerMinMax} with the   previous work \cite{chuLi2024fiveTori}, in which  we constructed 5  minimal tori in any 3-spheres with positive Ricci curvature. There we relied crucially on the non-existence of a 5-sweepout on $S^3$ that consists of only genus 0 elements. However, this line of argument is unavailable in   the higher genus case. 
\end{rmk}

Next, we apply  the above enumerative min-max theorem to the case of 3-spheres of positive Ricci curvature to construct multiple minimal surfaces of genus 2. The main challenge is to find a suitable map $\Phi:(X,Z)\to (\cS_{\leq 2}(S^3),\cS_{\leq 1}(S^3))$, together with  homology and cohomology classes, that satisfy the conditions of Theorem \ref{thm:enumerMinMax}.

\begin{thm}\label{thm:main}
    Every Riemannian $3$-sphere of positive Ricci curvature contains at least $4$ embedded, orientable minimal surfaces of genus $2$.
\end{thm}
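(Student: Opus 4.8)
The plan is to derive Theorem~\ref{thm:main} from the Enumerative min-max theorem (Theorem~\ref{thm:enumerMinMax}), applied with $g=2$ and $p=3$, so that $p+1=4$ is exactly the number of surfaces asserted. Everything therefore reduces to constructing, on $S^3$, a Simon--Smith family
$$\Phi:(X,Z)\to(\cS_{\leq 2}(S^3),\cS_{\leq 1}(S^3)),$$
with $X$ a finite simplicial complex and $Z\subset X$ a subcomplex, together with a class $w\in H_k(X,Z)$ and three classes $\lambda_1,\lambda_2,\lambda_3\in H^{k_i}(X\setminus Z)$, satisfying the two purely topological conditions \eqref{item:enuMinMaxY} and \eqref{item:enuMinMaxC}. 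No curvature information enters this construction, and Theorem~\ref{thm:enumerMinMax} then produces four distinct embedded orientable minimal genus-$2$ surfaces, with an area bound that is irrelevant for the count.

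For the construction I would build $X$ from two ingredients already controlled in this program: a fixed sweepout of $S^3$ by genus-$1$ Heegaard surfaces --- a topologically optimal genus-$1$ family as in \cite{chuLiWang2025optimalFamily} --- and the data of a small controlled handle (a ``tube'') attached to its members so as to raise the genus to $2$. Concretely $\Phi(x)$ is the boundary of a regular neighborhood of an embedded graph (a ``dumbbell''/theta-graph) whose loops follow the genus-$1$ sweepout and whose connecting arc is the attached handle, and $Z$ is the locus where the handle degenerates, so that there $\Phi$ maps into $\cS_{\leq 1}(S^3)$. Because the sweepout parameter carries $\RP^\infty$-type $\Z_2$-cohomology and the handle contributes further parameter circles, one can arrange $H^\ast(X\setminus Z)$ to contain classes $\lambda_1,\lambda_2,\lambda_3$, of degrees $k_i$ summing to less than $k$ (so the admissible $Y$'s are low-dimensional --- plausibly $1$-dimensional --- families of genus-$2$ surfaces), pulled back via $\Phi$ from natural $\Z_2$-classes on the ambient space of surfaces, with $w\frown(\lambda_1\smile\lambda_2\smile\lambda_3)\neq 0$ for $w=[X]$; here the cup product is the one whose pairing against $w$ is legitimate by Lemma~\ref{lem:cupProdWellDef}.

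The two verifications are the substance of the argument. For \eqref{item:enuMinMaxY}: given a relative subcomplex $(Y,\partial Y)\subset(X,Z)$ with $[Y]=w\frown(\lambda_1\smile\lambda_2\smile\lambda_3)$, suppose $\Phi|_Y$ could be deformed via pinch-off processes into $\cS_{\leq 1}(S^3)$; concatenating this with the genus-dropping deformation already present over $\partial Y\subset Z$ would force $[Y]$ to pair trivially with (the Poincar\'e--Lefschetz dual of) $\lambda_1\smile\lambda_2\smile\lambda_3$, contradicting $[Y]=w\frown(\lambda_1\smile\lambda_2\smile\lambda_3)$. This is a Lusternik--Schnirelmann-type obstruction, and it rests on the fact that pinch-off processes preserve the relevant relative $\Z_2$-(co)homological invariant --- a fact to be extracted from the deformation theory of \cite{chuLiWang2025optimalFamily,chu2025arbitraryGenus}. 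For \eqref{item:enuMinMaxC}: fix a smooth embedded genus-$2$ surface $S\subset M$ and choose $r>0$ small enough that the $\bF$-ball of radius $r$ about $S$ deformation-retracts onto a set on which the ambient class defining $\lambda_i$ restricts to $0$ (e.g.\ $r$ below the reach of $S$); a family $\Phi|_C$ that could be pinched to lie $r$-close to $S$ would then pair trivially with that class, i.e.\ $\lambda_i([C])=0$, a contradiction.

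The main obstacle is the construction itself: the relative cohomology of $(X,Z)$ must be rich enough to carry a nonvanishing triple product --- this is precisely what forces $p=3$ and hence the count $4$ --- while the family must simultaneously be ``tight'' enough that \eqref{item:enuMinMaxY} holds on \emph{every} admissible $Y$, each of which must genuinely require genus $2$ rather than merely genus $\leq 1$; these two demands pull against each other. A secondary difficulty, internal to \eqref{item:enuMinMaxY}, is making rigorous that pinch-off processes respect the cohomological bookkeeping in the contradiction. Once $\Phi$, $w$ and $\lambda_1,\lambda_2,\lambda_3$ are in place and both conditions are verified, Theorem~\ref{thm:enumerMinMax} completes the proof.
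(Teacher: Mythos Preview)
Your proposal correctly identifies that Theorem~\ref{thm:main} should follow from Theorem~\ref{thm:enumerMinMax} with $g=2$ and $p=3$, but the two verifications you offer are not proofs; they are restatements of what must be shown.

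For condition~\eqref{item:enuMinMaxY}, your argument is circular. You write: if $\Phi|_Y$ could be deformed via pinch-off into $\cS_{\leq 1}$, then $[Y]$ ``would pair trivially'' with $\lambda_1\smile\lambda_2\smile\lambda_3$, contradicting $[Y]=w\frown(\lambda_1\smile\lambda_2\smile\lambda_3)$. But $[Y]$ is a class in $H_{*}(X,Z)$, a statement about the \emph{parameter space}; it is completely unaffected by deforming the family $\Phi|_Y$ in the \emph{target} $\cS(S^3)$. No contradiction arises. What one must actually show is that some invariant \emph{of the image family} (not of $Y$) obstructs lowering the genus, and pinch-off processes do not automatically respect any such invariant --- this is precisely the content, not a technicality. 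In the paper, the obstruction is built in two stages: first one uses that the $\RP^5$-factor gives an Almgren--Pitts $5$-sweepout, which (by a nontrivial result from \cite{chuLi2024fiveTori}) cannot consist of genus-$0$ surfaces, to locate a relative $2$-cycle $D$ on which the deformed family stays in $\cS_{\geq 1}$; then one shows $\Phi'|_{\partial D}$ is homologically nontrivial in $\cS_1(S^3)$ by constructing a map to the space $\cE(L,S^3)$ of Hopf links and invoking Hatcher's Smale conjecture and the Boyd--Bregman computation of $\pi_1(\cE(L,S^3))$. None of this structure is present in your sketch.

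For condition~\eqref{item:enuMinMaxC}, you assert that an $\bF$-ball around $S$ retracts onto a set where $\lambda_i$ restricts to zero, but you have not said what $\lambda_i$ is pulled back from, so this is unfalsifiable. In the paper the $\lambda_i$ are chosen so that $\lambda_i([C])\neq 0$ forces $\Xi|_C$ to be an Almgren--Pitts $1$-sweepout; a $1$-sweepout can never be $\bF$-small, and that is the mechanism. Finally, your ``dumbbell'' construction is not actually carried out, and the hard constraint --- that $H^*(X\setminus Z)$ carry a nonzero triple cup product \emph{detected by genuine genus-$2$ phenomena} --- is what forces the paper to use the $D_{24}$-symmetry of the Lawson surface and the quotient $(\RP^5\times\D\times SO(4))/D_{24}$, whose cohomology (via $\pi_1\cong\Z_2\oplus Q_{48}$) provides a class $\alpha$ with $\alpha^3\neq 0$. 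A handle-attachment parameter circle gives $\alpha^1$, not $\alpha^3$.
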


Let us record the known results in this direction. Denote by ${\frak n}_g(S^3)$ the optimal number such that every Riemannian 3-sphere of positive Ricci curvature must contain at least ${\frak n}_g(S^3)$ orientable, embedded  minimal surfaces of genus $g.$ Then we have:

\medskip

\begin{center}
\begin{tabular}{ |c| c| c| c| }
 \hline$\frak n_0(S^3)$ & $\frak n_1(S^3)$ & $\frak n_2(S^3)$ & $\frak n_g(S^3)$ for $g\geq 3$ \\ 
 \hline 4 \cite{WangZhou23FourMinimalSpheres} & $\geq 5$ \cite{chuLi2024fiveTori} & $\geq 4$ (Theorem \ref{thm:main}) & $\geq 1$ \cite{chu2025arbitraryGenus}\\
 \hline
\end{tabular} 
\end{center}

\medskip

Conjecturally $\frak n_1(S^3)=5$. It will be very interesting to know the exact value of   ${\frak n_g}(S^3)$ for any particular $g\geq 2$. We believe that Theorem \ref{thm:enumerMinMax} is general enough to be used as a ``black box"   for exploring the higher genus cases: It reduces the problem of lower bounding  ${\frak n_g}(S^3)$ to finding suitable maps into $\cS_{\leq g}(S^3)$. 

The family  we use to prove Theorem \ref{thm:main} is inspired by the symmetry of the Lawson surfaces $\xi_{2,1}$ \cite{Law70}, which is the only known genus 2 embedded minimal surface in the standard 3-sphere $\mathbb S^3$. To speculate the $g=3$ case, we   recall that there are two known genus 3 minimal surfaces in $\mathbb S^3$: The Lawson surface $\xi_{3,1}$  and the Karcher-Pinkall-Sterling genus 3 surface \cite{karcherPinkallSterling1988new}. We expect that to get a good lower bound for ${\frak n_3}(S^3)$, the family for which we apply Theorem \ref{thm:enumerMinMax} to should encompass surfaces that resemble both of these two minimal surfaces, and also neck-pinch phenomenon from them.

 \iffalse
 Version 2:

\begin{thm}[Enumerative min-max theorem]\label{thm:enumerMinMax}
    Let $(M,\bg)$ be a closed orientable Riemannian $3$-manifold with positive Ricci curvature, and $g$ be a positive integer. Let $$\Phi:(X,Z_1\cup Z_2)\to (\cS_{\leq g}(M),\cS_{\leq g-1}(M))$$ be a Simon--Smith family, where
    $X$ is a finite simplicial  complex and $Z_1,Z_2\subset X$ are subcomplexes. Suppose there exist
some homology class $w\in H_k(X,Z_1\cup Z_2)$ and $p$ cohomology classes $\lambda_i\in H^{k_i}(X,Z_1)$, $i=1,...,p$, with the following properties.
    \begin{enumerate}
        \item Let $(Y,\partial Y)\subset (X,Z_1\cup Z_2)$ be any relative $(k-k_1-\dots-k_p)$-cycle such that $$[Y]=w\frown(\lambda_1\smile\dots\smile\lambda_p)\in H_{k-k_1-\dots-k_p}(X, Z_2).$$ Then the subfamily $\Phi|_{Y}$ cannot be deformed via  pinch-off processes  to become a map into $\cS_{\leq g-1}$.
\item There exists some $r>0$ such that for each $i=1,\dots,p$,  if $(C,\partial C)\subset (X,Z_1)$ is any relative $k_i$-cycle such that   $\lambda_i([C])\ne 0$ and $\Phi|_{C}$ maps into $\cS_{g}$, then $\Phi|_C$ cannot be deformed via pinch-off processes to become a map into some $r$-ball (under $\bF$-metric) in $\cS_g(M)$.
\end{enumerate}
Then $M$ contains at least $p+1$ orientable embedded minimal surfaces with genus $g$ and area at most $\displaystyle\max_{x\in X}\area(\Phi(x))$.
\end{thm}
\fi

\subsection{Sketch of proof.} 
\subsubsection{ Theorem \ref{thm:enumerMinMax}} We first recall that, when the ambient metric is generic with positive Ricci curvature,  \cite[Theorem 1.4]{chuLiWang2025optimalFamily} shows that every family of surfaces in $\cS(M)$ can be deformed into some topologically optimal family. 
We first perturb the ambient metric such that     \cite[Theorem 1.4]{chuLiWang2025optimalFamily} is applicable,  {\it while fixing the number of orientable genus $g$ minimal surfaces}. Applying this theorem to the given family $\Phi$, we obtain some minimal surfaces  $\Gamma_i$ of genus $\leq g$ and subcomplexes $D_i\subset X$ satisfying the following properties. 

Suppose that  $m$ orientable genus $g$ were produced. We can divide the subcomplexes $D_i$ into two collections: Those that correspond to  orientable genus $g$ minimal surfaces, and the other ones. We denote the first collection by $D_1,...,D_m$, and take the union of the second collection to obtain some subcomplex $D$. Hence, we have
\begin{itemize}
\item $m$ orientable minimal surfaces of genus $g$, $\Gamma_1,\dots,\Gamma_m$ with area $\leq \max_{x}\area(\Phi(x)),$
\item and associated subcomplexes $D_1,D_2,\dots,D_m\subset X$ that, together with $D$, cover $X$,

\end{itemize}
such that:
\begin{enumerate} 
    \item\label{item:sketchItemD} $\Phi|_D$ can be deformed via pinch-off processes to become some map into $\cS_{\leq g-1}(M)$.
    \item\label{item:sketchItemDi} For each $j=1, \dots, m$, $\Phi|_{D_j}$ can be deformed via pinch-off processes to become  some  family $\Xi_j:D_j\to\cS_{g}(M)$  such that each member of $\Xi_j$ is close  to $\Gamma_j$ and has genus $g$.
\end{enumerate}

Then to prove Theorem \ref{thm:enumerMinMax} it suffices to show $m\geq p+1$.
Suppose by contradiction $m\leq p$. By \eqref{item:sketchItemDi}, we know that for every $j=1,...,m$ we have $\lambda_i|_{D_j}=0$, for otherwise  item \eqref{item:enuMinMaxC} of Theorem \ref{thm:enumerMinMax} would be contradicted. Thus, together with $m\le p$, we know $$(\lambda_1\smile...\smile \lambda_p)|_{D_1\cup...\cup D_m}=0.$$  Then by some elementary algebraic topology one can deduce that  $D$ must detect the class
\begin{equation} \label{eq:capProd}   w\frown(\lambda_1\smile\dots\smile\lambda_p)\in H_{k-k_1-\dots-k_p}(X,Z).
\end{equation} More precisely, we can find some $[Y]\in H_{k-k_1-...-k_p}(D,D\cap Z)$ that gets pushforwarded to the class \eqref{eq:capProd}   under the inclusion $(D,D\cap Z)\hookrightarrow (X,Z)$. Then  by item \eqref{item:sketchItemD} above, $\Phi|_D$, and thus $\Phi|_Y$ in particular, can be deformed via pinch-off processes to become a map into $\cS_{\leq g-1}(M)$. This contradicts Theorem \ref{thm:enumerMinMax} \eqref{item:enuMinMaxY}.

\subsubsection{ Theorem \ref{thm:main}}\label{sect:sketchFourGenus2}  In order to construct 4 minimal surfaces of genus 2 using Theorem \ref{thm:enumerMinMax}, we need a suitable map into  $\cS_{\leq 2}(S^3)$.  We first define  in the unit $3$-sphere $S^3\subset\R^4$ a 7-parameter family as follows. 
We introduce  a coordinate system $(x_1,x_2,\alpha)$ for $S^3$ (on the complement of a great circle $\{x_3=x_4=0\}$), defined by the relation 
$$(x_1,x_2,x_3,x_4)=(x_1,x_2,\sqrt{1-x_1^2-x_2^2}\cos\alpha,\sqrt{1-x_1^2-x_2^2}\sin\alpha)\,,$$
where $(x_1,x_2)$ lies in the open unit disc in $\R^2$ and $\alpha\in\R/2\pi\Z$. Note that $(0,0,\alpha)$ parametrizes the great circle $C = \{x_1 = x_2 = 0\}$. Now, we define a family $\Psi:\RP^5\x\D\to\cS_{\leq 2}(S^3)$ by sending each pair $a=[a_0:a_1:\dots:a_5]\in\RP^5$ and $(r,\theta)\in \D$ to the zero set  
\begin{align*}
\{a_0+a_1x_1+a_2x_2+a_3x_3+a_4x_4+a_5\left[x_1x_2+\epsilon(r\cos(\theta+2\alpha)+(1-r)\cos 3\alpha) \right]=0\}\subset S^3,
\end{align*}
where $\epsilon>0$ is some small number (in our actual argument, this should be some small function). Geometrically, this family should be viewed as a desingularization of the $\RP^5$-family $\Phi_5$ given by 
$$\Phi_5(a):=\{a_0+a_1x_1+a_2x_2+a_3x_3+a_4x_4+a_5 x_1x_2=0\}\subset S^3,$$
such that all members have at most finitely many singularities.
Notably, the surface $\Psi([0:...:0:1],0)$ has the same symmetry group as the genus 2 Lawson  minimal surface $\xi_{2,1}$, which is the dihedral group $D_{24}$ of 24 elements.

Then, we define a  $13$-parameter  family $\bar \Xi:\RP^5\x\D\x SO(4)\to\cS_{\leq 2}(S^3)$ by rotating all members of $\Psi$: We set $ \bar \Xi(a,z,R):=R\cdot\Psi(a,z)$ where $R \cdot \Psi(a,z)$ denotes the image of  $\Psi(a,z)\subset S^3$ under the rotation  $R \in SO(4)$. This family has a lot of symmetry, allowing  us to take   quotient. Namely, we will define a certain $D_{24}$-action on $\RP^5\x\D\x SO(4)$ such that $\bar \Xi((a,z,R)\cdot g)=\bar \Xi(a,z,R)$ for all $g\in D_{24}$ and $(a,z,R)$. Hence, letting $\cX:=(\RP^5\x\D\x SO(4))/D_{24}$, $\bar\Xi$ induces a family  $\Xi:\cX\to\cS_{\leq 2}(S^3).$ Note, by the definition of $\Psi$, $\Xi|_{\partial\cX}$ maps into $\cS_{\leq 1}(S^3)$.

In order to   apply Theorem \ref{thm:enumerMinMax}, let us understand the topology of $\cX$. It can be checked that the natural projection map $\pi$ from $\cX$ onto $B:=SO(4)/D_{24}$ is a fiber bundle. By some standard topological  argument, $B\cong S^3/Q_{48}\x S^3$ where $Q_{48}$ is the dicyclic group of order 48. 
We then  introduce two cohomology classes in $\cX$:
\begin{itemize}
    \item It is known that there exists some  element $\hat\alpha\in H^1(S^3/Q_{48})$ for which $\hat\alpha^3\ne 0$. Let $\alpha\in H^1(\cX)$ be the pullback of $\hat\alpha$ under $\Pi_1\circ\pi:\cX\to S^3/Q_{48}$ where $\Pi_1:B\to S^3/Q_{48}$ is the projection onto the first factor.
    \item Fix some $p \in S^3$, and let $B_0:= S^3/Q_{48}\x \{p\}\subset  S^3/Q_{48}\x   S^3 \cong B$. Let $\beta\in H^3(\cX)$ be the pullback $\pi^*(\beta_B)$, where $\beta_B\in H^3(B)$ is the Poincar\'e dual of $[B_0]\in H_3(B)$.
\end{itemize}
Then, it is easy to see that the relative homology class
$[\cX]\frown(\alpha^3\smile\beta)\in H_7(\cX,\partial\cX)$ is induced by the fibers of $\pi:\cX\to B.$

Finally, let $\cX_0:=\pi^{-1}(B_0)\subset\cX$. Note $\partial\cX_0=\cX_0\cap\partial\cX$. We apply Theorem \ref{thm:enumerMinMax} to the subfamily $$\Xi|_{\cX_0}:(\cX_0, \partial \cX_0)\to (\cS_{\leq 2}(S^3),\cS_{\leq 1}(S^3)),$$ with $p=3$ and the three cohomology classes all being $\alpha|_{\cX_0}\in H^1(\cX_0).$ To finish the proof, it suffices to verify condition \eqref{item:enuMinMaxY} and \eqref{item:enuMinMaxC} in Theorem \ref{thm:enumerMinMax}. 

For \eqref{item:enuMinMaxY}, let $[Y]=[\cX_0]\frown(\alpha_0)^3\in H_7(\cX_0,\partial \cX_0)$ and $\Phi'$ be obtained from $\Xi|_Y$ by deformation via pinch-off processes. We need to show that $\Phi'$ must still contain some genus $2$ element. Observe that,  when restricted onto any $\RP^5$-factor, $\Xi$ gives an Almgren-Pitts 5-sweepout, which thus cannot be deformed via pinch-off processes into $\cS_0(S^3)$ (see \cite[\S 3.5]{chuLi2024fiveTori}).
From this, together with some algebraic topology, it can be shown that there exists some relative 2-cycle $(D,\partial D)\subset (Y,Y\cap\partial \cX_0)$ such that $D$ is homologous to the $\D$ factor in $\cX$ and {\it every member of $\Phi'|_D$ has genus $1$ or $2$}. Finally, we analyze the boundary family $\Phi'|_{\partial D}$, and show that it is, in certain sense, homologically non-trivial in the set $\cS_1(S^3)$ of all genus 1 surfaces in $S^3$, and thus does not admit a fill in. Hence,  $\Phi'|_{ D}$ must contain some genus 2 element, as desired. (This step requires Hatcher's resolution of the Smale conjecture and certain  descriptions of the space of all Hopf links in $S^3$.)

As for item \eqref{item:enuMinMaxC}, we shall show that any such 1-cycle $C$ must correspond to a 1-sweepout under $\Xi$ in the Almgren-Pitts sense, so that $\Xi|_C$ cannot be deformed into any $r$-ball if $r$ is small enough.

\subsection{Organization} In \S \ref{sect:prelim}, we recall some preliminary results from our previous works \cite{chuLi2024fiveTori,chuLiWang2025optimalFamily}. A crucial result needed is    \cite[Theorem 1.4]{chuLiWang2025optimalFamily}. Based on that,  in \S \ref{sect:enuMinMax} we prove the enumerative min-max theorem, Theorem \ref{thm:enumerMinMax}. In \S \ref{sect:4Genus2}, we apply this theorem to construct 4 minimal surfaces of genus 2 in 3-spheres of positive Ricci curvature, proving Theorem \ref{thm:main}. Then the three remaining sections contain the proof of some results used in \S \ref{sect:4Genus2}. Namely, in \S \ref{sect:4Genus2}, the 4 genus 2 minimal surfaces are obtained by applying the enumerative min-max theorem, using a certain 13-parameter family $\Xi:\cX\to\cS_{\leq 2}(S^3)$. The construction and the properties of this family will be studied in detail in \S \ref{sect:proofThmPsi} and \S \ref{sect:XiProper}. In \S \ref{sect:topoXi}, we examine the topology of the parameter space of $\Xi$.

\subsection*{Acknowledgment} 
We would like to thank Chi Cheuk Tsang  and Boyu Zhang for numerous  discussions on topology, and Giada Franz for the discussions on min-max theory. We also want to thank Ryan Budney, Andr\'e Neves, Daniel Stern, Shmuel Weinberger, and Xin Zhou for their interests. 
The first author was supported by the NSF grant  DMS-1928930 while in  SLMath in Fall   2024. The first and the second author was partially supported by the AMS-Simons travel grant.

\section{Preliminaries}\label{sect:prelim}
\subsection{Notations}
Throughout this paper, all simplicial complexes are assumed to be finite. Moreover,  a {\it pure} simplicial $k$-complex   is a simplicial complex $X$ in which every simplex of dimension less than $k$ is a face of some simplex in $X$ of dimension exactly $k$.

Let $(M,\bg)$ be a closed orientable Riemannian 3-manifold. Below we briefly recall  the terminologies from \cite[\S 2]{chuLiWang2025optimalFamily}.

\begin{defn}[Punctate surface]\label{def:punctate_surf} Let $M$ be a closed orientable 3-manifold.
    A closed subset $S \subset M$ is a {\it punctate surface} in $M$ provided that: 
    \begin{enumerate}
        \item The 2-dimensional Hausdorff measure $\cH^2(S)$ is finite.
        \item\label{item:punctateOrientable} There exists a (possibly empty) finite set $P \subset S$ such that $S \setminus P$ is a smooth, orientable, embedded surface.
        \item\label{item:punctateSeparate} (Separating) Let $S_{\mathrm{iso}}$ be the set of isolated points of $S$. Then the complement of $S\backslash S_{\mathrm{iso}}$ is a disjoint union of two open regions of $M$, each having $S\backslash S_{\mathrm{iso}}$ as its topological boundary.
    \end{enumerate} 
    Any such set $P$ is called a {\it punctate set} for $S$. We denote by $\cS(M)$ the set of all punctate surfaces in $M$.
\end{defn}

Recall that $\cS(M)$ can be viewed as a subset of the cycle space $\cZ_2(M; \bF; \Z_2)$. In particular, we can put on $\cS_g(M)$ the $\bF$-distance, which is the sum of the flat distance and the varifold distance.
Moreover, we can define the notion of {\it genus} for punctate surfaces as in \cite[Definition 2.5]{chuLiWang2025optimalFamily}. We denote by $\cS_g(M)\subset\cS(M)$ (resp. $\cS_{\leq g}(M)$) the subset  of elements with genus $g$ (resp. $\leq g$).

\begin{defn}[Simon--Smith family]\label{def:Simon_Smith_family}
    Let $X$ be a cubical or simplicial complex. 
    A map $\Phi: X \to \GS(M)$ is called a {\em Simon--Smith family} with parameter space $X$ if the following hold.
    \begin{enumerate}[label=\normalfont(\arabic*)]
        \item \label{item:Hausdorff_cts} The  map $x\mapsto\cH^2(\Phi(x))$ is continuous. 
        \item \label{item:closedFamily} (Closedness) The family $\Phi$ is ``closed" in the sense that 
        $\{(x,p)\in X\x M: p\in \Phi(x)\}$
        is a closed subset of $X \x M$. 
        \item \label{item:SingPointsUpperBound} ($C^\infty$-continuity away from singularities) For each $x\in X$, we can choose a finite set $P_\Phi(x)\subset \Phi(x)$ such that:
        \begin{itemize}
            \item $\Phi(x)\backslash P_\Phi(x)$ is a smooth embedded orientable surface, i.e.  $P_\Phi(x)$ is a   punctate set for $\Phi(x)$.
            \item For any $x_0\in X$  and within any open set $U\subset\subset M \backslash P_\Phi(x_0)$, $\Phi(x)\to \Phi(x_0)$ smoothly whenever $x\to x_0$.
            \item $\displaystyle N(P_\Phi) := \sup_{x\in X} |P_\Phi(x)|<\infty.$
        \end{itemize}
    \end{enumerate}
\end{defn}

It follows from \cite[\S 2]{chuLi2024fiveTori} that every Simon--Smith family is $\bF$-continuous. 
 
The notion of {\it pinch-off process} will also be important. Roughly speaking, it refers to a family $\{\Gamma(t)\}_{t\in[a,b]}$ of elements in $\cS(M)$ given by any combination of isotopy, neck-pinches, and the shrinking of some connected components into points. The rigorous definition is stated in \cite[\S 2]{chuLiWang2025optimalFamily}. Now, let $\Phi,\Phi':X\to\cS(M)$ be Simon--Smith families. Suppose we have a Simon--Smith family $H:[0,1]\x X\to \cS (M)$ such that:
    \begin{itemize}
        \item  $H(0, \cdot)=\Phi$ and $H(1, \cdot)=\Phi'$.
        \item For each $x\in X$, $t\mapsto H(t,x)$ is a pinch-off process.
    \end{itemize}
Then   $H$ is called a {\it deformation via pinch-off processes} from $\Phi$ to $\Phi'$.

\subsection{Preliminary results} First, for readers' convenience, we recall Theorem 1.4 in \cite{chuLiWang2025optimalFamily}.

\begin{thm}[Existence of a topologically optimal family]\label{thm:repetitiveMinMax}
    Let $(M,\mathbf g)$ be a closed orientable Riemannian $3$-manifold of positive Ricci curvature, and $\Phi:X \to \mathcal{S}(M)$ be a Simon--Smith family of genus $\leq g$ with $g\geq 1$, and the min-max width $L:=\bL(\Lambda(\Phi))>0$. Assume Condition \ref{cond:A}  (which automatically holds if $\bg$ is a generic metric with positive Ricci curvature).
    Then for any $r > 0$, there exist:
    \begin{itemize}
        \item a deformation via pinch-off processes, $H:[0,1]\x X\to\cS_{\leq g}(M)$,
        \item  distinct, orientable, multiplicity one, embedded  minimal surfaces $\Gamma_1,\Gamma_2, \dots,\Gamma_l$ of genus $\leq g$ and area  $\leq L$,
        \item subcomplexes $D_0,D_1, \dots,D_l$ of $X$ (after refinement) that cover $X$,
    \end{itemize}
such that: 
    \begin{enumerate}
        \item \label{item:pinchOffH}$H(0,\cdot)=\Phi$.
        \item \label{item:optimal}  The family $\Phi':= H(1,\cdot)$ is such  that $\Phi'|_{D_i}$  is of genus $\leq \fg(\Gamma_i)$ for every $i=1, \dots,l$, and $\Phi'|_{D_0}$ is of genus $0$.
        \item \label{item:ti} For each $i=1,\dots,l$, there is some smooth function $f_i:D_i\to[0,1]$ such that for every $x\in D_i$, $$H(f_i(x),x)\in \cS_{\fg(\Gamma_i)}(M)\cap \bB^\bF_r(\Gamma_i).$$ 
    \end{enumerate}
\end{thm}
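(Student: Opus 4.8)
The plan is to prove this as a Lusternik--Schnirelmann-type (``repetitive'') refinement of Simon--Smith min-max theory: iterate a single min-max round while keeping control of the topology of the slices. I would take as inputs the regularity theory of Simon--Smith min-max, so that min-max limits are smooth embedded minimal surfaces whose genus is bounded by that of the sweepout \cite{Smith82,CD03,DP10,Ket19}; the multiplicity-one theorem and optimal regularity under positive Ricci curvature and Condition~\ref{cond:A} \cite{WangZhou23FourMinimalSpheres,SarnataroStryker2023Optimal}; and the ``pull-tight plus Lusternik--Schnirelmann'' machinery for extracting parameter subcomplexes from a sweepout that detects a prescribed width \cite{MN21}. \emph{Step 1 (one min-max round).} Working in the saturation $\Lambda(\Phi)$, pull-tight produces a stationary integral varifold of mass $L=\bL(\Lambda(\Phi))$; by the multiplicity-one and regularity inputs it is the varifold of a single smooth embedded minimal surface $\Gamma_1$ of genus $\le g$ and area $L$, which is orientable because, being a multiplicity-one limit of separating surfaces, it is $\Z_2$-nullhomologous and hence two-sided. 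The Lusternik--Schnirelmann step then yields, for the given $r>0$, a deformation of $\Phi$ \emph{via pinch-off processes}, a subcomplex $D_1\subset X$ (after subdivision), and a smooth $f_1\colon D_1\to[0,1]$, such that on $D_1$ the deformed family passes through $\bB^\bF_r(\Gamma_1)$ with $H(f_1(x),x)\in\cS_{\fg(\Gamma_1)}(M)\cap\bB^\bF_r(\Gamma_1)$, while on $X_1:=\overline{X\setminus D_1}$ the deformed family has min-max width strictly below $L$. It is essential that every deformation is a pinch-off process: these never raise genus, so the family stays in $\cS_{\le g}(M)$, and the critical slices near $\Gamma_1$ inherit genus $\le\fg(\Gamma_1)$ through their pinch-off history.

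\emph{Step 2 (iteration and assembly).} I would then apply the same round to the restricted family on $X_1$, obtaining $\Gamma_2$ of area $<L$, a subcomplex $D_2\subset X_1$, a function $f_2$, and a leftover family on $X_2:=\overline{X_1\setminus D_2}$ of still smaller width; and continue. The iteration terminates after finitely many rounds, say $l$, because each round strictly lowers a combinatorial complexity of the uncontrolled part (its dimension, or its number of top-dimensional cells), following the recursion of \cite{MN21}; when it stops, the leftover family on $X_l$ has vanishing width, which (otherwise the iteration would continue) forces it to be deformable via pinch-off processes into $\cS_0(M)$. Set $D_0:=X_l$, so that $D_0\cup D_1\cup\dots\cup D_l=X$. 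Concatenating the successive round deformations, each extended by the identity off its support and interleaved so as to agree on overlaps, produces a single deformation via pinch-off processes $H\colon[0,1]\x X\to\cS_{\le g}(M)$ with $H(0,\cdot)=\Phi$; the endpoint $\Phi':=H(1,\cdot)$ then has genus $\le\fg(\Gamma_i)$ on $D_i$ for $i\ge 1$ and genus $0$ on $D_0$, the $\Gamma_i$ are distinct with area $\le L$, and the $f_i$ are as required, giving \ref{item:pinchOffH}--\ref{item:ti}.

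\emph{Main obstacle.} The combinatorics of Step 2, though intricate, follows established templates \cite{MN21}. The genuine difficulties are geometric, and I expect two to dominate. First and foremost: every min-max deformation must be realized as an honest pinch-off process in the punctate-surface category, with the genus bound preserved throughout, so that ``$\bF$-close to $\Gamma_i$'' can be upgraded to ``genus $\le\fg(\Gamma_i)$''; this Simon--Smith/Ketover-style topology bookkeeping is the heart of the matter. Second: one needs multiplicity one for \emph{every} $\Gamma_i$, not merely the first, so that $\fg(\Gamma_i)$ is well defined and $\le g$ — and this is exactly where positive Ricci curvature and Condition~\ref{cond:A} enter, through \cite{WangZhou23FourMinimalSpheres,SarnataroStryker2023Optimal}. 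The complete argument is carried out in \cite{chuLiWang2025optimalFamily}.
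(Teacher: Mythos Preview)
Your proposal is a reasonable sketch, and you correctly identify at the end that the complete argument is in \cite{chuLiWang2025optimalFamily}. In fact, the present paper does not prove this theorem at all: it is stated in \S\ref{sect:prelim} purely as a recollection of \cite[Theorem~1.4]{chuLiWang2025optimalFamily}, so there is no proof in this paper to compare your outline against.
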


Below is the definition of Condition \ref{cond:A}. 
Let $\cO_{g,\leq L}(M,\bg)$ (resp. $\cN_{g,\leq L}(M,\bg)$) be the set of all orientable (resp. non-orientable) embedded minimal surfaces in $(M,\bg)$ with genus $g$ and area $\leq L$. Analogously, we define the sets $\cO_{\leq g,\leq L}(M,\bg)$ and $\cN_{\leq g,\leq L}(M,\bg)$ in which the genus of the minimal surfaces concerned is $\leq g$. 

\begin{cond}\label{cond:A}
Given a closed Riemannian 3-manifold $(M,\bg)$,  a positive integer $g$, and a real number $L>0$, we say that {\it Condition \ref{cond:A}} holds if the following holds:

    \begin{itemize}
    \item The set $\cO_{\leq g,\leq L}(M,\bg)$ is finite.
    \item Every element in $\cO_{\leq g-1,\leq L}(M,\bg)$ is non-degenerate.
    \item For  every  $\Gamma\in \cO_{g,\leq L}(M,\bg)$, numbers in the set 
        \begin{align*}
            &\{\area(\Gamma'):\Gamma'\in\cO_{\leq g-1,\leq L}(M,\bg)\cup\cN_{\leq g+1,\leq L}(M,\bg)
            \}
            \cup\{\area(\Gamma)\}
        \end{align*}
        are linearly independent over $\Z$. 
    \end{itemize}
\end{cond}
We remark that, for a generic Ricci-positive metric $\bg$, Condition \ref{cond:A} must hold for all $g$ and $L$. 
 
To guarantee that a metric satisfies the nondegeneracy assumption and the linear independence assumption of the previous theorem, the perturbation theorem below will be instrumental.  
\begin{prop} \label{prop:perturbMetric}
    Given $g_0 \in \mathbb{N}$ and $L \in \mathbb{R}^+$, suppose that $(M, \bg)$ is a  Riemannian $3$-manifold with $\# \cO_{g_0,\leq L}(M,\bg) < \infty$ and it admits no degenerate stable embedded minimal surface. Then for any $\varepsilon \in \mathbb{R}^+$ and $g_1 > g_0$, there exists $\bg' \leq \bg$ satisfying:
    \begin{enumerate}[label=\normalfont(\arabic*)]
        \item \label{item:sameNumber_inproof}  $\# \cO_{g_0, \leq L}(M, \bg') = \# \cO_{g_0, \leq L}(M, \bg)$.
        \item $\# \cO_{\leq g_1, \leq L}(M, \bg') < \infty $, $\# \cN_{\leq g_1, \leq L}(M, \bg') < \infty$, and the set 
        \begin{equation}\label{eqn:def_K_g'}
          \cK_{\bg'}:= \left(\cO_{\leq g_1, \leq L}(M, \bg') \cup   \cN_{\leq g_1, \leq L}(M, \bg')\right) \setminus \cO_{g_0,\leq L}(M,\bg')
        \end{equation}
        consists of strongly non-degenerate minimal surfaces.
        \item \label{item:linearIndep_inproof} For any $\Sigma\in\cO_{g_0,\leq L}(M,\bg')$,
        \[
            \left\{\area_{\bg'}(\Sigma'): \Sigma' \in \cK_{\bg'} \right\}\cup\{\area_{\bg'}(\Sigma)\}
        \]
        forms a $\mathbb{Z}$-linearly independent set.
    \end{enumerate}
\end{prop}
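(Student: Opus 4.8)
The plan is to build $\bg'$ as a decreasing limit of a sequence of small perturbations, each one handling a finite ``window'' of potential minimal surfaces, and then invoke a Baire/genericity argument within the space of conformal or metric perturbations that do not create new stable minimal surfaces below area $L$. First I would fix attention on the (finite) set $\cO_{g_0,\leq L}(M,\bg)$ and choose a neighborhood structure: since each element of $\cO_{g_0,\leq L}(M,\bg)$ is a compact embedded minimal surface and, by hypothesis, there are no degenerate stable embedded minimal surfaces, I can pick pairwise disjoint tubular neighborhoods $U_1,\dots,U_N$ of these surfaces together with a complementary region. The key point for property \ref{item:sameNumber_inproof} is a \emph{persistence and non-creation} statement: if the perturbation $\bg'\leq\bg$ is $C^\infty$-small and supported away from $\bigcup U_j$ (or chosen to agree with $\bg$ to high order along each $\Sigma\in\cO_{g_0,\leq L}$), then (i) each $\Sigma$ remains minimal, so $\#\cO_{g_0,\leq L}(M,\bg')\geq\#\cO_{g_0,\leq L}(M,\bg)$, and (ii) no new genus-$g_0$, area-$\leq L$ minimal surface appears. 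For (ii) I would argue by contradiction using compactness for minimal surfaces of bounded genus and area (Choi--Schoen / White-type compactness): a sequence of counterexamples with $\bg_i\to\bg$ would subconverge to a minimal surface for $\bg$ that is genus $g_0$, area $\leq L$, hence one of the finitely many known ones, and then strict stability/non-degeneracy of the limit (guaranteed once we are in the generic regime, or handled by a preliminary perturbation making the finitely many surfaces in $\cO_{g_0,\leq L}$ non-degenerate without changing their count) forces the sequence to eventually coincide with it — contradiction. The condition $\bg'\leq\bg$ is arranged by taking the perturbation to be a conformal contraction $\bg'=e^{2\phi}\bg$ with $\phi\leq0$, or simply by scaling; note that decreasing the metric does not decrease areas in a way that helps, so I would instead use that $\bg'\leq\bg$ pointwise still allows the area functional to be perturbed freely on the finitely relevant surfaces.

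Next, for property (2), I would apply the standard bumpy-metric machinery of B.~White \cite{Whi91} (and its refinements) relativized to a subset of metrics: within the Banach manifold of perturbations vanishing to infinite order along $\cO_{g_0,\leq L}(M,\bg)$ and satisfying $\leq\bg$, the subset of metrics for which every minimal surface of genus $\leq g_1$ and area $\leq L$ \emph{other than} those in $\cO_{g_0,\leq L}$ is strongly non-degenerate is residual; moreover, for such metrics the compactness theorem gives finiteness of $\cO_{\leq g_1,\leq L}$ and $\cN_{\leq g_1,\leq L}$ (a non-degenerate minimal surface is isolated, and a bounded-area bounded-genus family of them is compact, hence finite). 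This yields $\bg'$ with $\cK_{\bg'}$ consisting of strongly non-degenerate surfaces, and since the perturbation fixed $\cO_{g_0,\leq L}$ to infinite order along those surfaces, property \ref{item:sameNumber_inproof} is preserved provided the no-creation argument above is also run \emph{along the residual set} — i.e.\ I must intersect with the (open, dense) condition that no new genus-$g_0$ area-$\leq L$ minimal surface is created, which is where smallness of the perturbation enters.

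Finally, for the $\Z$-linear independence in \ref{item:linearIndep_inproof}, I would perturb once more. For each $\Sigma\in\cO_{g_0,\leq L}(M,\bg')$ the area $\area_{\bg'}(\Sigma)$ is a real number; I want $\{\area_{\bg'}(\Sigma')\}\cup\{\area_{\bg'}(\Sigma)\}$ to be $\Z$-linearly independent. Since there are only finitely many surfaces in $\cK_{\bg'}\cup\{\Sigma\}$ — say with areas $A_1,\dots,A_M$ — the set of metrics for which some nontrivial integer relation $\sum n_j A_j=0$ holds is a countable union of closed sets of codimension $\geq1$ in the perturbation space: indeed, moving the metric locally near a chosen $\Sigma\in\cO_{g_0,\leq L}$ by a conformal factor changes $\area(\Sigma)$ while one can, by localizing the support, leave the other areas essentially fixed, so the gradient of $A_j-\sum_{j'\neq j}\frac{n_{j'}}{n_j}A_{j'}$ with respect to the perturbation is nonzero; hence each relation is avoided by a generic small perturbation, and by Baire we avoid all of them simultaneously. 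One must check this last perturbation can be chosen to also respect $\bg'\leq\bg$ and to preserve properties (1) and (2) — this follows by taking it small enough relative to the gaps and non-degeneracy constants established in the previous step.

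The main obstacle I anticipate is the interaction between the three requirements: the perturbation must simultaneously (a) not create new genus-$g_0$ minimal surfaces (forcing smallness and/or support conditions away from $\cO_{g_0,\leq L}$), (b) make a possibly large but finite family of other minimal surfaces non-degenerate (a genericity statement that wants freedom to perturb everywhere), and (c) achieve an arithmetic independence of finitely many area values (another genericity statement). Reconciling (a) with (b)–(c) is the delicate part: the cleanest route is to first perform the no-creation-friendly perturbation fixing $\cO_{g_0,\leq L}$ to infinite order and stabilizing those surfaces, \emph{then} work entirely within the open set of metrics $C^\infty$-close to that one where the count $\#\cO_{g_0,\leq L}$ is locally constant, and run White's transversality and the arithmetic-genericity argument inside that open set. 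The quantitative compactness input — that bounded genus and bounded area minimal surfaces in $C^0$-close metrics cannot escape to infinity in moduli, nor degenerate unexpectedly — is what makes the ``local constancy of the count'' rigorous, and citing the precise compactness theorem (as used already in \cite{chuLiWang2025optimalFamily}) is essential here.
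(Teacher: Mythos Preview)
Your overall architecture matches the paper's: isolate $\cO_{g_0,\leq L}(M,\bg)$ in a neighborhood $U$, perturb the metric only on $M\setminus U$, run White's bumpy-metric transversality in that restricted space, then adjust areas for $\Z$-independence. However, two steps in your execution have genuine gaps.

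\textbf{The no-creation argument does not use the actual hypothesis.} You assume a hypothetical new genus-$g_0$ minimal surface $\Sigma_j$ subconverges to some $\Sigma\in\cO_{g_0,\leq L}(M,\bg)$ and then invoke ``non-degeneracy of the limit'' to force $\Sigma_j=\Sigma$ eventually. But the surfaces in $\cO_{g_0,\leq L}(M,\bg)$ are \emph{not} assumed non-degenerate --- the hypothesis is only that there is no degenerate \emph{stable} minimal surface --- and your parenthetical fix (``a preliminary perturbation making the finitely many surfaces in $\cO_{g_0,\leq L}$ non-degenerate without changing their count'') is circular: that is exactly the kind of count-preservation you are trying to prove. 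More seriously, the varifold limit may occur with multiplicity $k\geq 2$, in which case the limit surface need not have genus $g_0$ at all, and non-degeneracy would not force coincidence anyway. The paper's argument (Lemma~\ref{lem:other_min_surf_far} and the lemma following it) instead uses Sharp~\cite{Sha17}: multiplicity $k\geq 2$ convergence forces the limit to be degenerate stable, which is exactly what the hypothesis forbids; multiplicity one then gives smooth convergence via Allard. This is where the ``no degenerate stable'' assumption is actually consumed.

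\textbf{The linear-independence step contradicts your localization.} Having fixed $\bg'|_U=\bg|_U$ so that $\cO_{g_0,\leq L}$ is preserved, you then propose to ``move the metric locally near a chosen $\Sigma\in\cO_{g_0,\leq L}$'' to vary $\area(\Sigma)$. That undoes the support condition and re-opens both the minimality of $\Sigma$ and the no-creation problem. The paper keeps $\area_{\bg'}(\Sigma)$ \emph{fixed} (since $\bg'=\bg$ on $U$) and instead perturbs the areas of the finitely many surfaces in $\cK_{\bg'}$, all of which lie outside $U$ by the first step; the countable family of $\Z$-relations is then broken by a generic perturbation supported on $M\setminus U$, citing \cite[Lemma~6.6]{chuLi2024fiveTori}.
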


This proposition generalizes \cite[Proposition 6.1]{chuLi2024fiveTori} to arbitrary genus, and we will prove it in Appendix \ref{sect:proofPerturbMetric}. Here, a minimal surface is called \textit{strongly non-degenerate} if its double cover is non-degenerate, and $\bg'\leq\bg$ means that $\bg-\bg'$ is non-negative definite pointwise. Note, the conclusion of ``strong" non-degeneracy is not needed in this paper (mere non-degeneracy suffices), but we record it nonetheless for possible future usage.
    
\section{Enumerative min-max theorem}\label{sect:enuMinMax}
    In this section, we prove Theorem \ref{thm:enumerMinMax}. We use $\Z_2$-coefficients throughout this section unless specified otherwise.

\subsection{Preparation}
We first state a lemma explaining why the cap product in Theorem \ref{thm:enumerMinMax} is well-defined.

\begin{lem}\label{lem:cupProdWellDef}
    Given a simplicial  complex $X$, a subcomplex $Z\subset X$, and integers $k\geq l\geq 0$, there is a well-defined cap product 
    $$H_k(X,Z)\x H^l(X\backslash Z)\to H_{k-l}(X,Z).$$
\end{lem}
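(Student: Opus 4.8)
The plan is to reduce the assertion to the ordinary relative cap product of a CW pair, the only obstruction being that $X\setminus Z$ is an open subset of $|X|$ rather than a subcomplex. Since $Z$ is a subcomplex, after one barycentric subdivision of $X$ the (subdivided) $Z$ is a full subcomplex, and standard regular-neighborhood theory provides a closed regular neighborhood: a subcomplex $N\subseteq X$ with $Z$ a strong deformation retract of $N$, and with the ``complementary'' subcomplex $X':=\overline{|X|\setminus N}$ satisfying $X=N\cup X'$, $N\cap X'=:\partial N$ a subcomplex, and $X'$ a strong deformation retract of $X\setminus Z$ (the complement of $Z$ pushes away from $Z$ onto $X'$).

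With such an $N$ fixed, the inclusions induce isomorphisms $H^l(X\setminus Z)\cong H^l(X')$ (deformation retraction), $H_j(X,Z)\cong H_j(X,N)$ (the map of pairs $(X,Z)\hookrightarrow(X,N)$ is an isomorphism on relative homology, by the five lemma applied to the long exact sequences of the two pairs together with $H_*(Z)\cong H_*(N)$), and $H_j(X',\partial N)\cong H_j(X,N)$ (CW excision: collapsing $N$ identifies $X/N$ with $X'/\partial N$), for every $j$. I would then define $w\frown\lambda\in H_{k-l}(X,Z)$ to be the image of $w'\frown\lambda'\in H_{k-l}(X',\partial N)$ under the composite $H_{k-l}(X',\partial N)\cong H_{k-l}(X,N)\cong H_{k-l}(X,Z)$, where $w'\in H_k(X',\partial N)$ and $\lambda'\in H^l(X')$ are the classes corresponding to $w$ and $\lambda$, and $\frown$ on the right is the usual relative cap product of the CW pair $(X',\partial N)$.

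It remains to check independence of the choice of $N$. Any two such neighborhoods contain a common third one of the same type (e.g. a regular neighborhood of $Z$ sitting inside both of their interiors), so it suffices to compare nested $Z\subseteq N_2\subseteq N_1$; then the inclusions of the associated data into one another, together with the naturality of the relative cap product and of the excision and retraction isomorphisms, show the two composites coincide, so the cap product is well defined. I would also record at this stage --- since it is used in the proof of Theorem \ref{thm:enumerMinMax} --- that the resulting cap product is natural with respect to inclusions of subcomplexes $Y\subseteq X$ (with $Y\cap Z$ and $Y\setminus(Y\cap Z)$) and satisfies $w\frown(\lambda\smile\mu)=(w\frown\lambda)\frown\mu$, both inherited from the corresponding properties of the ordinary relative cap product via the same natural identifications.

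The argument is entirely routine; the only genuine point is the first paragraph --- one cannot simply write down a naive simplicial cochain-level formula, because the back face of a simplex not contained in $Z$ may nevertheless lie in $Z$ --- and the only real bookkeeping is the independence-of-$N$ verification.
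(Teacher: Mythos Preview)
Your argument is correct and follows essentially the same route as the paper: replace $Z$ by a regular neighborhood $N$ (the paper uses an open $U(Z)$), identify $H_*(X,Z)$ with $H_*(X',\partial N)$ via homotopy equivalence and excision, and invoke the standard relative cap product on the pair $(X',\partial N)$ together with $H^*(X')\cong H^*(X\setminus Z)$. Your write-up is in fact more thorough than the paper's, which omits the independence-of-neighborhood check and the naturality/associativity remarks you record.
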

\begin{proof} By the definition of simplicial subcomplex, we can take some  open neighborhood $U(Z)$ of $Z$ such that 
the pairs $(X,Z)$ and $(X,  \overline{U(Z)} )$ are homotopy equivalent. So   $H_k(X,Z)=H_k(X,  \overline{U(Z)} ),$ which is equal to $H_k(X\backslash U(Z),\partial U(Z))$ by excision. Thus, since there is a well-defined cap product
    $$H_k(X\backslash U(Z),\partial U(Z))\x H^l(X\backslash U(Z))\to H_{k-l}(X\backslash U(Z),\partial U(Z))$$
    (see for example Hatcher \cite[p.240]{hatcher2002book}),
    there is also a well-define cap product 
    $$H_k(X,Z)\x H^l(X\backslash U(Z))\to H_{k-l}(X,Z).$$
    But since $X\backslash U(Z)$ and $X\backslash Z$ are homotopy equivalent, the lemma follows.
\end{proof}

Now, let $(M,\bg)$ be the closed orientable 3-manifold with positive Ricci curvature given in  Theorem \ref{thm:enumerMinMax}, and we begin to prove that $(M,\bg)$ has at least $p+1$ orientable minimal surfaces of genus $g$ and area  $\leq L_0 := \displaystyle\max_{x\in X}\area_\bg\Phi(x) $. Without loss of generality, we can assume that $(M,\bg)$ has only finitely many such minimal surfaces, i.e. $\# \cO_{g,\leq L_0}(M,\bg)<\infty$.

\subsection{Perturbation of metric} 
Since $(M,\bg)$ is Ricci-positive, it contains no degenerate stable minimal surfaces. Hence, we apply Proposition \ref{prop:perturbMetric} to $(M,\bg)$ with $g_0:=g,g_1:=g+1,\epsilon:=1$, and $L:=L_0$, and obtain a  metric   $\bg'\leq \bg$ satisfying the properties therein. In particular, $$L_0':=\sup_x\area_{\bg'}\Phi(x)\leq \sup_x\area_{\bg}\Phi(x)\leq L_0$$
as $\bg'\leq \bg$, and $\#\cO_{g, L_0}(M,\bg')=\#\cO_{g, L_0}(M,\bg)$. As a result, to prove   Theorem \ref{thm:repetitiveMinMax},  it suffices to show $\# \cO_{g,\leq L'_0}(M,\bg') \geq p + 1$.

\subsection{Applying the optimal family}\label{sect:applyOptimalFamily}  
Applying Theorem \ref{thm:repetitiveMinMax} to  $(M,\bg')$,  $g$, $\Phi$, and some $r>0$ to be specified, we obtain the optimal family $\Phi'$,  subcomplexes $D_0,D_1,...,D_l$, and minimal surfaces $\Gamma_1,...,\Gamma_l$ as stated. Suppose for the sake of contradiction there are only $\leq p$ minimal surfaces of genus $g$ and area $\leq L_0$. 
We relabel $\Gamma_1,...,\Gamma_l$ such that $\Gamma_1,\dots,\Gamma_m$ have genus $g$ and $\Gamma_{m+1},\dots,\Gamma_l$ have genus $<g$, where $m\leq p$ by assumption. Of course, we should relabel the subcomplexes $D_1,...,D_l$ accordingly.

Recall that we are given some homology class $w\in H_k(X,Z)$ and $p$ cohomology classes 
    $\lambda_i\in H^{k_i}(X\backslash Z),i=1,\dots,p,$
    such that the statements (1) and (2) in Theorem \ref{thm:enumerMinMax} hold: Note these two statements are  independent of the ambient  metric.

\begin{prop}\label{lem:cupProdZero}
    For each $i=1,\dots,p$ and $j=1,\dots,m$, we have  $\lambda_i|_{D_j\backslash Z} =0.$ Thus, $$(\lambda_1\smile\dots\smile\lambda_p)|_{(D_1\cup\dots\cup D_m)\backslash Z}=0.$$
\end{prop}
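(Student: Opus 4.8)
The plan is to argue by contradiction, playing item~\eqref{item:ti} of Theorem~\ref{thm:repetitiveMinMax} — which says that over each $D_j$ the family $\Phi$ can be pushed, via a partial deformation by pinch-off processes, into an arbitrarily small $\bF$-ball about $\Gamma_j$ — against hypothesis~\eqref{item:enuMinMaxC} of Theorem~\ref{thm:enumerMinMax}. At the outset I would replace $X\setminus Z$, the $D_j\setminus Z$, and all relevant subsets by the homotopy-equivalent finite subcomplexes $X\setminus\overline{U(Z)}$, $D_j\setminus\overline{U(Z)}$ of a subdivision of $X$, exactly as in the proof of Lemma~\ref{lem:cupProdWellDef}; this makes simplicial and singular (co)homology agree and lets me work throughout with finite complexes. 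Now fix $i\leq p$ and $j\leq m$ and suppose, for contradiction, that $\lambda_i|_{D_j\setminus Z}\neq 0$. Since we work over the field $\Z_2$, the universal coefficient theorem makes the evaluation pairing faithful, so there is a $k_i$-cycle $C$ supported in $D_j\setminus Z$ with $\langle\lambda_i|_{D_j\setminus Z},[C]\rangle\neq 0$; by naturality of the pairing under $D_j\setminus Z\hookrightarrow X\setminus Z$, the same $C$, regarded as a cycle in $X\setminus Z$, satisfies $\lambda_i([C])\neq 0$.

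Next I would build the forbidden deformation. By item~\eqref{item:ti} of Theorem~\ref{thm:repetitiveMinMax} there is a smooth $f_j\colon D_j\to[0,1]$ with $H(f_j(x),x)\in\cS_g(M)\cap\bB^\bF_r(\Gamma_j)$ for all $x\in D_j$ (using $\fg(\Gamma_j)=g$). Since $C\subset D_j$, the map $\widetilde H(s,x):=H(s\,f_j(x),x)$, $s\in[0,1]$, $x\in C$, is a Simon--Smith family — it is $H$ precomposed with the continuous map $(s,x)\mapsto(sf_j(x),x)$ — and for each fixed $x$ the curve $s\mapsto\widetilde H(s,x)$ is a truncated, reparametrized piece of the pinch-off process $t\mapsto H(t,x)$, hence itself a pinch-off process; as $H(0,\cdot)=\Phi$, this $\widetilde H$ is a deformation via pinch-off processes from $\Phi|_C$ to the family $x\mapsto H(f_j(x),x)$ (for $x\in C$), which maps $C$ into $\bB^\bF_r(\Gamma_j)$. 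As $\Gamma_j$ is an orientable embedded smooth genus $g$ surface, this contradicts hypothesis~\eqref{item:enuMinMaxC} of Theorem~\ref{thm:enumerMinMax} with $S=\Gamma_j$ — provided $r$ was chosen small enough relative to $\Gamma_j$.

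The one point requiring care is that $r$ must be fixed \emph{before} Theorem~\ref{thm:repetitiveMinMax} is invoked in \S\ref{sect:applyOptimalFamily}, whereas the surfaces $\Gamma_j$ appear only afterwards. This is harmless: every genus $g$ minimal surface the theorem can output lies in the fixed finite set $\cO_{g,\leq L'_0}(M,\bg')$, so for each $\Gamma$ in this set hypothesis~\eqref{item:enuMinMaxC} supplies an $r(\Gamma)>0$; and since $\bg'$ and $\bg$ are bi-Lipschitz on the compact manifold $M$, so that their $\bF$-distances are uniformly comparable, I take $r>0$ small enough that the $\bg'$-$\bF$-ball of radius $r$ about any such $\Gamma$ lies inside the $\bg$-$\bF$-ball of radius $r(\Gamma)$, and then run Theorem~\ref{thm:repetitiveMinMax} with this $r$. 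With that choice the contradiction above is genuine, and it follows that $\lambda_i|_{D_j\setminus Z}=0$ for all $i=1,\dots,p$ and $j=1,\dots,m$.

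It remains to deduce $(\lambda_1\smile\dots\smile\lambda_p)|_{A}=0$, where $A:=(D_1\cup\dots\cup D_m)\setminus Z=\bigcup_{j=1}^m A_j$ with $A_j:=D_j\setminus Z$, each a subcomplex in the model above. For $i=1,\dots,m$, since $\lambda_i|_{A_i}=0$ the long exact sequence of the pair $(A,A_i)$ lets me lift $\lambda_i|_A$ to a class $\widetilde\lambda_i\in H^{k_i}(A,A_i)$. The relative cup product places $\widetilde\lambda_1\smile\dots\smile\widetilde\lambda_m$ in $H^{k_1+\dots+k_m}(A,\,A_1\cup\dots\cup A_m)=H^{*}(A,A)=0$, and it maps to $\lambda_1|_A\smile\dots\smile\lambda_m|_A$ under $H^{*}(A,A)\to H^{*}(A)$; hence $\lambda_1|_A\smile\dots\smile\lambda_m|_A=0$, and cupping with the remaining $\lambda_{m+1}|_A,\dots,\lambda_p|_A$ (here $m\leq p$ is used) gives the claim. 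I do not expect a genuine obstacle: Theorem~\ref{thm:repetitiveMinMax} carries the analytic weight and the rest is bookkeeping; the step most in need of attention is the one just discussed — pinning down $r$ before \S\ref{sect:applyOptimalFamily}, together with the $\bF$-distance comparison between $\bg$ and $\bg'$ — along with the routine check, from the definition in \cite{chuLiWang2025optimalFamily}, that a truncated, monotonically reparametrized pinch-off process is again a pinch-off process.
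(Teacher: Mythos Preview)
Your proof is correct and follows essentially the same approach as the paper: argue by contradiction, produce a $k_i$-cycle $C\subset D_j\setminus Z$ detecting $\lambda_i$, use item~\eqref{item:ti} of Theorem~\ref{thm:repetitiveMinMax} to deform $\Phi|_C$ into $\bB^\bF_r(\Gamma_j)$, and contradict hypothesis~\eqref{item:enuMinMaxC}; then obtain the cup product statement by standard relative-cohomology bookkeeping. You are in fact more careful than the paper on two points it leaves implicit --- that $r$ must be chosen \emph{before} invoking Theorem~\ref{thm:repetitiveMinMax} (handled by ranging $S$ over the finite set $\cO_{g,\leq L'_0}(M,\bg')$) and the comparability of the $\bF$-distances for $\bg$ and $\bg'$ --- and your explicit $\widetilde H(s,x)=H(sf_j(x),x)$ and the lift-to-$H^*(A,A_i)$ argument for the cup product are exactly the details the paper summarizes as ``elementary algebraic topology.''
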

\begin{proof}
Suppose by contradiction $\lambda_i|_{D_j\backslash Z}\ne 0$ for some $i=1,\dots,p$ and $j=1,\dots,m$. Then there exists some $k_i$-cycle $C\subset D_i\backslash Z$ such that $\lambda_i([C])\ne 0$.  Recall, from item (3) of Theorem \ref{thm:repetitiveMinMax}, that $\Phi|_{D_i}$ can be deformed via pinch-off processes to become some map into  $\cS_{g}(M)\cap \bB^{\bF}_{r}(\Gamma_j)$, and thus so can    $\Phi|_{C}$. Hence, if we take $r$ to be sufficiently small (depending on $\Gamma_1,...,\Gamma_m$), then this contradicts with item (2) in the assumption of Theorem \ref{thm:enumerMinMax}, as desired. 
The second statement of Proposition \ref{lem:cupProdZero}   then follows  by elementary algebraic topology.
\end{proof}

We can now easily finish the proof of Theorem \ref{thm:enumerMinMax}. Denote $D:= D_0\cup D_{m+1}\cup D_{m+2}\cup\dots\cup D_l$. To obtain a contradiction, it suffices to prove:
\begin{prop}\label{prop:splitDomain}
 Consider the inclusion map $i:(D,D\cap Z)\to (X,Z)$, and the induced map $$i_*:H_{k-k_1-\dots-k_p}(D,D\cap Z)\to H_{k-k_1-\dots-k_p}(X,Z).$$ There exists some relative subcomplex $(Y,\partial Y)\subset ( D,D\cap Z)$
        such that 
        $$i_*([  Y])= w\frown(\lambda_1\smile\dots\smile\lambda_p)\in  H_{k-k_1-\dots-k_p}(X, Z).$$
\end{prop}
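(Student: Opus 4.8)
The plan is to show that the class $w\frown(\lambda_1\smile\dots\smile\lambda_p)$ is already carried by the subcomplex $D$, by feeding the cup-product vanishing of Proposition~\ref{lem:cupProdZero} into a $\Z_2$-coefficient, chain-level refinement of the cap product of Lemma~\ref{lem:cupProdWellDef} that keeps track of supports. Write $l:=k_1+\dots+k_p$, $\Lambda:=\lambda_1\smile\dots\smile\lambda_p\in H^l(X\backslash Z)$, and $A:=D_1\cup\dots\cup D_m$; since $D_0,\dots,D_l$ cover $X$ and $D=D_0\cup D_{m+1}\cup\dots\cup D_l$, the subcomplexes $D$ and $A$ together cover $X$. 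Proposition~\ref{lem:cupProdZero} gives $\Lambda|_{A\backslash Z}=0$, so the long exact sequence of the pair $(X\backslash Z,\,A\backslash Z)$ lets us lift $\Lambda$ to a relative class $\bar\Lambda\in H^l(X\backslash Z,\,A\backslash Z)$.

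Next I would work in the excision model underlying Lemma~\ref{lem:cupProdWellDef}. After passing to a common subdivision in which $Z$, all the $D_i$, and a subcomplex $N$ deformation-retracting onto $Z$ are simplicial, set $X^\circ:=\overline{X\backslash N}$ and $\partial^+:=X^\circ\cap N$; then $(X^\circ,\partial^+)$ computes $H_*(X,Z)$, $X^\circ$ computes $H^*(X\backslash Z)$ (and $A\cap X^\circ$ computes $H^*(A\backslash Z)$), and the cap product of Lemma~\ref{lem:cupProdWellDef} is the ordinary simplicial one. Put $D^\circ:=D\cap X^\circ$ and $A^\circ:=A\cap X^\circ$, so $D^\circ\cup A^\circ=X^\circ$. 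Choose a simplicial relative cycle $\sigma\in C_k(X^\circ)$ representing $w$, so that $\partial\sigma\in C_{k-1}(\partial^+)$, and a simplicial cocycle $\phi$ representing $\bar\Lambda$, so that $\phi$ vanishes on $C_*(A^\circ)$. Since each simplex of $X^\circ$ lies in $D^\circ$ or in $A^\circ$, write $\sigma=\sigma_D+\sigma_A$ with $\sigma_D\in C_k(D^\circ)$ and $\sigma_A\in C_k(A^\circ)$. The front face of a simplex of $A^\circ$ again lies in $A^\circ$, so $\sigma_A\frown\phi=0$ and hence $\sigma\frown\phi=\sigma_D\frown\phi\in C_{k-l}(D^\circ)$. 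As $\phi$ is a cocycle, $\partial(\sigma\frown\phi)=\partial\sigma\frown\phi\in C_{k-l-1}(\partial^+)$; together with $\sigma_D\frown\phi\in C_*(D^\circ)$ this forces $\partial(\sigma_D\frown\phi)\in C_{k-l-1}(D^\circ\cap\partial^+)$. Thus $c:=\sigma_D\frown\phi$ is a relative $(k-l)$-cycle in $(D^\circ,\,D^\circ\cap\partial^+)$ whose image in $(X^\circ,\partial^+)$ represents $w\frown\Lambda$, because $\phi$ read as an absolute cocycle represents $\Lambda$.

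To conclude, I would transport $[c]$ through the homotopy equivalences to get a class $a\in H_{k-l}(D,\,D\cap Z)$ with $i_*(a)=w\frown(\lambda_1\smile\dots\smile\lambda_p)$, and then represent $a$ by a subcomplex: over $\Z_2$ a representing relative simplicial cycle $c'$ of $(D,D\cap Z)$ has all coefficients in $\{0,1\}$, so its support $Y:=\supp(c')$ is a pure $(k-l)$-dimensional subcomplex of $D$ with $Y\cap Z=:\partial Y$ absorbing $\partial c'$; setting $[Y]:=[c']$ exhibits the desired relative $(k-l)$-subcomplex $(Y,\partial Y)\subset(D,D\cap Z)\subset(X,Z)$ with $i_*[Y]=w\frown(\lambda_1\smile\dots\smile\lambda_p)$.

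The step I expect to be most delicate is the chain-level refinement: arranging the common subdivision, the neighborhood $N$, and the identifications $(X^\circ,\partial^+)\simeq(X,Z)$ and $X^\circ\simeq X\backslash Z$ so that the ``refined'' cap product $H_k(X^\circ,\partial^+)\times H^l(X^\circ,A^\circ)\to H_{k-l}(D^\circ,D^\circ\cap\partial^+)$ is genuinely compatible with the one from Lemma~\ref{lem:cupProdWellDef} (and, if one wants this as a natural operation rather than a mere existence statement, checking independence of the auxiliary choices of $\sigma$, $\phi$ and the splitting $\sigma=\sigma_D+\sigma_A$; for the present proposition only one such $c$ is needed, so this can be skipped). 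It is worth emphasizing that the whole argument is specific to $\Z_2$-coefficients: the cap product is sign-free, and a mod~$2$ cycle is literally the $\Z_2$-fundamental class of its supporting subcomplex.
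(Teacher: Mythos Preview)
Your proof is correct and rests on the same idea as the paper's: the vanishing $(\lambda_1\smile\dots\smile\lambda_p)|_{(D_1\cup\dots\cup D_m)\backslash Z}=0$ from Proposition~\ref{lem:cupProdZero} lets one localize the cap product $w\frown(\lambda_1\smile\dots\smile\lambda_p)$ to the complementary subcomplex $D$. The difference is purely in execution. The paper isolates a general homology-level statement (Lemma~\ref{lem:existsThetaAlgTop}), valid over any commutative ring and allowing a relative cohomology class $\omega\in H^q(X',A)$, and proves it by chasing a large naturality diagram for relative cap products. You instead work directly at the chain level: lift $\Lambda$ to a cocycle $\phi$ vanishing on $A^\circ$, split the representing cycle $\sigma=\sigma_D+\sigma_A$, and observe that $\sigma_A\frown\phi=0$ because front faces of simplices in $A^\circ$ stay in $A^\circ$. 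Your route is more elementary and transparent for this particular application, and avoids the appendix diagram chase; the paper's route buys a reusable lemma (applied again in \S\ref{sect:4Genus2}) stated in greater generality. Your caveat about the excision model --- matching $(X^\circ,\partial^+)$ with $(X,Z)$ and $(D^\circ,D^\circ\cap\partial^+)$ with $(D,D\cap Z)$ --- is exactly the bookkeeping the paper also handles by choosing a thin neighborhood $U(Z)$ of $Z$, and presents no real difficulty.
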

Indeed, suppose we know this proposition is true. Hence,  $\Phi|_Y$ cannot be deformed via pinch-off processes to become a map into   $\cS_{\leq g-1}(M)$ due to Condition \ref{item:enuMinMaxY} of Theorem \ref{thm:enumerMinMax}. However, since $Y\subset D$, we know using the defining properties of $D_i$ (namely Theorem \ref{thm:repetitiveMinMax} \eqref{item:optimal}) that each member of $\Phi'|_{ Y}$  has  genus at most $\max\{\fg(\Gamma_{m+1}),\dots,\fg(\Gamma_l)\}$. And since   $\Gamma_{m+1},\dots,\Gamma_l$ all have genus $<g$, we know that  $\Phi'|_Y$  maps into $\cS_{\leq g-1}(M)$. Contradiction arises.

So it suffices to prove the above proposition.

\subsection{Proof of Proposition \ref{prop:splitDomain}} In the lemma below,  we fix  an arbitrary commutative ring for the coefficients of homology and cohomology.  Readers may refer to \cite[p.240]{hatcher2002book} for the following relative version of cup product.
\begin{lem}\label{lem:existsThetaAlgTop} 
    Let $X'$ be a (not necessarily finite) simplicial complex, and $A,B\subset X'$ be open subsets. Let $p\geq q\geq 0$ be natural numbers and $p > 0$. 
    Let $C$ be a union of $p$-subcomplexes in $X'$ with $\partial C\subset A\cup B$ (so that
     $[C]\in H_{p}(X', A\cup B)$), and $\omega\in H^q(X',A)$. 
    Suppose that: 
    \begin{itemize}
        \item $C$ is a union of two open subsets $W_1,W_2\subset C$.
        \item $W_2\cap A=\emptyset.$
        \item The pullback of $\omega$ under the inclusion $(W_1,W_1\cap A)\hookrightarrow (X',A)$ is zero.
    \end{itemize}
    Then there exists some  $\theta\in H_{p-q}(W_2,W_2\cap B)$  such that the pushforward of $\theta$ under the inclusion $(W_2,W_2\cap B)\hookrightarrow (X',B)$ is equal to
    $[C]\frown\omega\in H_{p-q}(X',B).$ 
\end{lem}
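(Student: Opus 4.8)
The plan is to deduce everything from naturality and excision for the relative cap product, working first inside the subcomplex $C$ and then inside $W_2$; no new geometric input is needed. Write $A_C := A\cap C$ and $B_C := B\cap C$, both open in $C$. The $p$-chain $C$ has boundary supported in $\partial C\subseteq A\cup B$, hence in $A_C\cup B_C$, so it defines a class $[C]\in H_p(C,A_C\cup B_C)$ lifting the given $[C]\in H_p(X',A\cup B)$, and we set $\omega_C:=\omega|_{(C,A_C)}\in H^q(C,A_C)$. By naturality of the cap product along $(C,B_C)\hookrightarrow(X',B)$, the image of $[C]\frown\omega_C\in H_{p-q}(C,B_C)$ in $H_{p-q}(X',B)$ is exactly $[C]\frown\omega$. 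Since $B_C\cap W_2=B\cap W_2=W_2\cap B$, it therefore suffices to exhibit $\theta\in H_{p-q}(W_2,W_2\cap B_C)$ whose image in $H_{p-q}(C,B_C)$ is $[C]\frown\omega_C$.

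Next I would use the hypothesis on $W_1$. The key elementary point is that $W_2\cap A=\emptyset$ together with $C=W_1\cup W_2$ forces $A_C=A\cap W_1$; in particular $A_C\subseteq W_1$ and the pair $(W_1,W_1\cap A)$ is literally $(W_1,A_C)$, so the vanishing hypothesis says that $\omega_C$ restricts to $0$ in $H^q(W_1,A_C)$. Plugging this into the cohomology long exact sequence of the triple $(C,W_1,A_C)$ produces a class $\tilde\omega\in H^q(C,W_1)$ mapping to $\omega_C$ under $H^q(C,W_1)\to H^q(C,A_C)$. Let $[C]'\in H_p(C,W_1\cup B_C)$ be the image of $[C]$ (using $A_C\cup B_C\subseteq W_1\cup B_C$). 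Applying naturality of the cap product to the identity map of $C$, regarded as a map of subspace triples $(C;A_C,B_C)\to(C;W_1,B_C)$, gives $[C]'\frown\tilde\omega=[C]\frown\omega_C$ in $H_{p-q}(C,B_C)$.

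Finally I would localize to $W_2$ by excision: since $C=W_1\cup W_2$ with $W_1,W_2$ open and $\overline{C\setminus W_2}=C\setminus W_2\subseteq W_1$, the inclusion $W_2\hookrightarrow C$ induces isomorphisms $H_p\bigl(W_2,(W_1\cap W_2)\cup(B_C\cap W_2)\bigr)\xrightarrow{\ \sim\ }H_p(C,W_1\cup B_C)$ and $H^q(C,W_1)\xrightarrow{\ \sim\ }H^q(W_2,W_1\cap W_2)$. Let $[C]'_2$ be the preimage of $[C]'$, let $\tilde\omega_2:=\tilde\omega|_{(W_2,W_1\cap W_2)}$, and set $\theta:=[C]'_2\frown\tilde\omega_2\in H_{p-q}(W_2,B_C\cap W_2)$ using the cap product internal to $W_2$. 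Naturality of the cap product along $W_2\hookrightarrow C$ then yields that the image of $\theta$ in $H_{p-q}(C,B_C)$ equals $[C]'\frown\tilde\omega=[C]\frown\omega_C$, which as noted pushes forward to $[C]\frown\omega$ in $H_{p-q}(X',B)$. Recalling $B_C\cap W_2=W_2\cap B$, this $\theta$ is the desired class.

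The substance here is small; the delicate part is bookkeeping. I would be most careful about two things: verifying the excision hypotheses (all of $A,B,W_1,W_2$ are open, so their traces on $C$ are open in $C$ and the sets excised are closed and contained in open subsets, which legitimizes the excision isomorphisms and the triple long exact sequence used above), and keeping track of variances in the repeated use of naturality of the relative cap product — namely, for a map $f\colon(X;A,B)\to(X';A',B')$ of subspace triples one has $f_*(\sigma\frown f^*\tau)=f_*\sigma\frown\tau$ for $\sigma\in H_n(X,A\cup B)$ and $\tau\in H^q(X',A')$ — since every reduction above is a single instance of this identity with $f$ either an inclusion or the identity with relabeled subspaces.
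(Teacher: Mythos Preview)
Your proof is correct and follows essentially the same strategy as the paper's: both restrict to $C$ by naturality, use the long exact sequence of the triple $(C,W_1,A_C)$ to lift $\omega_C$ to a class $\tilde\omega\in H^q(C,W_1)$, and then transfer the resulting cap product into $H_{p-q}(W_2,W_2\cap B)$. The only technical variation is in this last transfer: the paper computes the cap product in $H_{p-q}(C,W_2\cap B)$ and then uses the long exact sequence of the triple $(C,W_2,W_2\cap B)$---checking vanishing in $H_{p-q}(C,W_2)$ via $H_p(C,C)=0$---to lift, whereas you excise $C\setminus W_2\subset W_1$ directly and compute the cap product inside $W_2$. Your route is marginally cleaner since it avoids the paper's intermediate passage through $H^q(C,W_1\cap(A\cup B))$ and the vanishing check, but the two arguments are interchangeable.
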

We postpone the proof of Lemma \ref{lem:existsThetaAlgTop} to Appendix \ref{sect:capProd}: It just uses standard algebraic topology.
In this paper we will only use the case $A=\emptyset$, but we choose to state this general version for possible future usage.

In addition, it is easy to check that the above lemma is also valid if $A,B\subset X'$ and $W_1,W_2,\subset C$ are subcomplexes instead. Hence, we apply this lemma to our situation as follows.

\begin{itemize}
    \item  Take some  open neighborhood $U(Z)\subset X=\dmn(\Phi)$ of $Z$ such that the inclusion map $(X,Z)\hookrightarrow(X,  \overline{U(Z)} )$ and is a  homotopy equivalence between the pairs, and $X':= X\backslash U(Z)$ is homotopy equivalent to $X\backslash Z$. We shall use these homotopy equivalence to identify homology or cohomology groups.
\item  Put $A:=\emptyset$ and $B:=X'\cap \overline{U(Z)}$. Note, by excision, we can identify the homology groups of $(X', B)$ and $(X,Z)$.  Without loss of generality we can assume $B$ is a subcomplex. 
\item Let $r:=k=\deg(w)$ and   $ s:=k_1+...+k_p=\deg(\lambda_1\smile...\smile \lambda_p).$
    \item Let $C\subset X'$ be some  $k$-subcomplex, with $\partial C\subset B$, that  represents  $w$, viewed as an element of $H_k(X',B)$.  
    \item  Let  $W_1:=((D_1\cup...\cup D_m)\backslash U(Z)) \cap C$ and  $W_2:=( D\cup \overline{U(Z)})\cap C$. Note $W_1$ and $W_2$  cover $C$, and can be assumed to be subcomplexes.
    \item Let $\omega:=\lambda_1\smile...\smile \lambda_p$, viewed as an element of  $H_{k_1+...+k_p}(X')$. Note, since the pullback of $\lambda_1\smile...\smile \lambda_p$ under $(D_1\cup...\cup D_m)\backslash Z\hookrightarrow X\backslash Z$ is zero by Proposition \ref{lem:cupProdZero}, we can ensure the pullback of $\omega$ under $W_1\hookrightarrow X'$ is zero  by choosing $U(Z)$ to be a sufficiently thin neighborhood of $Z.$
\end{itemize}
Then, by the above lemma, there exists some $\theta\in H_{k-k_1-...-k_p}(W_2,W_2\cap B)$ such that the pushforward of $\theta$ under the inclusion $(W_2,W_2\cap B)\hookrightarrow (X',B)$ is equal to $$[C]\frown(\lambda_1\smile...\smile\lambda_p)\in H_{k-k_1-...-k_p}(X',B).$$ Then it is easy to see, there exists some $ \theta_1\in H_{k-k_1-...-k_p}(D,D\cap Z)$ such that the pushforward of $ \theta_1$ under the inclusion $(D,D \cap Z)\hookrightarrow (X,Z)$ is equal to $$w\frown(\lambda_1\smile...\smile\lambda_p)\in H_{k-k_1-...-k_p}(X,Z).$$ Letting $Y$ be a representative of $\theta_1$, we finish the proof of Proposition \ref{prop:splitDomain}, and also Theorem \ref{thm:enumerMinMax}.  

\iffalse
\begin{prop}
    Let $M$ be a closed Riemannian $3$-manifold, $\Sigma$ be a smooth surface in $M$, and  $\Phi:X\to\overline{\cS_{\leq\Sigma}}$ be a Simon--Smith family, where $\overline{\cS_{\leq\Sigma}}$ is the closure of $ \cS_{\leq\Sigma}$ in $\cS(M)$. Then there exist a surjective homotopy equivalence $h:X'\to X$, which is a simplicial map satisfying Property \ref{proper:homotopyEqu}, and a Simon--Smith family $H:[0,1]\x X'\to \overline{\cS_{\leq\Sigma}}$ such that 
    \begin{itemize}
        \item $H|_{\{0\}\x X'}=\Phi\circ h$.
        \item $H|_{\{1\}\x X'}$ maps into $\cS_{\leq\Sigma}$.
        \item For each $x\in X'$, $t\mapsto H(t,x)$ is a pinch-off process.
    \end{itemize}
\end{prop}
\fi

\section{Existence of four   genus 2 minimal surfaces}\label{sect:4Genus2}

In this section, we prove   Theorem \ref{thm:main} in the following steps:  
\begin{enumerate}
    \item Construct a 7-parameter family $\Psi:\RP^5\x\D\to\cS_{\leq 2}(S^3)$. 
    \item Rotate $\Psi$ by $SO(4)$, and then the symmetry of $\Psi$ would allow us to define a family
    $$\Xi:(\RP^5\x \D \x SO(4))/D_{24}\to\cS_{\leq 2}(S^3).$$
    \item Verify the applicability of Theorem \ref{thm:enumerMinMax} to $\Xi$, and produce four genus 2 minimal surfaces.
\end{enumerate} 

Let $S^3 := \{x \in \mathbb{R}^4 \mid \sum^4_{i = 1} x^2_i = 1\}$, let $C$ denote the great circle $ \{x \in S^3 \mid x_3 = x_4 = 0\}$. For each $(x_1,x_2,x_3,x_4) \in S^3\backslash C$, we can write 
\begin{equation}\label{eq:coor}
(x_1,x_2,x_3,x_4) = (x_1,x_2,\sqrt{1-x_1^2-x_2^2}\ \cos\alpha,\sqrt{1-x_1^2-x_2^2}\ \sin\alpha)\,,    
\end{equation}
with $(x_1,x_2)$ in the open unit disc $\inte(\bD)$ and $\alpha\in \bS^1:=\R/2\pi\Z$. This yields a parametrization $(x_1, x_2, \alpha)$ of $S^3$ by $\bD\times \bS^1$, although the parametrization is not injective along the locus $x^2_1 + x^2_2 = 1$, where distinct $\alpha$ are mapped to the same point.

\subsection{A $7$-parameter family $\Psi$}\label{sect:2g+3}

Let $\D \subset \R^2$ be the closed unit $2$-disc centered at the origin. We define a $7$-parameter Simon--Smith family $\Psi:\RP^5\x\D\to\cS_{\leq 2}(S^3)$ as follows. 

For each $a=[a_0:a_1:\cdots:a_5]\in\RP^5$ and $(r,\theta)\in \D$ (in polar coordinates), $\Psi$ sends $(a, (r, \theta))$ to the zero set in $S^3$
\begin{equation}\label{eq:PsiDef}
    \begin{aligned}
        \Big\{(x_1, x_2, \alpha) \in S^3 \mid &a_0+a_1x_1+a_2x_2+a_3x_3+a_4x_4\\
        &+a_5\left[x_1x_2+ \rho(a,x)\left(r\cos(\theta+2\alpha)+(1-r)\cos3\alpha\right)\right]=0\Big\}\,.
    \end{aligned}
\end{equation}
Here, $\rho:\RP^5\x S^3\to [0,1]$ is a cut-off function determined below. We fix a non-increasing cut-off function $\zeta\in C^\infty(\R)$ such that $\zeta|_{\R_{\leq 1/2}}=1$, $\zeta|_{\R_{\geq 1}}=0$, and then define  $\rho$  as follows:  
\begin{enumerate} [label={\normalfont(\Roman*)}]
    \item For every $[a_0:a_1:...:a_4:1]\in\RP^5$ with $a_1^2+a_2^2<1$, let 
    \begin{align}
     \begin{split} 
        \rho(a, x) := &\ \zeta \left(64\cdot\frac{(x_1+a_2)^2+(x_2 + a_1)^2}{\left(1-a_1^2-a_2^2\right)^2}\right)\ \zeta\left(\frac{a_3^2+a_4^2+(a_0-a_1a_2)^2}{\delta\left(a_1^2 +  a_2^2\right)}\right)\delta\left({a_1^2 +  a_2^2}\right)
     \end{split}  
    \end{align}
    where $\delta\in C^\infty([0, \infty))$ is $0$ on $[1, +\infty)$, and positive but very small on $[0,1)$. Equivalently, for every $a\in\RP^5$ with $a_5\ne 0$  and $a_1^2+a_2^2<a_5^2$, 
        \begin{align}
     \begin{split} 
        \rho(a, x) := &\ \zeta \left(64a_5^2\cdot\frac{(a_5 x_1+a_2)^2+(a_5 x_2 + a_1)^2}{\left(a^2_5-a_1^2-a_2^2\right)^2}\right) \zeta\left(\frac{(a_3^2+a_4^2)a_5^2+(a_0 a_5-a_1a_2)^2}{a^4_5\ \delta\left((a_1^2 +  a_2^2)/a^2_5\right)}\right)\delta\left(\frac{a_1^2 +  a_2^2}{a^2_5}\right)
     \end{split} \label{Equ_Def_rho(a,x)}
    \end{align}
    \item Otherwise, set $\rho(a, x)=0$.
\end{enumerate}
 
\begin{rmk}
As mentioned in \S \ref{sect:sketchFourGenus2}, the family  $\Psi$ should be viewed as a desingularization of the $\RP^5$-family $\Phi_5$ given by 
$$\Phi_5(a):=\{a_0+a_1x_1+a_2x_2+a_3x_3+a_4x_4+a_5 x_1x_2=0\}\subset S^3,$$
such that only finitely many desingularities are left.
Namely, for any $a\in\RP^5$ in the set
$$A_{\sing}:=\{[a_1a_2:a_1:a_2:0:0:1]\in\RP^5:   a_1^2+a_2^2<1\},$$
$\Phi_5(a)=\{(x_1+a_2)(x_2+a_1)=0\}$ contains the singular circle $C(a_1,a_2):=\{x_1=-a_2,x_2=-a_1\}$. The feature of the above $\rho$ is that:
\begin{itemize}
    \item When $a\in\RP^5$ is near $A_{\sing}$, the function $\rho(a,\cdot):S^3\to\R$ is a bump function supported near the circle $C(a_1,a_2)$.
    \item When $a$ is away from $A_{\sing}$, the function $\rho(a,\cdot)\equiv 0$.
\end{itemize}
\end{rmk}

\begin{thm}\label{prop:Psi1}
    With the functions $\delta,\zeta$ appropriately chosen, \eqref{eq:PsiDef} gives a Simon--Smith family
    \[
        \Psi:\RP^5\x \D\to\cS_{\leq 2}(S^3)
    \] with the following properties.  
    \begin{enumerate}
        \item\label{Item:Family_Genus0} For every  $(a,z)\in \RP^5\x\D$, if $\rho(a,\cdot):S^3\to\R$ is  the constant zero function, then $\Psi(a,z)$ has genus $0$.
         \item\label{Item:Family_Genus1}  $\Psi|_{\RP^5\x \partial\D}$ is of genus $\leq 1$.
        
    \end{enumerate}
\end{thm}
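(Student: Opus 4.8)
The plan is to separate two essentially independent issues: that the prescription \eqref{eq:PsiDef} really does define a Simon--Smith family valued in $\cS_{\leq 2}(S^3)$, and the genus assertions \eqref{Item:Family_Genus0}--\eqref{Item:Family_Genus1}; for the latter I would prove the uniform statement that every member has genus $\leq 2$ and read off the two refinements from the same computation. First I would normalise and rewrite: on $\{a_5\neq 0\}$ we may set $a_5=1$, put $a_i'=a_i/a_5$ and complete the square, so that $\Psi(a,z)$ becomes
\[
  \{(x_1+a_2')(x_2+a_1')+\rho(a,x)\,h_z(\alpha)=c(a)-a_3'x_3-a_4'x_4\},\qquad c(a):=a_1'a_2'-a_0',
\]
with $h_z(\alpha):=r\cos(\theta+2\alpha)+(1-r)\cos 3\alpha$. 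Away from the (at most one) ``core circle'' $C(a):=\{x_1=-a_2',\,x_2=-a_1'\}\cap S^3$ this agrees with the quadric section $\Phi_5(a)$, and the two cut-off factors in \eqref{Equ_Def_rho(a,x)} are arranged, by taking $\delta$ small and $\zeta$ as prescribed, so that $\rho(a,\cdot)$ is a radially symmetric bump of height $O(\delta)$ supported in a thin solid tube $T(a)$ around $C(a)$, and vanishes identically once $a$ leaves an $O(\sqrt\delta)$-neighbourhood of $A_{\sing}$ (and trivially once $a_5=0$ or $a_1^2+a_2^2\geq a_5^2$, when $C(a)$ is empty). The role of ``$\delta,\zeta$ appropriately chosen'' is exactly to make $T(a)$ thin and $\delta$ small enough that every topological change caused by the perturbation happens inside $T(a)$.

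For the Simon--Smith property: the $\cH^2$-continuity, the closedness of the total space, and $C^\infty$-continuity away from singularities all follow from joint smoothness of the defining function $F_{a,z}$ in $(a,z,x)\in\RP^5\times\D\times S^3$ together with the fact that $F_{a,z}$ never vanishes identically on $S^3$ (the functions $1,x_1,x_2,x_3,x_4,x_1x_2$ being linearly independent there and $\rho h_z$ being a bounded, tube-supported perturbation). The real content is the uniform bound $N(P_\Psi)<\infty$: a singular point is a zero of $F_{a,z}$ at which $\nabla^{S^3}F_{a,z}=0$, and a Lagrange-multiplier computation shows this occurs at only finitely many points outside $T(a)$ (with a bound independent of $(a,z)$); inside $T(a)$, where in suitable coordinates $F_{a,z}$ looks like $xy-\psi_{a,z}$ with $\psi_{a,z}$ small and with $\alpha$-profile equal, up to a tube-radius error, to the trigonometric polynomial
\[
  \psi_{a,z}\big|_{C(a)}(\alpha)=c(a)-\sqrt{1-a_1'^2-a_2'^2}\,(a_3'\cos\alpha+a_4'\sin\alpha)-\rho_0\,h_z(\alpha),\qquad \rho_0:=\rho(a,\cdot)|_{C(a)}\geq 0
\]
(here $\rho_0$ is constant along $C(a)$ because the first $\zeta$-factor equals $1$ there and the others depend only on $a$), the singular points lie over critical values of this profile, of which there are at most six.

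For the genus bounds the crux is a local model near $C(a)$: there $\Psi(a,z)$ is a desingularisation $\{xy=\psi_{a,z}\}$ of the two transversally crossing sheets $\{xy=0\}=S_1\cup_{C(a)}S_2$, where $S_1=\{x_1=-a_2'\}\cap S^3$, $S_2=\{x_2=-a_1'\}\cap S^3$ are round $2$-spheres and the ``resolution profile'' is the trigonometric polynomial above. I would first record that $\Phi_5(a)\setminus T(a)$ is always a disjoint union of four discs (each $S_i$ being cut by $C(a)$ into two discs), so $\chi=4$ there; then a Morse-theoretic count in the $\alpha$-direction gives $\chi$ of the resolved tube piece equal to $-2N$, where $2N$ is the number of simple zeros of the profile, whence $\chi(\Psi(a,z))=4-2N$. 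Since the profile has frequencies among $\{0,1,2,3\}$ it has at most $6$ zeros, so $\chi(\Psi(a,z))\geq -2$; combined with connectedness of the resolved surface whenever the profile has at least one sign change, this forces genus $\leq 2$. When $z\in\partial\D$ the $\cos 3\alpha$-term drops, the profile has frequencies in $\{0,1,2\}$, at most $4$ zeros, $\chi\geq 0$, hence genus $\leq 1$, which is \eqref{Item:Family_Genus1}; and when $\rho(a,\cdot)\equiv 0$ (so $\rho_0=0$) the profile has frequencies only in $\{0,1\}$, at most $2$ zeros, $\chi\geq 2$, hence genus $0$, which --- together with the trivial case $a_5=0$ (a round $2$-sphere) and a direct Morse analysis of $\{Q_a=0\}\cap S^3$ for the $a$ that fall entirely outside the tube regime --- gives \eqref{Item:Family_Genus0}. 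As a sanity check, $a=[0:\cdots:0:1]$, $z=0$ gives $h_z=\cos 3\alpha$, $2N=6$, $\chi=-2$, genus $2$, matching the genus of the Lawson surface $\xi_{2,1}$ whose symmetry this member shares.

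I expect the main obstacle to be the connectedness claim, and more generally the bookkeeping of how the four discs of $\Phi_5(a)\setminus T(a)$ are reattached through the resolved tube as the sign pattern of the profile varies: this is transparent from the picture of a circle of transverse double points resolved with $2N$ ``switches'' of resolution type, but turning it into a clean argument --- uniformly in $(a,z)$, and through the degenerate parameter values where the profile acquires a multiple zero, which is exactly where the finitely many leftover singularities sit --- requires care. A secondary nuisance is pinning down the constants in the definition of $\rho$ so that the tube-localisation and smallness estimates actually close up, i.e. justifying ``with $\delta,\zeta$ appropriately chosen''.
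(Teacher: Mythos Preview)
Your proposal is correct and follows essentially the same route as the paper: the Euler characteristic computation via the zeros of the trigonometric profile is exactly Lemma~\ref{Lem_Constru Psi_GenusCal of (F=0)} (carried out there via Poincar\'e--Hopf for $\partial_\alpha^\Sigma$ on the closed surface rather than by decomposing into tube and complement), the degree bounds for the profile are Lemma~\ref{Lem_Constru Psi_Ord(Trig polyn)} and Corollary~\ref{Cor_alm trig polyn has zero bd}, the calibration of $\delta$ is Lemma~\ref{Lem_SmallDelta}, and your ``direct Morse analysis'' outside the tube regime is Lemmas~\ref{Lem_5-family_SingParam}--\ref{Lem_5-family_topo of surf}. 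One small slip: the punctate singularities inside the tube lie over zeros of the profile of order $\geq 2$ (see \eqref{Equ_GenusCalc_crit pt}), not over its critical values, and there are at most three of them.
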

We shall prove this theorem in \S \ref{sect:proofThmPsi}.

\subsection{Symmetry group of genus $2$ Lawson surface}\label{subsec:symm_gp_lawson_surf}
While the Lawson surface $\xi_{2,1}$ does not actually appear in the family $\Psi$, we remark that the element $\Psi([0:...:0:1],0)$ in fact has the same symmetry as $\xi_{2,1}$, and it will be helpful to understand its symmetry.

By Kapouleas--Wiygul~\cite[Lemma~3.10, Lemma~3.11]{KW22Lawson_symmetry} (see also Lawson~\cite{Law70}), the genus $2$ Lawson surface $\xi_{2,1}$ associated to a $4$-segment partition of the great circle $C$ and a $6$-segment partition of the complementary orthogonal great circle $C^\perp = \{x_1=x_2=0\}\cap S^3$ has an orientation-preserving symmetry group $G$ of order $24$. By~\cite[Lemma~3.11 (ii)]{KW22Lawson_symmetry}, $G = \mathcal{G}^\mathbf{C} = D_{24} \subset SO(4)$, generated by $2$ group elements $g_1$ and $g_2$ satisfying the relations
\begin{equation}\label{eq:D24relation}
    (g_1)^{12} = (g_2)^2 = (g_2g_1)^2=\id\,.    
\end{equation}
Hence, the space of genus $2$ Lawson surfaces is 
\[
    \{R(\xi_{2,1}): R\in SO(4)\} \cong SO(4)/\langle g_1, g_2\rangle \,.
\]
The fact that $G$ is dihedral is not explicitly stated in \cite{KW22Lawson_symmetry}. For completeness and convenience, we therefore rewrite $G$ in an equivalent form below.

For our purpose, in the construction of the Lawson surface $\xi_{2,1}$, we shall use on $C^\perp$ (resp. $C$) the partition whose endpoints contain $(0,0,\cos(\pi/6),\sin(\pi/6))$ (resp. $(0,1,0,0)$).

Since $\mathbb{R}^4$ can be identified with the quaternion space $\mathbb{H}$ via $(x_1, x_2, x_3, x_4) \mapsto x_1 + i x_2 + j x_3 + k x_3$, we will henceforth express the group $G$ in quaternion coordinates. Let $\mathbb S^3\subset \mathbb{H}$ denote the unit quaternion group, and each element $(q_1, q_2) \in \mathbb S^3\times \mathbb S^3$ acts on $S^3 \subset \mathbb{R}^4$ isometrically by 
\begin{equation}\label{eq:doubleCover}
    (q_1, q_2): \mathbb{R}^4 \to \mathbb{R}^4, \quad p \mapsto q_1pq_2^{-1}\,. \footnote{The expression $q_1pq_2^{-1}$ (as opposed to $q_1^{-1}pq_2$) is compatible with the left $SO(4)$-action on $\R^4$.}
\end{equation}
The induced group homomorphism $\mathbb{S}^3 \times \mathbb{S}^3 \to SO(4)$ is a double cover.
Let $\hat G$ denote the subgroup of $\mathbb S^3\times \mathbb S^3$ generated by
\[
  \hat g_1 := \left( e^{i5\pi/12}, e^{-i\pi/12} \right)\,, \quad \hat g_2 := (j, -j) \,.
\] 
It can be checked that $\hat G$ is the dicyclic group $Q_{48}$ of order $48$. Note that the kernel of the above double cover map is $\{\pm(1,1)\}\subset\hat G$.

Under the group homomorphism induced by~\eqref{eq:doubleCover}, $\hat g_1$ and $\hat g_2$ are mapped to some elements $g_1$ and $g_2$ in $SO(4)$, and we let $G < SO(4)$ be the subgroup generated by $g_1$ and $g_2$, which must be of order 24 by the previous paragraph. Then $g_1$ corresponds to a rotation in $\R^4$ by $\pi/2$ in the $x_1x_2$-plane and by $\pi/3$ in the $x_3x_4$-plane, as
\begin{equation}\label{eqn:g_1_act1}
    \begin{aligned}
        & g_1(x_1 + ix_2 + jx_3 + kx_4) \\
        &\qquad = e^{i5\pi/12}\big((x_1+ix_2) + (x_3+ix_4)j \big) e^{i\pi/12} \\
        &\qquad = e^{i\pi/2}(x_1+ix_2) + e^{i\pi/3}(x_3 + ix_4)j\,, \\
        &\qquad = (-x_2 + i x_1) + \left( \left(x_3 \cos\frac{\pi}{3} - x_4 \sin\frac{\pi}{3}\right) + \left(x_3 \sin\frac{\pi}{3} + x_4 \cos\frac{\pi}{3}\right)i\right)j\,;
    \end{aligned}
\end{equation}
$g_2$ corresponds to a reflection about the 2-plane $\{x_1=x_3=0\}$, as
\begin{equation}\label{eqn:g_2_act1}
    \begin{aligned}
      g_2(x_1 + ix_2 + jx_3 + kx_4) & = j\big((x_1+ix_2) + (x_3+ix_4)j \big)(-j)^{-1} \\
      & = (-x_1+ix_2) + (-x_3+ix_4)j \,.
    \end{aligned}
\end{equation}
In particular, on $S^3\subset \R^4$ and under  the $(x_1, x_2, \alpha)$ coordinate, the above actions are:
\begin{equation}\label{eqn:g_act2}
    g_1(x_1, x_2, \alpha)=(-x_2, x_1, \alpha + \pi/3), \quad g_2(x_1, x_2, \alpha) =(-x_1, x_2, \pi - \alpha)\,,
\end{equation}
which preserve the orientation of $S^3$. 

Using \eqref{eqn:g_act2}, we can check that $g_1$ and $g_2$ satisfy the relations~\eqref{eq:D24relation}, so $G$ is the dihedral group $D_{24}$ of order $24$. Since $g_1$ and $g_2$ preserve $\xi_{2, 1}$, and the orientation preserving symmetry group of $\xi_{2,1}$ is of order $24$~\cite[Lemma~3.11 (ii)]{KW22Lawson_symmetry}, $G$ is indeed this group.

\subsection{A 13-parameter family $\Xi$} 
  First, we define a $13$-parameter Simon--Smith family $\bar \Xi:\RP^5\x\D\x SO(4)\to\cS_{\leq 2}(S^3)$ by
  \begin{equation}\label{eq:barXi}
    \bar \Xi(a,z,R):=R\cdot\Psi(a,z),
  \end{equation}
  where $\Psi$ is the 7-parameter family defined in the previous subsection, and $R \cdot \Psi(a,z)$ denotes the image of  $\Psi(a,z)\subset S^3$ under the rotation  $R \in SO(4)$ acting from the left.

  Then, we define a {\it left} $G$-action $\sigma$ on $\RP^5\x\D$ by\footnote{We show the form of $\sigma(g^{-1}_i)$ instead of $\sigma(g_i)$, for convenience in later calculation.}:
  \begin{equation}\label{Equ_D_24-action_Y}
    \begin{aligned}
      &\sigma(g_1^{-1}):& ([a_0:\dots:a_5],z) &\mapsto ([a_0:a_2:-a_1:a_3\cos\frac\pi 3+a_4\sin\frac \pi 3,-a_3\sin\frac\pi 3+a_4\cos\frac\pi 3,-a_5],e^{-i\pi/3}z),\\
      &\sigma(g_2^{-1}):& ([a_0:\dots:a_5],z) &\mapsto  ([a_0:-a_1:a_2:-a_3:a_4:-a_5],-\bar z).
    \end{aligned}
  \end{equation}
  Since $G < SO(4)$, this induces a {\it right} $G$-action on $\RP^5\x\D\times SO(4)$ by:
  \begin{equation}\label{eqn:g_act3}
    (a, z, R) \cdot g = (\sigma(g^{-1})(a, z), R\cdot g)\,.
  \end{equation}

  \begin{rmk}
      The right quotient $\cX:=(\RP^5\x \D\x SO(4))/G$ is the same as the total space $SO(4)\x_{\sigma} \cY$ of the {\it associated bundle to $SO(4)\to SO(4)/G$ with fiber $\cY$}; See Notation \ref{nota:fiberBundle}.
  \end{rmk}

Note, a direct calculation shows that $\rho$ has the following symmetry:
\begin{equation}\label{eq:g1ActRP5}
 \begin{split}
  & \rho([a_0:a_1:\dots:a_5],g_1x) \\
  & \qquad =\rho([a_0:a_2:-a_1:a_3\cos\frac\pi 3+a_4\sin\frac \pi 3,-a_3\sin\frac\pi 3+a_4\cos\frac\pi 3:-a_5],x)\,, 
 \end{split}
\end{equation}
and 
\begin{equation}\label{eq:g2ActRP5}
\rho([a_0:a_1:\dots:a_5],g_2x)=\rho([a_0:-a_1:a_2:-a_3:a_4:-a_5],x)\,. 
\end{equation}

  \begin{prop}\label{prop:tildeXiInvariant} 
    For each $g\in G$ and each $(a,z,R)\in \RP^5\x\D\x SO(4)$,   
    \[
      \bar \Xi ((a,z,R)\cdot g)=\bar \Xi(a,z,R)\,.
    \]
  \end{prop}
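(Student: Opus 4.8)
The statement to prove is that $\bar\Xi((a,z,R)\cdot g) = \bar\Xi(a,z,R)$ for all $g \in G = D_{24}$ and all $(a,z,R)$. Unwinding the definition \eqref{eqn:g_act3} of the right $G$-action and the definition \eqref{eq:barXi} of $\bar\Xi$, the left side equals $(Rg)\cdot\Psi(\sigma(g^{-1})(a,z))$, while the right side is $R\cdot\Psi(a,z)$. Applying $R^{-1}$ to both, the claim is equivalent to
\[
  g\cdot\Psi(\sigma(g^{-1})(a,z)) = \Psi(a,z) \quad\text{for all } g\in G.
\]
Since $G$ is generated by $g_1$ and $g_2$, and the assignments $g\mapsto (\text{left-multiplication by }g)$ and $g\mapsto \sigma(g^{-1})$ are respectively a left action and a left action (so $g\mapsto \sigma(g^{-1})$ composes correctly), it suffices to verify this identity for $g = g_1$ and $g = g_2$. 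Indeed, if the identity holds for two elements $g,h$, then $gh\cdot\Psi(\sigma((gh)^{-1})(a,z)) = g\cdot\big(h\cdot\Psi(\sigma(h^{-1})(\sigma(g^{-1})(a,z)))\big) = g\cdot\Psi(\sigma(g^{-1})(a,z)) = \Psi(a,z)$.

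The first step is therefore to reduce to checking the two generator relations explicitly. For $g = g_1$: I would apply the change of coordinates \eqref{eqn:g_act2}, namely $g_1(x_1,x_2,\alpha) = (-x_2,x_1,\alpha+\pi/3)$, to the defining zero-set equation \eqref{eq:PsiDef} of $\Psi(\sigma(g_1^{-1})(a,z))$ and check that the resulting equation is exactly that of $\Psi(a,z)$. Concretely, $\Psi(\sigma(g_1^{-1})(a,z))$ has parameters $([a_0:a_2:-a_1:a_3\cos\tfrac\pi3+a_4\sin\tfrac\pi3:-a_3\sin\tfrac\pi3+a_4\cos\tfrac\pi3:-a_5], e^{-i\pi/3}z)$; a point $p$ lies in $g_1\cdot\Psi(\sigma(g_1^{-1})(a,z))$ iff $g_1^{-1}p$ satisfies the defining equation for those parameters. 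One then substitutes $g_1^{-1}(x_1,x_2,\alpha) = (x_2,-x_1,\alpha-\pi/3)$ into that equation and simplifies: the linear terms $a_0,a_1x_1,\dots,a_4x_4$ recombine correctly because $g_1$ acts as a rotation by $\pi/2$ in the $x_1x_2$-plane and $\pi/3$ in the $x_3x_4$-plane (by \eqref{eqn:g_1_act1}); the term $x_1x_2 \mapsto x_2\cdot(-x_1) = -x_1x_2$, which cancels the sign flip $a_5\mapsto -a_5$; and the term $r\cos(\theta+2\alpha)+(1-r)\cos 3\alpha$ transforms under $\alpha\mapsto\alpha-\pi/3$ and the rotation $z\mapsto e^{-i\pi/3}z$ (i.e. $\theta\mapsto\theta-\pi/3$, $r$ fixed) by $\cos((\theta-\pi/3)+2(\alpha-\pi/3)) = \cos(\theta+2\alpha-\pi)= -\cos(\theta+2\alpha)$ and $\cos 3(\alpha-\pi/3) = \cos(3\alpha-\pi) = -\cos 3\alpha$, producing an overall sign $-1$ that again cancels with $a_5\mapsto -a_5$. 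Finally, the cutoff $\rho$ behaves correctly precisely because of the symmetry relation \eqref{eq:g1ActRP5}. An entirely analogous computation for $g = g_2$, using \eqref{eqn:g_2_act1}, \eqref{eqn:g_act2} ($g_2(x_1,x_2,\alpha)=(-x_1,x_2,\pi-\alpha)$), the parameter change $\sigma(g_2^{-1})$, and the symmetry relation \eqref{eq:g2ActRP5}, finishes the proof. Here one uses that $\alpha\mapsto\pi-\alpha$ together with $z\mapsto -\bar z$ (i.e. $\theta\mapsto\pi-\theta$, $r$ fixed, from the polar form of $-\bar z$) sends $\cos(\theta+2\alpha)\mapsto\cos((\pi-\theta)+2(\pi-\alpha)) = \cos(3\pi-\theta-2\alpha) = -\cos(\theta+2\alpha)$ and $\cos 3\alpha\mapsto\cos 3(\pi-\alpha) = \cos(3\pi-3\alpha) = -\cos 3\alpha$, while $x_1x_2\mapsto(-x_1)x_2 = -x_1x_2$, and all of these signs are absorbed by $a_5\mapsto -a_5$; the remaining linear terms match because $g_2$ is a reflection negating $x_1$ and $x_3$.

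The computation is essentially bookkeeping, and I expect no genuine obstacle once the coordinate formulas \eqref{eqn:g_act2} and the $\rho$-symmetries \eqref{eq:g1ActRP5}--\eqref{eq:g2ActRP5} are in hand. The one point requiring a little care — and the closest thing to a ``hard part'' — is keeping track of how the $\D$-parameter $z=(r,\theta)$ transforms: one must confirm that $\sigma(g_1^{-1})$ acts on $z$ by $z\mapsto e^{-i\pi/3}z$ (rotation, $r$ fixed, $\theta\mapsto\theta-\pi/3$) and $\sigma(g_2^{-1})$ by $z\mapsto-\bar z$ ($r$ fixed, $\theta\mapsto\pi-\theta$), and that these are exactly the shifts needed so that the two-term trigonometric expression $r\cos(\theta+2\alpha)+(1-r)\cos 3\alpha$ picks up a uniform sign $-1$ (not two different factors) under the simultaneous change of $\alpha$ and $z$ — this is what forces the specific coefficients $2$ and $3$ in the definition of $\Psi$ and is the reason those particular angular frequencies were chosen. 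Once that alignment is checked, both generator cases close and the reduction above yields the proposition for all $g\in G$.
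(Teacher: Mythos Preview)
Your proposal is correct and follows essentially the same approach as the paper: the paper reduces the proposition to the identity $F_{\sigma(g_i^{-1})(a,z)}(p) = -F_{a,z}(g_i\cdot p)$ for $i=1,2$ (Lemma \ref{Lem_D_24Sym_F_a,z(p)}), which is equivalent to your identity $g_i\cdot\Psi(\sigma(g_i^{-1})(a,z)) = \Psi(a,z)$ at the level of zero sets, and then verifies this by the same direct trigonometric bookkeeping using \eqref{eqn:g_act2}, \eqref{eq:g1ActRP5}--\eqref{eq:g2ActRP5}, and \eqref{Equ_D_24-action_Y}. The only cosmetic difference is that the paper substitutes $g_i\cdot p$ into $F_{a,z}$ and reads off the transformed parameters, whereas you substitute $g_i^{-1}p$ into $F_{\sigma(g_i^{-1})(a,z)}$ and recover the original equation; these are inverse computations with identical content.
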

  
  In other words, the family $\bar \Xi$ also has certain $G$-symmetry. It follows straightforwardly from the definition, and we postpone the proof to \S \ref{sect:tildeXiInvariantProof}. 
  Note that the facts that $G$ acts on $SO(4)$ from the right while $SO(4)$ acts on $\R^4$ from the left would be important for the proof.

Hence, letting
\[
    \cX:=(\RP^5\x\D\x SO(4))/G\,,
\]
 $\bar \Xi$ induces a well-defined $13$-parameter Simon–Smith family 
\[
    \Xi:\cX\to \cS_{\leq 2}, (a, z, R)\cdot G \mapsto \bar \Xi(a, z, R)\,. 
\]

\subsection{Topology of $\cX$}\label{sect:TopoX}
Let $B:=SO(4)/G$, which is a closed smooth manifold. Since by definition, the natural projection onto the third factor $ \RP^5\times \D\times SO(4)\to SO(4)$ is $G$-equivarient, it naturally descends to fiber bundles
%As the induced $G$-action on the factor $\RP^5\x\D$ is free  ,  we have two fiber bundles 
$$\RP^5 \times \D \hookrightarrow \cX \xrightarrow{\pi} B\quad\textrm{ and }\quad   \RP^5 \times \partial\D \hookrightarrow \partial\cX \xrightarrow{\pi} B.$$

By \S~\ref{subsec:symm_gp_lawson_surf}, we can also write $B = (\mathbb{S}^3\times \mathbb{S}^3)/\hat G$. Let $\hat\Pi_1: \mathbb{S}^3\times \mathbb{S}^3 \to \mathbb{S}^3 \subset \mathbb{H}$ be the projection map onto the first factor. Hence, $\hat\Pi_1(\hat g_1) = e^{i5\pi/12}$ and $\hat\Pi_1(\hat g_2) = j$, so $\hat\Pi_1(\hat G)$ is also isomorphic to $Q_{48}$.

\begin{lem}\label{lem:BisProduct}
    The projection map $\hat\Pi_1$ induces a well-defined map \[
       \Pi_1: B\to \mathbb{S}^3/\hat \Pi_1(\hat G), \quad (q_1, q_2)\cdot \hat G\mapsto q_1 \cdot\hat \Pi_1(\hat G)\,.
    \] 
    Moreover, this yields a \emph{trivial} $\mathbb{S}^3$-bundle $\mathbb S^3 \hookrightarrow B \xrightarrow{\Pi_1} \mathbb{S}^3/\hat\Pi_1(\hat G)$.
\end{lem}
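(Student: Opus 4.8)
The plan is to exploit the fact that $\hat\Pi_1$ restricts to an \emph{isomorphism} $\hat\Pi_1|_{\hat G}\colon \hat G\xrightarrow{\ \sim\ }\hat\Gamma$, where we write $\hat\Gamma:=\hat\Pi_1(\hat G)$. Indeed $\hat\Pi_1|_{\hat G}$ is by construction a surjective group homomorphism onto $\hat\Gamma$, and since $\hat\Gamma\cong Q_{48}$ (as observed just above) while $\hat G\cong Q_{48}$, the two groups have the same order $48$, so a surjection between them is an isomorphism. Equivalently, $\hat G\cap(\{1\}\times\mathbb S^3)=\{(1,1)\}$. Granting this, well-definedness of $\Pi_1$ is immediate: writing $B=(\mathbb S^3\times\mathbb S^3)/\hat G$ with $\hat G$ acting by componentwise right translation, two representatives differing by $(h_1,h_2)\in\hat G$ have first coordinates differing by $h_1\in\hat\Gamma$, hence the same image in $\mathbb S^3/\hat\Gamma$; the induced map $\Pi_1$ is clearly continuous and surjective.

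For the ``moreover'' part I would first upgrade $\Pi_1\colon B\to\mathbb S^3/\hat\Gamma$ to a \emph{principal} $\mathbb S^3$-bundle. The key point is that $\mathbb S^3$ acts on $B$ from the left by $g\cdot[(q_1,q_2)]:=[(q_1,gq_2)]$, left translation in the second factor; this is well defined because left and right translations on $\mathbb S^3$ commute, it preserves the fibres of $\Pi_1$, and using $\hat G\cap(\{1\}\times\mathbb S^3)=\{(1,1)\}$ one checks it is free and restricts to a simply transitive action on each fibre. Since $\hat\Gamma<\mathbb S^3$ is finite and acts freely by right translation, $\mathbb S^3\to\mathbb S^3/\hat\Gamma$ is a covering map, whose local sections pull back to local trivialisations of $\Pi_1$; thus $\Pi_1$ is a smooth principal $\mathbb S^3$-bundle over the closed $3$-manifold $\mathbb S^3/\hat\Gamma$.

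I would then conclude by the general fact that every principal $\mathbb S^3$-bundle over a base of dimension $\le 3$ is trivial. This is because the classifying space $B\mathbb S^3\simeq\mathbb{HP}^\infty$ is $3$-connected (its first nonvanishing homotopy group is $\pi_4(B\mathbb S^3)\cong\pi_3(\mathbb S^3)\cong\Z$, as $\mathbb{HP}^\infty$ has cells only in dimensions $0,4,8,\dots$), so any classifying map from a $3$-dimensional CW complex is null-homotopic. Equivalently, by obstruction theory a global section exists, since the successive obstructions lie in $H^{k+1}(\mathbb S^3/\hat\Gamma;\pi_k\mathbb S^3)$ and these vanish for every $k$ ($\pi_0=\pi_1=\pi_2=0$, and $H^{\ge 4}$ of a closed $3$-manifold is $0$), and a principal bundle admitting a section is trivial. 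Either way, $B\cong(\mathbb S^3/\hat\Gamma)\times\mathbb S^3$ compatibly with $\Pi_1$ and the projection, which is the claim.

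All of these steps are routine; the one demanding the most care is the bookkeeping that makes $\hat\Pi_1|_{\hat G}$ an isomorphism --- i.e.\ confirming that $\hat\Gamma=\langle e^{i5\pi/12},j\rangle$ is genuinely $Q_{48}$ and not a proper quotient of $\hat G$ --- together with setting up the free, fibrewise simply transitive $\mathbb S^3$-action on $B$ correctly. Once those are pinned down, triviality follows from the $3$-connectivity of $B\mathbb S^3$ as a black box.
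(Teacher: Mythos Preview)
Your proposal is correct and follows essentially the same approach as the paper: both verify that $\Pi_1$ is a principal $\mathbb S^3$-bundle (your left action $g\cdot[(q_1,q_2)]=[(q_1,gq_2)]$ is the same as the paper's right action $\tau([(q_1,q_2)],q')=[(q_1,q'^{-1}q_2)]$), and then both invoke obstruction theory over a $3$-dimensional base to produce a global section. The paper additionally offers a concrete alternative---extend to a rank $4$ real vector bundle and perturb the zero section to be transverse, hence nowhere vanishing by a dimension count---but your classifying-space formulation via the $3$-connectedness of $B\mathbb S^3\simeq\mathbb{HP}^\infty$ is an equally valid black box.
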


We postpone this topological proof to \S \ref{sect:spaceLawsonProof}.

By the previous lemma, $B\cong \mathbb S^3/Q_{48}\x\mathbb S^3$. Now, note the universal cover $\tilde \cX$ of $\cX$ is $S^5\x\D\x \mathbb S^3\x \mathbb S^3$. Let $\tilde G:=\Z_2\oplus \hat G\cong \Z_2\oplus Q_{48}$, which is generated by
\[
    \tilde g_0:=(1,0)\,,\quad \tilde g_1:=(0,\hat g_1)\,,\quad \tilde g_2:=(0,\hat g_2)\,.
\]
We define a {\it right} $\tilde G$-action on $\tilde \cX$ as follows:
 For every $(a, z, {\bf q})\in S^5\times \D\times (\mathbb{S}^3\times \mathbb{S}^3)$,
\begin{equation}\label{eqn:action_tilde_g}
\begin{aligned} 
  (a,z,{\bf q})\cdot\tilde g_0 & := ((-a_0,-a_1,-a_2,-a_3,-a_4,-a_5),z, {\bf q}),\\
     (a,z, {\bf q})\cdot\tilde g_1 & := ( (a_0, a_2, -a_1,\, a_3 \cos \frac{\pi}{3} + a_4 \sin \frac{\pi}{3},\, -a_3 \sin \frac{\pi}{3} + a_4 \cos \frac{\pi}{3}, -a_5 ), e^{-i\frac{\pi}{3}}z, {\bf q} \hat g_1),\\
   (a,z, {\bf q})\cdot\tilde g_2 & :=
  (\left( a_0, -a_1, a_2, -a_3, a_4, -a_5 \right), -\bar z,
  {\bf q} \hat g_2),
\end{aligned}
\end{equation}
where for any ${\bf q} = (q_1,q_2)$, the product $(q_1,q_2)\hat g_i$ refers to a componentwise quaternionic multiplication.
It is straightforward to check that this $\tilde G$-action on $\tilde \cX$ descends to the $G$-action on $\RP^5\x \D\x SO(4)$. Hence, $\tilde G$ acts on $\tilde X$ freely, and $\cX = \tilde \cX / \tilde G$. This implies that $\pi_1(\cX)= \tilde G = \Z_2\oplus Q_{48}$ and $H_1(\cX;\Z_2)=\Z_2\oplus \Z_2\oplus \Z_2$.

In summary, we have:
% https://q.uiver.app/#q=WzAsNyxbMCwwLCJcXHRpbGRlIFgiXSxbMiwwLCJcXHRleHRybXtkbW59KFxcYmFyIFxcWGkpIl0sWzMsMCwiWCJdLFsxLDEsIlxcbWF0aGJiIFNeM1xcdGltZXNcXG1hdGhiYiBTXjMiXSxbMywxLCJCPShcXG1hdGhiYiBTXjNcXHRpbWVzXFxtYXRoYmIgU14zKS9RX3s0OH0iXSxbMSwyLCJcXG1hdGhiYiBTwrMiXSxbMywyLCJTXjMvUV97NDh9Il0sWzAsMV0sWzEsMiwiRz1EX3syNH0iXSxbMyw0LCJcXGhhdCBHPVFfezQ4fSJdLFsyLDQsIs+AIl0sWzMsNSwiXFxoYXQgXFxQaeKCgSIsMl0sWzUsNiwiXFxoYXQgR18xPVFfezQ4fSJdLFs0LDYsIlxcUGnigoEiXSxbMCwyLCJcXHRpbGRlIEc9XFxtYXRoYmIgWl8yXFx0aW1lcyBRX3s0OH0iLDAseyJjdXJ2ZSI6LTR9XV0=
\[\begin{tikzcd}[cramped]
	{\tilde \cX} && {\textrm{dmn}(\bar \Xi)} & \cX \\
	& {\mathbb S^3\times\mathbb S^3} && {B=(\mathbb S^3\times\mathbb S^3)/Q_{48}} \\
	& {\mathbb S^3} && {\mathbb S^3/Q_{48}}
	\arrow[from=1-1, to=1-3]
\arrow["{\tilde G=\mathbb Z_2\times Q_{48}}", bend left=20, from=1-1, to=1-4]
	\arrow["{G=D_{24}}", from=1-3, to=1-4]
	\arrow["{\pi}", from=1-4, to=2-4]
	\arrow["{\hat G=Q_{48}}", from=2-2, to=2-4]
	\arrow["{\hat \Pi}_1"', from=2-2, to=3-2]
	\arrow["{\Pi_1}", from=2-4, to=3-4]
	\arrow["{\hat\Pi_1( \hat G)=Q_{48}}", from=3-2, to=3-4]
\end{tikzcd}\]

We now fix a base point $\tilde x_0\in \tilde \cX$:
\begin{equation}\label{eq:x0Tilde}
\tilde x_0:=((0,0,0,0,0,1),0,(1,0,0,0),(1,0,0,0))\,.    
\end{equation}
Then the generators $\tilde g_0,\tilde g_1$, and $\tilde g_2$ can be viewed as elements in $\pi_1(\cX)$, and thus they induce three elements $c_0,c_1$, and $c_2$ in $H_1(\cX;\Z_2)$ respectively. {\it From now on, we will always use $\Z_2$-coefficients, unless specified otherwise.}
We define three cohomology classes $\lambda,\alpha\in H^1(\cX)$ and $\beta \in H^3(\cX)$:
\begin{itemize}
    \item Let $\lambda:= c_0^*+c_1^*+c_2^*\in H^1(\cX)$, where $\{c_0^*,c_1^*,c_2^*\} \subset H^1(\cX)\cong\Hom(H_1(\cX);\Z_2)$ is the dual basis of $\{c_0,c_1,c_2\} \subset H_1(\cX)$.
    \item Let $\alpha:=c_1^*+c_2^*\in H^1(\cX)$.

    \item Fix some $p \in \mathbb{S}^3$, and let $B_0:=\mathbb S^3/Q_{48}\x \{p\}\subset \mathbb S^3/Q_{48}\x \mathbb S^3 \cong B$. Let $\beta$ be the pullback $\pi^*(\beta_B)$, where $\beta_B\in H^3(B)$ is the Poincar\'e dual of $[B_0]\in H_3(B)$.
\end{itemize}

Denote the $2$-dimensional disc
\begin{equation}\label{eq:tildeD0}
    \tilde D_0:=(0,0,0,0,0,1)\x\D\x(1,0,0,0)\x (1,0,0,0)\subset \tilde \cX,
\end{equation}
and let 
$D_0\subset \cX$ be its image under the quotient map $\tilde \cX\to \cX$. Below is an important topological feature of $\cX.$

\begin{thm}\label{thm:capProduct}
    In $H_{2}(\cX,\partial \cX)$, $[\cX]\frown(\lambda^5\smile\alpha^3\smile\beta)=[D_0].$
\end{thm}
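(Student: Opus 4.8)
Throughout we work with $\Z_2$ coefficients. The plan is to strip off the cup factors $\beta$, $\alpha^3$, and $\lambda^5$ one at a time, exploiting the fiber bundle $\pi:\cX\to B$ (fiber $\RP^5\x\D$, base the closed $6$-manifold $B$) from \S\ref{sect:TopoX} together with the cohomology ring of $\mathbb S^3/Q_{48}$ recalled in \S\ref{sect:sketchFourGenus2}. The basic tool is the elementary fact that if $\sigma:E\to N$ is a smooth fiber bundle with $N$ closed and $E$ a compact manifold with boundary (with $\partial E$ the subbundle of fiber boundaries), then $[E]\frown\sigma^\ast(\mathrm{PD}_N[A])=[\sigma^{-1}(A)]$ in $H_\ast(E,\partial E)$ for every closed submanifold $A\subset N$; this follows by realizing $\mathrm{PD}_N[A]$ as the extension by zero of the mod $2$ Thom class of $\nu_A$, using $\sigma^\ast\nu_A\cong\nu_{\sigma^{-1}(A)}$ (as $\sigma$ is a submersion), and Poincar\'e--Lefschetz duality.

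First I would reduce $\alpha^3\smile\beta$ to a pulled-back point class. Since $\ker\bigl(\pi_\ast:\pi_1\cX\to\pi_1 B\bigr)=\langle\tilde g_0\rangle$, the map $\pi^\ast:H^1(B)\to H^1(\cX)$ is injective with image the annihilator of $c_0$; as $\alpha=c_1^\ast+c_2^\ast$ annihilates $c_0$, we may write $\alpha=\pi^\ast\eta$. With $\beta=\pi^\ast\beta_B$ this gives $\alpha^3\smile\beta=\pi^\ast(\eta^3\smile\beta_B)$, and $\eta^3\smile\beta_B\in H^6(B)\cong\Z_2$. I would show this is the top class of $B$: by the intersection formula and $\beta_B=\mathrm{PD}_B[B_0]$ it suffices that $\eta^3|_{B_0}$ generate $H^3(B_0)$. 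By Lemma~\ref{lem:BisProduct}, $\Pi_1$ induces an isomorphism on $H^1$ and $\Pi_1|_{B_0}:B_0\to\mathbb S^3/Q_{48}$ is a homeomorphism, so $\eta|_{B_0}=(\Pi_1|_{B_0})^\ast\hat\alpha$, where (tracking $\pi_1$) $\hat\alpha\in H^1(\mathbb S^3/Q_{48})$ is the class pairing to $1$ with the images of both $\hat g_1$ and $\hat g_2$. Thus the claim reduces to $\hat\alpha^3\neq0$ in $H^3(\mathbb S^3/Q_{48})$, the cohomology-ring input established in \S\ref{sect:topoXi}. Granting it, $\alpha^3\smile\beta=\pi^\ast\mathrm{PD}_B[\mathrm{pt}]$, and so, using mod $2$ commutativity of $\smile$ and $a\frown(u\smile v)=(a\frown u)\frown v$,
\[
    [\cX]\frown(\lambda^5\smile\alpha^3\smile\beta)=\bigl([\cX]\frown\pi^\ast\mathrm{PD}_B[\mathrm{pt}]\bigr)\frown\lambda^5=[F]\frown\lambda^5
\]
for a fiber $F=\pi^{-1}(\mathrm{pt})\cong\RP^5\x\D$, by the cap-equals-preimage fact.

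Next I would compute $[F]\frown\lambda^5$. The inclusion $F\cong\RP^5\x\D\hookrightarrow\cX$ sends the generator of $\pi_1(F)=\pi_1(\RP^5)$ to $\tilde g_0$ — a path joining $a$ to $-a=\tilde g_0\cdot a$ in the $S^5$-factor of $\tilde\cX$ covers the essential loop of $\RP^5$ — so on restriction $c_0^\ast$ becomes the generator $t$ of $H^\ast(\RP^5)=\Z_2[t]/(t^6)$ while $c_1^\ast,c_2^\ast$ become $0$; hence $\lambda|_F=t$ (pulled back from the $\RP^5$-factor). Since $t^k=\mathrm{PD}_F[\RP^{5-k}\x\D]$, we get $\lambda^5|_F=\mathrm{PD}_F[\{q\}\x\D]$, so by naturality of the cap product $[F]\frown\lambda^5=[\{q\}\x\D]$, the class of a ``coordinate disc'' in the fiber. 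Finally, $\tilde D_0$ is exactly such a disc in the $\RP^5\x\D$-fiber over the identity coset in $\dmn(\bar\Xi)$, so $D_0$ is a coordinate disc in a fiber of $\pi$; and since $B$ and $\RP^5$ are path-connected, any two coordinate discs $\{q\}\x\D$ in fibers of $\pi$ are homologous rel $\partial\cX$ (slide the base point along a path in $B$, then $q$ along a path in $\RP^5$, keeping $\{q\}\x\partial\D\subset\partial\cX$ throughout). Hence $[\{q\}\x\D]=[D_0]$ in $H_2(\cX,\partial\cX)$, as desired.

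The hard part will be the input $\hat\alpha^3\neq0$: one must identify $H^\ast(\mathbb S^3/Q_{48};\Z_2)=H^\ast(Q_{48};\Z_2)$, observe it is isomorphic as a ring to $H^\ast(Q_{16};\Z_2)$ (the odd-order part of $Q_{48}$ contributing nothing mod $2$), and then locate $\hat\alpha$ among the three nonzero classes of $H^1$ — exactly two of which have nonzero cube while one does not — so a genuine computation, to be carried out in \S\ref{sect:topoXi}, is needed to verify that the particular $\hat\alpha$ here is not the exceptional class. The only other step requiring (routine) care is the cap-equals-preimage fact for bundles whose fiber is a manifold with boundary.
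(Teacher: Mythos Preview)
Your approach is correct and genuinely different from the paper's. You proceed by successive reduction: first show $\alpha^3\smile\beta=\pi^\ast(\text{top class of }B)$, cap to land on a single fiber $F\cong\RP^5\times\D$, then peel off $\lambda^5$ inside $F$ using $\lambda|_F=t$. The paper instead works with explicit $\tilde G$-equivariant functions on $\tilde\cX$: it constructs $v_1,v_2,v_3\in C^\infty(\tilde\cX)$ (built from degree-$12$ and degree-$14$ polynomials in the first $\mathbb S^3$-factor) each with Poincar\'e dual $\alpha$, and five functions $f_0,\dots,f_4$ (mixing the $S^5$ and first $\mathbb S^3$ coordinates) with Poincar\'e duals $c_0^\ast+d_\ell^\ast$ for various $d_\ell^\ast\in\Span\{c_1^\ast,c_2^\ast\}$. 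Their common zero set, intersected with $\pi^{-1}(B_0)$, is computed directly to be $7$ discs each homologous to $D_0$; a final dimension-count on $B$ shows $(\smile_\ell(c_0^\ast+d_\ell^\ast))\smile\alpha^3\smile\beta=\lambda^5\smile\alpha^3\smile\beta$.

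The trade-off is exactly where you locate it. Your route is cleaner and more conceptual, but isolates the single nontrivial input $\hat\alpha^3\neq0$ in $H^3(\mathbb S^3/Q_{48};\Z_2)$ --- and as you note, since one of the three nonzero classes in $H^1(Q_{16};\Z_2)$ does cube to zero, a genuine identification is required. The paper's approach never invokes the ring structure of $H^\ast(Q_{16};\Z_2)$ at all: the content of $\hat\alpha^3\neq0$ is absorbed into the explicit zero-count of $v_1,v_2,v_3$ on $\mathbb S^3$, which yields $48\times7$ points (hence $7$ points in $\mathbb S^3/Q_{48}$, odd mod~$2$). So the paper is self-contained on this point, at the cost of carrying around concrete polynomial sections and verifying their transversality; your argument is shorter but outsources exactly that computation. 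Both are valid; if you pursue your line, the cleanest way to close the gap is probably not via $H^\ast(Q_{16})$ in the abstract but by writing down one equivariant map $\mathbb S^3\to\R^3$ with the $\hat\alpha$-equivariance and counting its zeros --- which is essentially the paper's Lemma on $v_1,v_2,v_3$, stripped of the other factors.
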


Here, $\smile$ is the cup product in $H^*(\cX)$, and $\frown$ is the relative cap product for $(\cX,\partial \cX)$. The proof is postponed to \S \ref{sect:topologyX}.

Now, let us record some crucial features of the family $\Xi.$

\begin{thm}\label{thm:XiProper} By appropriately choosing the functions $\delta,\zeta$ in the definition \eqref{Equ_Def_rho(a,x)} for the cut-off function $\rho$, the Simon--Smith family $\Xi:\cX\to \cS_{\leq 2}(S^3)$ satisfies the following.
    \begin{enumerate}
        \item\label{Item:FamilyXi_Genus0} For any $x \in \left(\{\rho=0\}\x\D\x SO(4)\right)/G\subset \cX$, $\Xi(x)$ has  genus $0$.
        \item\label{Item:FamilyXi_Genus1} For any $x \in \partial \cX$, $\Xi(x)$  has genus $\leq 1$.
        \item\label{Item:FamilyXi_1-sweepout} For any $1$-cycle $C\subset \cX$   such that $\alpha([C])\ne 0$ and $\Xi|_C$ maps into $\cS_2(S^3)$,   $\Xi|_C$ is a $1$-sweepout in the Almgren-Pitts sense. 
        \item\label{Item:FamilyXi_5-sweepout} For any $5$-cycle $C\subset \cX$ such that $\lambda^5([C])\ne 0$, $\Xi|_{C}$ is a $5$-sweepout.
        \item\label{Item:FamilyXi_2chain}   For any closed $1$-subcomplex $C$ homologous to $ \partial D_0$ in $\partial\cX$, if $\Xi|_C$ maps into $\cS_1(S^3)$ then   
        $\Xi|_C$ is ``homologically non-trivial in $\cS_1(S^3)$" in the following sense.  There does not exist a Simon--Smith family $\Phi:W\to\cS_1(S^3)$, where $W$ is a {\it pure} simplicial $2$-complex, such that  (i) $\partial W=C$ and (ii) $\Phi|_{\partial W}=\Psi|_{C}$.
    \end{enumerate}
\end{thm}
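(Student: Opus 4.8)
\textbf{Proof proposal for Theorem \ref{thm:XiProper}.}

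The plan is to prove the five items largely independently, since each concerns a different aspect of the family $\Xi$; the genuinely hard part will be item \eqref{Item:FamilyXi_2chain}, so I would organize the argument to isolate it.

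\emph{Items \eqref{Item:FamilyXi_Genus0} and \eqref{Item:FamilyXi_Genus1}.} These should follow immediately from Theorem \ref{prop:Psi1}, together with the fact that genus is invariant under the isometries $R\in SO(4)$. Indeed, $\Xi(x)$ for $x$ represented by $(a,z,R)$ equals $R\cdot\Psi(a,z)$; if $\rho(a,\cdot)\equiv 0$ then $\Psi(a,z)$ has genus $0$ by Theorem \ref{prop:Psi1}\eqref{Item:Family_Genus0}, hence so does its isometric image, giving \eqref{Item:FamilyXi_Genus0}. For \eqref{Item:FamilyXi_Genus1}, note $\partial\cX$ is the image of $\RP^5\times\partial\D\times SO(4)$, and Theorem \ref{prop:Psi1}\eqref{Item:Family_Genus1} says $\Psi|_{\RP^5\times\partial\D}$ has genus $\leq 1$; again apply invariance under $R$. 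One should double-check that the notion of genus used for punctate surfaces (Definition 2.5 of \cite{chuLiWang2025optimalFamily}) is genuinely isometry-invariant, but this is routine.

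\emph{Items \eqref{Item:FamilyXi_1-sweepout} and \eqref{Item:FamilyXi_5-sweepout}.} For \eqref{Item:FamilyXi_5-sweepout}: a $5$-cycle $C$ with $\lambda^5([C])\neq 0$ detects the cohomology class $\lambda^5\in H^5(\cX)$, and I would show that $\Xi^*$ of the generator $\bar\lambda\in H^1(\cZ_2(S^3;\bF;\Z_2);\Z_2)$ of the Almgren--Pitts cohomology equals $\lambda$, so that $(\Xi^*\bar\lambda)^5\neq 0$ when paired with $[C]$; this is exactly the definition of $\Xi|_C$ being a $5$-sweepout. To identify $\Xi^*\bar\lambda=\lambda$, restrict $\Xi$ to one $\RP^5$-fiber, where it is (a desingularization of) the degree-$2$ Veronese-type family $\Phi_5$; as recorded in the sketch (see \cite[\S 3.5]{chuLi2024fiveTori}) this restriction is an Almgren--Pitts $5$-sweepout, and by our definition of $\lambda = c_0^*+c_1^*+c_2^*$, the class $\lambda$ restricts to the generator of $H^1(\RP^5;\Z_2)$ on each $\RP^5$-fiber (this follows from how the generators $c_0,c_1,c_2$ of $H_1(\cX)$ were built from $\tilde g_0,\tilde g_1,\tilde g_2$, each of which acts on the $S^5$ factor by an orientation datum contributing to the $\RP^5$ class). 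Item \eqref{Item:FamilyXi_1-sweepout} is analogous: $\alpha=c_1^*+c_2^*$, and one checks that on a $1$-cycle $C$ with $\alpha([C])\neq 0$ on which $\Xi$ maps into $\cS_2(S^3)$ (so the family is ``honest'' and not collapsing into lower genus), the loop traced out is detected by $\Xi^*\bar\lambda$, hence $\Xi|_C$ is a $1$-sweepout. The main point to verify carefully here is that $\alpha$ pulls back from the base-related coordinates (the $\D$-factor and $SO(4)$-factor identifications) in a way compatible with $\bar\lambda$; this uses the explicit $\tilde G$-action \eqref{eqn:action_tilde_g}.

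\emph{Item \eqref{Item:FamilyXi_2chain}.} This is the crux. Suppose for contradiction such a pure $2$-complex $W$ with $\partial W=C$ and a Simon--Smith family $\Phi:W\to\cS_1(S^3)$ extending $\Xi|_C$ exists. Since every surface in the range is a genus-$1$ embedded surface in $S^3$, i.e. an (unknotted, by Alexander) embedded torus, bounding a solid torus on each side gives a Heegaard torus; the space of Heegaard tori in $S^3$ is, by Hatcher's resolution of the Smale conjecture, homotopy equivalent to the space of ``great'' Clifford tori, i.e. to $SO(4)/(\text{stabilizer})\simeq \Gr_2(\R^4)\times(\text{circle data})$ — more precisely one identifies $\cS_1(S^3)$ (or the relevant component) up to homotopy with the space of Hopf links in $S^3$, whose homotopy type is known (it retracts onto the configuration space of a pair of linked great circles). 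The boundary family $\Xi|_C$, by construction of $\Psi$ on $\RP^5\times\partial\D$, describes a specific loop (or $1$-cycle) in this space of Hopf links, and I would compute its class in $H_1$ (or $\pi_1$) of that space and show it is non-zero — equivalently, that the core curve of one of the two solid tori sweeps out a homotopically essential loop among great circles as one traverses $C$. A non-trivial class in $H_1(\cS_1(S^3);\Z_2)$ cannot bound a $2$-chain's worth of genus-$1$ surfaces, contradicting the existence of $W$.

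The main obstacle is making the topology of $\cS_1(S^3)$ precise: one must pin down which component / homotopy type is relevant (tori isotopic to the Clifford torus), invoke Hatcher correctly to replace the infinite-dimensional space of embeddings by a finite-dimensional model, and then carry out the class computation for the explicit loop coming from $\Xi|_C$ using the coordinate description \eqref{eqn:g_act2}, \eqref{eq:PsiDef} and the $\tilde G$-action. I expect this to require the ``space of all Hopf links in $S^3$'' description alluded to in the sketch, and the bulk of the work will be in verifying that the boundary $1$-cycle $C$ — which is homologous to $\partial D_0$, hence essentially the $\partial\D$-direction modulo the $G$-action — maps to the essential class. The other four items should be comparatively short once the sweepout-pullback identifications are set up.
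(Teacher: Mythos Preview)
Your treatment of items \eqref{Item:FamilyXi_Genus0}--\eqref{Item:FamilyXi_5-sweepout} matches the paper's approach and is essentially correct: \eqref{Item:FamilyXi_Genus0}, \eqref{Item:FamilyXi_Genus1} are immediate from Theorem~\ref{prop:Psi1}, and for \eqref{Item:FamilyXi_1-sweepout}, \eqref{Item:FamilyXi_5-sweepout} the paper also reduces to showing $\Xi^*\bar\lambda=\lambda$ by evaluating on explicit loops representing $c_0,c_1,c_2$. One detail you elide for \eqref{Item:FamilyXi_1-sweepout}: the reason the hypothesis $\Xi|_C\subset\cS_2(S^3)$ matters is that it forces, via item~\eqref{Item:FamilyXi_Genus0}, $C\subset(\{\rho\ne 0\}\times\D\times SO(4))/G$, and since $\{\rho\ne 0\}\subset\RP^5$ is a contractible neighborhood of $A_{\sing}$, $[C]$ has no $c_0$-component. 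Only then does $\alpha([C])\ne 0$ imply $\lambda([C])\ne 0$.

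For item \eqref{Item:FamilyXi_2chain} your outline has the right endpoint (pass to the Hopf-link space $\cE(L,S^3)$, show the boundary class is nonzero, contradict the existence of $W$) but two genuine gaps. First, your claim that ``every surface in the range is a genus-$1$ embedded surface \dots\ i.e.\ an (unknotted, by Alexander) embedded torus'' is false: elements of $\cS_1(S^3)$ are \emph{punctate} surfaces with finitely many singular points and possibly several spherical components, and even a smooth embedded torus need not be a Heegaard torus (Alexander only gives a solid torus on one side). Consequently there is no continuous ``core-curve'' map $\cS_1(S^3)\to\cE(L,S^3)$, and no homotopy equivalence of the sort you invoke. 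The paper instead constructs $\Upsilon:\partial_1\cX\to\cE(L,S^3)$ directly from the defining function $F_{a,z}$ (so it lives on the \emph{parameter space}, not on $\cS_1(S^3)$), and then proves a cell-by-cell extension lemma (Lemma~\ref{Lem_Hopflink_ExtLoops}) showing that any Simon--Smith family $\Phi:W\to\cU$ with $\Phi|_{\partial W}$ bounding the Hopf links $\Upsilon|_{\partial W}$ allows $\Upsilon$ to be extended over $W$; this uses a smoothing lemma for punctate tori, the connectedness of Hopf links bounded by a fixed smooth Heegaard torus, and the explicit description $\pi_1(\cE(L,S^3))\cong\Z_2\oplus\Z_2$. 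Second, you overlook that $C$ is homologous to $\partial D_0$ only in $H_1(\partial\cX)$, whereas $\Upsilon$ is defined merely on $\partial_1\cX\subsetneq\partial\cX$; to conclude $\Upsilon_*[C]=\Upsilon_*[\partial D_0]$ the paper needs the separate fact (Proposition~\ref{Lem_NontrivialLoop_Upsilon}\ref{Item_Upsilon_Ker-iota_*-to-0}) that $\Upsilon_*$ kills $\ker(\iota_*:H_1(\partial_1\cX)\to H_1(\partial\cX))$, which is proved by factoring $\Upsilon$ through a configuration-space bundle $\hConf_4(\tau)$ and a commuting diagram \eqref{Diag_Upsilon-factorthrough-hConf}. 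Without these two ingredients the contradiction cannot be closed.
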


We postpone the proof to \S \ref{sect:XiProper}. It will be checked in the proof that  in item \eqref{Item:FamilyXi_Genus0} the set defined by $\rho=0$ is indeed $G$-equivariant and thus we can take the quotient by $G$.  

\subsection{Proof of Theorem \ref{thm:main} via Theorem \ref{thm:enumerMinMax}} 

Let $(S^3,\bg)$  be the 3-sphere of positive Ricci curvature given in Theorem \ref{thm:main}. Let $\cX_{0} \subset \cX$ be the preimage of $B_0:=\mathbb S^3/Q_{48}\x \{p\}\subset B$ under the bundle projection $\pi: \cX\to B$, and $\alpha_0\in H^1(\cX_0)$ be the restriction $\alpha|_{\cX_0}$. Clearly, $\partial \cX_0 = \cX_0 \cap \partial \cX$.

Theorem \ref{thm:main} would follow immediately from applying Theorem \ref{thm:enumerMinMax}, to the 10-dimensional family  $\Xi_0:=\Xi|_{\cX_0}:(\cX_0,\partial\cX_0)\to\cS_{\leq 2}(S^3)$, with $p = 3$ and $\lambda_1 = \lambda_2 = \lambda_3 = \alpha_0$. Thus, it now suffices to verify the two  assumptions in Theorem \ref{thm:enumerMinMax}. 

We first verify   assumption \eqref{item:enuMinMaxC} of Theorem \ref{thm:enumerMinMax}. Let $S\subset M$ be any smooth embedded genus $g$ surface. Pick any 1-cycle $C\subset \cX_0$ such that $\alpha_0([C])\ne 0$. We  let $r>0$  be small enough such that every $S'\in\cS(M)$ that is $r$-close to $S$ in the $\bF$-distance must have genus at least $g$: See \cite[Lemma 2.6]{chuLiWang2025optimalFamily}. Thus, to verify Theorem \ref{thm:enumerMinMax} \eqref{item:enuMinMaxC}, we can assume without loss of generality that $\Xi|_C$ maps into $\cS_2(S^3)$. Then by Theorem \ref{thm:XiProper} \eqref{Item:FamilyXi_1-sweepout}, $\Xi|_C$ must be a 1-sweepout. In particular, it cannot be deformed via pinch-off processes to become $r$-close to $S$, as desired.

To verify   Theorem \ref{thm:enumerMinMax} \eqref{item:enuMinMaxY}, we let $(Y,\partial Y)\subset (\cX_0,\partial \cX_0)$ be an arbitrary relative 7-subcomplex such that
\begin{equation}\label{eq:YDef}
[Y]=[\cX]\frown(\alpha_0)^3\in H_7(\cX_0,\partial \cX_0).    
\end{equation}
By \eqref{eq:YDef} and Theorem \ref{thm:capProduct}, we have 
\[
    [Y] \frown \lambda^5=[\cX] \frown \lambda^5 \frown \alpha^3 \frown \beta = [D_0] \in H_2(\cX,\partial\cX)\,.
\] 
Suppose by contradiction that there exists a deformation via pinch-off processes, 
\begin{equation}
    H:[0,1]\x Y\to\cS_{\leq 2}(S^3)\,,
\end{equation}
where $H(0,\cdot)=\Xi|_{Y}$, and $\Xi':=H(1,\cdot)$ maps into $\cS_{\leq 1}(S^3)$. For $k=0,1$, let $Z_k:=\{x\in Y:\fg(\Xi'(y))=k\}$.   We can derive a contradiction by finding a relative $2$-subcomplex $(D,\partial D)\subset (Z_1,Z_1\cap \partial \cX)$ such that $[\partial D]=[\partial D_0] \in H_1(\partial \cX)$. Indeed, with such a $D$, we can concatenate $H|_{[0,1]\x\partial D}$ and $\Xi'|_D$ along $\Xi'|_{\partial D}$ to obtain a family $\Phi$. In this construction, the boundary of the domain of $\Phi$ is $\{0\}\x\partial D$, on which $\Phi$ agrees with $\Xi|_{\partial D}$, contradicting Theorem \ref{thm:XiProper} \eqref{Item:FamilyXi_2chain}.

Using \cite[Proposition 2.23]{chuLiWang2025optimalFamily}, we take a {\it subcomplex} $\tilde Z_0\subset Y$ slightly larger than  $Z_0$ such that $\Xi|_{\tilde Z_0}$ can also be deformed via pinch-off processes to become a map into $\cS_0(S^3)$. Now, recall that there does not exist a Simon--Smith family in $\cS_0(S^3)$ that is 5-sweepout in the Almgren-Pitts sense: This was proven in \cite[\S 3.5]{chuLi2024fiveTori}. Hence, $\Xi|_{\tilde Z_0}$ cannot be a 5-sweepout, and therefore, by Theorem \ref{thm:XiProper} \eqref{Item:FamilyXi_5-sweepout}, $\lambda^5|_{\tilde Z_0}=0
$. Now, we apply Lemma \ref{lem:existsThetaAlgTop}  with
\begin{itemize}
    \item Let $X:=\cX$, $A:=\emptyset$. Slightly thickened $\partial\cX$ to make it open, and let $B$ be that.
    \item $p:=7$, $q:=5$, $C:=Y$. $\omega:=\lambda^5$,
    \item $W_1:=\textrm{int}(\tilde Z_{0}) $,
    $W_2:=Z_1$.
\end{itemize}
Note that the assumptions Lemma \ref{lem:existsThetaAlgTop} are   satisfied, and so  we immediately obtain some relative 2-subcomplex $(D,\partial D)\subset(Z_1,Z_1\cap \partial \cX)$ for which, when viewed in $H_2(\cX,\partial\cX)$, $[D]=[D_0]$. In particular, $[\partial D]=[\partial D_0]$ in $H_1(\partial \cX)$, as desired. This finishes the proof of Theorem \ref{thm:main}.

%Let $\Omega\subset \cX$ be a closed tabular neighborhood of $(\{a_5\ne 0\}\x\D\x SO(4))/G$ in $\cX$. Assuming $\Omega$ is small enough, we can guarantee $\Xi|_{\Omega}$ is genus 0. We may also assume that $Y$ intersects $\partial\Omega$ transversely. Let $Y':=\overline{Y\backslash \Omega}$. Note $\partial Y'$ is the union of two parts: $Y\cap\partial\Omega\subset \Omega$ and $\partial Y\backslash\Omega\subset\partial\cX.$ Now, let us consider the pair $(Y',\partial Y')\subset (\cX,\Omega\cup\partial\cX)$
\section{Properties of  the family $\Psi$}\label{sect:proofThmPsi}
  In this section, we specify the cut-off functions $\rho, \zeta, \delta$ in the definition of family $\Psi$ in \eqref{eq:PsiDef} and \eqref{Equ_Def_rho(a,x)} and prove Theorem \ref{prop:Psi1}. This section is organized as follows. We first study in \S \ref{Subsec_5-family} zero sets of the form \[
    \{x\in S^3: a_0+ a_1x_1+ a_2x_2+ a_3x_3+ a_4x_4+ a_5x_1x_2 = 0\} \,.
  \]
  This corresponds to an $\RP^5$-family of cycles which (if nonempty) are either punctate surfaces with genus $0$, or a union of two transversely intersecting round spheres.  \S \ref{Subsec_AbstractLem} contains some abstract lemmas discussing zeros of functions on $\bbS^1$, which help to understand in Lemma \ref{Lem_Constru Psi_GenusCal of (F=0)} the genus and other geometric behavior of the zero sets of a special class of functions of the form \eqref{eq:PsiDef}. 
  Using them, we specify the choice of the cut-off functions $\rho, \zeta, \delta$ in \S \ref{Subsec_Constru Psi} and discuss the basic properties of the family $\Psi$ in Lemma \ref{Lem_SmallDelta}. Theorem \ref{prop:Psi1} will proved at the end of \S \ref{Subsec_Constru Psi} based on Lemma \ref{Lem_SmallDelta}. Finally, in \S \ref{sect:tildeXiInvariantProof}  we prove Proposition \ref{prop:tildeXiInvariant}.
  
  Let us recall the coordinate system we  use on $S^3$. First, we view $S^3$ as the set of points $x\in\R^4$ of unit distance to the origin.  Let $C\subset S^3$ be the great circle $\{x\in S^3:x_3=x_4=0\}$. We parametrize $S^3$ by 
  \begin{align}
    \bD\times \bbS^1 \to S^3, \quad (x, \alpha)\mapsto (x_1, x_2, \varsigma(x)\cos\alpha, \varsigma(x)\sin\alpha)\,, \label{Equ_Constru Psi_Param S^3}
  \end{align}
  where $\varsigma(x):= \sqrt{1-x_1^2-x_2^2}$, $\bD$ is the closed unit disk in $\R^2$, and $\bbS^1:= \R/2\pi\Z$. For later reference in this section, we also let $\bD_\kappa(b)$ be the closed disc of radius $\kappa$ centered at $b$. Recall that \eqref{eq:coor} is a diffeomorphism from $\Int(\bD)\times \bbS^1$ to $S^3\setminus C$, and it is a smooth $\bbS^1$ fiber bundle when restricted to $\partial\bD\times \bbS^1$, mapping $\{z\}\times \bbS^1$ to a point on $C$. Note also that here we use the symbol $\bf D$ in order to distinguish it from the disc $\D$ in $\dmn(\Psi)=\RP^5\x\D$.
%  Below, we will often use the coordinate $(x_1,x_2,\alpha)$ for $S^3\backslash C$. Let $C^\perp$ the great circle in $S^3$ given by $x_1=x_2=0$.

 \subsection{The canonical $\RP^5$ family} \label{Subsec_5-family}
   We first consider the $\RP^5$ family: for every $a=[a_0:a_1:a_2:a_3:a_4:a_5]$, define
   \begin{align}
     \tilde\Phi_5(a) & := \{x\in \R^4: a_0+ a_1x_1+ a_2x_2+ a_3x_3+ a_4x_4+ a_5x_1x_2 = 0\} \subset \R^4\,; \\
     \Phi_5(a) & := S^3\cap \tilde\Phi_5(a)\subset S^3 \,. \label{Equ_Def_Phi_5(a)}
   \end{align}
   Note that $\tilde\Phi_5(a)$ is either the union of two orthogonal hyperplanes (in which case $a = [a_1a_2 :a_1 :a_2 :0 :0 :1]$), or an empty set (in which case $a = [1:0:0:0:0:0]$) or a smooth hypersurface in $\R^4$. And $\tilde\Phi_5(a)$ varies smoothly in $a$ in their regular parts. We say that  $\tilde\Phi_5(a)$ is \textit{transverse} to $S^3$ at $x\in S^3\cap \tilde\Phi_5(a)$ if $x$ is a regular point of $\tilde\Phi_5(a)$ and the tangent spaces $T_xS^3$ and $ T_x\tilde\Phi_5(a)$ span $\R^4$.

   It is easy to check that $\Phi_5$ gives a 5-sweepout in the Almgren-Pitts setting, but it is not a Simon--Smith family. However, we still have genus bound when $\Phi_5(a)$ does not contain a singular curve:
   \begin{lem}\label{Lem_5-family_SingParam}
     Let \[
       A_{\sing}:= \{[a_1a_2 :a_1 :a_2 :0 :0 :1] \in\RP^5: a_1^2 + a_2^2 < 1\}\,,
     \]
     Then, 
     \begin{enumerate} [label={\normalfont(\roman*)}]
      \item\label{Item_5-family_2plane} for every $a\in A_{\sing}$, $\Phi_5(a)$ is the union of two round (but not necessarily equatorial) spheres that intersect transversely along the circle $\{x\in S^3: x_1=-a_2, x_2=-a_1\}$;
      \item\label{Item_5-family_reg_a.e.} for a dense subset of $a\in \RP^5\setminus A_{\sing}$, $\tilde\Phi_5(a)$ is transverse to $S^3$, and hence $\Phi_5(a)$ is smooth;
      \item\label{Item_5-family_finiteSing} for every $a\in \RP^5\setminus A_{\sing}$, $\tilde\Phi_5(a)$ is transverse to $S^3$ at all but at most $9$ points; and $\mathfrak{g}(\Phi_5(a)) = 0$.
     \end{enumerate}
   \end{lem}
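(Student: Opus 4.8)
\textbf{Proof plan for Lemma \ref{Lem_5-family_SingParam}.}

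The statement concerns the zero sets $\Phi_5(a) = S^3 \cap \{a_0 + a_1 x_1 + a_2 x_2 + a_3 x_3 + a_4 x_4 + a_5 x_1 x_2 = 0\}$, and splits into three parts of increasing difficulty. For part \ref{Item_5-family_2plane}, when $a = [a_1 a_2 : a_1 : a_2 : 0 : 0 : 1] \in A_{\sing}$, the polynomial factors as $(x_1 + a_2)(x_2 + a_1)$, so $\tilde\Phi_5(a)$ is literally the union of the two orthogonal affine hyperplanes $\{x_1 = -a_2\}$ and $\{x_2 = -a_1\}$. The plan is then purely elementary: intersect each hyperplane with $S^3$ to get a round $2$-sphere (nonempty and of positive radius exactly because $a_1^2 + a_2^2 < 1$, so each hyperplane meets the open ball bounded by $S^3$), and observe that the two spheres meet precisely along $S^3 \cap \{x_1 = -a_2, x_2 = -a_1\}$, a circle of radius $\sqrt{1 - a_1^2 - a_2^2} > 0$; transversality along this circle is immediate since the two hyperplanes are orthogonal. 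I would also note the complementary case $a = [1:0:0:0:0:0]$ gives $\Phi_5(a) = \emptyset$, and otherwise $\tilde\Phi_5(a)$ is a smooth hypersurface by inspecting the gradient of the defining polynomial.

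For part \ref{Item_5-family_reg_a.e.}, the plan is a standard Sard/transversality argument. Consider the incidence variety of pairs $(x, a) \in S^3 \times (\RP^5 \setminus A_{\sing})$ with $x \in \tilde\Phi_5(a)$, and show the map $(x,a) \mapsto$ (value of the polynomial) is a submersion in the $a$-directions at every point where $\tilde\Phi_5(a)$ is nonempty — indeed, perturbing $a_0$ alone already moves the level set. Hence the projection of the incidence variety to the parameter space has as its critical values (in the sense of failure of transversality of $\tilde\Phi_5(a)$ to $S^3$) a measure-zero, hence nowhere-dense, subset; outside $A_{\sing}$ the complement is therefore dense, and for such $a$ the intersection $\Phi_5(a) = S^3 \cap \tilde\Phi_5(a)$ is a smooth embedded surface. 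One must be mildly careful that $A_{\sing}$ is closed in $\RP^5$ so that $\RP^5 \setminus A_{\sing}$ is open and the density statement makes sense.

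The real work is part \ref{Item_5-family_finiteSing}: the bound of $9$ non-transverse points and the genus-zero conclusion. For the point count, the plan is to express the non-transversality condition as a polynomial system. A point $x \in S^3 \cap \tilde\Phi_5(a)$ fails transversality iff either $x$ is a singular point of $\tilde\Phi_5(a)$ (i.e.\ $\nabla(a_0 + \cdots + a_5 x_1 x_2) = 0$ at $x$), or $x \in \tilde\Phi_5(a)$ is regular but $T_x \tilde\Phi_5(a) = T_x S^3$, which forces the gradient of the quadric polynomial to be parallel to $x$. Writing the latter as a Lagrange-multiplier condition $\nabla P(x) = \mu x$ together with $|x|^2 = 1$ and $P(x) = 0$, one gets a system whose generic solution count I would bound by $9$ via Bézout: the degrees $(2, 2, 2, 2, 1)$ of the equations $\partial_1 P - \mu x_1 = 0, \dots, \partial_4 P - \mu x_4 = 0$ (in $\R^4$, after eliminating $\mu$, or projectivizing in $(x,\mu)$) and $|x|^2 - 1 = 0$, $P = 0$ multiply out to the claimed bound — this is precisely the kind of count that appears for intersections of a quadric with the second fundamental form locus, and $9 = 3^2$ is the natural answer for a quadric hypersurface restricted to a quadric. \textbf{This Bézout bookkeeping is the main obstacle}: one must carefully exclude the components at infinity and on $A_{\sing}$, handle the case $a_5 = 0$ (where $P$ is affine-linear and $\tilde\Phi_5(a)$ is a hyperplane, transverse to $S^3$ at all points of intersection, so the count is trivially $0$), and make sure the generic count dominates the non-generic one (by upper-semicontinuity of the number of solutions, or by a limiting argument from the dense transverse locus). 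For the genus statement $\mathfrak{g}(\Phi_5(a)) = 0$: for $a$ in the dense transverse set this is because $\Phi_5(a)$ is a smooth surface cut out of $S^3$ by a quadric not containing a line, hence a disjoint union of spheres — I would argue $\tilde\Phi_5(a)$ is a quadric surface of the form $\{x_1 x_2 = (\text{affine-linear})\}$ after scaling, which (when smooth and meeting $S^3$ transversally) intersects $S^3$ in spheres by writing the intersection as a graph over a conic or by an explicit change of coordinates diagonalizing the quadric; for the remaining $a \in \RP^5 \setminus A_{\sing}$ one passes to the limit using the definition of genus for punctate surfaces (Definition 2.5 of \cite{chuLiWang2025optimalFamily}) and the fact that genus cannot increase under the degeneration of finitely many transverse intersection points, each of which is a local node contributing a sphere rather than a handle. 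I would present this last step by exhibiting an explicit isotopy or a direct computation of $H_1$ rather than invoking heavy machinery.
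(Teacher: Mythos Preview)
Your plan for \ref{Item_5-family_2plane} and \ref{Item_5-family_reg_a.e.} is fine and matches the paper's (the paper actually derives \ref{Item_5-family_reg_a.e.} as a byproduct of the reduction below, but your direct Sard argument on the incidence variety works equally well; note incidentally that $A_{\sing}$ is \emph{not} closed --- it is an open $2$-disc --- but this is harmless since it is nowhere dense).

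The gap is in \ref{Item_5-family_finiteSing}. Your B\'ezout bookkeeping does not yield $9$: the Lagrange system $\nabla P=\mu x$, $|x|^2=1$ consists of five quadratic equations in $(x_1,\dots,x_4,\mu)$, so the na\"ive count is $2^5=32$, and there is no evident mechanism by which ``excluding components at infinity'' cuts this to $9=3^2$; the slogan ``natural answer for a quadric restricted to a quadric'' is not a proof. Likewise, diagonalizing the quadric to $\{u^2-v^2=\text{linear}\}$ does not make the genus-zero claim transparent --- a hyperbolic paraboloid crossed with $\R$, sliced by a round sphere, is not obviously a union of spheres. The paper's argument is genuinely different and supplies both missing ideas. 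One applies an affine map $\eta(x)=\lambda\cR x+b$ sending $\tilde\Phi_5(a)$ to $\{x_1x_2=x_3\}\times\R$ (or $\{x_1x_2=1\}\times\R$) and $S^3$ to a round sphere $\partial B^4_\lambda(b)$; non-transverse points then correspond to critical points of the squared-distance $\varrho_{\breve b}$ on the explicit surface $P=\{x_1x_2=x_3\}\subset\R^3$. Solving $\nabla(\varrho_{\breve b}|_P)=0$ directly reduces (after eliminating $x_1,x_2$ in terms of $t=x_3-b_3$) to a degree-$5$ polynomial in $t$ plus two degenerate quadratic cases at $t=\pm1$, giving $5+4=9$. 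For the genus, the key observation is that the second fundamental form of the saddle surface $P$ is everywhere \emph{indefinite}, so $\nabla^2(\varrho_{\breve b}|_P)=2\,\id+(\nabla^\perp\varrho_{\breve b})\cdot A_P$ always has a positive eigenvalue; a Morse perturbation therefore has no local maxima, forcing the first Betti number of every sublevel set to be monotone and hence zero (since $P\cong\R^2$), so $\overline{B_r(\breve b)}\cap P$ is a union of discs and its double is a union of spheres. Your ``explicit isotopy or direct $H_1$ computation'' would have to rediscover this Morse-theoretic structure.
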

   \begin{proof}
     \ref{Item_5-family_2plane} follows by noticing that when $a\in A_{\sing}$, $\tilde\Phi_5(a) = \{(x_1+a_2)(x_2+a_1)=0\}$ is the union of two orthogonal hyperplanes $\{x_1=-a_2\}$ and $\{x_2=-a_1\}$ whose intersection transversely intersects $S^3$ along the circle described in \ref{Item_5-family_2plane}.
     
     To prove \ref{Item_5-family_reg_a.e.} and \ref{Item_5-family_finiteSing}, consider an arbitrary $a\in \RP^5\setminus A_{\sing}$. If $a_5 = 0$, then $\tilde\Phi_5(a)$ is either empty or an affine hyperplane in $\R^4$, hence \ref{Item_5-family_finiteSing} holds for such $a$; if not, then we write $a=[a_0:\dots:a_4:1]$. If $a_0=a_1a_2$ and $a_3=a_4=0$, then by $a\notin A_{\sing}$, we have $a_1^2+a_2^2\geq 1$, hence $\Phi_5(a)$ is the union of at most two 2-spheres (possibly degenerated to points), either disjoint or tangent to each other, which also satisfies the assertion in \ref{Item_5-family_finiteSing}. For the remaining case, \[
       \tilde\Phi_5(a)=\{x\in \R^4: (x_1+a_2)(x_2+a_1) + (a_3x_3+a_4x_4) + a_0-a_1a_2 = 0\}\,.
     \] 
     where $(a_3, a_4, a_0-a_1a_2)\neq (0,0,0)$. Hence there exists $\cR\in SO(4)$, $b\in \R^4$ and $\lambda>0$ such that 
     \begin{align*}
        \eta(\tilde\Phi_5(a)) = \begin{cases}
          \{x\in \R^4: x_1x_2 = 1\}, & \ \text{ if }(a_3, a_4)= (0,0)\,, \\
          \{x\in \R^4: x_1x_2 = x_3\}, & \ \text{ if }(a_3, a_4)\neq (0,0)\,;
        \end{cases}
     \end{align*}
     where $\eta(x) = \eta_{\cR, b, \lambda}(x):= \lambda\cdot\cR x + b$ is a diffeomorphism of $\R^4$. Also notice that $\eta(S^3) = \partial B^4_\lambda(b)$, hence $\eta(\Phi_5(a)) = (P\times \R)\cap \partial B^4_\lambda(b)$, where $P$ is the smooth surface described in  Lemma \ref{Lem_5-family_topo of surf} below. 
     
     Write $b=(\breve b, b_4)\in \R^3\times \R$, $\varrho_{\breve b}(\breve x):= |\breve x - \breve b|^2$ is a function on $\R^3$. Notice that $P\times \R$ intersects non-transversely with $\partial B^4_\lambda(b)$ at $x=(\breve x, x_4)$ if and only if $x_4=b_4$ and $\breve b - \breve x \perp T_{\breve x} P$, i.e. $\varrho_{\breve b}(\breve x)=\lambda^2$ and $\breve x$ is a critical point of $\varrho_{\breve b}|_P$. Also notice that if $\lambda^2$ is a regular value of $\varrho_{\breve b}|_P$, then the smooth surface $(P\times \R)\cap \partial B_\lambda^4(b)$ is homeomorphic to the following. Consider the smooth surface with boundary $P\cap \overline{B^3_\lambda(\breve b)}$, take two copies of it, and glue them together along their boundaries: Let us call the resulting surface (without boundary) the {\it double of $P\cap \overline{B^3_\lambda(\breve b)}$} . 
     
     Therefore, \ref{Item_5-family_reg_a.e.} follows by Sard Theorem, as the regular value of $\varrho_{\breve b}|_P$ is dense; \ref{Item_5-family_finiteSing} follows from Lemma \ref{Lem_5-family_topo of surf} below, where the genus control is immediate when the intersection of $\tilde\Phi_5(a)$ and $S^3$  is transverse, since the double of finitely many disks is finitely many spheres; when the intersection is not transverse, the genus control follows from the approximation of regular $\Phi_5(a_j)$ guaranteed by \ref{Item_5-family_reg_a.e.} and the lower-semi-continuity of genus $\mathfrak{g}$ under $C^\infty_{loc}$ convergence away from finitely many points. 
   \end{proof}

   \begin{lem} \label{Lem_5-family_topo of surf}
     Let $P$ be one of the following two smooth surfaces in $\R^3$ \[
       \{x\in \R^3: x_1x_2 = 1\}\,, \qquad \{x\in \R^3: x_1x_2 = x_3\}\,.
     \]
     Then for every $b\in \R^3$, let $\varrho_b(x):= |x-b|^2$, we have,
     \begin{enumerate} [label={\normalfont(\alph*)}]
      \item\label{Item_5-family_Topo_9Crit} $\varrho_b|_P$ has at most $9$ critical points;
      \item\label{Item_5-family_Topo_Disk} if $r^2$ is a regular value of $\varrho_b|_P$, then $\overline{B_r^3(b)}\cap P$ is diffeomorphic to a finite collection (possibly empty) of disjoint disks. 
     \end{enumerate}
   \end{lem}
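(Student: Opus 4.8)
The plan is to analyze the critical points of $\varrho_b|_P$ directly via Lagrange multipliers, and then deduce the disk statement by a Morse-theoretic sweep-out argument. First I would set up coordinates: for $P = \{x_1 x_2 = 1\}$, parametrize by $(x_1, x_2) = (t, 1/t)$ with $t \ne 0$, so $\varrho_b$ becomes a rational function of a single variable $t$; its critical points are the roots of a polynomial equation obtained by clearing denominators in $\frac{d}{dt}\varrho_b(t, 1/t, \cdot)$ — wait, this surface sits in $\R^3$, so actually $P=\{x_1x_2=1\}\subset\R^3$ is a cylinder over a hyperbola, parametrized by $(t, 1/t, s)$ with $(t,s) \in (\R\setminus\{0\})\times\R$. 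Then $\varrho_b = (t-b_1)^2 + (1/t - b_2)^2 + (s-b_3)^2$, and $\partial_s \varrho_b = 0$ forces $s = b_3$, while $\partial_t \varrho_b = 0$ gives $2(t-b_1) - 2(1/t-b_2)t^{-2} = 0$, i.e. $t^3 - b_1 t^2 - b_2 t + 1 = 0$ after multiplying by $t^2$ — a cubic with at most $3$ roots. Hmm, that gives at most $3$ critical points, not $9$; let me reconsider — for the second surface $\{x_1 x_2 = x_3\}$ one parametrizes by $(x_1, x_2) = (u,v)$ freely with $x_3 = uv$, giving $\varrho_b = (u-b_1)^2 + (v-b_2)^2 + (uv - b_3)^2$, and the two equations $\partial_u \varrho_b = \partial_v \varrho_b = 0$ are two polynomial equations of degrees $3$ and $3$ in $(u,v)$; by Bézout the number of solutions is at most $9$ (generically), and one argues the bound holds always since the system, after elimination, reduces to a polynomial of degree $\le 9$ in one variable. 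For $\{x_1 x_2 = 1\}$ the count is even smaller, so $9$ is a safe uniform bound for both. That settles \ref{Item_5-family_Topo_9Crit}.

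For \ref{Item_5-family_Topo_Disk}, I would argue as follows. Suppose $r^2$ is a regular value of $\varrho_b|_P$. The key point is that $P$ is diffeomorphic to $\R^2$ (both surfaces are graphs: the first over $(\R\setminus\{0\})\times\R$, which is two copies of $\R^2$, the second genuinely over all of $\R^2$), hence simply connected, and $\varrho_b|_P$ is a proper, bounded-below exhaustion function (its sublevel sets are compact since $P$ is closed in $\R^3$ and $\varrho_b \to \infty$ along $P$). Therefore $\overline{B_r^3(b)} \cap P = \{\varrho_b|_P \le r^2\}$ is a compact smooth surface with boundary, and each of its connected components is a compact planar (genus $0$) surface with nonempty boundary; I must rule out components with more than one boundary circle, i.e. rule out handles/holes. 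Here I would use that $P$ carries a function — namely a linear height function, or more robustly $\varrho_b$ itself restricted suitably — whose behavior forbids nontrivial topology: concretely, if some component $\Omega$ of the sublevel set had first Betti number $\ge 1$, then by Morse theory applied to $\varrho_b|_P$ on all of $P \cong \R^2$ we would get an index-$1$ critical point of $\varrho_b|_P$ lying below level $r^2$ whose unstable manifold forms a loop that does not bound in the sublevel set but does bound in $P$ — contradicting that every critical submanifold is recovered; more cleanly, since $P$ is contractible, $H_1(\{\varrho_b \le r^2\}) $ must vanish for a regular sublevel set because $\varrho_b|_P$ has no local maxima (the Hessian of $\varrho_b$ restricted to $P$ always has a positive direction — in fact $\varrho_b|_P \to +\infty$ so there is no maximum and, checking the second-order data at critical points, no index-$2$ critical point either). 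By the handle-decomposition / Reeb-type argument, a contractible surface built with only index-$0$ and index-$1$ handles, having a regular value as its top, decomposes its regular sublevel sets into disjoint unions of disks — this is exactly the statement that $\{\varrho_b|_P \le r^2\}$ is a disjoint union of disks.

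The main obstacle I anticipate is making the "no index-$2$ critical point" claim fully rigorous and uniform: one must verify that at every critical point of $\varrho_b|_P$ the restricted Hessian is not negative definite. For the distance-squared function this is geometrically clear — a local maximum of $\varrho_b|_P$ would be a point of $P$ locally farthest from $b$, but $P$ is unbounded and, being a ruled/graphical surface with a linear direction of escape, cannot have such a point — yet turning this into a clean computation at possibly-degenerate critical points (when $r^2$ is a regular value the relevant critical points are nondegenerate, which helps) requires care with the two separate surfaces. I would handle $\{x_1 x_2 = 1\}$ first, where the cylinder structure makes the height coordinate $x_3 = s$ a direction of infinite escape with vanishing restricted Hessian in that direction at every point (so the restricted Hessian is at most index $1$), and then do $\{x_1 x_2 = x_3\}$ by noting it is asymptotically ruled near infinity and that along each ruling $\varrho_b$ is a quadratic in one parameter opening upward, again precluding a negative-definite restricted Hessian. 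With the index bound in hand, \ref{Item_5-family_Topo_Disk} follows from standard Morse theory on the contractible surface $P$.
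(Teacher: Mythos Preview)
Your proposal is correct and follows essentially the same two-step strategy as the paper: count critical points algebraically, then rule out index-$2$ critical points and invoke Morse theory on the contractible surface. Two minor tactical differences are worth noting. For \ref{Item_5-family_Topo_9Crit} on $\{x_1x_2=x_3\}$, the paper eliminates differently: rather than B\'ezout on two cubics in $(u,v)$, it writes the Lagrange conditions as a linear system in $(x_1,x_2)$ with coefficients depending on $x_3-b_3$, solves when $(x_3-b_3)^2\neq 1$ and substitutes back into $x_1x_2=x_3$ to get a quintic in $x_3$ (at most $5$ solutions), then handles the two degenerate slices $x_3-b_3=\pm 1$ separately (at most $4$ more), giving $5+4=9$; this avoids any discussion of intersections at infinity. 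For \ref{Item_5-family_Topo_Disk}, where you anticipate difficulty making ``no index-$2$'' rigorous, the paper's device is clean and worth knowing: at any point of $P$ one has $\nabla^2(\varrho_b|_P)=2\,\id_{TP}+(\nabla^\perp\varrho_b)\cdot A_P$, and since the second fundamental form $A_P$ is everywhere indefinite (both surfaces have $K\le 0$), this Hessian always has a strictly positive eigenvalue --- equivalent to your ruling argument but uniform and coordinate-free. The paper then perturbs to a Morse function with no local maxima, observes $\dim H_1$ of sublevel sets is nondecreasing, and uses that for $\lambda\gg 1$ the sublevel set is a single disk; this is exactly your monotonicity argument.
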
 
   \begin{proof}
     We only focus on the case when $P=\{x\in \R^3:x_1x_2=x_3\}$, the other surface can be treated in a similar (actually even simpler) way.

     For every $x\in P$, $x$ is a critical point of $\varrho_b|_P$ if and only if $2(x_1-b_1, x_2-b_2, x_3-b_3) =\nabla\varrho_b(x)\perp T_x P$, which is equivalent to 
     \begin{align}
      \begin{cases}
        x_1-b_1 = -x_2(x_3-b_3)\,, \\
        x_2-b_2 = -x_1(x_3-b_3)\,.
      \end{cases} \label{Equ_5-family_quadr equ}
     \end{align}

     When $(x_3-b_3)^2\neq 1$, \eqref{Equ_5-family_quadr equ} has solution 
     \begin{align*}
        x_1 = \frac{b_1 - b_2(x_3-b_3)}{1-(x_3-b_3)^2}\,, & &
        x_2 = \frac{b_2 - b_1(x_3-b_3)}{1-(x_3-b_3)^2}
     \end{align*}
     Plugging back to the defining equation $x_1x_2=x_3$ for $P$ gives a $5$-th order polynomial equation in $x_3$, so there are  at most $5$ solutions for $(x_1, x_2, x_3)$. 

     When $x_3-b_3 = \pm 1$, by \eqref{Equ_5-family_quadr equ}, $
       x_1 \pm x_2 = b_1 = \pm b_2\,.$
     Combining with $x_1x_2=x_3 = b_3 \pm 1$ gives at most $4$ solutions for $(x_1, x_2, x_3)$. In summary, there are at most $5+4=9$ solutions $x$ to $x\in P$ and $\nabla\varrho_b\perp T_xP$, which proves \ref{Item_5-family_Topo_9Crit}.

     To prove \ref{Item_5-family_Topo_Disk}, we can first take a smooth perturbation of $\varrho_b|_P$ to get a Morse function $\bar\varrho\in C^\infty(P)$ such that $\spt(\bar\varrho-\varrho_b|_P)$ is compact and that $\bar\varrho^{-1}(\R_{\leq r^2})$ is diffeomorphic to $\varrho_b^{-1}(\R_{\leq r^2})\cap P = \overline{B_r^3(b)}\cap P$. Also, since the second fundamental form $A_P$ of $P$ is everywhere indefinite, \[
       \nabla^2(\varrho_b|_P) = \nabla^2\varrho_b + (\nabla^\perp\varrho_b)\cdot A_P = 2\,\id_{T_*P} + (\nabla^\perp\varrho_b)\cdot A_P\, 
     \]
     has at least $1$ strictly positive eigenvalue everywhere on $P$. Thus we can further require the smooth perturbation $\bar\varrho$ to have no local maximum, i.e. every critical point of $\bar\varrho$ has Morse index $0$ or $1$. Therefore, the first betti number function $\lambda\mapsto \dim H_1(\bar\varrho^{-1}(\R_{\leq \lambda}))$
     is monotone non-decreasing in regular value $\lambda$ of $\bar\varrho$. 
     When $\lambda\gg 1$, $\bar\varrho^{-1}(\R_{\leq \lambda}) = \varrho_b^{-1}(\R_{\leq \lambda})\cap P = \overline{B^3_{\sqrt\lambda}(b)}\cap P$ is a single disk, so we conclude that \[
       \dim H_1(\overline{B_r^3(b)}\cap P) = \dim H_1(\bar\varrho^{-1}(\R_{\leq r^2})) = 0\,.
     \]
     Also, since $P$ is a graph over the $x_1x_2$-plane and hence diffeomorphic to $\R^2$, as a compact subdomain with first betti number $0$, $\overline{B_r^3(b)}\cap P$ is diffeomorphic to a finite disjoint union of disks.
   \end{proof}

 \subsection{Some abstract lemmas} \label{Subsec_AbstractLem}
    We list some useful abstract lemmas in this subsection for later application. Readers may skip the proofs in this subsection in a first reading.
   
   For $f\in C^\infty(\bbS^1; \R)$, let $\alpha\in f^{-1}(0)$. Define the order of $f$ at $\alpha$ to be
   \begin{align*}
      \ord_f(\alpha):= \sup\{l\in \Z_{\geq 1}: f(\alpha)=f^{(1)}(\alpha) = \dots = f^{(l-1)}(\alpha)=0\}\,.
   \end{align*}
   Also for $E\subset \bbS^1$, let 
   \begin{align*}
      \ord(f; E) := \sum_{\alpha\in f^{-1}(0)\cap E} \ord_f(\alpha)\,; 
   \end{align*}
   and we may omit $E$ if $E=\bbS^1$. 
   \begin{lem} \label{Lem_Constru Psi_local ord usc}
     For $N \in \mathbb{N}^+$, $f\in C^\infty(\bbS^1)$ and $\alpha\in f^{-1}(0)$ with $\ord_f(\alpha)\leq N$, suppose that a sequence $f_j\in C^\infty(\bbS^1)$ $C^N$-converges to $f$ as $j\to \infty$, and $U_1\supset U_2\supset...$ are closed intervals containing $\alpha$ that shrink to $\alpha$ as $j\to \infty$. Then for all $j\gg 1$, $\ord(f_j; U_j) \leq \ord_f(\alpha)$.
   \end{lem}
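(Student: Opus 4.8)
The plan is to reduce the statement to the following local fact: if $g$ is a smooth function on a neighborhood of $0$ in $\mathbb{R}$ with $\ord_g(0) = m \leq N$, then for any sequence $g_j \to g$ in $C^N$ on that neighborhood, and any sequence of closed intervals $I_j \ni 0$ shrinking to $\{0\}$, one has $\ord(g_j; I_j) \leq m$ for all $j \gg 1$. Here $\ord(g_j; I_j)$ counts zeros of $g_j$ in $I_j$ with multiplicity (an infinite-order zero contributes $+\infty$, but this will not occur). Passing from $\mathbb{S}^1$ to a local model is harmless since $\alpha$ has a neighborhood diffeomorphic to an interval, and $C^N$-convergence on $\mathbb{S}^1$ restricts to $C^N$-convergence on that interval.

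To prove the local fact, I would argue by contradiction and by downward induction on $m$. Since $\ord_g(0) = m$, Taylor's theorem gives $g(t) = t^m h(t)$ with $h$ smooth on a possibly smaller neighborhood and $h(0) = g^{(m)}(0)/m! \neq 0$; shrink the neighborhood so that $h$ is bounded away from $0$ there. The content is that a $C^N$-small perturbation of $t^m h(t)$ cannot have more than $m$ zeros (with multiplicity) in a tiny interval around $0$. One clean way: suppose for a subsequence $g_j$ has at least $m+1$ zeros counted with multiplicity in $I_j$. Apply Rolle's theorem repeatedly: if a $C^1$ function has $\geq \ell$ zeros with multiplicity in an interval, its derivative has $\geq \ell - 1$ zeros with multiplicity in the same interval. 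After $m$ applications, $g_j^{(m)}$ has at least one zero in $I_j$. But $g_j^{(m)} \to g^{(m)}$ uniformly near $0$ (this is where $m \leq N$ is used), and $g^{(m)}(0) = m! \, h(0) \neq 0$, so by continuity $g^{(m)}$ is bounded away from $0$ on a fixed neighborhood of $0$; hence for $j$ large $g_j^{(m)}$ has no zero in $I_j \subset$ that neighborhood — a contradiction. (One must be slightly careful that "Rolle with multiplicity" is valid: between two zeros of $g$ lies a zero of $g'$, and a zero of $g$ of order $k$ is a zero of $g'$ of order $k-1$; a short counting argument handles the general case. This is standard.)

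The main obstacle, such as it is, is purely bookkeeping: making the "$\ell$ zeros with multiplicity $\Rightarrow \ell - 1$ zeros of the derivative with multiplicity" step precise when several zeros coalesce, and ensuring the intervals $I_j$ are eventually contained in the fixed neighborhood on which $g^{(m)}$ (equivalently $h$) is bounded away from zero. Once that is in place, the convergence $g_j^{(m)} \to g^{(m)}$ in $C^0$ — guaranteed by the hypothesis $\ord_f(\alpha) \leq N$ together with $C^N$-convergence — closes the argument immediately. No geometry of $S^3$ or of the family $\Psi$ enters; this is a self-contained one-variable calculus lemma.
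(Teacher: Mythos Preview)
Your proposal is correct and follows essentially the same line as the paper's proof: both argue by contradiction, use repeated Rolle (the paper phrases it as $\ord(f;U)\leq \ord(f';U)+1$) to produce a zero of the $m$-th derivative of $f_j$ in $U_j$, and then contradict $f^{(m)}(\alpha)\neq 0$ via the $C^N$-convergence. The paper's write-up is marginally slicker in that it avoids the Taylor factorization and the fixed neighborhood by directly passing to the limit $f_j^{(k)}(\alpha_{j,k})\to f^{(k)}(\alpha)$, but the substance is identical.
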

   \begin{proof}
     Notice that by the intermediate value theorem, any closed interval $U\subset \bbS^1$ ended by two zeros of $f$ contains a zero of $f'$ in its interior. Hence, inductively, we have, \[
       \ord(f; U) \leq \ord(f^{(1)}; U) + 1 \leq \dots \leq \ord(f^{(N)}; U) + N\,.
     \]
     
     Suppose now for contradiction $\ord(f_j; U_j) > \ord_f(\alpha)=:N'$ for infinitely many $j$. Then after passing to a subsequence, for every $j$, and every $k \leq N'$,
     \[
       \ord(f_j^{(k)}, U_j) \geq \ord(f_j, U_j) - k>0\,,
     \] 
     i.e. there exists $\alpha_{j,k}\in U_j$ such that $f_j^{(k)}(\alpha_{j, k})=0$. For $j\to \infty$, we get $\alpha_{j, k}\to \alpha$ and $f^{(k)}(\alpha)=0$ for all $0 \leq k \leq N'$. This forces $\ord_f(\alpha)\geq N'+1$, which gives a contradiction.
   \end{proof}

   Associated to each $f\in C^\infty(\bS^1; \R)$ with
   $0<\ord(f)<+\infty$, we define:
   \begin{align}
%      \cZ(f)  := \sum_{\alpha\in f^{-1}(0)} \alpha\,; & &
%      \cZ^-(f):= \sum_{I\in \pi_0(\{f<0\})}\mu(I)\,;
      \cZ(f)   & := \sum_{\alpha\in f^{-1}(0)} \ord_f(\alpha)\cdot \alpha\,;  \\
      \cZ^-(f) & :=  \sum_{\alpha  \in \mu(\{f<0\})}\alpha + \sum_{\alpha\in f^{-1}(0)} \frac12 \big(\ord_f(\alpha)-\Neg_f(\alpha)\big)\cdot \alpha \,. \label{Equ_Def_cZ^-}
      %\label{Equ_Zeros on S^1_def Z(f)}
   \end{align}
   Here, when given a disjoint union $J$ of intervals on $\bS^1$, $\mu(J)$ denotes the set consisting of the midpoint of each connected component of $J$; 
   and  $\Neg_f(\alpha)\in \{0,1,2\}$ denotes   the number of connected components of $\{f<0\}$ whose closure contains $\alpha$. 
   Note that since $\alpha,\mu(I)\in \bbS^1=\R/2\pi\Z$ and $\ord_f(\alpha)-\Neg_f(\alpha)$ is always even, we have $\cZ(f), \cZ^-(f)$ also take value in $\bS^1=\R/2\pi\Z$. 
   When $\ord(f)=0$, we set as a convention that $\cZ(f)=\cZ^-(f):= 0 \text{ mod}\,2\pi$. 
%   when $\ord(f; E)=+\infty$, we set $\cZ(f; E):= \infty$.

   \begin{lem} \label{Lem_cZ=2cZ^-}
     For every $f\in C^\infty(\bS^1; \R)$ with $\ord(f)<+\infty$, we have $\cZ(f) = 2\cZ^-(f)$ in $\bS^1$. Moreover, if $f_j\in C^\infty(\bS^1, \R)$ such that $f_j\to f_\infty$ in $C^n$ and $\ord(f_j)=\ord(f_\infty)=n<+\infty$ for all $j\geq 1$, then $\cZ^-(f_j)\to \cZ^-(f_\infty)$.
   \end{lem}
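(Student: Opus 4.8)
I would treat the two assertions separately: the first by a direct bookkeeping in $\bS^1=\R/2\pi\Z$, the second by localizing at the zeros of $f_\infty$.

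\emph{The identity $\cZ(f)=2\cZ^-(f)$.} Since $\ord(f)<\infty$, the set $f^{-1}(0)$ is finite and $\{f<0\}$ is a finite disjoint union of open arcs. Unwinding the definitions, $2\cZ^-(f)-\cZ(f)=2\sum_{m\in\mu(\{f<0\})}m-\sum_{\alpha\in f^{-1}(0)}\Neg_f(\alpha)\cdot\alpha$ in $\bS^1$, so it is enough to show $2\sum_m m=\sum_\alpha\Neg_f(\alpha)\cdot\alpha$. For a single component arc $I$ of $\{f<0\}$, with endpoints $a,b$ and arclength $\ell$ when $I$ is traversed from $a$ to $b$, we have $b\equiv a+\ell$ and midpoint $m_I\equiv a+\ell/2$, hence $2m_I\equiv a+b$ in $\bS^1$; summing over components gives $2\sum_I m_I=\sum_I(a_I+b_I)$. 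Every endpoint of a component of $\{f<0\}$ is a zero of $f$, and a given zero $\alpha$ occurs among the pairs $\{a_I,b_I\}$ exactly $\Neg_f(\alpha)$ times, so the multiset $\{a_I,b_I\}$ coincides with the multiset in which $\alpha$ has multiplicity $\Neg_f(\alpha)$, and the identity follows. (The only subtlety is the degenerate case $\{f<0\}=\bS^1\setminus\{\alpha\}$, where one reads $\Neg_f(\alpha)$ as the number of germs of $\{f<0\}$ at $\alpha$; in all of our applications $f$ attains a positive value, so this does not occur.)

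\emph{The convergence $\cZ^-(f_j)\to\cZ^-(f_\infty)$.} I may assume $n\ge1$. First note that $\cZ(f_j)\to\cZ(f_\infty)$ is immediate, since the zeros of $f_j$, with their orders, converge to those of $f_\infty$ (using $\ord(f_j)=n$); by the first part this already gives $2\cZ^-(f_j)\to2\cZ^-(f_\infty)$, so the content is to exclude a jump by $\pi$. Write $\alpha_1,\dots,\alpha_K$ for the zeros of $f_\infty$, of orders $n_1,\dots,n_K$ with $\sum_k n_k=n$, and fix pairwise disjoint closed arcs $U_k$ with $\alpha_k\in\Int U_k$, $f_\infty\ne0$ on $\bS^1\setminus\bigcup_k U_k$ and on $\bigcup_k\partial U_k$. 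For $j\gg1$, $C^0$-convergence forces $f_j\ne0$ outside $\bigcup_k U_k$ with the same sign as $f_\infty$, so all zeros of $f_j$ lie in $\bigcup_k U_k$, and each zero of $f_j$ in $U_k$ tends to $\alpha_k$; Lemma \ref{Lem_Constru Psi_local ord usc}, applied on each $U_k$ with intervals shrinking to $\alpha_k$, gives $\ord(f_j;U_k)\le n_k$, and since $\ord(f_j)=\sum_k\ord(f_j;U_k)\le\sum_k n_k=n=\ord(f_j)$ we get $\ord(f_j;U_k)=n_k$ for every $k$. In particular the number $e_k$ of endpoints of $U_k$ at which $f$ is negative equals $\Neg_{f_\infty}(\alpha_k)$, and is unchanged between $f_\infty$ and $f_j$.

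\emph{Finishing, and the main obstacle.} I would then split $\cZ^-(f)$ along the $U_k$'s. No component of $\{f<0\}$ can pass through a $U_k$ (which contains the zero $\alpha_k$), so every component either lies inside a single $U_k$ (``small'') or contains exactly one full arc of $\bS^1\setminus\bigcup_k\Int U_k$ along which $f<0$ (``big''); the big components are thus in bijection with those arcs, a datum preserved for $j\gg1$, and the endpoints of each big component of $f_j$ are zeros of $f_j$ converging to the corresponding zeros of $f_\infty$, so its midpoint converges to the right limit. For $f_\infty$ there are no small components (each $U_k$ contains one zero), so the part of $\cZ^-(f_\infty)$ supported near $\alpha_k$ is exactly $\frac12(n_k-\Neg_{f_\infty}(\alpha_k))\alpha_k=\frac12(n_k-e_k)\alpha_k$; and a short count of the arcs of $\{f_j<0\}\cap U_k$ shows $\#\{\text{small comps in }U_k\}+\frac12\sum_{\alpha\in\Int U_k}(\ord_{f_j}(\alpha)-\Neg_{f_j}(\alpha))=\frac12(n_k-e_k)$ as well, so the part of $\cZ^-(f_j)$ supported in $U_k$ is a sum of $\frac12(n_k-e_k)$ points, each tending to $\alpha_k$. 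Summing over $k$ and over big components yields $\cZ^-(f_j)\to\cZ^-(f_\infty)$. The hard part is precisely this last bookkeeping: one must check that under the perturbation the arcs of $\{f<0\}$ can split or merge only inside the $U_k$'s and only in a way that leaves $e_k$ and the number of big arcs invariant — and this is exactly where the hypothesis $\ord(f_j)=\ord(f_\infty)$ is indispensable, since without it $f_j$ could have strictly fewer zeros and $\cZ^-$ could genuinely jump.
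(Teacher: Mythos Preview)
Your proof is correct and follows essentially the same strategy as the paper's, but you carry out explicitly what the paper leaves as a one-line claim. For the identity $\cZ(f)=2\cZ^-(f)$, your endpoint--midpoint computation is exactly the paper's argument (``$2\mu(I)=\alpha^-+\alpha^+$ in $\bS^1$''). For the continuity, the paper reformulates $\cZ^-(f)$ as the sum of midpoints of $n/2$ possibly-degenerate intervals in $\{f\le0\}$ (counting each zero $\alpha$ as $(\ord_f(\alpha)-\Neg_f(\alpha))/2$ degenerate intervals) and then asserts that under the order-preserving convergence these intervals vary continuously; your localization into the arcs $U_k$, the small/big dichotomy, and the count $s_k+\tfrac12\sum_{\alpha\in U_k}(\ord_{f_j}(\alpha)-\Neg_{f_j}(\alpha))=\tfrac12(n_k-e_k)$ are precisely the justification of that assertion. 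The paper's formulation is slicker, but your version is what one would actually write to make the ``intervals vary continuously'' step rigorous.
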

   \begin{proof}
     The first assertion follows directly from the definition and that for an open interval $I\subset \bS^1$ with end points $\alpha^-, \alpha^+$, we have $2\mu(I)=\alpha^-+\alpha^+$ in $\bS^1$.

     To prove the second assertion, notice that we can view each $\alpha\in f^{-1}(0)$ with $\ord_f(\alpha)-\Neg_f(\alpha)>0$ as $(\ord_f(\alpha)-\Neg_f(\alpha))/2$ degenerate intervals in $\{f\leq 0\}$, so that \eqref{Equ_Def_cZ^-} is the sum of midpoints of all (possibly degenerate) $n/2$ intervals in $\{f\leq 0\}$. By Lemma \ref{Lem_Constru Psi_local ord usc}, if the total order is preserved under taking the  limit $f_j\to f_\infty$, then the  zeros of $f_\infty$ must be the limits of the zeros of $f_j$. In particular, the corresponding (possibly degenerate) intervals in $\{f_j\leq 0\}$ vary continuously  under convergence. $\cZ^-(f_j)\to \cZ^-(f_\infty)$ thus follows from this.
   \end{proof}

   For every integer $k>0$, a \textit{trigonometric polynomial} of degree $k$ is a function on $\bbS^1$ of the form
   \begin{align}
      T(\alpha) := b_0 + b_1\cos(\alpha+\theta_1) + \dots + b_k\cos(k\alpha +\theta_k)\,, \label{Equ_Constru Psi_TrigonPolyn}
   \end{align}
   where $b_j\in \R$, $b_k\neq 0$, and $\theta_j\in \R/2\pi\Z$. (We use the convention that the  zero function has degree $-1$.) We let \[
      \|T\|:= \left(b_0^2+b_1^2+\dots + b_k^2\right)^{1/2}\,,
   \]
   and let $\cT_k$ be the space of all trigonometric polynomial of degree $\leq k$, which is a vector space of  dimension $2k+1$.
   
   \begin{lem} \label{Lem_Constru Psi_Ord(Trig polyn)}
     For every nonzero $T\in \cT_k$, $\ord(T)\leq 2k$. Furthermore, if $T$ has the form \eqref{Equ_Constru Psi_TrigonPolyn} and $\ord(T) = 2k$, then $b_k\neq 0$, $\cZ(T)=-2\theta_k$ and
     \begin{align*}
        \cZ^-(T) = \begin{cases}
            -\theta_k + k\pi,     & \text{ if }b_k>0\,; \\
            -\theta_k+ (k+1)\pi, & \text{ if }b_k<0\,.
        \end{cases} 
     \end{align*}
   \end{lem}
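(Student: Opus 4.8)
The plan is to convert the statement into one about the roots of a single degree-$2k$ polynomial via the substitution $z=e^{i\alpha}$. Writing $\cos(j\alpha+\theta_j)=\tfrac12(e^{i\theta_j}e^{ij\alpha}+e^{-i\theta_j}e^{-ij\alpha})$, a nonzero $T\in\cT_k$ becomes $T(\alpha)=\sum_{j=-k}^k d_j e^{ij\alpha}$ with $d_{-j}=\overline{d_j}$, $d_0=b_0$ and $d_k=\tfrac{b_k}{2}e^{i\theta_k}$, and I would set
\[
  P(z):=z^k\textstyle\sum_{j=-k}^k d_j z^j=\sum_{m=0}^{2k}d_{m-k}z^m,
\]
a nonzero polynomial of degree $\le 2k$ with $P(e^{i\alpha})=e^{ik\alpha}T(\alpha)$. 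The first point is that $\ord_T(\alpha_0)$ equals the multiplicity of $e^{i\alpha_0}$ as a root of $P$: if $P(z)=(z-e^{i\alpha_0})^mQ(z)$ with $Q(e^{i\alpha_0})\ne 0$, then $T(\alpha)=e^{-ik\alpha}(e^{i\alpha}-e^{i\alpha_0})^mQ(e^{i\alpha})$ and $\alpha\mapsto e^{i\alpha}-e^{i\alpha_0}$ vanishes to exactly first order at $\alpha_0$. Summing over $T^{-1}(0)$ gives $\ord(T)=\#\{\text{roots of }P\text{ on the unit circle, with multiplicity}\}\le\deg P\le 2k$, which is the first assertion.

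Assume now $\ord(T)=2k$. If $b_k=0$ then $T\in\cT_{k-1}$ and the first part applied to $k-1$ gives $\ord(T)\le 2k-2$, a contradiction; so $b_k\ne 0$, $\deg P=2k$, and all $2k$ roots of $P$ lie on the unit circle (else $\ord(T)<2k$). Writing them as $e^{i\alpha_j}$, $j=1,\dots,2k$, with real lifts $\alpha_j$, factoring $P=d_k\prod_j(z-e^{i\alpha_j})$, and using $e^{i\alpha}-e^{i\alpha_j}=2i\,e^{i(\alpha+\alpha_j)/2}\sin\tfrac{\alpha-\alpha_j}{2}$, I would derive
\[
  T(\alpha)=C\prod_{j=1}^{2k}\sin\tfrac{\alpha-\alpha_j}{2},\qquad C:=d_k(2i)^{2k}e^{i\sum_j\alpha_j/2}\in\R\setminus\{0\}
\]
($C$ is real since the other two factors are real-valued). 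Comparing constant terms, $P(0)=d_k\prod_j(-e^{i\alpha_j})=d_{-k}=\overline{d_k}$ forces $e^{i\sum_j\alpha_j}=\overline{d_k}/d_k=e^{-2i\theta_k}$, i.e. $\cZ(T)=\sum_j\alpha_j\equiv -2\theta_k$ in $\bbS^1$. Reading $(2i)^{2k}=(-1)^k2^{2k}$ off the definition of $C$ and comparing arguments, in the case $b_k>0$ one also gets $-\theta_k\equiv\tfrac12\sum_j\alpha_j+\pi\,\mathbf 1[(-1)^kC<0]$ in $\bbS^1$.

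To pin down $\cZ^-(T)$ (still with $b_k>0$) I would first reduce to the case of $2k$ distinct zeros: by Lemma~\ref{Lem_cZ=2cZ^-}, $\cZ^-$ is continuous on the set $\{T\in\cT_k:\ord(T)=2k\}$, trigonometric polynomials with distinct zeros are dense there (perturb the $\alpha_j$ in the product formula, keeping $C$ fixed, which stays in $\cT_k$), and $\theta_k=\arg d_k$ varies continuously. When the $\alpha_j$ are distinct, ordered so that $\alpha_1<\dots<\alpha_{2k}<\alpha_1+2\pi$, every factor $\sin\tfrac{\alpha-\alpha_j}{2}$ is negative on the arc $(\alpha_{2k},\alpha_1+2\pi)$, so $T$ has sign $\operatorname{sign}(C)$ there and, changing sign at each simple zero, sign $(-1)^j\operatorname{sign}(C)$ on $(\alpha_j,\alpha_{j+1})$. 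Thus $\{T<0\}$ is $(\alpha_1,\alpha_2)\cup(\alpha_3,\alpha_4)\cup\cdots\cup(\alpha_{2k-1},\alpha_{2k})$ if $C>0$ and $(\alpha_2,\alpha_3)\cup\cdots\cup(\alpha_{2k-2},\alpha_{2k-1})\cup(\alpha_{2k},\alpha_1+2\pi)$ if $C<0$; since there are no multiple zeros the $\Neg$-terms in \eqref{Equ_Def_cZ^-} drop out, and summing midpoints gives $\cZ^-(T)\equiv\tfrac12\sum_j\alpha_j+\pi\,\mathbf 1[C<0]$ (the extra $\pi$ for $C<0$ coming from the wrap-around arc). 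Subtracting the formula for $-\theta_k$ above, $\cZ^-(T)+\theta_k\equiv\pi(\mathbf 1[C<0]-\mathbf 1[(-1)^kC<0])$, which is $0$ for $k$ even and $\pi$ for $k$ odd, i.e. $\cZ^-(T)=-\theta_k+k\pi$. The case $b_k<0$ then follows by rewriting $b_k\cos(k\alpha+\theta_k)=(-b_k)\cos(k\alpha+\theta_k+\pi)$ and invoking the case just done, which turns $k\pi$ into $(k+1)\pi$ and leaves $\cZ(T)=-2\theta_k$ unchanged in $\bbS^1$.

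I expect the only genuinely delicate step to be the computation of $\cZ^-$: one has to keep careful track of $\bbS^1$-valued half-sums of root angles, of the contribution of the wrap-around arc on $\bbS^1$, and of the sign of the leading coefficient $C$ relative to $(-1)^k$. Everything else (Step~1, the factorization, and $\cZ(T)=-2\theta_k$) is routine, and the continuity reduction to simple zeros is what makes the $\cZ^-$ bookkeeping manageable by eliminating the combinatorics of higher-order zeros and the $\Neg_f$ corrections in \eqref{Equ_Def_cZ^-}.
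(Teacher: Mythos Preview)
Your proposal is correct and follows essentially the same approach as the paper: both pass to the degree-$2k$ polynomial $q_T(z)=z^k\sum d_j z^j$ to get $\ord(T)\le 2k$ and $\cZ(T)=-2\theta_k$ from the product of roots, then invoke the continuity of $\cZ^-$ (Lemma~\ref{Lem_cZ=2cZ^-}) on the locus $\{\ord(T)=2k\}$ to reduce the $\cZ^-$ computation. The only cosmetic difference is that the paper uses path-connectedness of this locus to reduce to the single element $\cos(k\alpha+\theta_k)$ and compute there, whereas you reduce by density to the simple-zero case and carry out the $\operatorname{sign}(C)$/wrap-around bookkeeping explicitly; the paper's route is a bit shorter, yours is more self-contained. (One small slip: on the arc $(\alpha_{2k},\alpha_1+2\pi)$ each factor $\sin\frac{\alpha-\alpha_j}{2}$ is \emph{positive}, not negative, but since there are $2k$ of them your conclusion that $T$ has sign $\operatorname{sign}(C)$ there is unaffected.)
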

   \begin{proof}
     By induction, without loss of generality, we can assume that $T$ has the form \eqref{Equ_Constru Psi_TrigonPolyn} where $b_k\neq 0$. By possibly replacing $(b_k, \theta_k)$ by $(-b_k, \theta_k-\pi)$ and multiplying $T$ by a positive constant (which does not change $\cZ^-(T)$), we can even assume that $b_k=1$.
     
     Notice that $\alpha\in T^{-1}(0)$ if and only if $z:= e^{i\alpha}$ is a root, with multiplicity $\ord_T(\alpha)$,  of the following polynomial:
     \begin{align*}
        q_T(z):= z^k\left(b_0 + \frac{b_1}{2}(e^{i\theta_1}z + e^{-i\theta_1}z^{-1})+ \dots + \frac{1}{2}(e^{i\theta_k}z^k + e^{-i\theta_k}z^{-k}) \right). 
%       & \qquad = \frac{b_k e^{i\theta_k}}{2}z^{2k} + \dots + \frac{b_k e^{-i\theta_k}}{2} 
     \end{align*}
     Hence, $\ord(T)$ is no more than the total number of roots of $q_T$, counting multiplicity, which is $2k$ by the fundamental theorem of algebra. 
     
     Moreover, if $\ord(T) = 2k$, then all roots of $q_T$ are sitting on the unit circle, and $e^{i\cZ(T)}$ is the product of all roots of $q_T$, which is equal to $q_T(0)$ divided by the leading coefficient of $q_T$. Hence, $e^{i\cZ(T)} = e^{-2i\theta_k}$, and then $\cZ(T)=-2\theta_k$, which yields by Lemma \ref{Lem_cZ=2cZ^-} that $\cZ^-(T)\in \{-\theta_k, -\theta_k+\pi\}$.

     To determine $\cZ^-(T)$ precisely, notice that  any trigonometric polynomial of form 
     \begin{align}
       T(\alpha) = b_0 + b_1\cos(\alpha+\theta_1) + \dots + b_{k-1}\cos((k-1)\alpha+\theta_{k-1}) \ + \cos(k\alpha+\theta_k) \label{Equ_Trigon b_k=1}
     \end{align}
     with order $2k$ is uniquely determined by   its zeros and $\theta_k$. More precisely, if $\alpha_1, \dots, \alpha_{2k}\in \R/2\pi\Z$ are the zeros of the above $T$ (counting the orders) such that $\sum\alpha_j = -2\theta_k$, then \[
       q_T(z) = \frac{e^{i\theta_k}}2(z-e^{i\alpha_1})(z-e^{i\alpha_2})\cdots (z-e^{i\alpha_{2k}}) 
     \] 
     and therefore \[
       T(\alpha) = e^{-ik\alpha}\, q_T(e^{i\alpha}) = \frac12 e^{i(-k\alpha+\theta_k)}(e^{i\alpha}-e^{i\alpha_1})(e^{i\alpha}-e^{i\alpha_2})\cdots (e^{i\alpha}-e^{i\alpha_{2k}}) \,.
     \] 
     Also, any $\alpha_j$ and $\theta_k$ as above give a trigonometric polynomial of the form \eqref{Equ_Trigon b_k=1} with order $2k$. Therefore, the space $\cT_k^{top}$ of trigonometric polynomial of form \eqref{Equ_Trigon b_k=1} with order $2k$ is path connected (and clearly contains $\cos(k\alpha+\theta_k)$). Moreover by Lemma \ref{Lem_cZ=2cZ^-}, $T\mapsto \cZ^-(T)$ is continuous on $\cT_k^{top}$, hence combined with the fact that $\cZ^-(T)\in \{-\theta_k, -\theta_k+\pi\}$, we see that \[
       \cZ^-(T) = \cZ^-(\cos(k\alpha+\theta_k)) = -\theta_k+k\pi\,.
     \]
   \end{proof}

   \begin{cor} \label{Cor_alm trig polyn has zero bd}
     For every integer $k>0$ and $\varepsilon_1 > 0$, there exists $\delta_1(k, \varepsilon_1)>0$ with the following property. For any $T\in \cT_k$ in the form \eqref{Equ_Constru Psi_TrigonPolyn}, and any  $f\in C^\infty(\bbS^1)$   such that \[
       \|f-T\|_{C^{2k}}\leq \delta_1\|T\|\,,
     \]
     we have $\ord(f)\leq 2k$; and if in fact $\ord(f)= 2k$ and $b_k>0$, then $
        \dist_{\bbS^1}(\cZ^-(f), -\theta_k+k\pi)\leq \varepsilon_1$.
   \end{cor}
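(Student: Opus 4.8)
The plan is to argue by contradiction through a compactness argument, feeding on three earlier facts: the upper semicontinuity of the local vanishing order (Lemma~\ref{Lem_Constru Psi_local ord usc}), the explicit value of $\cZ^-$ for a maximal-order trigonometric polynomial (Lemma~\ref{Lem_Constru Psi_Ord(Trig polyn)}), and the continuity of $\cZ^-$ under order-preserving limits (Lemma~\ref{Lem_cZ=2cZ^-}).

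First I would normalize. Since replacing $(T,f)$ by $(T/\|T\|,\,f/\|T\|)$ leaves $\ord(f)$, $\cZ^-(f)$, the sign of $b_k$, and the quantity $-\theta_k+k\pi$ unchanged, while rescaling the hypothesis to $\|f-T\|_{C^{2k}}\le\delta_1$, we may assume $\|T\|=1$. Suppose the assertion fails for some $k$ and $\varepsilon_1>0$. Then there are $\delta_1^{(j)}\to 0$, trigonometric polynomials $T_j\in\cT_k$ in the form~\eqref{Equ_Constru Psi_TrigonPolyn} with $\|T_j\|=1$ and coefficients $b_i^{(j)},\theta_i^{(j)}$, and functions $f_j\in C^\infty(\bbS^1)$ with $\|f_j-T_j\|_{C^{2k}}\le\delta_1^{(j)}$, such that for each $j$ either \emph{(a)} $\ord(f_j)>2k$, or \emph{(b)} $\ord(f_j)=2k$, $b_k^{(j)}>0$ and $\dist_{\bbS^1}\!\big(\cZ^-(f_j),-\theta_k^{(j)}+k\pi\big)>\varepsilon_1$. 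Because $\cT_k$ is finite-dimensional and the coefficients of the $T_j$ lie in a compact set, after passing to a subsequence all coefficients converge, hence $T_j\to T_\infty$ in every $C^m$, with $\|T_\infty\|=1$; in particular $T_\infty\ne 0$, with top coefficient $b_k^\infty=\lim b_k^{(j)}$ and phase $\theta_k^\infty=\lim\theta_k^{(j)}$, and $f_j\to T_\infty$ in $C^{2k}$.

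The next step is a global upper semicontinuity statement: $\ord(f_j)\le\ord(T_\infty)$ for all large $j$. Since $\|f_j-T_\infty\|_{C^0}\to 0$, every accumulation point of zeros of the $f_j$ is a zero of $T_\infty$; as $T_\infty\ne 0$ has finitely many zeros $\alpha_1,\dots,\alpha_s$ of finite orders $o_1,\dots,o_s$ with $\sum_i o_i=\ord(T_\infty)$ and each $o_i\le\ord(T_\infty)\le 2k$ by Lemma~\ref{Lem_Constru Psi_Ord(Trig polyn)}, it follows that for $j$ large all zeros of $f_j$ lie in fixed disjoint small neighbourhoods of the $\alpha_i$ and in fact collapse onto the $\alpha_i$. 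Applying Lemma~\ref{Lem_Constru Psi_local ord usc} at each $\alpha_i$ (with $N=2k$ and shrinking intervals $U_j\ni\alpha_i$ chosen to capture every nearby zero of $f_j$) yields $\ord(f_j;U_j)\le o_i$, and summing over $i$ gives $\ord(f_j)\le\sum_i o_i=\ord(T_\infty)\le 2k$. This already rules out alternative \emph{(a)} for large $j$, so we must be in alternative \emph{(b)}: then $\ord(f_j)=2k$ forces $\ord(T_\infty)=2k$, the ``furthermore'' clause of Lemma~\ref{Lem_Constru Psi_Ord(Trig polyn)} gives $b_k^\infty\ne 0$, hence $b_k^\infty>0$ (as $b_k^{(j)}>0$), and therefore $\cZ^-(T_\infty)=-\theta_k^\infty+k\pi$. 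Finally, $f_j\to T_\infty$ in $C^{2k}$ with $\ord(f_j)=\ord(T_\infty)=2k$, so Lemma~\ref{Lem_cZ=2cZ^-} gives $\cZ^-(f_j)\to\cZ^-(T_\infty)=-\theta_k^\infty+k\pi$; combined with $\theta_k^{(j)}\to\theta_k^\infty$ this forces $\dist_{\bbS^1}\!\big(\cZ^-(f_j),-\theta_k^{(j)}+k\pi\big)\to 0$, contradicting alternative \emph{(b)}.

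I expect the main obstacle to be the global order bound $\ord(f_j)\le\ord(T_\infty)$: promoting the purely local Lemma~\ref{Lem_Constru Psi_local ord usc} to a statement on all of $\bbS^1$ requires first localizing \emph{all} zeros of $f_j$ near the zeros of $T_\infty$ uniformly in $j$, and then choosing the shrinking intervals $U_j$ around each $\alpha_i$ diagonally so that they still contain every nearby zero of $f_j$. A secondary point to track is the non-uniqueness of the representation~\eqref{Equ_Constru Psi_TrigonPolyn} (e.g.\ $b_k\cos(k\alpha+\theta_k)=(-b_k)\cos(k\alpha+\theta_k+\pi)$), so that the convergences $b_k^{(j)}\to b_k^\infty>0$ and $\theta_k^{(j)}\to\theta_k^\infty$ are legitimate; this is automatic here since $\ord(T_\infty)=2k$ prevents the top term of $T_j$ from degenerating.
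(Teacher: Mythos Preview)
Your proof is correct and follows exactly the approach the paper indicates (``argue by contradiction and apply Lemma~\ref{Lem_cZ=2cZ^-} and~\ref{Lem_Constru Psi_Ord(Trig polyn)}''); you have simply written out the compactness argument in full, including the explicit invocation of Lemma~\ref{Lem_Constru Psi_local ord usc} to promote the local order upper semicontinuity to a global bound $\ord(f_j)\le\ord(T_\infty)$, which the paper leaves implicit. The handling of the coefficient convergence and the non-degeneracy $b_k^\infty>0$ via $\ord(T_\infty)=2k$ is also fine.
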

   \begin{proof}
     We may argue by contradiction and apply Lemma \ref{Lem_cZ=2cZ^-} and \ref{Lem_Constru Psi_Ord(Trig polyn)}. 
   \end{proof}

   \begin{lem} \label{Lem_Constru Psi_GenusCal of (F=0)}
     For every $b=(b_1, b_2)\in \bD$ and $\kappa>0$ such that $|b|+2\kappa<1$, there exists $\delta_2(\kappa)\in (0, \kappa/4)$ with the following property.

     Suppose $F\in C^\infty(\bD\times \bbS^1; \R)$ and $\bx\in C^\infty(\bbS^1; \bD_{\kappa/4}(b))$ satisfy the following:
     \begin{enumerate} [label={\normalfont(\alph*)}]
       \item\label{Item_GenusCalc_asump1} $\Sigma:= F^{-1}(0)$ agrees with $\Phi_5(a)$ (see section \ref{Subsec_5-family}) in $(\bD\setminus \bD_\kappa(b))\times \bbS^1$ for some $a=[a_0:a_1:a_2:a_3:a_4:1]\in \RP^5$ with \[
         |a_0-a_1a_2|+|a_1+b_2|+|a_2+b_1|+|a_3|+|a_4| < \delta_2\,;
       \]
       \item\label{Item_GenusCalc_asump2} $f(\alpha):= F(\bx(\alpha), \alpha)$ has $\ord(f)<+\infty$;
       \item\label{Item_GenusCalc_asump3} in $\bD_\kappa(b)\times \bbS^1$, $\nabla_xF(x, \alpha) = 0$ if and only if $x=\bx(\alpha)$; and \[
         \left|\nabla^2_x F - \begin{bmatrix}
           0 & 1 \\ 1 & 0
         \end{bmatrix} \right| < \delta_2\,.
       \]
     \end{enumerate}
     Then 
     \begin{enumerate}
      \item\label{Item_GenusCalc_concl_reg} Under the parametrization \eqref{Equ_Constru Psi_Param S^3}, $\Sigma$ is a smooth surface in $S^3$ with isolated singularities in $\{(\bx(\alpha), \alpha): \alpha\in f^{-1}(0), \ord_f(\alpha)\geq 2\}$.
      \item\label{Item_GenusCalc_concl_innerLoop} For every $\alpha\in \bbS^1$,
      \begin{align*}
        F(\cdot, \alpha) \begin{cases}
          >0\ \text{ on } \{\bx(\alpha)+(t,t)\in \bD: 0\neq t\in \R\}\,, &\ \text{ if }f(\alpha)\geq 0\,; \\
          <0\ \text{ on } \{\bx(\alpha)+(t,-t)\in \bD: 0\neq t\in \R\}\,, &\ \text{ if }f(\alpha)\leq 0\,;
        \end{cases}
      \end{align*}
      \item\label{Item_GenusCalc_concl_genus} the genus of $\Sigma$ satisfies \[
        \mathfrak{g}(\Sigma) \leq \max\left\{0,\ \frac12\#\{\alpha\in f^{-1}(0): \ord_f(\alpha) \text{ is odd}\}-1\right\}\,.
      \]  
      and equality holds if every zero of $f$ has order $1$.
     \end{enumerate}
   \end{lem}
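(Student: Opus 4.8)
The plan is to analyze the zero set $\Sigma = F^{-1}(0)$ locally near the degeneracy circle $\{x = \bx(\alpha)\}$ and away from it, then glue the topological information together. Away from the locus where $\nabla_x F$ vanishes, assumption \ref{Item_GenusCalc_asump1} and the implicit function theorem give that $\Sigma$ is a smooth embedded surface (it agrees with $\Phi_5(a)$ outside $\bD_\kappa(b)\times\bbS^1$, which by Lemma \ref{Lem_5-family_SingParam} is smooth there for $a$ close to but not in $A_{\sing}$, and inside $\bD_\kappa(b)\times\bbS^1$ away from $\{x=\bx(\alpha)\}$ it is a regular level set). So the only possible singularities occur along the curve $\{(\bx(\alpha),\alpha)\}$. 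There, by \ref{Item_GenusCalc_asump3}, $\nabla^2_x F$ is a small perturbation of $\left[\begin{smallmatrix}0&1\\1&0\end{smallmatrix}\right]$, hence nondegenerate of signature $(1,1)$: a saddle. Thus at a point $(\bx(\alpha),\alpha)$ with $f(\alpha)\ne 0$ the full gradient $\nabla_{(x,\alpha)}F$ is nonzero (the $\alpha$-derivative is nonzero since $f(\alpha)\ne 0$ forces... actually one must argue $F$ restricted to a transverse slice is regular), so $\Sigma$ is still smooth there; the genuine singular points are exactly where $f(\alpha)=0$ \emph{and} $\ord_f(\alpha)\ge 2$, since when $\ord_f(\alpha)=1$ the function $f$ crosses zero transversally and one checks the level set is a smooth surface passing through a would-be node in a way that resolves it. This proves conclusion \ref{Item_GenusCalc_concl_reg}.

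For \ref{Item_GenusCalc_concl_innerLoop}, the idea is a second-order Taylor expansion of $F(\cdot,\alpha)$ at the critical point $\bx(\alpha)$: $F(\bx(\alpha)+v,\alpha) = f(\alpha) + \tfrac12 v^T(\nabla^2_x F)v + O(|v|^3)$, and since $\nabla^2_x F$ is $\delta_2$-close to $\left[\begin{smallmatrix}0&1\\1&0\end{smallmatrix}\right]$, plugging in $v=(t,t)$ gives $\approx f(\alpha) + t^2$ and $v=(t,-t)$ gives $\approx f(\alpha) - t^2$. Choosing $\delta_2(\kappa)$ small enough relative to $\kappa$ — and using that $\bx(\alpha)$ stays in $\bD_{\kappa/4}(b)$ so the diagonal segments of length $\kappa$ fit inside $\bD_\kappa(b)$ where \ref{Item_GenusCalc_asump3} applies, then continue into the region where $F$ agrees with $\Phi_5(a)$ and a direct computation with the explicit quadric (perturbed $x_1x_2$) finishes the sign — one gets the claimed sign along the two diagonal rays whenever $f(\alpha)\geq 0$ or $\leq 0$ respectively. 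The place to be careful is the matching across $\partial\bD_\kappa(b)$: one needs $\delta_2$ small enough that the $\Phi_5(a)$-part, which is $O(\delta_2)$-close to $\{(x_1+b_1)(x_2+b_2)=0\}$ after the small translation, still has the right signs on the continuations of the diagonal rays; this is where $\delta_2$ depending on $\kappa$ enters.

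For \ref{Item_GenusCalc_concl_genus}, the strategy is a Morse-theoretic slicing of $\Sigma$ by the $\alpha\in\bbS^1 = \R/2\pi\Z$ coordinate. For each $\alpha$, the slice $\Sigma\cap(\bD\times\{\alpha\})$ is the zero set in the disc of $F(\cdot,\alpha)$, which outside $\bD_\kappa(b)$ agrees with $\Phi_5(a)\cap$(slice) — two transversally-meeting arcs (a perturbed pair of lines) — and inside has a single critical point of saddle type at $\bx(\alpha)$. By \ref{Item_GenusCalc_concl_innerLoop}, the ``inner" region near $\bx(\alpha)$ connects the two branches in one of two ways depending on $\mathrm{sign}(f(\alpha))$: when $f(\alpha)>0$ the slice is two disjoint arcs, when $f(\alpha)<0$ it is two disjoint arcs the other way, and when $f(\alpha)=0$ they cross at a node. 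So as $\alpha$ traverses $\bbS^1$, the topology of the slice changes exactly at the zeros of $f$, and a zero of odd order is a genuine modification (the two arcs swap how they connect) while an even-order zero is ``passed through" without a net change. This is the standard picture where $\Sigma$ is obtained from a trivial $\bbS^1$-family of two arcs by performing a band/saddle move at each odd-order zero of $f$; the surface built this way from two circles' worth of arcs with $N$ saddle moves (where $N = \#\{\alpha : \ord_f(\alpha)\text{ odd}\}$, necessarily even since the slice-type must return to itself) is a genus $\max\{0, N/2 - 1\}$ closed surface. One makes this rigorous by picking a Morse function on $\Sigma$ approximating the projection to $\bbS^1$, counting index-$1$ critical points, and applying Euler characteristic bookkeeping together with connectedness considerations (or directly: $\chi(\Sigma) = 2 - 2N/2 + \dots$); equality when all zeros have order $1$ follows because then every saddle move is genuinely realized with no cancellation, and the surface is connected. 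The main obstacle I expect is making the ``resolution at odd-order zeros / no-change at even-order zeros" dichotomy fully precise — i.e. controlling the local model of $\Sigma$ near a higher-order zero of $f$, where $F$ vanishes to order $\ge 2$ in $\alpha$ along the critical curve, and verifying that the slice genuinely returns to the same isotopy type — and correspondingly pinning down the equality case; the inequality alone is more robust and follows from upper-semicontinuity of genus under the $C^\infty_{loc}$ perturbation to a generic $f$ with only simple zeros, as in the proof of Lemma \ref{Lem_5-family_SingParam}.
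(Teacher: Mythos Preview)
Your arguments for \eqref{Item_GenusCalc_concl_reg} and \eqref{Item_GenusCalc_concl_innerLoop} match the paper's, though for \eqref{Item_GenusCalc_concl_reg} the clean one-liner is: since $\nabla_x F(\bx(\alpha),\alpha)=0$, the chain rule gives $f'(\alpha)=\partial_\alpha F(\bx(\alpha),\alpha)$; hence at a point $(\bx(\alpha),\alpha)\in\Sigma$ (which already forces $f(\alpha)=0$, so your case ``$f(\alpha)\ne 0$'' is vacuous) the full gradient of $F$ vanishes iff $\ord_f(\alpha)\ge 2$.

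For \eqref{Item_GenusCalc_concl_genus} the paper also analyzes the $\alpha$-projection, but packages it as Poincar\'e--Hopf for the tangential projection $\partial_\alpha^\Sigma$ of the Killing field $\partial_\alpha$ onto $T\Sigma$. Inside $\bD_\kappa(b)\times\bbS^1$ the zeros of $\partial_\alpha^\Sigma$ are exactly $\{(\bx(\alpha),\alpha):f(\alpha)=0\}$, each of index $-1$ (computed from the indefinite Hessian in \ref{Item_GenusCalc_asump3}). The step your sketch leaves open is the \emph{outer} contribution: the paper isotopes $\Sigma$ to a surface $\Sigma'$ which outside $\bD_{2\kappa}(b)\times\bbS^1$ agrees with the two-sphere degeneration $\Phi_5(\hat a)$, $\hat a\in A_{\sing}$, so the remaining zeros of $\partial_\alpha^{\Sigma'}$ contribute index sum $2\chi(S^2)=4$, yielding $\chi(\Sigma)=4-\#f^{-1}(0)$ directly. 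Your arc-tracking description would need exactly this piece --- how the four arc-endpoints on $\partial\bD$ close up under the collapse $\partial\bD\times\bbS^1\to C\subset S^3$ --- and that, rather than the higher-order-zero resolution you flag, is the real gap in the sketch. The perturbation-plus-lower-semicontinuity argument for non-simple zeros is the same in both approaches.
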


\begin{figure}[!ht]
    \centering
\includegraphics[width=1.5in]{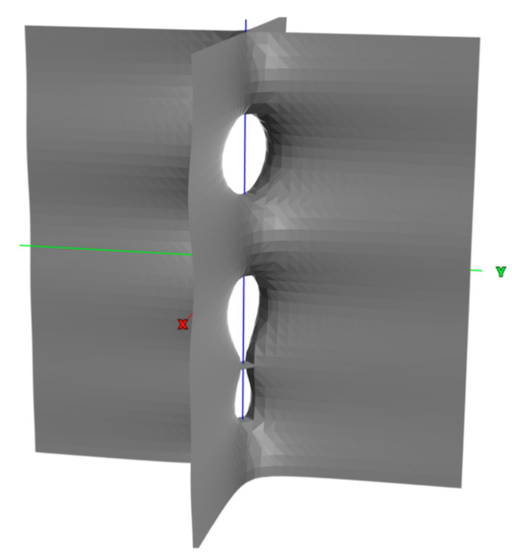}
\caption{An example of the surface $\Sigma$: It shows a portion of a surface $\Sigma$ near the circle $\{x_1=-b_2,x_2=-b_1\}\subset S^3$, which is represented by the $z$-axis in the figure, parametrized by $\alpha$. Those $\alpha$ satisfying $F (\bx(\alpha),\alpha)=0$ correspond to the points on the surface with horizontal tangent planes. %The condition that $\bx$ maps into $\bD_{\kappa/4}(b)$ implies that the points $(\bx(\alpha),\alpha)$ must be close to the circle $\{x_1=-b_2,x_2=-b_1\}$.
}
\label{fig:xy}
\end{figure}
   \begin{proof}
     To prove \eqref{Item_GenusCalc_concl_reg}, by \ref{Item_GenusCalc_asump1} and assuming $\delta_2\ll 1$, $\Sigma = \Phi_5(a)$ is smooth outside $\bD_\kappa(b)\times \bbS^1$. Hence it suffices to show that for every $(x, \alpha)\in \Sigma\cap \bD_\kappa(b)\times\bbS^1$, 
     \begin{align}
        \nabla F(x,\alpha) = 0 \quad \Leftrightarrow \quad
        x=\bx(\alpha)\ \text{ and }\ \ord_f(\alpha)\geq 2 \,. \label{Equ_GenusCalc_crit pt}
     \end{align}
     To see this, note that by \ref{Item_GenusCalc_asump3}, $\nabla_xF(x,\alpha)=0$ if and only if $x = \bx(\alpha)$; while by definition of $f$, \[
       f'(\alpha) = \nabla_xF(\bx(\alpha), \alpha)\cdot \bx'(\alpha) + \partial_\alpha F(\bx(\alpha), \alpha) = \partial_\alpha F(\bx(\alpha), \alpha) \,.
     \] 
     Combining these proves \eqref{Equ_GenusCalc_crit pt}.

     To prove \eqref{Item_GenusCalc_concl_innerLoop}, for $0\neq v=(v_1, v_2)\in \R^2$ such that $\bx(\alpha)+v\in \bD_\kappa(b)$, let $h(t):= F(\bx(\alpha)+tv, \alpha)$. Note that by \ref{Item_GenusCalc_asump2} and \ref{Item_GenusCalc_asump3}, 
     \begin{align*}
       h(0) = f(\alpha)\,, \quad h'(0) = 0\,, \quad
       |h''(t) - 2v_1v_2| \leq \delta_2|v|^2\,.
     \end{align*}
     Hence $\pm h''(t)>0$ if $v=(t, \pm t)$. Combined this with the fact \[
       F(\bx(\alpha) + v) = f(\alpha) + \int_0^1(1-t)h''(t)\ dt
     \]
     proves \eqref{Item_GenusCalc_concl_innerLoop} in $\bD_\kappa(b)$. Moreover,  by \ref{Item_GenusCalc_asump1} (assuming $\delta_2\ll 1$) and  a compactness argument, in $(\bD\setminus \bD_\kappa(b))\times \bbS^1$ we know $\Sigma = \Phi_5(a)$ does not intersect $\{(\bx(\alpha)+(t,\pm t), \alpha)\in \bD\times \bbS^1: 0\neq t\in \R, \alpha\in \bbS^1\}$, so the sign of $F(\cdot, \alpha)$ on the corresponding line segments are not changing outside $\bD_\kappa(b)$.

     To prove \eqref{Item_GenusCalc_concl_genus}, we first further assume that every zero of $f$ has order $1$ (hence $\Sigma$ is smooth everywhere by \eqref{Item_GenusCalc_concl_reg}). Consider the Killing vector field $\partial_\alpha$ on $\bD\times \bbS^1$ (which is also a Killing vector field on the round $S^3$ under parametrization \eqref{Equ_Constru Psi_Param S^3}). For $(x_\circ, \alpha_\circ)\in \bD_\kappa(b)\times \bbS^1\cap \Sigma$, the orthogonal projection $\partial_\alpha^\Sigma$ of $\partial_\alpha$ onto $T\Sigma$ is zero at $(x_\circ, \alpha_\circ)$ if and only if $\nabla F(x_\circ, \alpha_\circ)$ and $\partial_\alpha$ are parallel, or equivalently, $\nabla_x F(x_\circ, \alpha_\circ) = 0$, which is equivalent to $x_\circ=\bx(\alpha_\circ)$ (and $\alpha_\circ\in f^{-1}(0)$) by \ref{Item_GenusCalc_asump3}. Moreover, near such $(x_\circ, \alpha_\circ)$, viewing $\alpha$ as a single valued function on $\bD\times \bbS^1$, we have $\partial_\alpha^\Sigma = \nabla(\alpha|_\Sigma)$. And if we set $\partial_\alpha|_{(x_\circ, \alpha_\circ)} = c_\circ\nabla F(x_\circ, \alpha_\circ)$ for some nonzero constant $c_\circ\in \R$, then at $(x_\circ, \alpha_\circ)$,
     \begin{align*}
        \nabla^2(\alpha|_\Sigma) = \nabla^2\alpha|_{T\Sigma\otimes T\Sigma} + \vec{A}_\Sigma\cdot \partial_\alpha 
        & = c_\circ \vec A_\Sigma\cdot \nabla F(x_\circ, \alpha_\circ) \\
        & = c_\circ(\nabla^2(F|_\Sigma) - \nabla^2F(x_\circ, \alpha_\circ)|_{T\Sigma\otimes T\Sigma}) \\
        & = -c_\circ \nabla_x^2 F(x_\circ, \alpha_\circ)
     \end{align*}
     is indefinite and nondegenerate by \ref{Item_GenusCalc_asump3} if $\delta_2$ is chosen small enough. Hence, the Poincar\'e-Hopf index of the vector field $\partial_\alpha^\Sigma$ at its zero $(x_\circ, \alpha_\circ)$ is $-1$.

     On the other hand, set $\hat a:= [a_1a_2:a_1:a_2:0:0:1]\in A_{\sing}$. By \ref{Item_GenusCalc_asump1} with $\delta_2$ chosen even smaller, $\Sigma$ is isotopic to some smooth surface $\Sigma'$ which agrees with $\Sigma$ in $\bD_\kappa(b)\times \bbS^1$, agrees with $\Phi_5(\hat a)$ outside $\bD_{2\kappa}(b)\times \bbS^1$, and such that $\partial_\alpha^{\Sigma'}$ is nonzero on $(\bD_{2\kappa}(b)\setminus \bD_{\kappa/2}(b))\times \bbS^1$. In particular, each zero of $\partial_\alpha^{\Sigma'}$ is either
     \begin{itemize}
     \item one of $\{(\bx(\alpha_\circ), \alpha_\circ):\alpha_\circ\in f^{-1}(0)\}$, each of which has Poincar\'e-Hopf index $-1$; or
     \item  one of zeros of $\partial_\alpha^{\Phi_5(\hat a)}$, whose index sum is equal to $2\chi(S^2) = 4$ by Poincar\'e-Hopf theorem.
     \end{itemize}
     Therefore,  applying Poincar\'e-Hopf theorem to $\partial_\alpha^{\Sigma'}$ we find \[
       \chi(\Sigma) = \chi(\Sigma') = 4 + (-1)\cdot\# f^{-1}(0) \,.
     \]
     Then \eqref{Item_GenusCalc_concl_genus}  follows from the fact that $\Sigma'$ has at most two connected components, and is connected when $f^{-1}(0)\neq \emptyset$. 

     Finally, when $f$ has zeros of higher order, let $Z_{ev}$ and $Z_{od}$ be the set of zeros of $f$ with even order and odd order respectively. We can construct another smooth function $h\in C^\infty(\bbS^1)$ such that
     \begin{itemize}
        \item $h^{-1}(0) = Z_{od}$ and each zero of $h$ has order $1$;
        \item $fh\geq 0$ everywhere on $\bbS^1$.
     \end{itemize}
     Then, let $\xi\in C^\infty_c(\bD_{\kappa}(b), [0,1])$ be a cut-off such that $\xi|_{\bD_{\kappa/2}(b)} = 1$, we can consider \[
       F_\epsilon (x, \alpha) := F(x, \alpha) + \epsilon \xi(x)h(\alpha) \,.
     \] 
     It is easy to check that when $0<\epsilon\ll 1$,  $F_\epsilon$ also satisfies \ref{Item_GenusCalc_asump1}-\ref{Item_GenusCalc_asump3} with $f$ replaced by $f_\epsilon:= f + \epsilon h$. By the construction above, $f_\epsilon$ only has zeros of order $1$, and $f_\epsilon^{-1}(0) = Z_{od}$. Therefore, as $F_\epsilon^{-1}(0)$ locally smoothly converges to $\Sigma$ in the regular part, we have \[
       \mathfrak{g}(\Sigma) \leq \liminf_{\epsilon \searrow 0} \mathfrak{g}(F_\epsilon^{-1}(0)) = \max\left\{0,\ \frac12\#Z_{od}-1\right\}\,.
     \]   \end{proof}

 \subsection{Construction of $\Psi$} \label{Subsec_Constru Psi}
   Recall that $\D$ denotes the closed 2-dimensional unit disc in the second factor of  $\dmn(\Psi)=\RP^5\x \D$. We will often use polar coordinate $z=re^{i\theta}$ for $\D$, and we parametrize $S^3$ by \eqref{eq:coor}. Under this parametrization, the family $\Psi: \RP^5\times \D\to \cS(S^3)$ takes the form  
   \begin{align}
     \Psi(a, z):= \begin{cases}
        \Phi_5(a)\,, & \text{ if } a_5= 0\,; \\
        \{(x, \alpha)\in \bD\times \bbS^1 : F_{a,z}(x, \alpha) = 0\}\,, & \text{ if } a_5\neq 0
     \end{cases} \label{Equ_Def_Psi(a,z)}
   \end{align}
   where for $(a,z)\in (\RP^5\times \D)\cap \{a_5\neq 0\}$ (recall $\varsigma(x):=\sqrt{1-x_1^2-x_2^2}$), %\footnote{$F_{a, z}$ is well-defined up to a nonzero constant multiple, so $F_{a, z}=0$ well defines a surface.}   
   \begin{align}
    \begin{split}
     F_{a,z}(x, \alpha) := a_5^{-1} \Big( a_0 & + a_1x_1 + a_2x_2 + \varsigma(x)(a_3\cos\alpha + a_4\sin\alpha) \\
     & + a_5\Big[x_1x_2+\rho(a, x)\big(r\cos(\theta+2\alpha)+(1-r)\cos(3\alpha)\big)\Big] \Big)\,. 
    \end{split} \label{Equ_Def_F_a,z}
   \end{align}
   Recall that the cut-off function $\rho:\RP^5\x S^3\to[0,1]$ is chosen as in \eqref{Equ_Def_rho(a,x)}: 
   \begin{enumerate} [label={\normalfont(\Roman*)}]
     \item When $a=[a_0: a_1: a_2: a_3: a_4: a_5]\in \RP^5$ with $a_1^2+a_2^2<a_5^2$, let 
     \begin{align*}
        \rho(a, x) := &\ \zeta \left(64a_5^2\cdot\frac{(a_5 x_1+a_2)^2+(a_5 x_2 + a_1)^2}{\left(a^2_5-a_1^2-a_2^2\right)^2}\right) \zeta\left(\frac{(a_3^2+a_4^2)a_5^2+(a_0 a_5-a_1a_2)^2}{a^4_5\ \delta\left((a_1^2 +  a_2^2)/a^2_5\right)}\right)\delta\left(\frac{a_1^2 +  a_2^2}{a^2_5}\right)
%       \rho(a, x) := \zeta \left(64\cdot\frac{(x_1+a_2)^2+(x_2+a_1)^2}{\left(1-a_1^2-a_2^2\right)^2}\right)\cdot        \zeta\left(\frac{a_3^2+a_4^2+(a_0-a_1a_2)^2}{\delta(a_1^2+a_2^2)}\right)\delta(a_1^2+a_2^2)
     \end{align*}
     where $\zeta\in C^\infty(\R)$ is a fixed non-increasing cut-off such that $\zeta|_{\R_{\leq 1/2}}=1$, $\zeta|_{\R_{\geq 1}}=0$; $\delta\in C^\infty(\R; [0,1])$ is $0$ outside $(-1, 1)$, positive and very small in $(-1, 1)$, to be specified in Lemma \ref{Lem_SmallDelta} below.
     \item When $a\in \RP^5$ does not take the form above, we set $\rho(a, x)=0$.
   \end{enumerate}
   \begin{rmk} \label{Rem_Prop_rho(a,x)}
     The following facts can be verified directly from the definition:
     \begin{enumerate} [label={\normalfont(\roman*)}]
       \item $\rho$ is a well-defined continuous function on $\RP^5\times S^3$ and is $C^\infty$ in $x$ variable.
       \item\label{Item_rho(a,x)_genus0} For $a\in \RP^5$ that is not in a small neighborhood of $A_{\sing}$, $\rho(a, \cdot)\equiv 0$, hence (under the parametrization \eqref{eq:coor}) by Lemma \ref{Lem_5-family_SingParam}, $\Psi(a, z) = \Phi_5(a)$ is a surface with at most $9$ singular points and genus $0$.
       \item When $\rho(a, x)\neq 0$ and $a_5=1$, \[
       (x_1+a_2)^2+(x_2+a_1)^2\leq \frac1{64}(1-a_1^2-a_2^2)^2 < \frac1{16}\left(1-\sqrt{a_1^2+a_2^2} \right)^2\,, 
       \]
       hence $|x|\leq (1+3\sqrt{a_1^2+a_2^2}\ )/4<1$. As a consequence, $\Psi(a, z)$ agrees with $\Phi_5(a)$ in a small neighborhood of $\partial \bD\times \bbS^1$ (whose image under parametrzation \eqref{eq:coor} is a neighborhood of $C$ in $S^3$).  We shall establish the smoothness of $\Psi(a, z)$ away from at most $3$ points in the interior of $\bD\times \bbS^1$, which then implies the regularity of $\Psi(a, z)$ in the whole $S^3$.
       \item When $a\in A_{\sing}$, $\rho(a,x)=\delta(a_1^2+a_2^2)$ is a nonzero constant for $(x, \alpha)$ close to the singular circle of $\Phi_5(a)$. Heuristically, this means $\Psi(a, z)$ is a desingularization of $\Phi_5(a)$ using some \textit{trigonometric polynomials} in $\alpha$ variable, whose number of zeros bounds the genus of $\Psi(a, z)$ as in Lemma \ref{Lem_Constru Psi_GenusCal of (F=0)}.
     \end{enumerate}
   \end{rmk}

   \begin{lem} \label{Lem_SmallDelta}
     There exists some function $\delta\in C^\infty(\R)$, which vanishes outside $(-1, 1)$ and is positive and small enough in $(-1,1)$, such that $\Psi(a, z)$ and $\ F_{a,z}$ (defined at the beginning of this subsection) satisfy the following \footnote{$a_5$ is set to be $1$, so the domains $\{\pm F_{a,z}>0\}$ are well-defined for such $a$.} . For every $a=[a_0: a_1:\dots:a_4:1]\in \RP^5$ with $\rho(a,\cdot)$ not constantly zero, if we let $\kappa(a):=(1-\sqrt{a_1^2+a_2^2})/4$, then we have:
     \begin{enumerate} [label={\normalfont(\alph*)}]
       \item\label{Item_Smalldelta_bx error} There is a unique $\bx_a \in C^\infty(\bbS^1; \bD_{\kappa(a)/4}(-a_2, -a_1))$ so that for $(x, \alpha) \in \bD_{\kappa(a)}(-a_2, -a_1)\times \bS^1$, 
       \begin{align*}
         \nabla_x F_{a,z}(x, \alpha) = 0\,, \quad \Leftrightarrow \quad
         x = \bx_a(\alpha)\,.
       \end{align*}
       And for some constant $C=C(a^2_1+a^2_2)>0$ we have,
       \[
         |\bx_a(\alpha) - (-a_2, -a_1)| \leq C\cdot (|a_3| + |a_4|)\,
       \] 
       \item\label{Item_Smalldelta_ZerosBd} Let $f_{a,z}\in C^\infty(\bbS^1)$ be given by $f_{a, z}(\alpha):= F_{a,z}(\bx_a(\alpha), \alpha)$. Then 
       \begin{align*}
         \# f_{a,z}^{-1}(0) \leq \begin{cases}
            4\,, &\ \text{ if } |z| = 1\,; \\
            6\,, &\ \text{ if } |z| < 1\,.
         \end{cases}
       \end{align*}
       Moreover, if the equality holds when $z=e^{i\theta}$ (hence $|z|=1$), then each zero of $f_{a,z}$ has order $1$, and \[
          \dist_{\bbS^1}(\cZ^-(f_{a,z}), -\theta) \leq 1000^{-1} \,.  
       \]
       \item\label{Item_Smalldelta_GenusBd} For every $z\in \D$, $\Psi(a, z)$ is a smooth surface in $S^3$ away from at most $3$ points, with the genus estimate
       \begin{align*}
          \mathfrak{g}(\Psi(a,z)) \leq \begin{cases}
            1\,, &\ \text{ if } |z| = 1\,; \\
            2\,, &\ \text{ if } |z| < 1\,.
          \end{cases}
       \end{align*}
       And equality holds when $|z|=1$ if and only if $\# f_{a,z}^{-1}(0)=4$.
     \end{enumerate}
   \end{lem}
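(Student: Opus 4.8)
The plan is to choose the cut-off parameter $\delta$ small enough—necessarily decaying as $a_1^2+a_2^2\to 1$—so that for every admissible $a$ the function $F_{a,z}$ satisfies the hypotheses of the abstract genus-calculation lemma, Lemma~\ref{Lem_Constru Psi_GenusCal of (F=0)}, with $b:=(-a_2,-a_1)$ and $\kappa:=\kappa(a)$. Normalizing $a_5=1$, the requirement that $\rho(a,\cdot)$ not be constantly zero forces $t:=a_1^2+a_2^2<1$ and $\eta^2:=a_3^2+a_4^2+(a_0-a_1a_2)^2<\delta(t)$. First I would record two facts about $\rho(a,\cdot)$ that follow directly from \eqref{Equ_Def_rho(a,x)}: its support lies inside $\bD_{(1-t)/8}(b)\subset\bD_{\kappa(a)}(b)$, and on the smaller disc $\bD_{\kappa(a)/4}(b)$ the first cut-off factor is identically $1$, so there $\rho(a,\cdot)$ is the \emph{constant} $c_a:=\zeta(\eta^2/\delta(t))\,\delta(t)\in(0,\delta(t)]$, in particular independent of $x$. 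This last point drives part~(a) and the linearization of $f_{a,z}$ in part~(b).

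For part~(a), since $\rho(a,\cdot)$ is $x$-constant on $\bD_{\kappa(a)/4}(b)$, the $z$-dependent term of $F_{a,z}$ drops out of $\nabla_x F_{a,z}$ there, leaving the $z$-independent expression $\nabla_x F_{a,z}(x,\alpha)=\big((x-b)_2,(x-b)_1\big)+\nabla_x\varsigma(x)\,(a_3\cos\alpha+a_4\sin\alpha)$. The implicit function theorem then gives a unique smooth $z$-independent zero $\bx_a(\alpha)\in\bD_{\kappa(a)/4}(b)$ with $|\bx_a(\alpha)-b|\le C(t)(|a_3|+|a_4|)$ and all $\alpha$-derivatives of $\bx_a-b$ likewise $O(|a_3|+|a_4|)$. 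On the annulus $\bD_{\kappa(a)}(b)\setminus\bD_{\kappa(a)/4}(b)$ one has $|\nabla_x F_{a,z}|\ge|x-b|-C(t)\big(\sqrt{\delta(t)}+\delta(t)/(1-t)\big)>0$ for $\delta(t)$ small, uniformly in $z$ since $|r\cos(\theta+2\alpha)+(1-r)\cos3\alpha|\le1$ and $|\nabla_x\rho(a,\cdot)|\lesssim\delta(t)/(1-t)$ there; hence $\bx_a(\alpha)$ is the only critical point of $F_{a,z}(\cdot,\alpha)$ in $\bD_{\kappa(a)}(b)$, which is part~(a). Shrinking $\delta(t)$ further also yields $\big|\nabla_x^2 F_{a,z}-\left(\begin{smallmatrix}0&1\\1&0\end{smallmatrix}\right)\big|<\delta_2(\kappa(a))$ on $\bD_{\kappa(a)}(b)\times\bbS^1$ (the $\nabla_x^2\rho$ contribution being $\lesssim\delta(t)/(1-t)^2$, the rest $\lesssim\sqrt{\delta(t)}$); this, together with $|a_0-a_1a_2|+|a_3|+|a_4|\le3\eta<\delta_2(\kappa(a))$ and the fact that $F_{a,z}^{-1}(0)$ agrees with $\Phi_5(a)$ wherever $\rho(a,\cdot)=0$, verifies hypotheses~(a) and~(c) of Lemma~\ref{Lem_Constru Psi_GenusCal of (F=0)}.

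For part~(b), evaluating $f_{a,z}(\alpha)=F_{a,z}(\bx_a(\alpha),\alpha)$, using $\rho(a,\bx_a(\alpha))=c_a$ and expanding about $b$, one obtains $f_{a,z}=T_{a,z}+E_{a,z}$ with $T_{a,z}(\alpha)=(a_0-a_1a_2)+\varsigma(b)(a_3\cos\alpha+a_4\sin\alpha)+c_a\big(r\cos(\theta+2\alpha)+(1-r)\cos3\alpha\big)$, a trigonometric polynomial of degree $\le3$—degree $\le2$ when $|z|=r=1$—with strictly positive leading coefficient $c_a(1-r)$, resp.\ $c_a$, and $E_{a,z}=v_1v_2+(\varsigma(\bx_a(\alpha))-\varsigma(b))(a_3\cos\alpha+a_4\sin\alpha)$ with $v:=\bx_a-b$, so that $\|E_{a,z}\|_{C^6}\le C(t)\eta^2$ from the derivative bounds on $v$. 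The decisive inequality is $\|T_{a,z}\|\ge\sqrt{1-t}\,\eta$ (coming from the constant and degree-one coefficients, since $\varsigma(b)^2=1-t$), whence $\|E_{a,z}\|_{C^6}/\|T_{a,z}\|\le C(t)\sqrt{\delta(t)/(1-t)}$ can be forced below $\min\{\delta_1(3,1000^{-1}),\delta_1(2,1000^{-1})\}$ by taking $\delta(t)$ small, the case $\eta=0$ being trivial ($E_{a,z}\equiv0$, $\|T_{a,z}\|\ge c_a/\sqrt2>0$). Corollary~\ref{Cor_alm trig polyn has zero bd} then gives $\ord(f_{a,z})\le6$, resp.\ $\le4$ when $|z|=1$, so $\#f_{a,z}^{-1}(0)\le\ord(f_{a,z})$ yields the claimed bounds; and if $\#f_{a,z}^{-1}(0)=4$ at $z=e^{i\theta}$, the sandwich $4\le\ord(f_{a,z})\le4$ forces every zero to have order one, while the second assertion of Corollary~\ref{Cor_alm trig polyn has zero bd} (with $k=2$, $b_2=c_a>0$, $\theta_2=\theta$) gives $\dist_{\bbS^1}(\cZ^-(f_{a,z}),-\theta)\le1000^{-1}$.

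For part~(c) I would simply invoke Lemma~\ref{Lem_Constru Psi_GenusCal of (F=0)}, whose remaining hypothesis $\ord(f_{a,z})<\infty$ is now known: its regularity conclusion makes $\Psi(a,z)=F_{a,z}^{-1}(0)$ smooth away from $\{(\bx_a(\alpha),\alpha):\ord_{f_{a,z}}(\alpha)\ge2\}$, a set of at most $\ord(f_{a,z})/2\le3$ points (lying in $\Int(\bD)\times\bbS^1$, hence mapping to genuine points of $S^3$); and its genus conclusion gives $\mathfrak g(\Psi(a,z))\le\max\{0,\tfrac12\ord(f_{a,z})-1\}$, hence $\le2$ in general and $\le1$ when $|z|=1$, with the equality case for $|z|=1$ reading $\mathfrak g=1\Leftrightarrow\#f_{a,z}^{-1}(0)=4$ (forward: equality forces four odd-order zeros; backward: $\#f_{a,z}^{-1}(0)=4$ forces order-one zeros, and then the equality clause of the genus conclusion applies). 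Finally, $\delta$ is constructed by noting that each smallness demand above has the form $\delta(t)\le\Delta_i(t)$ for a continuous positive $\Delta_i$ on $[0,1)$, so any smooth $\delta$ supported in $(-1,1)$, positive on $(-1,1)$, and $\le\min_i\Delta_i$ on $[0,1)$ works. The step I expect to be the main obstacle is the uniform closeness estimate $\|E_{a,z}\|_{C^6}\le\delta_1\|T_{a,z}\|$: it rests on the linear data $(a_0-a_1a_2,a_3,a_4)$ surviving (up to the harmless factor $\varsigma(b)=\sqrt{1-t}$) in $T_{a,z}$ while the displacement-driven error $E_{a,z}$ is genuinely quadratic in that data, and on letting $\delta$ decay as $t\to1$ to tame the cut-off $\rho$, whose derivatives grow like $(1-t)^{-1}$ and $(1-t)^{-2}$; everything else is bookkeeping over Lemma~\ref{Lem_Constru Psi_GenusCal of (F=0)} and Corollary~\ref{Cor_alm trig polyn has zero bd}.
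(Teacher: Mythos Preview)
Your proposal is correct and follows essentially the same route as the paper's own proof: a contraction/implicit-function argument for $\bx_a$ on the inner disc where $\rho(a,\cdot)$ is constant, comparison of $f_{a,z}$ to the trigonometric polynomial $T_{a,z}(\alpha)=F_{a,z}(b,\alpha)$ followed by Corollary~\ref{Cor_alm trig polyn has zero bd}, and then Lemma~\ref{Lem_Constru Psi_GenusCal of (F=0)} for regularity and the genus bound. The only cosmetic difference is in bounding the ratio $\|f_{a,z}-T_{a,z}\|_{C^6}/\|T_{a,z}\|$: the paper observes that the error is $O\big((|a_3|+|a_4|)^2\big)$ while the degree-one coefficient of $T_{a,z}$ already gives $\|T_{a,z}\|\gtrsim|a_3|+|a_4|$, so the ratio is $O(|a_3|+|a_4|)$, whereas you route through $\eta$ and the lower bound $\|T_{a,z}\|\ge\sqrt{1-t}\,\eta$; either way the ratio is $O(\sqrt{\delta(t)})$ after absorbing the $t$-dependent constants.
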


   \begin{proof}[Proof of Lemma \ref{Lem_SmallDelta}]
     To prove \ref{Item_Smalldelta_bx error}, in the following $\Lambda_i>1$  will be absolute constants changing from line to line, and $C(a^2_1+a^2_2)>0$ also changes from line to line. Recall that 
     \begin{align*}
        F_{a, z}(x,\alpha) = (x_1+a_2)(x_2+a_1) & + (a_0-a_1a_2) + \varsigma(x)(a_3\cos\alpha + a_4\sin\alpha) \\
         & + \rho(a, x)(r\cos(\theta+2\alpha) + (1-r)\cos(3\alpha))
     \end{align*}
     where $\rho(a, \cdot)$ is constant for $x\in \bD_{\kappa(a)/4}(-a_2, -a_1)$, $\rho(a,\cdot)=0$ for $x\notin \bD_{\kappa(a)}(-a_2,-a_1)$ and in general, 
     \begin{align*}
       |\nabla_x\rho|\leq \Lambda_1(1-a_1^2 - a_2^2)^{-2}\cdot \delta(a_1^2+a_2^2)\,. %\label{Equ_Smalldelta_|nabla_x rho| Bd}
     \end{align*}
     Also since $\rho(a, \cdot)\neq 0$, we have 
     \begin{align}
       a_3^2 + a_4^2 + (a_0-a_1a_2)^2 \leq \delta(a_1^2+a_2^2)\,. \label{Equ_Smalldelta_|a_3|+|a_4| bd by delta}
     \end{align}
     And in $\bD_{\kappa(a)}(-a_2, -a_1)$, \[
       |\varsigma| + |\nabla \varsigma| + |\nabla^2\varsigma| \leq \Lambda_2\ \kappa(a)^{-4}\,.
     \]
     Hence, in $\bD_{\kappa(a)}(-a_2, -a_1)\setminus\bD_{\kappa(a)/4}(-a_2, -a_1)$, 
     \begin{align*}
       \nabla_x F_{a,z}(x, \alpha) & = (x_2+a_1, x_1+a_2) + O\left(|a_3|+|a_4| + \delta(a_1^2+a_2^2) \right)\kappa(a)^{-4} \\
       & = (x_2+a_1, x_1+a_2) + O\left(\delta(a_1^2+a_2^2)^{1/2} \right)\kappa(a)^{-4} \\
       & \neq 0
     \end{align*}
     as long as $\delta(a_1^2+a_2^2)\leq \Lambda_3^{-1}\kappa(a)^{10}$. While in $\bD_{\kappa(a)/4}(-a_2, -a_1)$, 
     \begin{align*}
       \nabla_x F_{a,z}(x, \alpha) & = (x_2+a_1,\ x_1+a_2) + (a_3\cos\alpha + a_4\sin\alpha)\nabla\varsigma(x)\,; \\
       \nabla_x F_{a,z}(-a_2, -a_1, \alpha) & = O(|a_3|+|a_4|)\kappa(a)^{-4} = O(\delta(a_1^2+a_2^2)^{1/2})\kappa(a)^{-4}\,;
     \end{align*}
     \begin{align}
       \nabla^2_x F_{a,z}(x, \alpha) = \begin{bmatrix}
         0 & 1 \\ 1 & 0 
       \end{bmatrix} + O(|a_3|+|a_4|)\kappa(a)^{-4} \,. \label{Equ_Smalldelta_nabla^2_x F bd}
%       & = \begin{bmatrix}
%         0 & 1 \\ 1 & 0 
%       \end{bmatrix} + O(|a_3|+|a_4|)\kappa(a)^{-4} \\ 
     \end{align}
     Hence consider \[
       \varpi: (y_1, y_2) \mapsto -\begin{bmatrix}
        0 & 1 \\ 1 & 0
       \end{bmatrix} \nabla_xF_{a,z}(-a_2+y_1, -a_1+y_2, \alpha) + (y_1, y_2)
     \] 
     By taking $\delta(a_1^2+a_2^2) \leq \Lambda_4^{-1}\kappa(a)^{10}$, we can see that $\varpi$ has small Lipschitz constant and is a contraction map from $\bD_{\kappa(a)/4}$ to itself, and by the Banach fixed-point theorem, it has a unique fixed point (which corresponds to zero of $\nabla_x F_{a, z}$) with length no more than \[
       |\varpi(0,0)| + \|\nabla\varpi\|_{C^0(\bD_{\kappa(a)/4})} \leq C(a_1^2+a_2^2)(|a_3| + |a_4|) \,.
     \]
     This finishes the proof of \ref{Item_Smalldelta_bx error}. Moreover, since $\nabla_xF_{a,z}(\bx_a(\alpha), \alpha) = 0$, by chain rule we have, 
     \begin{align}
        \|\bx_a - (-a_2, -a_1)\|_{C^6(\bbS^1)} \leq C(a_1^2+a_2^2)(|a_3|+|a_4|) \,. \label{Equ_Smalldelta_|bx_a|_C^6 bd}
     \end{align}

     To prove \ref{Item_Smalldelta_ZerosBd}, notice that 
     \begin{align*}
       T_{a,z}(\alpha) & := F_{a,z}(-a_2, -a_1, \alpha) \\
       & \ = (a_0-a_1a_2) + \varsigma(-a_2, -a_1)(a_3\cos\alpha +a_4\sin\alpha) \\
       & \ \ + \rho(a,( -a_2, -a_1))(r\cos(\theta+2\alpha)+(1-r)\cos(3\alpha))
     \end{align*}
     is a trigonometric polynomial of degree $\leq 3$ for every $(a, z)$, and it is of degree $2$ if $z\in \partial\D$. And since $\rho(a, \cdot)$ is a constant in $\bD_{\kappa(a)/4}$, 
     \begin{align*}
       f_{a,z}(\alpha) - T_{a,z}(\alpha) & = (\bx_a(\alpha)_1+a_2)(\bx_a(\alpha)_2+a_1) \\
       & + (\varsigma(\bx_a(\alpha))-\varsigma(-a_2, -a_1))(a_3\cos\alpha +a_4\sin\alpha)\,.
     \end{align*}
     Therefore
     \begin{align*}
       \|f_{a,z} - T_{a,z}\|_{C^6(\bbS^1)} & \leq C(a_1^2+a_2^2) \Big(\|\bx_a-(-a_2,-a_1)\|_{C^6(\bbS^1)}^2 \\
       & \;\;\qquad\qquad +\; (|a_3|+|a_4|)\|\bx_a-(-a_2,-a_1)\|_{C^6(\bbS^1)} \Big) \\
       & \leq C(a_1^2+a_2^2)(|a_3|+|a_4|)^2 \\
       & \leq \bar C(a_1^2+a_2^2)(|a_3|+|a_4|)\|T_{a,z}\|\,.
     \end{align*}
     Hence by taking $\delta(a_1^2+a_2^2)\ll 1$ such that $\bar C(a_1^2+a_2^2)(|a_3|+|a_4|) \leq \delta_1(3, 1000^{-1})$ from Corollary \ref{Cor_alm trig polyn has zero bd}, \ref{Item_Smalldelta_ZerosBd} follows directly by Corollary \ref{Cor_alm trig polyn has zero bd}. 

     To prove \ref{Item_Smalldelta_GenusBd}, by \eqref{Equ_Smalldelta_|a_3|+|a_4| bd by delta} and \eqref{Equ_Smalldelta_nabla^2_x F bd}, we can further assume $\delta(a_1^2+a_2^2)$  to be small   such that assumptions \ref{Item_GenusCalc_asump1}-\ref{Item_GenusCalc_asump3} of Lemma \ref{Lem_Constru Psi_GenusCal of (F=0)} hold for $F_{a,z},\ (-a_2, -a_1),\ \kappa(a),\ \bx_a$ in place of $F,\ b,\ \kappa,\ \bx$ therein. Hence, \ref{Item_Smalldelta_GenusBd} follows immediately from \ref{Item_Smalldelta_ZerosBd} proved above together with Lemma \ref{Lem_Constru Psi_GenusCal of (F=0)} \eqref{Item_GenusCalc_concl_reg} and \eqref{Item_GenusCalc_concl_genus}. 
   \end{proof}

   \begin{proof}[Proof of Theorem \ref{prop:Psi1}.]
     The construction of $\Psi$ has been carried out above with the choice of $\delta$ fixed by Lemma \ref{Lem_SmallDelta}. 
     The assertion that $\Psi$ takes value in the set of punctate surfaces of genus $\leq 2$ and \eqref{Item:Family_Genus0}, \eqref{Item:Family_Genus1} of Theorem \ref{prop:Psi1} follow from \ref{Item_Smalldelta_GenusBd} of Lemma \ref{Lem_SmallDelta} and \ref{Item_rho(a,x)_genus0} of Remark \ref{Rem_Prop_rho(a,x)}.       
   \end{proof}

\subsection{Symmetry of $\bar \Xi$ and $\Psi$}\label{sect:tildeXiInvariantProof}
In this section, we prove Proposition \ref{prop:tildeXiInvariant}. Recall that in \eqref{Equ_Def_Psi(a,z)}, $\Psi(a,z)$ is defined using $\Phi_5(a)$ in \eqref{Equ_Def_Phi_5(a)} when $a_5= 0$, and using $F_{a,z}$ in \eqref{Equ_Def_F_a,z} when $a_5\neq 0$: 
  \begin{align}
    \begin{split}
      F_{a,re^{i\theta}}(x, \alpha) := a_5^{-1} \Big( a_0 & + a_1x_1+a_2x_2+a_3x_3+a_4x_4\\
        & +a_5\left[x_1x_2+ \rho(a,x)\left(r\cos(\theta+2\alpha)+(1-r)\cos3\alpha\right)\right]\Big).   %\label{Equ_Def_F_a,z(x)}
    \end{split}
  \end{align}
  Therefore, Proposition \ref{prop:tildeXiInvariant} is a direct consequence of the following lemma.
  \begin{lem} \label{Lem_D_24Sym_F_a,z(p)}
    Recall the left-$G$ action $\sigma$ on $\RP^5\times \D$ is defined in \eqref{Equ_D_24-action_Y}. For $i\in \{1,2\}$, we have   \[
      F_{\sigma(g_i^{-1})(a,z)}(p) = -F_{a,z}(g_i\cdot p) , \qquad \forall\, (a,z)\in \{a_5\ne 0\}\x\D, \ \  \forall\, p\in S^3 \,,
    \] 
    where the $G$-action on $S^3$ was given in \eqref{eqn:g_act2}.
    In particular, the $7$-parameter family $\Psi: \cY\to \cS_{\leq 2}(S^3)$  is $G$-equivariant, where the $G$-action on $\cS_{\leq 2}(S^3)$ is induced by \eqref{eqn:g_act2}.
  \end{lem}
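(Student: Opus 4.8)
The plan is to verify the identity $F_{\sigma(g_i^{-1})(a,z)}(p) = -F_{a,z}(g_i \cdot p)$ by direct substitution, separately for $i=1$ and $i=2$, and to organize the computation by matching the three types of terms appearing in $F_{a,z}$: the affine part $a_0 + a_1 x_1 + \dots + a_4 x_4$, the quadratic part $a_5 x_1 x_2$, and the trigonometric perturbation $a_5 \rho(a,x)(r\cos(\theta+2\alpha) + (1-r)\cos 3\alpha)$. First I would record the effect of $g_i$ on the ambient coordinates: from \eqref{eqn:g_1_act1}–\eqref{eqn:g_2_act1}, $g_1$ sends $(x_1,x_2,x_3,x_4) \mapsto (-x_2, x_1, x_3\cos\frac\pi3 - x_4\sin\frac\pi3, x_3\sin\frac\pi3 + x_4\cos\frac\pi3)$ and acts on $\alpha$ by $\alpha \mapsto \alpha + \pi/3$ (see \eqref{eqn:g_act2}), while $g_2$ sends $(x_1,x_2,x_3,x_4)\mapsto (-x_1,x_2,-x_3,x_4)$ and $\alpha \mapsto \pi - \alpha$.

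For the affine and quadratic parts the verification is a finite linear-algebra check: substituting the coordinates of $g_i\cdot p$ into $a_0 + a_1 x_1 + a_2 x_2 + a_3 x_3 + a_4 x_4 + a_5 x_1 x_2$ and reading off the new coefficients of $1, x_1, x_2, x_3, x_4, x_1 x_2$, one sees these are precisely the components of $\sigma(g_i^{-1})(a,z)$ up to the global sign $-1$ (note $x_1 x_2 \mapsto (-x_2)(x_1) = -x_1 x_2$ under $g_1$ and $x_1 x_2 \mapsto (-x_1)(x_2) = -x_1 x_2$ under $g_2$, which is consistent with the sign flip of $a_5$ in \eqref{Equ_D_24-action_Y}). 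For the trigonometric term I would use \eqref{eq:g1ActRP5}–\eqref{eq:g2ActRP5}, which say exactly that $\rho$ is invariant under the simultaneous action of $g_i$ on $x$ and $\sigma(g_i^{-1})$ on $a$; it then remains to check that the angular factor transforms correctly. Under $g_1$, $\alpha \mapsto \alpha + \pi/3$ gives $2\alpha \mapsto 2\alpha + 2\pi/3$ and $3\alpha \mapsto 3\alpha + \pi$, so $\cos 3\alpha \mapsto -\cos 3\alpha$ and $r\cos(\theta + 2\alpha) + (1-r)\cos 3\alpha$ becomes $r\cos((\theta + 2\pi/3) + 2\alpha) - (1-r)\cos 3\alpha$; the replacement $z = re^{i\theta} \mapsto e^{-i\pi/3} z = r e^{i(\theta - \pi/3)}$ shifts $\theta \mapsto \theta - \pi/3$, so the combined effect is to produce $-\big(r\cos(\theta + 2\alpha) + (1-r)\cos 3\alpha\big)$ after accounting for the sign flip of $a_5$, as required. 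Under $g_2$, $\alpha \mapsto \pi - \alpha$ gives $2\alpha \mapsto 2\pi - 2\alpha$ and $3\alpha \mapsto 3\pi - 3\alpha$, so $\cos(\theta + 2\alpha) \mapsto \cos(\theta - 2\alpha)$ and $\cos 3\alpha \mapsto -\cos 3\alpha$; the replacement $z \mapsto -\bar z$ sends $re^{i\theta} \mapsto r e^{i(\pi - \theta)}$, i.e. $\theta \mapsto \pi - \theta$, so $\cos(\theta - 2\alpha) \mapsto \cos((\pi - \theta) - 2\alpha) = -\cos(\theta + 2\alpha)$ — wait, more carefully one tracks $r\cos(\theta+2\alpha) + (1-r)\cos 3\alpha \mapsto -r\cos(\theta+2\alpha) - (1-r)\cos 3\alpha$, again matching the $-a_5$ sign. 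Finally, the overall prefactor $a_5^{-1}$ becomes $(-a_5)^{-1} = -a_5^{-1}$, which combines with the sign changes above to yield exactly $-F_{a,z}(g_i\cdot p)$; one also checks the degenerate case $a_5 = 0$ separately, where $F$ reduces to the affine computation and $\Psi(a,z) = \Phi_5(a)$, handled by the same coefficient bookkeeping.

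The last assertion — that $\Psi$ is $G$-equivariant — follows immediately: the zero set of $F_{\sigma(g_i^{-1})(a,z)}$ equals the zero set of $-F_{a,z}\circ g_i$, which is $g_i^{-1}$ applied to the zero set of $F_{a,z}$, i.e. $g_i^{-1}\cdot \Psi(a,z)$; since $\sigma$ is a left action and $g_1, g_2$ generate $G$, this extends to all of $G$. I expect the main obstacle to be purely bookkeeping: keeping the sign conventions consistent across the three term types and across the two generators, in particular making sure the flip $a_5 \mapsto -a_5$ in \eqref{Equ_D_24-action_Y} is exactly compensated, and correctly tracking the interaction between the $z \mapsto e^{-i\pi/3}z$ (resp. $z \mapsto -\bar z$) substitution and the $\alpha$-shift (resp. $\alpha$-reflection). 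There is no conceptual difficulty, but the computation must be done with care, and it is cleanest to first isolate the transformation rules \eqref{eq:g1ActRP5}, \eqref{eq:g2ActRP5} for $\rho$ as a black box so that only the elementary trigonometric identities for $\cos(k\alpha + \text{shift})$ and the linear coordinate changes remain to be checked by hand.
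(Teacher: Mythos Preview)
Your proposal is correct and follows essentially the same approach as the paper's proof: a direct term-by-term computation of $F_{a,z}(g_i\cdot p)$ using the coordinate actions \eqref{eqn:g_1_act1}--\eqref{eqn:g_act2}, the $\rho$-symmetries \eqref{eq:g1ActRP5}--\eqref{eq:g2ActRP5}, and elementary trigonometric identities, followed by recognition of the result as $-F_{\sigma(g_i^{-1})(a,z)}(p)$. Your organization by affine, quadratic, and trigonometric parts is a minor stylistic variant of the paper's all-at-once substitution; the only small cleanup needed is that the statement already restricts to $\{a_5\neq 0\}$, so the separate $a_5=0$ check is only relevant for the ``in particular'' clause about $\Psi$ (where it is indeed trivial from the definition of $\Phi_5$), and your sign bookkeeping would read more cleanly if you note that the $a_5^{-1}\cdot a_5$ in front of the bracket cancels, so the global $-1$ comes entirely from the prefactor $(a_5')^{-1}=-a_5^{-1}$ applied to the (matching) bracketed expressions.
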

  \begin{proof}For $g_1$, we have
    \begin{equation*}
      \begin{aligned}
        F_{a,z}(g_1\cdot p) &= a^{-1}_5(a_0 + a_1 (-x_2) + a_2 x_1 + \begin{pmatrix} a_3 & a_4 \end{pmatrix} \begin{pmatrix} \cos\frac{\pi}{3} & -\sin\frac{\pi}{3}  \\ \sin \frac{\pi}{3}  & \cos \frac{\pi}{3}
        \end{pmatrix}\begin{pmatrix} x_3 \\x_4 \end{pmatrix}\\
        & \quad + a_5 \left[ (-x_2) x_1 + \rho(a,g_1\cdot x) \left(r \cos (\theta + 2 (\alpha + \pi/3)) + (1-r) \cos3(\alpha + \pi/3 )\right)\right])\\
        &= a^{-1}_5(a_0 + a_2 x_1 + (-a_1) x_2 + \left(a_3 \cos \frac{\pi}{3} + a_4 \sin \frac {\pi}{3}\right) x_3 + \left(-a_3 \sin \frac{\pi}{3} + a_4 \cos \frac{\pi}{3}\right) x_4\\
        & \quad + (-a_5) \left[ x_1 x_2 + \rho(g_1^{-1}\cdot a,   x) \left(r \cos ( (\theta - \pi/3 ) + 2\alpha  ) + (1-r)\cos3\alpha\right)\right])\\
        &= -F_{\sigma(g_1^{-1})(a,z)}(p)\,,
      \end{aligned}
    \end{equation*}
    where the first equality uses ~\eqref{eqn:g_1_act1} and ~\eqref{eqn:g_act2}, the second uses \eqref{eq:g1ActRP5}, 
     and the third uses ~\eqref{Equ_D_24-action_Y}. Note, the notation $g_i^{-1}\cdot a$ involves an obvious abuse of notation: It refers to the $G$-action on $\RP^5$ induced by \eqref{eq:g1ActRP5} and \eqref{eq:g2ActRP5}. 
    Similarly, by ~\eqref{eqn:g_2_act1}, ~\eqref{eqn:g_act2}, \eqref{eq:g2ActRP5}, and~\eqref{Equ_D_24-action_Y},
    \begin{equation*}
        \begin{aligned}
            F_{a,z}(g_2\cdot p) &= a^{-1}_5(a_0 + a_1 (-x_1) + a_2 x_2 + a_3 (-x_3) + a_4 x_4\\
            & \quad + a_5 \left[ (-x_1) x_2 + \rho(a, g_2 \cdot x) \left( r \cos(\theta + 2(\pi - \alpha)) + (1 - r) \cos 3(\pi - \alpha) \right) \right])\\
            & = a^{-1}_5(a_0 + (-a_1) x_1 + a_2 x_2 + (-a_3) x_3 + a_4 x_4\\
            &\quad  + (-a_5) \left[ x_1 x_2 + \rho(g_2^{-1}\cdot a,  x) \left(r \cos ((\pi-\theta) + 2\alpha) + (1 - r) \cos 3\alpha \right) \right])\\
            &=-F_{\sigma(g_2^{-1})(a,z)}(p)\,.
        \end{aligned}
    \end{equation*}
  \end{proof}

\section{Topology of $\cX$} \label{sect:topoXi}

In this section, we prove Lemma \ref{lem:BisProduct} and Theorem \ref{thm:capProduct}.

\subsection{Proof of Lemma \ref{lem:BisProduct}}\label{sect:spaceLawsonProof} 
    
    First, the map 
    \[
      \Pi_1: B=(\mathbb S^3\times \mathbb S^3)/\hat G \to \mathbb S^3/\hat\Pi_1(\hat G), \quad (q_1, q_2)\cdot \hat G\mapsto q_1\cdot \hat\Pi_1(\hat G)\,,
    \]  
    is clearly well-defined, where $(q_1, q_2)\cdot \hat G$ and $q_1\cdot \hat\Pi_1(\hat G)$ denote the left cosets. This gives a principal $\mathbb S^3$-bundle via the fiberwise right action 
    \begin{align*}
      \tau: ((\mathbb S^3\times \mathbb S^3)/\hat G)\times \mathbb S^3 & \to (\mathbb S^3\times \mathbb S^3)/\hat G, \\
      ((q_1, q_2)\cdot \hat G,\ q') & \mapsto (q_1, q'^{-1}q_2)\cdot \hat G
    \end{align*}
    Since the base of this bundle has dimension $3$, by the obstruction theory, it admits a smooth global section $\mathfrak s: \mathbb S^3/\hat\Pi_1(\hat G)\to (\mathbb S^3\times \mathbb S^3)/\hat G$, i.e., $\pi\circ \mathfrak s = \id_{\mathbb S^3/\hat\Pi_1(\hat G)}$. Alternatively, we extend $(\mathbb S^3\times \mathbb S^3)/\hat G$ to a real vector bundle $\bV$ over $\mathbb S^3/\hat\Pi_1(\hat G)$ of rank $4$, in which $(\mathbb S^3\times \mathbb S^3)/\hat G\subset \bV$ is the unit sphere bundle. By a perturbation of the zero section, there exists a section $\tilde {\mathfrak s}$ of $\bV$ transverse to the zero section $\tilde {\mathfrak s}_0$ in $\bV$, and thus, doesn't intersect $\tilde{\mathfrak s}_0$ by counting dimensions; in other words, $\tilde{\mathfrak s}$ is nonvanishing everywhere. Then $\mathfrak s:= \tilde{\mathfrak s}/|\tilde{\mathfrak s}|$ is a global section of $(\mathbb S^3\times \mathbb S^3)/\hat G$.

    The triviality of this $\mathbb{S}^3$-bundle follows from this global section $\mathfrak s$, which induces a diffeomorphism
    \[
      \mathbb S^3\times (\mathbb S^3/\hat\Pi_1(\hat G)) \to (\mathbb S^3\times \mathbb S^3)/\hat G, \quad (q', q\cdot \hat\Pi_1(\hat G)) \mapsto \tau(\mathfrak s(q\cdot \hat\Pi_1(\hat G)), q')\,.
    \]

\subsection{Proof of Theorem \ref{thm:capProduct}}\label{sect:topologyX}
    Recall that $\cX = \tilde \cX/\tilde G$, where $\tilde \cX:= S^5\times \D \times \mathbb S^3\times \mathbb S^3$, parametrized by $(a, z, z_1+z_2j, w_1+w_2j)$, and $\tilde G = \Z_2\times Q_{48}$, with generators $\tilde{g}_0, \tilde{g}_1,$ and $\tilde{g}_2$ acting on $\tilde\cX$ by \eqref{eqn:action_tilde_g}. Here, we view $\mathbb S^3\subset \C^2$ as the group of unit quaternions, and write any point $(w_1,w_2)\in \C^2$ as the quaternion $w_1+w_2j$.

    We shall apply Appendix \ref{Append_PoincareDual} with $\tilde \cX, \tilde G$ in place of $\tilde X, H$ therein to construct Poincar\'e duals of first cohomology classes. 
    
    We first construct $\tilde G$-equivarient functions on $\tilde \cX$, concentrating on its first $\mathbb{S}^3$ factor.
    \begin{lem} \label{Lem_AssoBundle_PD of c_1*+c_2*}
      Let $\epsilon:=2026^{-1}$. Define $v_1, v_2, v_3\in C^\infty(\tilde\cX, \R)$ (that only depend on $(z_1, z_2)$) by 
      \begin{align*}
        v_1 := \Re(z_1^{12}-z_2^{12})\,, \quad
        v_2 := \Imag(z_1^{12}-z_2^{12})\,, \quad 
        v_3 := \Re \left(e^{i\epsilon\pi}(z_1z_2^{13}+z_1^{13}z_2) \right)\,.
      \end{align*}
      Then they satisfy the following:
      \begin{enumerate} [label={\normalfont(\roman*)}]
       \item\label{Item_AssoBundle_v_ell G-equivar} For each $\ell \in \{1, 2, 3\}$, \[
         -v_\ell(p \tilde g_0) = v_\ell(p \tilde g_1) = v_\ell(p \tilde g_2) = -v_\ell (p), \qquad \forall\, p\in \tilde\cX \,.
       \]
       \item\label{Item_AssoBundle_|v_ell =0|=48*7} $\{v_1=v_2=v_3=0\} = S^5\times \D\times Z_{\alpha^3}\times \mathbb S^3$, where $Z_{\alpha^3}\subset \mathbb S^3$ consists of $48\times 7$ points;
       \item\label{Item_AssoBundle_nabla v_ell linear indep} At each $p\in \{v_1=v_2=v_3=0\}$, $\{\nabla v_1(p), \nabla v_2(p), \nabla v_3(p)\}$ are linear independent in $T_p \tilde\cX^3$.
      \end{enumerate}
    \end{lem}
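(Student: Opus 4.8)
\emph{Strategy and conventions.} Since $v_1,v_2,v_3$ depend only on the coordinates $(z_1,z_2)$ of the first $\mathbb S^3$--factor of $\tilde \cX$, the whole lemma reduces to an explicit analysis of these three functions on $\mathbb S^3=\{(z_1,z_2)\in\C^2:|z_1|^2+|z_2|^2=1\}$. I will use the quaternion identification $q=z_1+z_2 j$ with $jw=\bar w\, j$ ($w\in\C$), so that right multiplication on this factor by $e^{i\phi}$ acts as $(z_1,z_2)\mapsto(z_1 e^{i\phi},z_2 e^{-i\phi})$ and right multiplication by $j$ acts as $(z_1,z_2)\mapsto(-z_2,z_1)$; by \eqref{eqn:action_tilde_g}, $\tilde g_0$ fixes $(z_1,z_2)$, while $\tilde g_1,\tilde g_2$ act on this factor by right multiplication by $\hat\Pi_1(\hat g_1)=e^{i5\pi/12}$ and $\hat\Pi_1(\hat g_2)=j$. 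Assertion (i) then follows by direct substitution: $v_\ell(p\tilde g_0)=v_\ell(p)$ trivially, while under $\tilde g_1$ one gets $z_1^{12}-z_2^{12}\mapsto e^{i5\pi}z_1^{12}-e^{-i5\pi}z_2^{12}=-(z_1^{12}-z_2^{12})$ and (tracking phases) $z_1z_2^{13}+z_1^{13}z_2\mapsto-(z_1z_2^{13}+z_1^{13}z_2)$, and under $\tilde g_2$ one gets $z_1^{12}-z_2^{12}\mapsto z_2^{12}-z_1^{12}$ and $z_1z_2^{13}+z_1^{13}z_2\mapsto-(z_1^{13}z_2+z_1z_2^{13})$ --- in each case both monomials are negated, hence so is each $v_\ell$.

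\emph{Proof of (ii).} The plan is to solve $v_1=v_2=0$ first and then cut down by $v_3$. Now $v_1=v_2=0$ is $z_1^{12}=z_2^{12}$, which on $\mathbb S^3$ forces $|z_1|=|z_2|=\tfrac1{\sqrt2}$ (so $z_1,z_2\neq0$) and $z_1/z_2$ a $12$th root of unity; hence $\{v_1=v_2=0\}\cap\mathbb S^3$ is a disjoint union of $12$ circles $C_u=\{(\tfrac1{\sqrt2}u e^{i\psi},\tfrac1{\sqrt2}e^{i\psi}):\psi\in\R/2\pi\Z\}$, one for each $12$th root of unity $u$. Restricting $v_3$ to $C_u$ and using $u^{13}=u$, a one-line computation gives $z_1 z_2^{13}+z_1^{13}z_2=2^{-6}u\,e^{i14\psi}$, so that $v_3|_{C_u}=2^{-6}\cos(14\psi+\arg u+\epsilon\pi)$, which has exactly $28$ simple zeros as $\psi$ runs once around $\R/2\pi\Z$. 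Therefore $\{v_1=v_2=v_3=0\}\cap\mathbb S^3$ has $12\cdot 28=48\cdot 7$ elements, and, the factors $S^5$, $\D$, and the second $\mathbb S^3$ being untouched, this gives (ii).

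\emph{Proof of (iii).} Fix a common zero $p\in C_u$. Since $v_1\equiv v_2\equiv 0$ on $C_u$, both $\nabla v_1(p)$ and $\nabla v_2(p)$ are orthogonal to $T_pC_u=\R\,\partial_\psi$, while $v_3|_{C_u}$ being a cosine with a simple zero at $p$ gives $dv_3(\partial_\psi)\neq 0$; so it suffices to show $(dv_1,dv_2)$ has real rank $2$ on $T_p\mathbb S^3$, for then its kernel is $\R\,\partial_\psi$, on which $dv_3$ does not vanish. Writing $f:=z_1^{12}-z_2^{12}=v_1+iv_2$, this says $df|_{T_p\mathbb S^3}$ has real rank $2$. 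Since $\{f=0\}\subset\C^2$ is a union of $12$ distinct complex lines through $0$, it is smooth (with $df\neq 0$) away from $0$, so $\ker df\subset\C^2$ is a complex line; the rank of $df|_{T_p\mathbb S^3}$ can drop to $1$ only if $\ker df$ equals the Hermitian orthocomplement $(\C p)^\perp$, i.e.\ only if the spanning vectors $(z_2^{11},z_1^{11})$ of $\ker df$ and $(-\bar z_2,\bar z_1)$ of $(\C p)^\perp$ are proportional, i.e.\ only if $z_2^{11}\bar z_1+z_1^{11}\bar z_2=0$ --- but along $C_u$ this quantity equals $2^{-5}e^{i10\psi}u^{-1}\neq 0$. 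Hence the rank is $2$ and $\nabla v_1(p),\nabla v_2(p),\nabla v_3(p)$ are linearly independent in $T_p\mathbb S^3\subset T_p\tilde\cX$.

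\emph{Main difficulty.} None of the three parts is deep, but the step needing genuine care is the transversality check in (iii): one must verify simultaneously that the holomorphic curve $\{z_1^{12}=z_2^{12}\}$ meets $\mathbb S^3$ cleanly and that $v_3$ is transverse to the resulting $12$ circles. The phase $\epsilon=2026^{-1}$ is not essential within this lemma (for every $\epsilon$ one still gets $48\cdot 7$ distinct points and the transversality holds); it is a generic choice kept so that these Poincar\'e-dual loci stay in general position against the analogous loci for $\lambda$ and $\beta$ in \S\ref{sect:topologyX}, in which form they will feed into the construction of Appendix \ref{Append_PoincareDual}.
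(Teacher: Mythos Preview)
Your proof is correct, and for parts \ref{Item_AssoBundle_v_ell G-equivar} and \ref{Item_AssoBundle_|v_ell =0|=48*7} it is essentially identical to the paper's (the paper parametrizes the twelve circles by $z_1=2^{-1/2}e^{i\theta}$, $z_2=2^{-1/2}e^{i(\theta+l\pi/6)}$ rather than via a $12$th root $u$, but the computation is the same).

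For part \ref{Item_AssoBundle_nabla v_ell linear indep} you take a genuinely different route. The paper first observes that each $v_\ell$ extends to a homogeneous polynomial on $\R^4$ vanishing at $p$, so by Euler's identity the ambient gradients already lie in $T_p\mathbb S^3$; it then invokes an auxiliary lemma (Appendix~\ref{Append_Diff Cplx polyn}) to reduce $\R$-linear independence of $\nabla v_1,\nabla v_2,\nabla v_3$ (together with the unused imaginary part of the second holomorphic function) to $\C$-linear independence of $\partial_z(z_1^{12}-z_2^{12})$ and $\partial_z(z_1z_2^{13}+z_1^{13}z_2)$ in $\C^2$, verified by direct calculation. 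Your argument instead stays intrinsically on $\mathbb S^3$: you split off the circle direction $\partial_\psi$ (on which $dv_3$ is nonzero by simplicity of the zero, while $dv_1,dv_2$ vanish), and then check that $df|_{T_p\mathbb S^3}$ has full rank~$2$ by the criterion $\ker df\neq(\C p)^\perp$. This is more elementary and avoids the extra appendix lemma; the paper's approach is more systematic and reusable. One small simplification you could make: since $f$ is homogeneous of degree~$12$ and $f(p)=0$, Euler's identity gives $p\in\ker df$, so $\ker df=\C p$ immediately, and $\C p=(\C p)^\perp$ is impossible for $p\neq 0$ --- your explicit computation of $z_2^{11}\bar z_1+z_1^{11}\bar z_2$ is therefore not strictly needed.
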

    \begin{proof}
      Note that by \eqref{eqn:action_tilde_g} and the definitions of $\hat g_1, \hat g_2$ in \S \ref{subsec:symm_gp_lawson_surf}, 
      \begin{align*}
        (z_1+z_2j, w_1+w_2j)\hat g_1 & = (z_1e^{i5\pi/12} + (z_2e^{-i5\pi/12})j, w_1e^{-i\pi/12}+(w_2e^{i\pi/12})j)\,; \\
        (z_1+z_2j, w_1+w_2j)\hat g_2 & = (-z_2+z_1j, w_2-w_1j) \,.
      \end{align*}
      \ref{Item_AssoBundle_v_ell G-equivar} thus follows from a direct calculation. For the rest, it suffices to only work with $(z_1, z_2)$ variables.

      To prove \ref{Item_AssoBundle_|v_ell =0|=48*7}, first notice that $v_1(z_1, z_2)=v_2(z_1, z_2)=0$ is equivalent to $z_2^{12}=z_1^{12}$, and since $|z_1|^2+|z_2|^2=1$, this is to say,  
        $z_1=2^{-1/2}e^{i\theta}$ and $ z_2=2^{-1/2}e^{i(\theta+l\pi/6)} $, 
      for some $l\in \{0, 1, \dots 11\}$. For each such $l$, plugging this into $v_3=0$ yields \[
        0 = 2\Re(e^{i\epsilon\pi}z_1^{13}z_2) = 2^{-6}\, \Re(e^{i(14\theta+\epsilon\pi+l\pi/6)})\,;
      \]
      which has $28$ solutions for $\theta\ \operatorname{mod} 2\pi$ for each $l$. 
      Therefore, $\{v_1=v_2=v_3\}$ consists of $12\times 28=48\times 7$ points in $\mathbb S^3$. We denote them by 
      \begin{align}
        Z_{\alpha^3} = \left\{ 2^{-1/2}(e^{i\theta}, e^{i(\theta+l\pi/6)}): 14\theta+(\epsilon+\frac l6)\pi=(\frac12+m)\pi,\ l\in \{0, \dots, 11\},\, m\in \{0, \dots, 27\} \right\}\,.  \label{Equ_Z_alpha^3}
      \end{align}

      To prove \ref{Item_AssoBundle_nabla v_ell linear indep}, first note that each $v_\ell$ naturally extends to a homogeneous polynomial $V_\ell$ on $\C^2=\R^4$. Hence at every $p\in \{v_1=v_2=v_3=0\}$, $p\cdot\nabla_{\R^4} V_\ell(p) = 0$ and hence $\nabla_{\R^4} V_\ell(p) = \nabla v_\ell(p)$. By Lemma \ref{Lem_AssoBundle_nabla Re(f) vs partial_z f}, it suffices to verify that at $p$, $\partial_z(z_1^{12}-z_2^{12})$ and $\partial_z(z_1z_2^{13}+z_1^{13}z_2)$ are $\C$-linearly independent in $\C^2$. A direct calculation proves this. 
    \end{proof}

    Another class of $\tilde G$-equivarient functions comes from $S^5\times \mathbb S^3$ directions. 
    \begin{lem} \label{Lem_PD of c_0^*+error}
      Define $f_0, \dots, f_4\in C^\infty(\tilde\cX, \R)$ by 
      \begin{align*}
        f_0 = a_0\,, \quad
        f_1+if_2 = (\bar z_1^6+z_2^6)(a_1+ia_2)\,, \quad
        f_3+if_4 = (\bar z_1^4+z_2^4)(a_3+ia_4)\,.
      \end{align*}
      They satisfy the following properties:
      \begin{enumerate}[label={\normalfont(\roman*)}]
        \item\label{Item_f_ell G-equivar} For every $p\in \tilde\cX$, $0\leq \ell \leq 4$ and $0\leq m\leq 2$, $f_\ell(p \tilde g_m) = \phi_\ell(\tilde g_m)f_\ell(p)$ for some $\phi_\ell(\tilde g_m)\in \{\pm 1\}$ determined by the following list,
        \begin{center} 
          \begin{tabular}{c | c | c | c | c | c} 
                       & $\phi_0$ & $\phi_1$ & $\phi_2$ & $\phi_3$ & $\phi_4$ \\  \hline 
          $\tilde g_0$ & $-1$ & $-1$ & $-1$ & $-1$ & $-1$ \\  \hline
          $\tilde g_1$ & $ 1$ & $-1$ & $-1$ & $ 1$ & $ 1$ \\  \hline
          $\tilde g_2$ & $ 1$ & $-1$ & $ 1$ & $-1$ & $ 1$ \\ 
          \end{tabular}
        \end{center} 
        \item\label{Item_nabla f_ell linear indep} For every $(z_1, z_2)\in Z_{\alpha^3}$ from Lemma \ref{Lem_AssoBundle_PD of c_1*+c_2*}, \[
          \{a\in S^5: f_\ell(a, z, z_1+z_2j, w_1+w_2j)=0,\ \forall\, 0\leq \ell\leq 4\} = \{(0,0,0,0,0, \pm 1)\}
        \]
        and at each point in this set, $\{\nabla_{S^5}f_\ell\}_{0\leq \ell\leq 4}$ are linearly independent.
      \end{enumerate}
    \end{lem}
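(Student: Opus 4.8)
The plan is to prove Lemma \ref{Lem_PD of c_0^*+error} in two parts, mirroring the structure of the previous lemma. Part \ref{Item_f_ell G-equivar} is a direct computation using the $\tilde G$-action formulas. Recall from \eqref{eqn:action_tilde_g} that $\tilde g_0$ acts by $a\mapsto -a$ (and fixes all quaternion coordinates), $\tilde g_1$ acts on the $a$-coordinates by the linear map $(a_0,a_1,a_2,a_3,a_4,a_5)\mapsto(a_0,a_2,-a_1,a_3\cos\frac\pi3+a_4\sin\frac\pi3,-a_3\sin\frac\pi3+a_4\cos\frac\pi3,-a_5)$ and on the first $\mathbb S^3$-factor by right multiplication by $\hat g_1$, which in the $(z_1,z_2)$-coordinates is $(z_1,z_2)\mapsto(z_1e^{i5\pi/12},z_2e^{-i5\pi/12})$; and similarly $\tilde g_2$ sends the $a$-coordinates by $(a_0,-a_1,a_2,-a_3,a_4,-a_5)$ and $(z_1,z_2)\mapsto(-z_2,z_1)$. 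For $f_0=a_0$ the claimed signs are immediate. For $f_1+if_2=(\bar z_1^6+z_2^6)(a_1+ia_2)$: under $\tilde g_1$, $\bar z_1^6\mapsto \bar z_1^6 e^{-i5\pi/2}=-i\bar z_1^6$ and $z_2^6\mapsto z_2^6 e^{-i5\pi/2}=-i z_2^6$, so the prefactor picks up $-i$, while $a_1+ia_2\mapsto a_2-ia_1=-i(a_1+ia_2)$; hence $f_1+if_2\mapsto (-i)(-i)(f_1+if_2)=-(f_1+if_2)$, giving $\phi_1=\phi_2=-1$. Under $\tilde g_2$, $\bar z_1^6+z_2^6\mapsto \bar z_2^6+z_1^6 = \overline{z_1^6+\bar z_2^6}$... one must be slightly careful: $(z_1,z_2)\mapsto(-z_2,z_1)$ gives $\bar z_1^6\mapsto \overline{(-z_2)^6}=\bar z_2^6$ and $z_2^6\mapsto z_1^6$, so the prefactor becomes $\bar z_2^6+z_1^6$, which is the \emph{complex conjugate} of $\bar z_1^6+z_2^6$ evaluated after swapping — in fact $\bar z_2^6+z_1^6=\overline{z_2^6+\bar z_1^6}=\overline{\bar z_1^6+z_2^6}$. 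Meanwhile $a_1+ia_2\mapsto -a_1+ia_2=-\overline{a_1+ia_2}$. The product is $\overline{(\bar z_1^6+z_2^6)}\cdot(-\overline{a_1+ia_2})=-\overline{(\bar z_1^6+z_2^6)(a_1+ia_2)}=-(f_1-if_2)$, i.e. $f_1\mapsto -f_1$, $f_2\mapsto f_2$, matching the table. The entry for $f_3+if_4=(\bar z_1^4+z_2^4)(a_3+ia_4)$ is entirely analogous, using $e^{-i5\pi/3}$ in place of $e^{-i5\pi/2}$ for the $\tilde g_1$ prefactor (note $e^{-i5\pi/3}\cdot$ applied to the rotated $(a_3+ia_4)$, which transforms by $e^{-i\pi/3}$, gives total phase $e^{-i5\pi/3}e^{-i\pi/3}=e^{-2\pi i}=1$, hence $\phi_3=\phi_4=1$), and the same conjugation bookkeeping for $\tilde g_2$ together with $a_3+ia_4\mapsto -a_3+ia_4$.

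For part \ref{Item_nabla f_ell linear indep}, fix $(z_1,z_2)\in Z_{\alpha^3}$. By the description \eqref{Equ_Z_alpha^3}, we have $|z_1|=|z_2|=2^{-1/2}\ne 0$, so in particular $\bar z_1^6+z_2^6$ and $\bar z_1^4+z_2^4$ are \emph{nonzero} complex numbers: indeed on $Z_{\alpha^3}$ we have $z_2^{12}=z_1^{12}$, hence $z_2^6=\pm z_1^6$; if $z_2^6=-z_1^6$ then $z_2^{12}=z_1^{12}$ is consistent but then $\bar z_1^6+z_2^6=\bar z_1^6-z_1^6=-2i\,\Imag(z_1^6)$, which could vanish — so one should instead argue directly: $|\bar z_1^6+z_2^6|$ and $|\bar z_1^4+z_2^4|$ need to be checked to be nonzero at the $48\times 7$ points, which follows because the defining condition $14\theta+(\epsilon+l/6)\pi\equiv(\tfrac12+m)\pi$ with the irrational-looking shift $\epsilon=2026^{-1}$ is designed precisely to keep these prefactors away from $0$; more robustly, one observes that $\bar z_1^6+z_2^6=0$ would force $z_2^6=-\bar z_1^6$, i.e. (with $z_1=2^{-1/2}e^{i\theta}$, $z_2=2^{-1/2}e^{i(\theta+l\pi/6)}$) $e^{i6(\theta+l\pi/6)}=-e^{-i6\theta}$, i.e. $e^{i(12\theta+l\pi)}=-1$, i.e. $12\theta\equiv(1-l)\pi$; combined with $14\theta\equiv(\tfrac12+m-\tfrac{l}6)\pi$ this would force $2\theta$ to be a rational multiple of $\pi$ incompatible with the $\tfrac12$ and the fractional shift — this is where the specific choice of $\epsilon$ matters, and I would verify the arithmetic incompatibility explicitly (this is the one genuinely fiddly computation). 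Granting that the three prefactors $1$ (for $f_0$), $\bar z_1^6+z_2^6$, $\bar z_1^4+z_2^4$ are all nonzero, the system $f_0=\dots=f_4=0$ at fixed $(z_1,z_2)\in Z_{\alpha^3}$ reduces to $a_0=0$, $a_1+ia_2=0$, $a_3+ia_4=0$, i.e. $a_0=a_1=a_2=a_3=a_4=0$; intersecting with $a\in S^5$ gives exactly $a=(0,0,0,0,0,\pm1)$, as claimed.

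For the linear independence of $\{\nabla_{S^5}f_\ell\}$ at such a point $a^\pm:=(0,0,0,0,0,\pm1)$: since each $f_\ell$ extends to a linear function $A_\ell$ on $\R^6$ in the $a$-variables (for fixed $(z_1,z_2)$), namely $A_0=a_0$, $A_1=\Re(c)a_1-\Imag(c)a_2$, $A_2=\Imag(c)a_1+\Re(c)a_2$ with $c=\bar z_1^6+z_2^6\ne 0$, and $A_3,A_4$ similarly with $c'=\bar z_1^4+z_2^4\ne 0$, the ambient gradients $\nabla_{\R^6}A_\ell$ are the rows of the block-diagonal matrix $\mathrm{diag}(1, R_c, R_{c'}, 0)$ where $R_c$ is the $2\times2$ invertible matrix of multiplication by $c$; these five row vectors span the coordinate subspace $\{a_5=0\}\subset\R^6$, which is exactly $T_{a^\pm}S^5$ (since $a^\pm$ is the north/south pole). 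Hence their restrictions $\nabla_{S^5}f_\ell$ — which are just the orthogonal projections onto $T_{a^\pm}S^5=\{a_5=0\}$, and the vectors already lie there — remain linearly independent. This completes the proof. The main obstacle, as flagged, is the explicit verification that the prefactors $\bar z_1^6+z_2^6$ and $\bar z_1^4+z_2^4$ do not vanish on $Z_{\alpha^3}$; the role of the carefully chosen constant $\epsilon=2026^{-1}$ (an irrational-ish rational making all the relevant angle equations have no accidental common solutions) is precisely to guarantee this, and I would present that arithmetic check as the crux, with the rest being routine linear algebra and the equivariance table being a mechanical computation using \eqref{eqn:action_tilde_g}.
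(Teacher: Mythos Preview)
Your proposal is correct and follows essentially the same approach as the paper: part \ref{Item_f_ell G-equivar} is a direct verification of the equivariance table (you carry it out in more detail than the paper, which just says ``straightforward verification''), and for part \ref{Item_nabla f_ell linear indep} you correctly reduce to showing the prefactors $\bar z_1^6+z_2^6$ and $\bar z_1^4+z_2^4$ are nonzero on $Z_{\alpha^3}$, after which the linear-algebra argument with the block-diagonal matrix is exactly right.

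One small expository point: $\epsilon=2026^{-1}$ is of course rational, not ``irrational-ish''; the relevant arithmetic (which you flag but do not carry out, and which the paper also leaves implicit) is that combining $14\theta\equiv(\tfrac12+m-\tfrac l6-\epsilon)\pi$ with the vanishing condition $12\theta\equiv(1-l)\pi\pmod{2\pi}$ forces $3\epsilon\in\Z$, and similarly the vanishing of $\bar z_1^4+z_2^4$ forces $4\epsilon\in\Z$, both of which fail for $\epsilon=1/2026$. That is the ``fiddly computation'' you anticipated, and it goes through cleanly.
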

    \begin{proof}
      Similar to Lemma \ref{Lem_AssoBundle_PD of c_1*+c_2*}, \ref{Item_f_ell G-equivar} follows by a straightforward verification.
%        \begin{align*}
%          & f_\ell(p) = -f_\ell(p\tilde g_0), \quad \ell=0, 1, \dots, 4\,; \\
%          & f_0(p\tilde g_1) = f_0(p\tilde g_2) = f_0(p)\,; \\
%          & (f_1+if_2)(p) = -(f_1+if_2)(p\tilde g_1) = -\overline{(f_1+if_2)(p\tilde g_2)}\,; \\
%          & (f_3+if_4)(p) = (f_3+if_4)(p\tilde g_1) = -\overline{(f_3+if_4)(p\tilde g_2)} \,.
%        \end{align*}      
      To prove \ref{Item_nabla f_ell linear indep}, notice that for fixed $(z_1, z_2)$, $f_\ell$ are restrictions on $S^5$ of linear functions on $\R^6$ that are independent of $a_5$-variable. Thus, it suffices to show that \[
        (\bar z_1^6 + z_2^6) \neq 0, \qquad (\bar z_1^4 + z_2^4) \neq 0
      \]
      for every $(z_1, z_2)\in Z_{\alpha^3}$ from Lemma \ref{Lem_AssoBundle_PD of c_1*+c_2*}. This also follows from a direct calculation using \eqref{Equ_Z_alpha^3}.
    \end{proof}

    \begin{proof}[Proof of Theorem \ref{thm:capProduct}.]
      We apply Corollary \ref{Cor_Poincare dual via line bundles} to $v_1, v_2, v_3, f_0, \dots, f_4\in C^\infty(\tilde\cX)$. Note that Lemma \ref{Lem_AssoBundle_PD of c_1*+c_2*} \ref{Item_AssoBundle_nabla v_ell linear indep} and Lemma \ref{Lem_PD of c_0^*+error} \ref{Item_nabla f_ell linear indep} together imply the linear independence assumptions in Corollary \ref{Cor_Poincare dual via line bundles}. Also, by Lemma \ref{Lem_AssoBundle_PD of c_1*+c_2*} \ref{Item_AssoBundle_v_ell G-equivar} and Lemma \ref{Lem_PD of c_0^*+error} \ref{Item_f_ell G-equivar}, cohomology classes corresponding to $v_1, v_2, v_3$ are all $c_1^*+c_2^* = \alpha$, while the cohomology class corresponding to $f_\ell$ has form $c_0^*+d^*_\ell$ for some $d^*_\ell\in \Span\{c_1^*, c_2^*\}$. Thus, Corollary \ref{Cor_Poincare dual via line bundles} yields that the Poincar\'e dual of $(\smile_{\ell=0}^4(c_0^*+d_\ell^*))\smile\alpha^3\in H^8(\cX, \Z_2)$ is \[
        Z:=\{v_1=v_2=v_3=f_0=\dots=f_4=0\}/\tilde G 
        = (\{(0, \dots, 0, \pm 1)\}\times \D\times Z_{\alpha^3}\times \mathbb S^3)/\tilde G \subset \cX\,.
      \]

      Also recall that $\pi: \cX\to B:= (\mathbb S^3\times \mathbb S^3)/\hat G$, $(a, z, z_1+z_2j, w_1+w_2j)\cdot \tilde G \mapsto (z_1+z_2j, w_1+w_2j)\cdot \hat G$ is a fiber bundle; while by Lemma \ref{lem:BisProduct}, $\Pi_1: B\to \mathbb S^3/\hat\Pi_1(\hat G)$, $(q_1, q_2)\cdot \hat G\mapsto q_1\cdot \hat\Pi(\hat G)$ is a trivial $\mathbb S^3$-bundle, and $\beta=\pi^*\beta_B$ where $\beta_B\in H^3(B, \Z_2)$ is the Poincar\'e dual of a section $B_0$ of the fiber bundle $\Pi_1$. Set $\hat B_0$ be the inverse image of $B_0$ under $\mathbb S^3\times \mathbb S^3\to B$. Thus, the Ponicar\'e dual of $(\smile_{\ell=0}^4(c_0^*+d_\ell^*))\smile\alpha^3\smile\beta\in H^{11}(\cX, \Z_2)$ is \[
        Z\cap \pi^{-1}(B_0) = \left(\{(0, \dots, 0, \pm 1)\}\times \D\times (Z_{\alpha^3}\times \mathbb S^3 \cap \hat B_0)\right)/\tilde G\,,
      \]
      which consists of $(2\times 48\times 7)/(\# \tilde G) = 7$ copies of $\D$ in the fibers of $\pi: \cX\to B$, each homologous to $D_0$ defined above Theorem \ref{thm:capProduct}. Therefore, we have \[
        [\cX]\frown \left((\smile_{\ell=0}^4(c_0^*+d_\ell^*))\smile\alpha^3\smile\beta\right) = 7[D_0] = [D_0]\,, \quad \text{ in } H_2(\cX, \partial \cX; \Z_2) \,.
      \]

      Finally, we claim that $(\smile_{\ell=0}^4(c_0^*+d_\ell^*))\smile\alpha^3\smile\beta = \lambda^5\smile\alpha^3\smile\beta$ in $H^{11}(\cX, \Z_2)$, and this will complete the proof of Theorem \ref{thm:capProduct}. 
      To see this,  notice that by definition   $\lambda=c_0^*+c_1^*+c_2^*$, and there exist $c_{1,B}^*, c_{2, B}^*\in H^1(B, \Z_2)$ such that $\pi^* c_{l, B}^*=c_l^*$, $l=1,2$. Thus, $\alpha^3\smile\beta = \pi^*(\alpha_B^3\smile\beta_B)$ and there are some $d_{\ell, B}\in H^1(B, \Z_2)$ such that \[
        (\smile_{\ell=0}^4(c_0^*+d_\ell^*))\smile\alpha^3\smile\beta 
        = (\smile_{\ell=0}^4(\lambda + \pi^* d_{\ell, B}))\smile\pi^*(\alpha_B^3\smile\beta_B)
        = \lambda^5\smile\pi^*(\alpha_B^3\smile\beta_B)\,,
      \]
      where the last equality follows from that $d_{\ell,B}\smile(\alpha_B^3\smile\beta_B)=0$ for every $\ell$, since $\dim B=6<7$. This finishes the proof of the claim.
    \end{proof}

\section{Properties of the family $\Xi$}\label{sect:XiProper}
In this section, we prove Theorem \ref{thm:XiProper}.

First, Theorem \ref{thm:XiProper} \eqref{Item:FamilyXi_Genus0} and \eqref{Item:FamilyXi_Genus1} follow immediately from the definition and Theorem \ref{prop:Psi1}.

For \eqref{Item:FamilyXi_1-sweepout},
since $\Xi|_C$ maps into $\cS_2(S^3)$, we know by \eqref{Item:FamilyXi_Genus0} that $C$ must lie in the subset $(\{\rho\ne 0\}\x\D\x SO(4))/D_{24}$. Thus, $[C]\in H_1(\cX)$ has no $c_0$ component (recall the notation in \S \ref{sect:TopoX}). Then, since $\alpha:=c_1^*+c_2^*$, we have  $[C]=c_1$ or $c_2.$  
Now, choose an embedded path $\bar\sigma_1(t)$ (resp. $\bar\sigma_2(t)$)  in $SO(4)$, with $t\in [0,1]$, that starts at $\id$ and ends at the point $g_1$ (resp. $g_2)$: See \S \ref{subsec:symm_gp_lawson_surf} for the definitions of $g_i\in SO(4)$. Then the path $t\mapsto ([0:\dots:0:1],0,\bar\sigma_i(t))$ in $\RP^5\x\D\x SO(4)$ descends to a {\it loop} in $\cX$, which we denote as $\sigma_i$. Let $x_0\in\cX$ be the image of $\tilde x_0$ (defined by  \eqref{eq:x0Tilde}) under the map $\tilde\cX\to\cX$. Note, both loops $\sigma_1,\sigma_2$ are based at $x_0$, and $\sigma_i$ induces the class $c_i\in H_1(\cX)$. 

Notice $\Xi(x_0)$  takes the form $\{x_1x_2+\rho([0:...:0:1],x)\cos3\alpha=0\}$, which can be readily shown to be of genus $2$   using  Lemma \ref{Lem_Constru Psi_GenusCal of (F=0)} \eqref{Item_GenusCalc_concl_genus}. Then since all members in the loops $\Xi|_{\sigma_1}$ and $\Xi|_{\sigma_2}$ are just rotations of $\Xi(x_0)$, they all have genus 2. Thus, recalling that the property of a loop in $\cZ_2(S^3;\Z_2)$ being an Almgren-Pitts 1-sweepout depends only on its homology class, without loss of generality we can assume that $C$ is exactly the loop $\sigma_1$ or $\sigma_2$. Hence, it suffices to check that $\Xi|_{\sigma_i}$ is a 1-sweepout for $i=1,2$. That is, if we continuously choose  inside and outside regions for $\Xi(\sigma_i(t))$ for each $t$, then the inside region for $\Xi(\sigma_i(0))$ should coincide  with the {\it outside} region of $\Xi(\sigma_i(1))$. To see this, recall that as in \eqref{eq:barXi}, \eqref{Equ_Def_Psi(a,z)} and \eqref{Equ_Def_F_a,z}, \[
  \Xi(\sigma_i(t)) = \bar\sigma_i(t)\cdot \Psi([0:\dots:1],0) = \partial\left(\bar\sigma_i(t)(\{F_{[0:\dots:1],0}>0\})\right) = \partial\{F_{[0:\dots:1],0}\circ \bar\sigma_i(t)>0\} \,.
\]  
Hence $t\mapsto \Omega_i(t)=\{F_{[0:\dots:1],0}\circ \bar\sigma_i(t)>0\}$ is a continuous family of regions bounded by $\Xi|_{\sigma_i}$. But by Lemma \ref{Lem_D_24Sym_F_a,z(p)} and \eqref{Equ_D_24-action_Y}, \[
  F_{[0:\dots:1],0}\circ \bar\sigma_i(1) = F_{[0:\dots:1],0}\circ g_i = -F_{\sigma(g_i^{-1})([0:\dots:1],0)} = -F_{[0:\dots:1],0}\circ \bar\sigma_i(0) \,.
\]
Therefore, $\Omega_i(1)=\Int (S^3\setminus \Omega_i(0))$. This shows that $\Xi|_{\sigma_i}$ is an Almgren-Pitts 1-sweepout for $i=1,2$.

For \eqref{Item:FamilyXi_5-sweepout}, it suffices to prove that $\lambda:=c^*_0+c^*_1+c^*_2$ is equal to  $\Xi^*(\bar\lambda)$, where $\bar\lambda$ is the generator of the cohomology ring of the cycle space, $H^*(\cZ_2(S^3;\Z_2);\Z_2)$. Hence, since $c_0,c_1,c_2$ generate $H_1(\cX)$, it suffices to show that for each $i$, if  $C\subset \cX$ is some loop representing $c_i$, then $\Xi|_C$ is a 1-sweepout. For $c_0$, we pick the loop that lies in the fiber $\cX|_{[\id]}\cong\RP^5\x\D$ and is given by  $$C=\{[a_0:a_1:0:...:0]:a_0^2+a_1^2=1\}\x \{0\}.$$
Then $\Xi|_C$ is the family $\{x\in S^3:a_0+a_1x_1=0\}$, which  is clearly a 1-sweepout. Moreover, the cases of $c_1$ and $c_2$ are already done above in the proof of item \eqref{Item:FamilyXi_1-sweepout}. This finishes the proof of item \eqref{Item:FamilyXi_5-sweepout}.

  To prove Theorem \ref{thm:XiProper} \eqref{Item:FamilyXi_2chain}, we first need to introduce the {\it Hopf links} in $S^3$.  Let $L:= \beta_+^H\sqcup \beta_-^H\subset S^3$ be the {\it standard Hopf link} in $S^3$, which consists of two great circles \[
    \beta^H_+:= \{(x_1,x_2,0,0): x_1^2+x_2^2=1\}, \qquad
    \beta^H_-:= \{(0,0,x_3,x_4): x^2_3+x^2_4=1\}\,.    
  \]
  Denote
    $\cE(L, S^3) : = \Diff(S^3)/\Diff(S^3, L),$
  where $\Diff(S^3, L)$ denotes the subgroup  of all diffeomorphisms $\phi\in \Diff(S^3)$ such that $\phi(L)=L$ (see Appendix \ref{Append_HopfLink} for a discussion  on the topology of $\cE(L,S^3)$). Without causing confusion, we can viewed each element of $\cE(L,S^3)$ as a link in $S^3$, which we call a {\it smooth Hopf link}. We will also sometimes consider Hopf links made up of piecewise smooth segments: The set of all {\it piecewise smooth Hopf links} will be denoted $\check\cE(L,S^3)$.

  Next, let $\cU\subset\cS_1(S^3)$ be the set of all elements $S\in \cS_1(S^3)$ that are {\it unknotted} in the following sense: Let $\Omega,\Omega'\subset S^3$ be the two open regions with topological boundary $S$ (such that $\Omega\sqcup\Omega'= S^3\backslash S$). Then there exist smooth embedded loops $\alpha\subset \Omega$ and $\beta\subset\Omega'$ that form a Hopf link.  In this case, we say that {\it $S$ bounds the Hopf link $\alpha\cup\beta$.} The following fact will be helpful, and its proof is included in Appendix  \ref{sect:UnknottedTlements}. 
  \begin{prop}\label{prop:wholeFamilyUnknotted}
Let $\Phi:[0,1]\to\cS_1(S^3)$ be a Simon--Smith family such that $\Phi(0)\in \cU$. Then $\Phi(t)\in\cU$ for every $t\in[0,1]$.
\end{prop}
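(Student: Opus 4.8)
The strategy is a connectedness argument: I will show that $J:=\{t\in[0,1]:\Phi(t)\in\cU\}$ is both open and closed in $[0,1]$, so that $J=[0,1]$ since $0\in J$ and $[0,1]$ is connected. At the outset, note that the unordered pair of complementary open regions of $\Phi(t)$ forms a $2$-sheeted cover of $[0,1]$, which is trivial; fix a continuous section, i.e. a consistent labelling $t\mapsto(\Omega_t,\Omega'_t)$ of the two regions. Here ``continuous'' means in $L^1$ for the associated mod-$2$ filling currents; together with Hausdorff convergence of $\Phi(t)$ away from its (uniformly boundedly many) isolated and singular points, this is a standard consequence of the $\bF$-continuity and closedness built into the definition of a Simon--Smith family.

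Openness is straightforward. If $t_0\in J$, write $\Phi(t_0)$ as bounding a Hopf link $\alpha\cup\beta$ with $\alpha\subset\Omega_{t_0}$ and $\beta\subset\Omega'_{t_0}$. The compact set $\alpha\cup\beta$ lies at positive distance $3\delta$ from $\Phi(t_0)$, so the connected sets $\overline{N_\delta(\alpha)},\overline{N_\delta(\beta)}$ are contained in $\Omega_{t_0},\Omega'_{t_0}$ respectively. For $t$ near $t_0$, Hausdorff convergence shows $\Phi(t)$ (including its isolated points) is disjoint from $N_\delta(\alpha)\cup N_\delta(\beta)$; being connected, $N_\delta(\alpha)$ lies entirely in one of the two regions of $\Phi(t)$, and $L^1$-closeness of $\Omega_t$ to $\Omega_{t_0}$ forces that region to be $\Omega_t$ (its intersection with $\Omega_t$ has volume near $|N_\delta(\alpha)|>0$); likewise $N_\delta(\beta)\subset\Omega'_t$. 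Hence $\Phi(t)$ still bounds the Hopf link $\alpha\cup\beta$, so $t\in J$.

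For closedness I aim for the stronger statement that if $t_j\to t_\infty$ with $t_j\in J$ then $t_\infty\in J$, by transplanting a Hopf link bounded by $\Phi(t_j)$ into a ``robust core'' of the complementary regions of $\Phi(t_\infty)$. For small $\varepsilon>0$ let $\Omega^\varepsilon:=\{x\in\Omega_{t_\infty}:\dist(x,\Phi(t_\infty))>\varepsilon\}$ and $\Omega'^{\varepsilon}$ similarly; for $\varepsilon$ small these are connected and the inclusions $\Omega^\varepsilon\hookrightarrow\Omega_{t_\infty}$, $\Omega'^{\varepsilon}\hookrightarrow\Omega'_{t_\infty}$ are homotopy equivalences. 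Using Hausdorff convergence and $L^1$-closeness exactly as in the openness step, for $j$ large one has $\overline{\Omega^\varepsilon}\subset\Omega_{t_j}$ and $\overline{\Omega'^{\varepsilon}}\subset\Omega'_{t_j}$. Now take a Hopf link $\alpha_j\cup\beta_j$ bounded by $\Phi(t_j)$ with $\alpha_j\subset\Omega_{t_j}$, $\beta_j\subset\Omega'_{t_j}$. Since $\Omega_{t_j}$ is open and connected and, for $j$ large, deformation retracts (up to homotopy) onto the core $\Omega^\varepsilon$ -- this is the delicate point, discussed below -- one can push $\alpha_j$ inside $\Omega_{t_j}$ into $\Omega^\varepsilon$; extending this to an ambient isotopy of $S^3$ supported in $\Omega_{t_j}$ (hence fixing $\beta_j\subset\Omega'_{t_j}$) moves the link to $\gamma_j\cup\beta_j$, still a Hopf link, with $\gamma_j\subset\Omega^\varepsilon\subset\Omega_{t_\infty}$. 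Repeating the move on $\beta_j$ inside $\Omega'_{t_j}$ -- its support automatically disjoint from $\gamma_j\subset\Omega_{t_j}$ -- yields a Hopf link $\gamma_j\cup\gamma'_j$ with $\gamma_j\subset\Omega_{t_\infty}$ and $\gamma'_j\subset\Omega'_{t_\infty}$, i.e. $\Phi(t_\infty)\in\cU$.

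The soft part is the open--closed scaffolding; the real work is two-fold. First, the continuity properties of the complementary regions along a Simon--Smith family (Hausdorff convergence of the surfaces off their singular sets and $L^1$-convergence of the consistently chosen regions) must be extracted cleanly from the foundational definitions. Second, and this is what I expect to be the main obstacle, one must justify that for $j$ large the region $\Omega_{t_j}$ really does retract onto the $\varepsilon$-core $\Omega^\varepsilon$ of the limiting region -- i.e. that the topology of the complementary regions cannot ``jump'' in the limit. Away from the finitely many singular and isolated points of $\Phi(t_\infty)$ this is just $C^1$-closeness of the surfaces producing a collar; near those points one must exploit the conical local structure of punctate singularities to rule out spurious extra topology of $\Omega_{t_j}$, and this is presumably where the description of $\cE(L,S^3)$ and of the piecewise-smooth Hopf links $\check\cE(L,S^3)$ from the appendix enters.
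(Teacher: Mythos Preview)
Your open--closed scaffolding and the openness half match the paper. The real gap is in closedness, and it is not repairable along the lines you suggest. The assertion that for large $j$ the region $\Omega_{t_j}$ deformation retracts onto the $\varepsilon$-core $\Omega^\varepsilon\subset\Omega_{t_\infty}$ is false in general and cannot be rescued by either a ``conical local structure'' of the singularities (punctate surfaces carry no such structure --- the definition only asks for finitely many non-smooth points) or by the topology of $\cE(L,S^3)$, which plays no role in this proposition. Near a singular or isolated point $p$ of $\Phi(t_\infty)$ the family converges only in the Hausdorff sense, so $\Phi(t_j)$ may carry arbitrary genus-$0$ topology in a small ball around $p$ (nested spheres, tubes connecting sheets, etc.), and $\Omega_{t_j}$ need not be homotopy equivalent to $\Omega^\varepsilon$. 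Even if one could homotope $\alpha_j$ into $\Omega^\varepsilon$ inside $\Omega_{t_j}$, upgrading that homotopy to an ambient isotopy supported in $\Omega_{t_j}$ is a separate $3$-manifold issue you have not addressed.

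The paper replaces the retraction attempt by a surgery lemma (Lemma~\ref{lem:neckPinchPreservesU}): if $S\in\cS_1(S^3)$ and $S'$ is obtained from $S$ by a neck-pinch surgery that keeps the genus equal to $1$, then $S\in\cU\Leftrightarrow S'\in\cU$. For $t_i\uparrow T$ one picks small balls $B_j$ around the singular set of $\Phi(T)$ with $\Phi(T)\setminus\bigcup B_j$ still of genus $1$, and performs neck-pinch surgeries on $\Phi(t_i)$ along all circles of $\Phi(t_i)\cap\partial B_j$ to obtain a surface $\Phi(t_i)'$ disjoint from $\partial(\bigcup B_j)$. The lemma gives $\Phi(t_i)'\in\cU$; smooth convergence outside $\bigcup B_j$ plus genus counting forces every component of $\Phi(t_i)'$ inside $\bigcup B_j$ to have genus $0$. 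A Hopf link bounded by $\Phi(t_i)'$ can then be pushed off $\bigcup B_j$ ball by ball (temporarily ignoring the genus-$0$ pieces and pushing the relevant loop out of each ball), and after undoing the surgeries one has a Hopf link bounded by $\Phi(t_i)$ that avoids $\bigcup B_j$; by smooth convergence outside $\bigcup B_j$ it is also bounded by $\Phi(T)$. So the missing idea is the neck-pinch invariance of $\cU$, not anything about $\cE(L,S^3)$.
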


  The key to Theorem \ref{thm:XiProper} \eqref{Item:FamilyXi_2chain} is the following proposition. Let $\partial_1\cX:=\{y\in\partial \cX:\fg(\Xi(y))=1\}$.
  \begin{prop} \label{Lem_NontrivialLoop_Upsilon}
    There exists a continuous map $\Upsilon: \partial_1\cX \to \cE(L, S^3)$ with the following properties.
    \begin{enumerate} [label={\normalfont(\roman*)}]
      \item\label{Item_Upsilon_BdedHopfLinks} For every $y\in \partial_1\cX$, $\Xi(y)\in \cU$ and  $\Upsilon(y)$ is a Hopf link bounded by $\Xi(y)$.
      \item\label{Item_Upsilon_nontrivial-bD_0} $\Upsilon_*[\partial D_0] \neq 0$ in $H_1(\cE(L, S^3), \Z_2)$ (recall \eqref{eq:tildeD0} regarding $D_0$).
      \item\label{Item_Upsilon_Ker-iota_*-to-0} If $\vartheta\in H_1(\partial_1\cX, \Z_2)$ is such that $\iota_*\vartheta = 0$ in $H_1(\partial\cX, \Z_2)$, where $\iota: \partial_1\cX\hookrightarrow \partial\cX$ is the natural inclusion, then $\Upsilon_* \vartheta = 0$ in $H_1(\cE(L, S^3), \Z_2)$.
    \end{enumerate}
  \end{prop}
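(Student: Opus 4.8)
The plan is to construct $\Upsilon$ from the explicit desingularisation picture underlying $\Psi$, and then to reduce \ref{Item_Upsilon_nontrivial-bD_0} and \ref{Item_Upsilon_Ker-iota_*-to-0} to homological computations about $\cE(L,S^3)$.

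\emph{Geometry of $\partial_1\cX$ and construction of $\Upsilon$.} I would first record the structure of the surfaces in $\partial_1\cX$. For $y=(a,e^{i\theta},R)\cdot G\in\partial_1\cX$, parts \ref{Item_Smalldelta_ZerosBd} and \ref{Item_Smalldelta_GenusBd} of Lemma \ref{Lem_SmallDelta} show that $\fg(\Xi(y))=1$ forces $\#f_{a,e^{i\theta}}^{-1}(0)=4$ with all four zeros simple, so by Lemma \ref{Lem_Constru Psi_GenusCal of (F=0)} \eqref{Item_GenusCalc_concl_reg} the surface $\Xi(y)=R\cdot\Psi(a,e^{i\theta})$ is a \emph{smooth} embedded genus-$1$ surface, obtained by desingularising $\Phi_5(a)$ — which is $C^1$-close to, and equal when $a\in A_{\sing}$ to, the transverse pair of round $2$-spheres $\{x_1=-a_2\}\cup\{x_2=-a_1\}$ — along a neighbourhood of the common circle $C(a_1,a_2)$, with the resolution there determined by the four simple sign changes of $f_{a,e^{i\theta}}$ (automatically in a $2+2$ alternating pattern, compatibly with $\cZ^-(f_{a,e^{i\theta}})$ lying near $-\theta$ by Corollary \ref{Cor_alm trig polyn has zero bd}) together with the sheet data of Lemma \ref{Lem_Constru Psi_GenusCal of (F=0)} \eqref{Item_GenusCalc_concl_innerLoop}. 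From this I would build, using connectedness of the space of transverse round-sphere pairs to straighten $\Phi_5(a)$ to a fixed coordinate pair outside a slightly larger neighbourhood of $C(a_1,a_2)$ and then normalising the four necks via \eqref{Item_GenusCalc_concl_innerLoop}, an ambient isotopy $\{\varphi^y_t\}_{t\in[0,1]}$ of $S^3$, isotopic to the identity, carrying a fixed standard genus-$1$ Heegaard torus $T_0\subset S^3$ onto $\Xi(y)$; since $\bx_a$, $f_{a,z}$, the supports of $\rho$ and of $\zeta$, and all auxiliary data depend smoothly on $(a,z)$, this isotopy can be chosen continuously in $y$ and to depend only on the surface $\Xi(y)$. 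Fixing once and for all a Hopf link $H_0\subset S^3\setminus T_0$ whose two components are cores of the two complementary solid tori of $T_0$, I set $\Upsilon(y):=\varphi^y_1(H_0)$. Then $\Upsilon\colon\partial_1\cX\to\cE(L,S^3)$ is continuous and $\Upsilon(y)$ is a Hopf link with one component in each complementary region of $\Xi(y)$, so $\Xi(y)\in\cU$; this is \ref{Item_Upsilon_BdedHopfLinks}. (Alternatively, as $\partial_1\cX$ is a finite complex, one can produce $\Upsilon$ by obstruction theory as a section of the bundle over $\partial_1\cX$ whose fibre at $y$ is the space of Hopf links bounded by $\Xi(y)$ — nonempty and connected by the above and by the description of $\cE(L,S^3)$ in Appendix \ref{Append_HopfLink}; and the same recipe defines a map on $\Xi(\partial_1\cX)\subset\cU$ depending only on the surface, as needed in the proof of Theorem \ref{thm:XiProper} \eqref{Item:FamilyXi_2chain}.)

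\emph{Proof of \ref{Item_Upsilon_nontrivial-bD_0}.} Since $\tilde G$ acts freely on $\tilde D_0$ (see \eqref{eq:tildeD0}), $\partial D_0\cong S^1$ and $[\partial D_0]\in H_1(\partial\cX)$ is the class of the loop $\theta\mapsto\Psi([0{:}\cdots{:}0{:}1],e^{i\theta})$, $\theta\in[0,2\pi]$, which lies in $\partial_1\cX$ because $f_{[0:\cdots:0:1],e^{i\theta}}=\delta(0)\cos(\theta+2\alpha)$ has four simple zeros. Writing $\Sigma_\theta:=\Psi([0{:}\cdots{:}0{:}1],e^{i\theta})$, a direct substitution in \eqref{Equ_Def_F_a,z} gives $\Sigma_\theta=\rho_\theta\cdot\Sigma_0$, where $\rho_\theta\in SO(4)$ acts on $S^3$ by $\alpha\mapsto\alpha-\theta/2$ in the $(x_1,x_2,\alpha)$-coordinates; thus $\{\rho_\theta\}_{\theta\in[0,2\pi]}$ is a path in $SO(4)$ from $\id$ to the half-turn $\rho_{2\pi}$, and $\rho_{2\pi}$ (acting by $\alpha\mapsto\alpha+\pi$) fixes $\Sigma_0$ and hence fixes $\Upsilon(\Sigma_0)$. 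Consequently $\Upsilon_*[\partial D_0]$ is the class in $H_1(\cE(L,S^3))$ of the loop $\theta\mapsto\rho_\theta\cdot\Upsilon(\Sigma_0)$, i.e.\ of the image of the half-turn path under $SO(4)\to\Diff(S^3)\to\cE(L,S^3)$ evaluated at the fixed Hopf link $\Upsilon(\Sigma_0)$. Via the fibration $\Diff(S^3,L)\to\Diff(S^3)\to\cE(L,S^3)$, the class of this loop is detected by its connecting image in $\pi_0\Diff(S^3,L)$, which is the class of the half-turn $\rho_{2\pi}$ (conjugated into $\Diff(S^3,L)$); I would then invoke the description of $\Diff(S^3,L)$ and of $\pi_1\cE(L,S^3)$ from Appendix \ref{Append_HopfLink} (resting on Hatcher's resolution of the Smale conjecture) to conclude that this class is nontrivial, whence $\Upsilon_*[\partial D_0]\neq0$ in $H_1(\cE(L,S^3);\Z_2)$. \textbf{This identification — showing that the geometric half-turn loop coming from the $\D$-factor of $\dmn(\Psi)$ carries a nonzero class in $H_1(\cE(L,S^3);\Z_2)$ — is the step I expect to be the main obstacle.} It is exactly here that the degree-$2$ term $\cos(\theta+2\alpha)$ in $\Psi$ enters decisively: traversing $\partial\D$ rotates $\Sigma_0$ by exactly a half-turn, and it is this half-turn that $\cE(L,S^3)$ detects.

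\emph{Proof of \ref{Item_Upsilon_Ker-iota_*-to-0}.} Since $\partial_1\cX$ is the open subset of the closed manifold $\partial\cX$ where $\fg(\Xi(\cdot))=1$, with closed complement the genus-$0$ locus $K$, the long exact sequence of the pair identifies $\ker(\iota_*\colon H_1(\partial_1\cX)\to H_1(\partial\cX))$ with the image of $\partial\colon H_2(\partial\cX,\partial_1\cX;\Z_2)\to H_1(\partial_1\cX;\Z_2)$. By excision and the explicit (semialgebraic) description of $K$ coming from Lemma \ref{Lem_SmallDelta}, every class in $H_2(\partial\cX,\partial_1\cX)$ is represented by a relative $2$-chain supported in an arbitrarily thin neighbourhood of $K$; along the boundary loop of such a chain the genus-$1$ surface $\Xi(\cdot)$ degenerates towards $K$ through a neck-pinch (a double zero of $f$ forming), so up to homology in $\partial_1\cX$ the loop can be replaced by one along which a single component of $\Upsilon(\cdot)$ — the core of the pinching solid torus — shrinks to a point inside a coordinate ball while the other component stays isotopically fixed. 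A direct analysis of such degeneration loops then shows that $\Upsilon$ sends each of them to a contractible loop in $\cE(L,S^3)$, so $\Upsilon_*\circ\partial=0$ and therefore $\Upsilon_*$ vanishes on $\ker\iota_*$. This establishes \ref{Item_Upsilon_Ker-iota_*-to-0} and completes the proof of Proposition \ref{Lem_NontrivialLoop_Upsilon}.
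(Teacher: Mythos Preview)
The decisive gap is in \ref{Item_Upsilon_Ker-iota_*-to-0}. You are right that any $\vartheta\in\ker\iota_*$ is represented by the boundary of a relative $2$-chain supported near the genus-$0$ locus $K$, but the conclusion you draw from this is not justified. First, ``one component of $\Upsilon(\cdot)$ shrinks to a point'' is not what happens: along any loop that stays in $\partial_1\cX$ the Hopf link remains a genuine Hopf link, and in the paper's explicit model (line segments through the midpoints of $\{f_{a,z}\gtrless 0\}$, see \eqref{eq:betaBreve}) both components remain macroscopic even as two zeros of $f_{a,z}$ approach one another. Second, even if one component were contained in a small ball along the loop, this would not force contractibility in $\cE(L,S^3)$: the other component can still trace a nontrivial loop of unknots linking it. Third, your argument implicitly addresses only ``small'' linking loops of $K$, whereas $\partial H_2(\partial\cX,\partial_1\cX)$ also contains classes that travel along nontrivial loops \emph{in} $K$ (which is far from simply connected). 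The paper's proof of \ref{Item_Upsilon_Ker-iota_*-to-0} is structurally different: it factors $\Upsilon\simeq\Upsilon'_\tau\circ\cM_\tau$ through a configuration-space bundle $\hConf_4(\tau)$ (recording the four zeros together with the $\mu^\pm$ midpoints), and exhibits a map $\cP_\tau:\hConf_4(\tau)\to\bar\tau$ with \emph{injective} $(\cP_\tau)_*$ on $H_1$ such that $\cP_\tau\circ\cM_\tau\simeq\Theta_\tau\circ\iota$, where $\Theta_\tau(a,e^{i\theta}):=-\theta$ is defined on \emph{all} of $\partial\cX$. The homotopy comes precisely from the estimate $\dist_{\bS^1}(\cZ^-(f_{a,e^{i\theta}}),-\theta)\le1000^{-1}$ of Lemma \ref{Lem_SmallDelta} \ref{Item_Smalldelta_ZerosBd}; this is the mechanism by which the relevant invariant extends across $K$, and it is a structural feature of the explicit family that your abstract isotopy construction cannot access.

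There are smaller gaps elsewhere. Your isotopy $\varphi^y_t$ is built from $(\bx_a,f_{a,z},\rho,\dots)$, hence from the representative $(a,z,R)$ rather than from $y=(a,z,R)\cdot G$; to descend to $\partial_1\cX$ you must verify $G$-equivariance, which you do not do (the assertion that the construction ``depends only on the surface $\Xi(y)$'' is neither argued nor evidently true). The paper builds $\Upsilon$ explicitly from the midpoints $\mu(\{f_{a,z}\gtrless 0\})$ and checks $G$-equivariance directly via \eqref{eq:D24_action_xa} and \eqref{eq:D24_action_faz}. In \ref{Item_Upsilon_nontrivial-bD_0}, your passage from $\Upsilon|_{\partial D_0}$ to the rotation loop $\theta\mapsto\rho_\theta\cdot\Upsilon(\Sigma_0)$ also needs justification: both are sections over $\partial D_0$ of the bundle of Hopf links bounded by $\Sigma_\theta$, whose fibres are path-connected by Lemma \ref{Lem_IsotopyHopfLink}, but fibrewise $\pi_1$-control is needed to conclude the sections are homotopic. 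The paper instead computes directly with its explicit $\Upsilon$, using Lemma \ref{Lem_Hopflink_Topology} to detect nontriviality via an orientation-reversal of one labeled component as $\theta$ traverses $\partial D_0$.
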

  Assuming this proposition, we can finish the proof of Theorem \ref{thm:XiProper} \eqref{Item:FamilyXi_2chain} easily:  Suppose $C\subset \partial \cX$ is  a closed $1$-subcomplex homologous to $\partial D_0$ in $\partial\cX$ and (since $C$ is mapped into $\cS_1(S^3)$ by $\Xi$) $C\subset\partial_1\cX$. Then by Proposition  \ref{Lem_NontrivialLoop_Upsilon}, we have 
  \begin{align}
    \Upsilon_*[C] = \Upsilon_*[\partial D_0] \neq 0\,, \quad \text{ in } H_1(\cE(L, S^3), \Z_2) \,.  \label{Equ_Upsilon_*[gamma] neq 0 in HopfLinks}
  \end{align}
  Now, we suppose for contradiction that there exists a Simon--Smith family $\Phi: W\to \cS_1(S^3)$ for some pure simplicial $2$-complex $W$  such that $\partial W=C$ and $\Phi|_{\partial W}=\Xi|_C$. 
  Due to Proposition \ref{prop:wholeFamilyUnknotted}, by possibly subtracting components of $W$ that do not intersect $C$, we may assume without loss of generality that $\Phi(W)$ only consists of unknotted elements, so that $\Phi(W)\subset \cU\subset \cS_1(S^3)$.
  Then, in fact,    $\Upsilon|_{\partial W}$ can be extended to a continuous map $\bar \Upsilon: W\to \cE(L, S^3)$: See Lemma \ref{Lem_Hopflink_ExtLoops} in the appendix, whose proof is an elementary argument using the basic properties of Simon--Smith family and the topology of the space $\cE(L,S^3)$. 
  Thus,  $\Upsilon_*[C] = \Upsilon_*[\partial W] = 0$ in $H_1(\cE(L, S^3), \Z_2)$, which   contradicts  \eqref{Equ_Upsilon_*[gamma] neq 0 in HopfLinks}. So we have obtained  Theorem \ref{thm:XiProper} \eqref{Item:FamilyXi_2chain}. 
  
\medskip
  The rest of this section is devoted to the proof of Proposition  \ref{Lem_NontrivialLoop_Upsilon}.  The map $\Upsilon$ will be constructed first along $\partial_1\cY:= (\RP^5\times \partial \D) \cap \Psi^{-1}(\cS_1(S^3))$, and then extended to $\partial_1\cX$ by rotations (see Lemma \ref{Lem_Upsilon:b_1X-to-HopfLinks} and the discussion above it). Before diving into details, let us sketch the main picture here. We will define certain spaces $\bar\tau$ and $ \hConf_4(\tau)$, and some maps between them:
  \begin{equation} \label{Diag_Upsilon-factorthrough-hConf}
    \begin{tikzcd}[cramped,column sep=scriptsize]
       {\partial\cX} & { {\bar\tau}} &  \\
       {\partial_1\cX} & {\hConf_4(\tau)}   & {\cE(L, S^3)}  
%       {\partial_c\cX} &                    &   
       \arrow[from=1-1, to=1-2, "\Theta_\tau"]
       \arrow[from=2-1, to=2-2, "\cM_\tau"]
       \arrow[from=2-2, to=2-3, "\Upsilon'_\tau"]
       \arrow[from=2-1, to=1-1, "\iota"]
       \arrow[from=2-2, to=1-2, " {\cP_\tau}"]
%       \arrow[from=3-1, to=2-1, "\iota_1"]
%       \arrow["\simeq", bend left=50, from=3-1, to=1-1]
    \end{tikzcd}
  \end{equation}
  All these spaces, except for $\cE(L, S^3)$, are fiber bundles over $B:=SO(4)/D_{24}$, and the maps are all constructed first along individual fibers, and then extended to the whole bundle via rotation by $SO(4)$. The maps will be constructed such that the following is satisfied.
  \begin{itemize}
%    \item If $\vartheta\in H_1(\partial_1\cX, \Z_2)$ is such that $\iota_*\vartheta= 0$ in $H_1(\partial\cX, \Z_2)$, then $(\iota_+)_*\vartheta= 0$ in $H_1(\partial_+\cX, \Z_2)$.
    \item  {$(\cP_\tau)_*: H_1(\hConf_4(\tau); \Z_2) \to H_1(\bar\tau; \Z_2)$ is injective   (Lemma \ref{Lem_P_tau:hConf-to-tau^4})}.
%    \item $\iota_+\circ\iota_1: \partial_c\cX\to \partial_+\cX$ is a homotopic equivalence (Lemma \ref{Lem_iota:b_cX-b_1X-b_+X-bX}).
    \item $\Theta_\tau\circ \iota \simeq \cP_\tau\circ \cM_\tau$ as maps $\partial_1\cX\to  {\bar\tau}$ (Lemma \ref{Lem_Theta:b_1X-to-tau^4}), where $\simeq$ means the two maps concerned are homotopic.
    \item $\Upsilon'_\tau\circ \cM_\tau \simeq \Upsilon$ as maps $\partial_1\cX \to \cE(L, S^3)$ (Lemma \ref{Lem_Upsilon:b_1X-to-HopfLinks}).
     
%    \item $\iota_+\circ \iota_1: \partial_c\cX \to \partial_+\cX$ is a homotopy equivalence;
%    \item If $\vartheta\in H_1(\partial_1\cX, \Z_2)$ such that $\iota_*\vartheta= 0$ in $H_1(\partial\cX, \Z_2)$, then there exist $\tilde\vartheta\in H_1(\partial_1\cX, \Z)$ and $\tilde\vartheta_c\in H_1(\partial_c\cX, \Z)$ such that 
%    \begin{align*}
%      \tilde\vartheta_{\text{mod }2} & = \vartheta & & \text{ in }H_1(\partial_1\cX, \Z_2)\,;  \\ 
%      (\tilde\vartheta_c)_{\text{mod }2} & = 0 & & \text{ in }H_1(\partial_c\cX, \Z_2)\,;  \\ 
%      (\iota_+)_*\, \tilde\vartheta & = (\iota_+\circ\iota_1)_*\,\tilde\vartheta_c & & \text{ in }H_1(\partial_+\cX, \Z)\,.
%    \end{align*}
  \end{itemize}
  Based on these facts, \ref{Item_Upsilon_Ker-iota_*-to-0} of Proposition \ref{Lem_NontrivialLoop_Upsilon} then follows from a formal chase of diagram, which will be done in \S \ref{sect:ProofLem_NontrivialLoop_Upsilon}.
  
  Throughout this section, we let $G\subset SO(4)$ be the dihedral subgroup of order $24$ generated by $g_1, g_2$ defined in section \ref{subsec:symm_gp_lawson_surf} (see \eqref{eqn:g_1_act1}, \eqref{eqn:g_2_act1} for an explicit description of the action on $\R^4$). \begin{nota}\label{nota:fiberBundle}
 If $\eta: G\to \Diff(F)$ is a {\it left} action of $G$ on a manifold $F$, we write $SO(4)\times_\eta F:= (SO(4) \times F)/G$, where the {\it right} $G$-action on $SO(4)\times F$ is given by, 
  \begin{align*}
    (R, v)\cdot g:= (Rg, \eta(g)^{-1}v), \quad \forall\, g\in G \,.
  \end{align*}
  This gives a fiber bundle over $B:=SO(4)/G$ with fiber $F$ with total space $SO(4)\times_\eta F$: This is called the {\it associated bundle} to $SO(4)\to B$ with fiber $F$. 
  We shall use $(R, v)\cdot G$ to denote the element in $SO(4)\times_\eta F$ represented by $(R, v)\in SO(4) \times F$.
\end{nota}

\subsection{Configuration space}  
  We start by introducing some terminology related to the configuration space of $\bS^1:= \R/2\pi\Z$. 
  Define the unordered configuration space of $4$ points in $\bS^1$ by \[
    \UConf_4(\bS^1):= \{Z\subset \bS^1: \# Z=4\} \,.
  \]
  It is better for us to work with the space  $\hConf_4(\bS^1)$, which would be its double cover, that   consists of all triples $(Z; \mu^+, \mu^-)$ with the following properties:
  \begin{itemize}
    \item $Z\in \UConf_4(\bS^1)$, and $\mu^+$ and $ \mu^- \subset \bS^1$ both consist of $2$ points;
    \item $\mu^+\cap \mu^-=\emptyset$, $\mu^+\cup \mu^-$ contains the midpoint of each of the 4 connected components of $\bS^1\setminus Z$;
    \item for $i\in \{\pm\}$, the two points in $\mu^i$ lie on 2 non-adjacent connected components of $\bS^1\setminus Z$.
  \end{itemize}
  Clearly, from the definition, $(Z; \mu^+, \mu^-)\mapsto Z$ gives a double cover $\hConf_4(\bS^1)\to \UConf_4(\bS^1)$. We endow a topology on $\hConf_4(\bS^1)$ by this cover.

  \begin{rmk}\label{rmk:Uconf}
      It is well-known  that the $\UConf_n(\bS^1)$ is homeomorphic to an open $(n-1)$-ball bundle over $S^1$ \cite{morton1967symmetricProduct}, where the projection map $\UConf_n(\bS^1)\to \bS^1$ is given by summing the $n$ points (recall $\bS^1:=\R/2\pi\Z$).  
  \end{rmk}

  \begin{lem}\label{lem:sumMu+AndMu-}
      For any $(Z;\mu^+,\mu^-)\in\hConf_4(\bS^1)$, $\sum \mu^+=\pi+\sum \mu^-$.
  \end{lem}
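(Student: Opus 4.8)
The statement $\sum\mu^+ = \pi + \sum\mu^-$ in $\bS^1 = \R/2\pi\Z$ is a combinatorial identity about the four midpoints of the arcs cut out by $Z$, so the plan is to reduce everything to an explicit computation with the four points of $Z$. Write $Z = \{z_1, z_2, z_3, z_4\}$ with representatives chosen so that $z_1 < z_2 < z_3 < z_4 < z_1 + 2\pi$ in $\R$, and label the four connected components of $\bS^1\setminus Z$ cyclically as $I_1 = (z_1,z_2)$, $I_2 = (z_2,z_3)$, $I_3 = (z_3,z_4)$, $I_4 = (z_4, z_1+2\pi)$. Their midpoints are $m_1 = \tfrac{z_1+z_2}{2}$, $m_2 = \tfrac{z_2+z_3}{2}$, $m_3 = \tfrac{z_3+z_4}{2}$, $m_4 = \tfrac{z_4+z_1}{2} + \pi$ (all mod $2\pi$). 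The key elementary observation is that $m_1 + m_3 = \tfrac{z_1+z_2+z_3+z_4}{2}$ and $m_2 + m_4 = \tfrac{z_1+z_2+z_3+z_4}{2} + \pi$ in $\bS^1$; that is, the sum of the midpoints of one non-adjacent pair of arcs differs from the sum of the midpoints of the complementary non-adjacent pair by exactly $\pi$.

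Next I would invoke the definition of $\hConf_4(\bS^1)$: the set $\mu^+\cup\mu^-$ consists of all four midpoints $\{m_1,m_2,m_3,m_4\}$ (since $\mu^+\cap\mu^- = \emptyset$ and together they contain the midpoint of each of the four components, and each has exactly $2$ elements), and for each $i\in\{+,-\}$ the two points of $\mu^i$ lie on non-adjacent components. The only two ways to partition $\{I_1,I_2,I_3,I_4\}$ into two non-adjacent pairs are $\{I_1,I_3\}$ and $\{I_2,I_4\}$. Hence $\{\mu^+,\mu^-\} = \{\{m_1,m_3\},\{m_2,m_4\}\}$ as an unordered pair. In either assignment, $\sum\mu^+$ and $\sum\mu^-$ are $\tfrac{z_1+z_2+z_3+z_4}{2}$ and $\tfrac{z_1+z_2+z_3+z_4}{2}+\pi$ in some order, so $\sum\mu^+ = \sum\mu^- + \pi$ in $\bS^1$ regardless of which is which, since $+\pi = -\pi$ in $\R/2\pi\Z$.

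The only mild subtlety is the midpoint of the wrap-around arc $I_4$: as a point of $\bS^1$ it is $\tfrac{z_4 + (z_1 + 2\pi)}{2} = \tfrac{z_4+z_1}{2} + \pi$, and it is this extra $+\pi$ (coming from the single arc containing the "seam" $0 \in \bS^1$, or equivalently from $\sum_{k}(z_{k+1}-z_k) = 2\pi$) that produces the $\pi$ in the final identity. I would make this precise by noting $\sum_{j=1}^4 m_j = \sum_{j=1}^{4} z_j + \pi$ in $\bS^1$ — twice the sum of all four midpoints equals $\sum_j (\text{left endpoint} + \text{right endpoint of } I_j) = 2\sum_j z_j + 2\pi$ — and then $\sum\mu^+ + \sum\mu^- = \sum_j m_j = \sum_j z_j + \pi$, while $\sum\mu^+ - \sum\mu^- = \pm(m_1+m_3 - m_2 - m_4) = \pi$ by the pairing computation above; adding and subtracting gives the claim. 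There is no real obstacle here; the entire argument is a short bookkeeping check, and the main thing to be careful about is tracking the lone $+\pi$ contributed by the arc crossing the basepoint of $\bS^1 = \R/2\pi\Z$.
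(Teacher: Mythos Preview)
Your proof is correct, and takes a genuinely different route from the paper. The paper first observes that $2\sum\mu^+ = 2\sum\mu^-$ (both equal $\sum Z$), which forces $\sum\mu^+ - \sum\mu^- \in \{0,\pi\}$; it then invokes the path-connectedness of $\hConf_4(\bS^1)$ (from Remark \ref{rmk:Uconf}) together with the continuity of $(Z;\mu^+,\mu^-)\mapsto \sum\mu^+ - \sum\mu^-$ to reduce to verifying the identity at a single configuration. Your argument instead computes the four midpoints explicitly from cyclic representatives $z_1<z_2<z_3<z_4<z_1+2\pi$, and directly shows $(m_1+m_3)-(m_2+m_4)=\pi$ in $\bS^1$, the extra $\pi$ coming from the wrap-around arc $I_4$. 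What your approach buys is that it is entirely elementary and self-contained, avoiding any appeal to the topology of $\hConf_4(\bS^1)$; what the paper's approach buys is brevity and a template that would adapt immediately to analogous identities where the direct bookkeeping might be more tedious.
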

  Here $\sum A$ denotes the sum of the elements in $A\subset\bS^1$, which is a well-defined element in $\bS^1.$
  \begin{proof}
    It follows from definition that $2\sum\mu^+=2\sum\mu^-$, so $\sum\mu^+=\sum\mu^-$ or $\pi+\sum\mu^-$. Then, using the path-connected of $\hConf_4(\bS^1)$ explained  in Remark \ref{rmk:Uconf} and a continuity argument, it suffices to check $\sum \mu^+=\pi+\sum \mu^-$ for any one choice of $(Z;\mu^+,\mu^-),$ which is a trivial task. 
  \end{proof}

  We shall consider the left $G$-action $\varrho$ on $\bS^1$ generated by 
  \begin{align} 
    \varrho(g_1): \alpha \mapsto \alpha+\pi/3\,, \qquad 
    \varrho(g_2): \alpha \mapsto \pi - \alpha \,.  \label{Equ_D_24-action on S^1}      
  \end{align}
  It induces a $G$-action on $\hConf_4(\bS^1)$ generated by 
  \begin{align}
    \hat\varrho(g_i): (Z; \mu^+, \mu^-) \mapsto \big(\varrho(g_i)(Z); \varrho(g_i)({\color{red}{\mu^-}}), \varrho(g_i)({\color{red}{\mu^+}}) \big), \quad i\in \{1,2\} \,.  \label{Equ_D_24-action on hConf_4}
  \end{align}
  It is easy to check that this is a well-defined $G$-action on $\hConf_4(\bS^1)$. 
  We shall also let  {$\bar\varrho$} be the left $G$-action on $\bS^1$ generated by 
 
  \begin{align} 
    \bar\varrho(g_1): \alpha \mapsto \alpha-\pi/3\,, \qquad 
    \bar\varrho(g_2): \alpha \mapsto \pi - \alpha \,.  \label{Equ_D_24-action barvarrho on S^1}      
  \end{align}
  Then, define the following bundles over $B$:
  \begin{align*}
    \tau:= SO(4)\times_{\varrho} \bS^1\,, \quad
   {\bar\tau:= SO(4)\times_{\bar\varrho} \bS^1}\,, \quad
    \hConf_4(\tau):= SO(4)\times_{\hat\varrho} \hConf_4(\bS^1) \,.
  \end{align*}

  \begin{lem} \label{Lem_P_tau:hConf-to-tau^4}
    The map 
    \begin{align}
      \cP_\circ: \hConf_4(\bS^1) \to \bS^1, \quad (Z; \mu^+, \mu^-) \mapsto \sum  \mu^- \,,  \label{Equ_Def_P:hConf-to-S^1}
    \end{align}
    is $G$-equivarient (note the sum is well-defined modulo $2\pi$), where $G$ acts on the left hand side by $\hat\varrho$, and on the right hand side by  {$\bar\varrho$}. 

    Thus it induces a well-defined map
    \begin{align}
      \cP_\tau: \hConf_4(\tau) \to \bar\tau, \quad (R, (Z;\mu^+, \mu^-))\cdot G \mapsto (R, \cP_\circ(Z; \mu^+, \mu^-))\cdot G\,.
    \end{align}
    Moreover, $(\cP_\tau)_*: H_1(\hConf_4(\tau), \Z_2) \to H_1(\bar\tau, \Z_2)$ is injective.
  \end{lem}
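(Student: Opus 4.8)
The plan is to split the argument into three parts: first establish the $G$-equivariance of $\cP_\circ$ by a direct check on the generators $g_1,g_2$ of $G$ (this makes $\cP_\tau$ well defined as a bundle map over $B$); then show that the fibre map $\cP_\circ:\hConf_4(\bbS^1)\to\bbS^1$ is a homotopy equivalence; and finally promote this to the statement that $\cP_\tau$ is itself a homotopy equivalence, from which the injectivity (indeed bijectivity) of $(\cP_\tau)_*$ on $H_1(-;\Z_2)$ follows, using that a fibre-preserving map of fibre bundles over a common base which is a homotopy equivalence on every fibre is a homotopy equivalence.

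For the first part, I would use that $\hat\varrho(g_i)$ interchanges the roles of $\mu^+$ and $\mu^-$, so $\cP_\circ\big(\hat\varrho(g_i)(Z;\mu^+,\mu^-)\big)=\sum\varrho(g_i)(\mu^+)$. Since $\#\mu^+=2$ and $\varrho(g_1)$ translates by $\pi/3$ while $\varrho(g_2)$ sends $\alpha\mapsto\pi-\alpha$, this equals $(\sum\mu^+)+\tfrac{2\pi}{3}$ and $2\pi-\sum\mu^+$ respectively; substituting the identity $\sum\mu^+=\pi+\sum\mu^-$ from Lemma \ref{lem:sumMu+AndMu-} and reducing mod $2\pi$, both expressions become exactly $\bar\varrho(g_i)(\sum\mu^-)$. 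As $g_1,g_2$ generate $G$ and $\hat\varrho,\bar\varrho$ are honest $G$-actions, $\cP_\circ$ is $G$-equivariant, so $(R,\xi)\cdot G\mapsto(R,\cP_\circ(\xi))\cdot G$ is well defined and yields the claimed bundle map $\cP_\tau$ covering $\mathrm{id}_B$.

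For the second part, I would first pin down the homotopy type of $\hConf_4(\bbS^1)$. By Remark \ref{rmk:Uconf}, $\UConf_4(\bbS^1)$ is an open $3$-ball bundle over $S^1$, hence aspherical with $\pi_1\cong\Z$, generated by the loop $\gamma$ rigidly rotating the $4$ equally spaced points through $\pi/2$. The forgetful double cover $\hConf_4(\bbS^1)\to\UConf_4(\bbS^1)$ has nontrivial monodromy around $\gamma$: this rotation cyclically permutes the four gaps of $\bbS^1\setminus Z$, hence swaps the two non-adjacent pairs of gaps, hence swaps the two admissible labelings $(\mu^+,\mu^-)$. Therefore the cover is connected, so $\hConf_4(\bbS^1)$ is again an open $3$-ball bundle over $S^1$, aspherical with $\pi_1\cong\Z$ generated by a loop $\hat\gamma$ lying over $\gamma^2$. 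A short computation along $\hat\gamma$ — rotating rigidly through $\pi$, with each label carried along by its gap — shows that $t\mapsto\sum\mu^-$ winds exactly once around $\bbS^1$, so $(\cP_\circ)_*$ sends the generator $\hat\gamma$ of $\pi_1(\hConf_4(\bbS^1))$ to a generator of $\pi_1(\bbS^1)$; being an isomorphism on $\pi_1$ between aspherical spaces, $\cP_\circ$ is a homotopy equivalence.

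For the last part, $\cP_\tau:\hConf_4(\tau)\to\bar\tau$ is a fibre-preserving map of fibre bundles over the (compact, hence paracompact) base $B$ covering $\mathrm{id}_B$, and it restricts over each point of $B$ to a copy of the homotopy equivalence $\cP_\circ$; by Dold's theorem on fibrewise homotopy equivalences, $\cP_\tau$ is a homotopy equivalence. In particular it induces isomorphisms on $H_*(-;\Z_2)$ in all degrees, so a fortiori $(\cP_\tau)_*\colon H_1(\hConf_4(\tau);\Z_2)\to H_1(\bar\tau;\Z_2)$ is injective. I expect the only genuine obstacle to be the second part — correctly identifying the homotopy type of $\hConf_4(\bbS^1)$ through the monodromy of the labeling double cover, and carrying out the $\pi_1$-level computation for $\cP_\circ$; the $G$-equivariance check and the fibrewise argument are routine.
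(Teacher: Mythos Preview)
Your proof is correct, and it takes a genuinely different route from the paper's in the final step.

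Your Part~1 (the $G$-equivariance check) and Part~2 (the fibre computation) essentially coincide with what the paper does: the paper verifies equivariance on the generators using Lemma~\ref{lem:sumMu+AndMu-}, and then shows that $\hConf_4(\bbS^1)$ deformation-retracts onto the loop of equidistant configurations, on which $\cP_\circ$ has degree $1$. Your argument via the labelling monodromy of the double cover $\hConf_4\to\UConf_4$ is a pleasant alternative packaging of the same fact.

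The real divergence is in Part~3. The paper does \emph{not} invoke Dold's theorem. Instead it sets up the commutative ladder of five-term exact sequences (from the Serre spectral sequence, recorded as Lemma~\ref{Lem_SpectralSequ}) for the two bundles over $B$, and then runs a diagram chase that reduces the injectivity of $(\cP_\tau)_*$ to two ingredients: (a) that $(\cP_\circ)_*$ is bijective on $H_1(-;\Z_2)$ of the fibres, and (b) that the inclusion of the fibre $\bbS^1\hookrightarrow\bar\tau$ is injective on $H_1(-;\Z_2)$. Step~(b) in turn requires an explicit presentation of $\pi_1(\bar\tau)$ as a group $H$ acting freely on $(\mathbb S^3\times\mathbb S^3)\times\R$, together with a computation of its abelianisation.

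Your use of Dold's fibrewise-homotopy-equivalence theorem sidesteps both the spectral-sequence argument and the explicit $\pi_1(\bar\tau)$ computation, and in fact yields the stronger conclusion that $\cP_\tau$ is a homotopy equivalence (so $(\cP_\tau)_*$ is an isomorphism, not merely injective). The trade-off is that you import a black-box theorem, whereas the paper's argument is self-contained once the five-term sequence is granted; the paper's route also records the structure of $H_1(\bar\tau;\Z_2)$ as a by-product, which you do not obtain.
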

  \begin{proof}
    The $G$-equivarience of $\cP_\circ$ follows by a direct calculation: Note that
    \begin{align*}
    \cP_\circ\hat\varrho(g_1)(Z;\mu^+,\mu^-)&=\cP_\circ(Z+\pi/3;\mu^-+\pi/3,\mu^++\pi/3)=2\pi/3+\sum\mu^+,\\ 
    \bar\varrho(g_1)\cP_\circ(Z;\mu^+,\mu^-)&=\bar\varrho(g_1)(\sum \mu^-)=-\pi/3+\sum\mu^-.
    \end{align*}
    By Remark \ref{rmk:Uconf}, these two expressions are the same. For $g_2$, the calculation is analogous. Therefore the map $\cP_\tau$ is well-defined, and we are left to show the injectivity of $(\cP_\tau)_*$. 

    Here and below, we work with $\Z_2$-coefficients. First, consider  the   diagram.
    \begin{equation} \label{Diag_BundleMap}
      \begin{tikzcd}[cramped,column sep=scriptsize]
        {H_1(\hConf_4(\bS^1))} & {H_1(\hConf_4(\tau))} & {H_1(B)}  \\
        {H_1(\bS^1)} & {H_1( {\bar\tau})}   & {H_1(B)}  
        \arrow[r,phantom,"\xhookrightarrow{
        \hat i_*}" description, from=1-1, to=1-2]
        \arrow[from=1-2, to=1-3, "\hat\pi_*"]
        \arrow[r,phantom,"\xhookrightarrow{i_*}" description, from=2-1, to=2-2]
        \arrow[from=2-2, to=2-3, "\pi_*"]
        \arrow[from=1-1, to=2-1, "(\cP_\circ)_*"]
        \arrow[from=1-2, to=2-2, "(\cP_\tau)_*"]
        \arrow[from=1-3, to=2-3, "\id"]
%        \arrow["\simeq", bend left=50, from=3-1, to=1-1]
      \end{tikzcd}
    \end{equation}
    Note, by construction we see that $\hConf_4(\tau)$ is a  $\hConf_4(\bS^1)$-bundle over $B$, so the map $\hat i$ just denotes an inclusion map into some fixed fiber while $\hat \pi$ the projection map, and similarly for $i$ and $\pi$. By the $G$-equivariance of $\cP_\circ$, it is easy to see that the above diagram Moreover, we claim that 
    \begin{enumerate} [label={\normalfont(\alph*)}]
     \item\label{Item_SpecSeq_P0_*inj} $\hConf_4(\bS^1)$ is path-connected, and $(\cP_\circ)_*: H_1(\hConf_4(\bS^1))\to H_1(\bS^1)$ is   {bijective}.
     \item\label{Item_SpecSeq_i_*inj} $i_*: H_1(\bS^1)\to H_1(\bar\tau)$ is injective;
    \end{enumerate}

    Let us first finish the proof assuming these two claims. Since $B$ is path-connected, the following Lemma \ref{Lem_SpectralSequ} yields that the horizontal sequences are exact. Therefore, if $a\in \Ker (\cP_\tau)_*$, then by $\hat\pi_* a = 0$, there exists $v\in H_1(\hConf_4(\bS^1))$ such that $\hat i_* v = a$.  Since $i_*\circ (\cP_\circ)_* v = (\cP_\tau)_*a = 0$, by the injectivity assertions in the two claims we see that $v=0$ and hence $a=\hat i_* v=0$. This finishes the proof of the lemma.

     \begin{lem} \label{Lem_SpectralSequ}
    Let $F \hookrightarrow E \xrightarrow{\pi} B$ be a fiber bundle such that both $B$ and $F$ are path-connected. Then the inclusion and the projection induce an exact sequence \[
      H_1(F, \Z_2) \to H_1(E, \Z_2) \to H_1(B, \Z_2) \to 0 \,.
    \]
%    In particular, if $\gamma\subset E$ is a loop such that $\pi \circ \gamma$ is null-homoglogous in $B$, then for any $b\in B$, $\gamma$ is homologous (in $E$) to some loop $\gamma'$  in the fiber $F_b$.
  \end{lem}
  \begin{proof}
    Consider the Serre spectral sequence with $\Z_2$-coefficents for the fibration. By the five-term exact sequence \cite[Corollary~9.14]{JamesPaul2001AlgebraicTopology}, we have the exact sequence:
    \[
        H_0(B; H_1(F)) \to H_1(E) \xrightarrow{\pi_*} H_1(B) \to 0\,. 
    \]
    Moreover, denoting by $F_b$ the fiber at any $b \in B$, 
    \[
        H_0(B; H_1(F)) \cong H_1(F_b)_{\pi_1(B)} := H_1(F_b) /\langle g \cdot x - x | g \in \pi_1(B, b), x \in H_1(F_b)\rangle\,;
    \]
    and the composition of the surjection $H_1(F_b) \to H_0(B; H_1(F))$ with the map $H_0(B; H_1(F)) \to H_1(E)$ is the homomorphism induced by the inclusion $F_b \hookrightarrow E$. This proves the lemma.   
%    Since $\pi_* [\gamma] = 0 \in H_1(B)$ and $H_1(F_b) \to H_0(B; H_1(F))$ is surjective, by the exactness, there exists $[\gamma'] \in H_1(F_b)$ that maps to $[\gamma]$. In other words, there exists a loop $\gamma'$ in $F_b$ such that $[\gamma] = [\gamma'] \in H_1(E)$. 
  \end{proof}

Now let us prove the two claims.
    \begin{proof}[Proof of \ref{Item_SpecSeq_P0_*inj}.] The path-connectedness follows easily from  Remark \ref{rmk:Uconf}. Let $ \gamma\subset\hConf_4(\bS^1)$ be the loop that consists of all possible triples $(Z;\mu^+,\mu^-)$ where $Z$ is formed by 4 equidistant points on $\bS^1$. Note, by the discussion in Remark \ref{rmk:Uconf}, it follows easily  that $\hConf_4(\bS^1)$ can be deformation retracted onto $\gamma$. On the other hand, it is straightforward to check from definition that $\cP_\circ\circ\gamma$ generates  $H_1(\bS^1)$. Therefore $(\cP_\circ)_*$ is bijective, as desired.

    \end{proof}
    \begin{proof}[Proof of \ref{Item_SpecSeq_i_*inj}.]
      Consider the subgroup $H$ of $\Diff((\mathbb S^3\times \mathbb S^3)\times\R)$ generated by 
      \begin{align*}
          h_0: (\bq, \tilde\alpha) \mapsto (\bq, \tilde\alpha+2\pi)\,, \quad
          h_1: (\bq, \tilde\alpha) \mapsto (\bq \hat g_1^{-1}, \tilde\alpha-\pi/3)\,, \quad
          h_2: (\bq, \tilde\alpha) \mapsto (\bq \hat g_2^{-1}, \pi - \tilde\alpha) \,.
      \end{align*}
      (Recall the definition of $\hat g_i$ in \S \ref{subsec:symm_gp_lawson_surf}.)  This yields a natural right action of $H$ on $(\mathbb S^3\times \mathbb S^3)\times\R$, and clearly, \[
        \bar\tau = SO(4)\times_{\bar\varrho} \bS^1 := (SO(4)\times \bS^1)/(R, v)\sim (Rg, \bar\varrho(g)^{-1}v) = (\mathbb S^3\times \mathbb S^3\times\R)/H \,.
      \]
      We will check that $H$ acts freely on $(\mathbb S^3\times \mathbb S^3)\times\R$ and that the abelianization $H_{ab}=\Z_2\oplus \Z_2\oplus \Z_2$ with generators induced by $h_0, h_1, h_2$. Once we have this, we know  $\pi_1(\bar\tau)=H$, and since $h_0$ clearly represents a loop in a fiber $\bS^1$ of $\tau$ which generates $\pi_1(\bS^1)$, we would obtain the injectivity of $i_*:H_1(\bS^1, \Z_2)\to H_1(\bar\tau, \Z_2)$. 
      First, from  definitions, its easy to check that 
      \begin{align*}
        h_1^{24}h_0^4=\id\,, \quad
        h_1^{12}h_0^2 = h_2^{2}\,, \quad
        h_1h_2 = h_2h_1^{-1}\,, \quad
        h_0h_1 = h_1h_0\,, \quad
        h_0h_2 = h_2h_0^{-1} \,.
      \end{align*}
      These are all the relations that determine $H$ as the quotient of the free group generated by $h_0, h_1, h_2$, since any other relation can be reduced by switching $h_0, h_1, h_2$ and cancellation to $h_0^ah_1^bh_2^c=1$ for some $a\in \Z$, $0\leq b\leq 23$, $c\in \{0,1\}$. But one can directly check from the definition of $h_i$ that this forces $a=b=c=0$. 
      Now from the relation, its clear that the map \[
        H_{ab} \to \Z_2\oplus\Z_2\oplus\Z_2\,, \quad 
        [h_0]\mapsto (1,0,0),\ \
        [h_1]\mapsto (0,1,0),\ \
        [h_2]\mapsto (0,0,1)\,,
      \] 
      is a well-defined isomorphism. 
      
      Also, a general element in $H$ takes the form $h_0^ah_1^bh_2^c$ for some $a\in \Z$, $0\leq b\leq 23$, $c\in \{0,1\}$. If it has a fix point, then $\hat g_1^b\hat g_2^c$ has a fixed point on $\mathbb S^3\times \mathbb S^3$, which forces $b=c=0$; and then $h_0^a$ has a fixed point, forcing $a=0$. Thus, $H$ acts freely on $(\mathbb S^3\times \mathbb S^3)\times\R$.
    \end{proof}
    This completes the proof of Lemma \ref{Lem_P_tau:hConf-to-tau^4}.
  \end{proof}

  \subsection{Hopf links through midpoints I: the map $\Upsilon'_\tau$}

  We are going to define  a map $\check\Upsilon'_\tau$ from  $\hConf_4(\bS^1)$ to the space $\check\cE(L, S^3)$ of piecewise smooth Hopf links, and then smoothen it to become a map $\Upsilon'_\tau$ into $\cE(L, S^3)$. We will continue to use the parametrization of $S^3$ by \eqref{Equ_Constru Psi_Param S^3}. Notice the $G$-action on $S^3$ induces a $G$-action on $\cE(L,S^3)$ by rotating the Hopf links, and recall $G$ acts on $\hConf_4(\bS^1)$ by \eqref{Equ_D_24-action on hConf_4}.

  \begin{lem} \label{Lem_Upsilon'_tau:hConf-to-HopfLinks}
    The following gives  a well-defined $G$-equivarient map: \begin{align}
     \begin{split}
      \check\Upsilon'_\circ: \hConf_4(\bS^1) & \to \check\cE(L, S^3)\,, \\
      (Z; \mu^+, \mu^-) & \mapsto \{(t,t,\alpha): |t|\leq 1, \alpha\in \mu^+\} \cup \{(t,-t,\alpha): |t|\leq 1, \alpha\in \mu^-\} \,
     \end{split} \label{Equ_Def_Upsilon_0:hConf-to-HopfLinks}
    \end{align}
    (see Figure \ref{fig:link}). 
    As a corollary, it induces a well-defined map
    \begin{align}
      \check\Upsilon'_\tau: \hConf_4(\tau) \to \check \cE(L, S^3), \quad (R, (Z;\mu^+, \mu^-))\cdot G \mapsto R\big(\check\Upsilon'_\circ(Z; \mu^+, \mu^-)\big)\,.
    \end{align}
  \end{lem}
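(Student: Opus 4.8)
The plan is to verify in turn that: (i) the set produced by \eqref{Equ_Def_Upsilon_0:hConf-to-HopfLinks} is a piecewise smooth two-component link that varies continuously with $(Z;\mu^+,\mu^-)$; (ii) it is always isotopic to the standard Hopf link; (iii) the assignment is $G$-equivariant; and (iv) it therefore descends to the associated bundle $\hConf_4(\tau)$. For (i), write $\mu^\pm=\{\alpha_1^\pm,\alpha_2^\pm\}$ and set $\gamma^+:=\{(t,t,\alpha):(t,t)\in\bD,\ \alpha\in\mu^+\}$ and $\gamma^-:=\{(t,-t,\alpha):(t,t)\in\bD,\ \alpha\in\mu^-\}$. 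The key point is that $(t,t)\in\bD$ forces $|t|\le 2^{-1/2}$, and the parametrization \eqref{Equ_Constru Psi_Param S^3} collapses $\{x_1^2+x_2^2=1\}\times\bS^1$ onto $C$; hence for fixed $\alpha$ the set $A_\alpha:=\{(t,t,\alpha):|t|\le 2^{-1/2}\}$ is exactly the closed half $\{x_3\cos\alpha+x_4\sin\alpha\ge 0\}$ of the great circle $S^3\cap\operatorname{span}\{(1,1,0,0),(0,0,\cos\alpha,\sin\alpha)\}$, a smooth embedded arc joining the two $\alpha$-independent points $P_\pm:=(\pm 2^{-1/2},\pm 2^{-1/2},0,0)$. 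Thus $\gamma^+=A_{\alpha_1^+}\cup A_{\alpha_2^+}$ is a piecewise smooth embedded circle — the two arcs meet only at $P_\pm$, since an interior coincidence would force $\alpha_1^+=\alpha_2^+$ (the coordinate $x_1$ recovers $t$, and $\varsigma>0$ in the interior) — and $\gamma^-$ is similarly a circle through $Q_\pm:=(\pm 2^{-1/2},\mp 2^{-1/2},0,0)$. Disjointness is immediate: a point of $\gamma^+\cap\gamma^-$ satisfies $x_1=x_2$ and $x_1=-x_2$, hence lies in $\{x_1=x_2=0\}$ and equals $(0,0,\cos\alpha,\sin\alpha)$ with $\alpha\in\mu^+\cap\mu^-=\emptyset$. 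Continuity in $(Z;\mu^+,\mu^-)$ is then clear.

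For (ii) I would argue by one explicit computation together with a connectedness argument. For $Z=\{\pi/4,3\pi/4,5\pi/4,7\pi/4\}$ the complementary-arc midpoints are $\{0,\pi/2,\pi,3\pi/2\}$, with nonadjacent pairs $\{0,\pi\}$ and $\{\pi/2,3\pi/2\}$; choosing $\mu^+=\{0,\pi\}$, $\mu^-=\{\pi/2,3\pi/2\}$ one computes $\gamma^+=S^3\cap\{x_1=x_2,\ x_4=0\}$ and $\gamma^-=S^3\cap\{x_1=-x_2,\ x_3=0\}$, two great circles lying in orthogonal complementary $2$-planes; an element of $SO(4)$ carries this pair to $S^3\cap\{x_3=x_4=0\}\sqcup S^3\cap\{x_1=x_2=0\}$, i.e. the standard Hopf link $L$. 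Since $\hConf_4(\bS^1)$ is path-connected (Remark \ref{rmk:Uconf}) and the isotopy class of an embedded link is locally constant along a continuous $1$-parameter family, $\check\Upsilon'_\circ$ takes values in $\check\cE(L,S^3)$, and by (i) it is continuous there.

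For (iii) it suffices to check $\check\Upsilon'_\circ(\hat\varrho(g_i)(Z;\mu^+,\mu^-))=g_i\cdot\check\Upsilon'_\circ(Z;\mu^+,\mu^-)$ for $i=1,2$, using the $S^3$-action \eqref{eqn:g_act2} and the $\hConf_4(\bS^1)$-action \eqref{Equ_D_24-action on hConf_4}; note that the latter swaps $\mu^+$ and $\mu^-$. For $g_1$, applying $(x_1,x_2,\alpha)\mapsto(-x_2,x_1,\alpha+\pi/3)$ followed by the harmless substitution $t\mapsto-t$ sends $\gamma^+$ to $\{(t,-t,\alpha+\tfrac\pi3):\alpha\in\mu^+\}$ and $\gamma^-$ to $\{(t,t,\alpha+\tfrac\pi3):\alpha\in\mu^-\}$, which is precisely $\check\Upsilon'_\circ$ applied to $\hat\varrho(g_1)(Z;\mu^+,\mu^-)$ — the $\mu^\pm$-swap being compensated by the fact that $g_1$ interchanges the $2$-planes $\{x_1=x_2\}$ and $\{x_1=-x_2\}$. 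The verification for $g_2$ (via $(x_1,x_2,\alpha)\mapsto(-x_1,x_2,\pi-\alpha)$) is entirely analogous. Finally, for (iv), equivariance yields $\check\Upsilon'_\circ(\hat\varrho(g)^{-1}v)=g^{-1}\cdot\check\Upsilon'_\circ(v)$, so $(R,v)\mapsto R\bigl(\check\Upsilon'_\circ(v)\bigr)$ is constant on the right $G$-orbits $(R,v)\sim(Rg,\hat\varrho(g)^{-1}v)$ defining $SO(4)\times_{\hat\varrho}\hConf_4(\bS^1)$, and the induced map on the quotient is the desired $\check\Upsilon'_\tau$.

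The step I expect to be the main obstacle is (i): one has to be genuinely careful about the behaviour of the arcs $A_\alpha$ at the degeneracy locus $C$ of the parametrization \eqref{Equ_Constru Psi_Param S^3} — in particular that each arc extends to a smooth embedded arc through its endpoints on $C$, so that gluing two such arcs produces an embedded piecewise smooth circle — and about making the ``locally constant isotopy type'' argument of (ii) precise with respect to the topology placed on $\check\cE(L,S^3)$. The $G$-equivariance computation in (iii) is routine, but it requires careful bookkeeping of the $\mu^\pm$-swap and the substitutions $t\mapsto-t$.
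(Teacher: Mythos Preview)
Your proposal is correct and, for the $G$-equivariance check in (iii), follows exactly the computation the paper carries out: compute $g_1\cdot\check\Upsilon'_\circ(Z;\mu^+,\mu^-)$ using \eqref{eqn:g_act2}, compute $\check\Upsilon'_\circ(\hat\varrho(g_1)(Z;\mu^+,\mu^-))$ using \eqref{Equ_D_24-action on hConf_4}, and observe that the $\mu^\pm$-swap in $\hat\varrho$ matches the fact that $g_1$ interchanges the diagonal and anti-diagonal lines; then remark that $g_2$ is analogous.

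Where your write-up differs is in scope: the paper's proof consists solely of that equivariance verification, leaving the claims you label (i), (ii), (iv) entirely implicit (the figure and the phrase ``well-defined'' carry them). Your treatment of (i)---recognizing each $A_\alpha$ as a half great-circle in $\Span\{(1,1,0,0),(0,0,\cos\alpha,\sin\alpha)\}$ meeting $C$ at the $\alpha$-independent points $(\pm 2^{-1/2},\pm 2^{-1/2},0,0)$, so that two such arcs glue to a piecewise smooth circle---and of (ii)---checking one explicit configuration gives orthogonal great circles and invoking path-connectedness of $\hConf_4(\bS^1)$---are genuine additions that make rigorous what the paper takes for granted. The formal descent argument (iv) is standard and matches Notation~\ref{nota:fiberBundle}. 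Your flagged worry about smoothness at $C$ is a non-issue once you view $A_\alpha$ intrinsically as a great-circle arc rather than through the singular chart \eqref{Equ_Constru Psi_Param S^3}, as you already do.
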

\begin{figure}
    \centering
\includegraphics[width=1.5in]{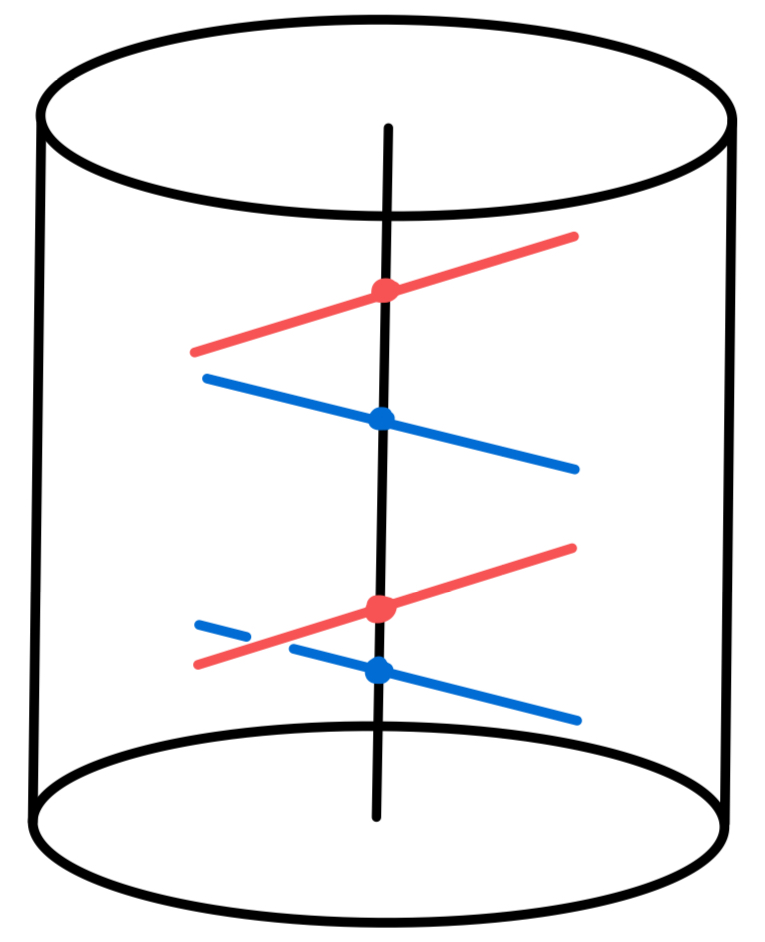}
\caption{The cylinder, after suitable boundary identification, is $S^3$. Then the red pieces form a loop, and so do the blue pieces. Together they form a link of the form $\Upsilon'_\circ(Z;\mu^+,\mu^-).$ }
\label{fig:link}
\end{figure}
  \begin{proof}
 By the definition of $\hat\varrho$ and \eqref{Equ_Def_Upsilon_0:hConf-to-HopfLinks}, 
   $$\check\Upsilon'_\circ(\hat\varrho(g_1)(Z;\mu^+,\mu^-))=\{(t,t,\alpha): |t|\leq 1, \alpha\in \varrho(g_1)(\mu^-)\} \cup \{(t,-t,\alpha): |t|\leq 1, \alpha\in  \varrho(g_1)(\mu^+)\},$$
   while using \eqref{eqn:g_act2} we have $$g_1(\check\Upsilon'_\circ(Z;\mu^+,\mu^-))=\{(-t,t,\alpha+\pi/3): |t|\leq 1, \alpha\in  \mu^+\} \cup \{(t,t,\alpha+\pi/3): |t|\leq 1, \alpha\in  \varrho(g_1)(\mu^-)\},$$
   and it is easy to see these two expressions are the same. For the action by $g_2$, the verification is similar.
  \end{proof}
  
Finally, we smoothen all the Hopf links defined by \eqref{Equ_Def_Upsilon_0:hConf-to-HopfLinks} in a $G$-equivarient way to obtain a map $\Upsilon'_\tau:\hConf_4(\tau)\to \cE(L,S^3).$

\subsection{Configuration determined by zeros}
  Recall the $7$-parameter family $\Psi: \RP^5\times \D\to \cS_{\leq 2}(S^3)$   specified in \S \ref{sect:2g+3}. Denote $\cY:=\RP^5\times \D$. Let  $\partial_1\cY\subset\partial \cY$ be the subset of $y$ such that $\fg(\Psi(y))=1$.
%    \partial_c\cY & := \{[0:0:0:0:0:1]\}\times \partial \D \ \subset \partial_1\cY\,.
  Recall that there is a left $G$-action on $\cY$ defined in \eqref{Equ_D_24-action_Y} which makes $\Psi$ $G$-equivarient (see Lemma \ref{Lem_D_24Sym_F_a,z(p)}). Note, the sets $\partial\cY,\ \partial_1\cY$ are both preserved  under this action.
%  \begin{lem} \label{Lem_iota:b_cX-b_1X-b_+X-bX}
%    The natural inclusion maps $\partial_c\cY\hookrightarrow \partial_1\cY \hookrightarrow \partial_+\cY\hookrightarrow \partial \cY$ and the projection map $\partial \cY\to \partial_c\cY$, $(a, z)\mapsto ([0:\dots:1],z)$ are $G$-equivarient with respect to the action $\sigma$.  As a corollary, they induce inclusions between spaces $\partial_i\cX:= SO(4)\times_\sigma \partial_i\cY$, $i\in \{c, 1, +\}$,
%    \begin{align}
%      \partial_c\cX \underset{\iota_1}{\hookrightarrow} \partial_1\cX \underset{\iota_+}{\hookrightarrow} \partial_+\cX \underset{\iota}{\hookrightarrow} \partial\cX \,;
%    \end{align}
%    and a projection $\varpi: \partial\cX \to \partial_c\cX$. 
    
%    Moreover,  
%    \begin{enumerate} [label={\normalfont(\alph*)}]
%      \item $\varpi\circ\iota_+\circ\iota_1=\id_{\partial_c\cX}$;
%      \item $\iota_+\circ\iota_1: \partial_c\cX\to \partial_+\cX$ is a homotopic equivalence.
%    \end{enumerate}
%  \end{lem}
%  \begin{proof} The first paragraph follows from a straightforward calculation based on the action \eqref{Equ_D_24-action_RP^5*D*SO(4)_vs_sigma}. The second paragraph follows directly from the definition of the sets and maps involved.
%  \end{proof}

Now, recall from  \S \ref{sect:tildeXiInvariantProof} that for each $(a,z)=(a,re^{i\theta})\in \cY$ in $\{a_5\ne 0\}\x\D\subset \RP^5\x\D$, we considered the real function $F_{a,z}$ on $S^3$ (parametrized by $(x,\alpha)$) given by 
\begin{align}
   \begin{split}
    F_{a,re^{i\theta}}(x, \alpha) := a_5^{-1} \Big( a_0 & + a_1x_1+a_2x_2+a_3x_3+a_4x_4\\
        & +a_5\left[x_1x_2+ \rho(a,x)\left(r\cos(\theta+2\alpha)+(1-r)\cos3\alpha\right)\right]\Big),   \label{Equ_Def_F_a,z(x)}
   \end{split}
  \end{align}
and the zero set of $F_{a,z}$ defines $\Psi(a, z)$ under \eqref{eq:PsiDef}. Note $\partial_1\cY\subset \{a_5\ne 0\}\x\partial\D$. 
In Lemma \ref{Lem_SmallDelta} ,  
for each $(a,z)\in \cY$ such that function  $\rho(a,\cdot)$ is not constantly zero, we obtained a map ${\bf x}_a\in C^\infty(\bS^1,\bD)$ with the following properties:
     \begin{itemize}
         \item          \begin{equation}\label{sect:bx}
          \nabla_x F_{a,z}(x, \alpha) = 0 \quad \Leftrightarrow \quad  x = \bx_a(\alpha).
          \end{equation}
         \item Consider the map
  \begin{align} 
    f: \cY\cap \{\rho(a,\cdot)\not\equiv 0\} \to C^\infty(\bS^1), \quad (a, z)\mapsto \big(f_{a,z}(\alpha):= F_{a,z}(\bx_a(\alpha), \alpha) \big)\,. \label{Equ_Def_f_a,z}
  \end{align}
  Then, from Lemma \ref{Lem_SmallDelta} \ref{Item_Smalldelta_GenusBd}, 
  \[
    \partial_1\cY = \{(a, z)\in \partial\cY: \rho(a, \cdot)\not\equiv 0,\ \# f_{a,z}^{-1}(0) = 4\} \,.
  \]
     \end{itemize}

  Notice  $\partial_1\cY$ is an open subset of $\partial\cY$.
  This enables us to define a map
  \begin{align}
    \cM_\circ: \partial_1\cY \to \hConf_4(\bS^1), \quad 
    (a, z) \mapsto \left(f_{a,z}^{-1}(0); \mu(\{f_{a,z}>0\}), \mu(\{f_{a,z}<0\}) \right)\,,  \label{Equ_Def_Take Roots,Midpts}
  \end{align}
  where for a disjoint union $J$ of open intervals in $\bS^1$, $\mu(J)$ denotes the set of midpoints of the  connected components of $J$.
  \begin{lem} \label{Lem_cM:Y_1-to-hConf_4-D_24-equiva}
    The map $\cM_\circ$   is $G$-equivariant, where $G$ acts on $\hConf_4(\bS^1)$ by $\hat\varrho$ described in \eqref{Equ_D_24-action on hConf_4}, and acts on $\partial_1\cY$ by $\sigma$ defined in \eqref{Equ_D_24-action_Y}. As a corollary, it induces a well-defined map from $\partial_1\cX:=SO(4)\times_\sigma \partial_1\cY$ into $\hConf_4(\tau)$:
    \begin{align}
      \cM_\tau: \partial_1\cX \to \hConf_4(\tau), \quad (R, (a,z))\cdot G \mapsto (R, \cM_\circ(a,z))\cdot G \,.
    \end{align}
  \end{lem}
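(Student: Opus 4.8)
The plan is to deduce the $G$-equivariance of $\cM_\circ$ from an explicit transformation rule for the functions $f_{a,z}\in C^\infty(\bS^1)$ of \eqref{Equ_Def_f_a,z} under the $G$-action $\sigma$ on $\partial_1\cY$. Since $\cM_\circ(a,z)$ is built entirely from the zero set of $f_{a,z}$ together with the midpoints of the connected components of $\{f_{a,z}>0\}$ and of $\{f_{a,z}<0\}$ (and continuity of $\cM_\circ$ on the open set $\partial_1\cY$ is immediate from the simple-zero description of that set in Lemma \ref{Lem_SmallDelta}\ref{Item_Smalldelta_GenusBd}), it will suffice to prove that for $i\in\{1,2\}$,
\[ f_{\sigma(g_i^{-1})(a,z)} \;=\; -\,f_{a,z}\circ\varrho(g_i) \quad\text{on }\bS^1, \]
where $\varrho(g_i)$ is the isometry of $\bS^1$ from \eqref{Equ_D_24-action on S^1}. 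The inputs are Lemma \ref{Lem_D_24Sym_F_a,z(p)}, which gives $F_{\sigma(g_i^{-1})(a,z)}(x,\alpha)=-F_{a,z}(g_i\cdot(x,\alpha))$ with $g_i$ acting on $S^3$ by \eqref{eqn:g_act2}; the uniqueness of the critical curve $\bx_a$ in Lemma \ref{Lem_SmallDelta}\ref{Item_Smalldelta_bx error}; and the symmetry of $\rho$ recorded in \eqref{eq:g1ActRP5}--\eqref{eq:g2ActRP5}.

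The first step is to transform $\bx_a$. Writing $\bar g_i\in O(2)$ for the linear map on the $(x_1,x_2)$-plane occurring in \eqref{eqn:g_act2}, so that $g_i\cdot(x,\alpha)=(\bar g_i x,\varrho(g_i)\alpha)$, I would differentiate the identity $F_{\sigma(g_i^{-1})(a,z)}(x,\alpha)=-F_{a,z}(\bar g_i x,\varrho(g_i)\alpha)$ in $x$; since $\bar g_i$ is orthogonal this gives $\nabla_x F_{\sigma(g_i^{-1})(a,z)}(x,\alpha)=-\bar g_i^{-1}(\nabla_x F_{a,z})(\bar g_i x,\varrho(g_i)\alpha)$, which vanishes iff $(\nabla_x F_{a,z})(\bar g_i x,\varrho(g_i)\alpha)=0$. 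The symmetry \eqref{eq:g1ActRP5}--\eqref{eq:g2ActRP5} of $\rho$ shows that $\bar g_i$ carries the ball relevant to $\sigma(g_i^{-1})(a)$ in Lemma \ref{Lem_SmallDelta}\ref{Item_Smalldelta_bx error} onto the one relevant to $a$ (the radius $\kappa$ is unchanged because $\sigma(g_i^{-1})$ preserves $a_1^2+a_2^2$ and $a_5^2$); so the uniqueness statement there forces $\bx_{\sigma(g_i^{-1})(a,z)}(\alpha)=\bar g_i^{-1}\bx_a(\varrho(g_i)\alpha)$. Plugging this into the definition of $f$ yields $f_{\sigma(g_i^{-1})(a,z)}(\alpha)=-F_{a,z}(\bx_a(\varrho(g_i)\alpha),\varrho(g_i)\alpha)=-f_{a,z}(\varrho(g_i)\alpha)$, as claimed.

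With that rule in hand, equivariance is formal: $\varrho(g_i)$ is an isometry of $\bS^1$, hence commutes with taking zero sets and with taking midpoints of components of open subsets, while the minus sign interchanges $\{f_{a,z}>0\}$ and $\{f_{a,z}<0\}$. Matching this against \eqref{Equ_D_24-action on hConf_4} (and using $\hat\varrho(g_i^{-1})=\hat\varrho(g_i)^{-1}$, which also swaps the two midpoint sets) gives $\cM_\circ\circ\sigma(g_i^{-1})=\hat\varrho(g_i^{-1})\circ\cM_\circ$; since $g_1,g_2$ generate $G$, $\cM_\circ$ is $G$-equivariant. The corollary is then standard: a $G$-equivariant map of $G$-spaces descends to a map of the associated bundles over $B=SO(4)/G$, so $(R,(a,z))\cdot G\mapsto(R,\cM_\circ(a,z))\cdot G$ is well-defined (cf.\ Notation \ref{nota:fiberBundle}).

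I expect the only genuinely delicate point to be the sign and orientation bookkeeping in the middle step — tracking the flip $a_5\mapsto-a_5$ buried in $\sigma(g_i^{-1})$, the orientation-reversal in $\varrho(g_2)$, and the linear maps $\bar g_i$ on the disc — together with checking that these conventions line up with how the balls $\bD_{\kappa(a)}(\cdot)$ of Lemma \ref{Lem_SmallDelta} are centered, so that the uniqueness of $\bx_a$ can legitimately be invoked on the image ball. The two outer steps (reduction to the transformation rule for $f$, and descent to associated bundles) are routine.
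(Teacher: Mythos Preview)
Your proposal is correct and follows essentially the same approach as the paper: derive the transformation law for $f_{a,z}$ by differentiating the identity of Lemma \ref{Lem_D_24Sym_F_a,z(p)} to first obtain the transformation of the critical curve $\bx_a$, then match the resulting sign-flip and $\bS^1$-isometry against the definition of $\hat\varrho$. The paper carries this out with $\sigma(g_i)$ rather than $\sigma(g_i^{-1})$ and is somewhat terser about the ball-matching needed to invoke uniqueness of $\bx_a$, but the arguments are the same in substance.
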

  \begin{proof} By  Lemma \ref{Lem_D_24Sym_F_a,z(p)}, for all $(a,z)\in\partial_1\cY$, for $i=1,2$, we have $F_{\sigma(g_i)(a,z)}(p) = -F_{a,z}(g_i^{-1}\cdot p)$  for every $p\in S^3$. Differentiate this equation with respect to the $(x_1,x_2)$. Then using  the definition \eqref{sect:bx} of ${\bf x}_a$ and the action  \eqref{eqn:g_act2} on $S^3$, we can check that \begin{equation}\label{eq:D24_action_xa}
   {\bf x}_{g_1a}(\alpha)=g_1{\bf x}_a(\alpha-\pi/3),\quad {\bf x}_{g_2a}(\alpha)=g_2{\bf x}_a(\pi-\alpha).   
  \end{equation} 
  Note here we abused notation: $g_ia$ denotes the $\RP^5$ component in \eqref{Equ_D_24-action_Y}, while $g_i(x_1,x_2)$ denotes the first two components in $g_i(x_1,x_2,\alpha)$ under \eqref{eqn:g_act2}. Thus, putting this into the definition \eqref{Equ_Def_f_a,z} of $f_{a,z}$, we see that 
  \begin{equation}\label{eq:D24_action_faz}
  f_{\sigma(g_1)(a,z)}=-f_{a,z}(\alpha-\pi/3),\quad f_{\sigma(g_2)(a,z)}=-f_{a,z}(\pi-\alpha).    
  \end{equation}
   This implies the zero set $f_{\sigma(g_i)(a,z)}^{-1}(0)$ is the image of $f_{ (a,z)}^{-1}(0)$ under $g_i$ under the action \eqref{Equ_D_24-action on S^1}, and that 
  $$\cM_\circ(\sigma(g_i)(a,z))=\left(\varrho(g_i)(f^{-1}_{a,z}(0));\varrho(g_i)(\mu(\{f_{a,z}{\color{red}<}0\})),\varrho(g_i)(\mu(\{f_{a,z}{\color{red}>}0\}))\right).$$
  But then according to \eqref{Equ_D_24-action on hConf_4} this is exactly $
    \hat\varrho(g_i)(f^{-1}_{a,z}(0);\mu(\{f_{a,z}>0\}),\mu(\{f_{a,z}<0\})).$
  \end{proof}

{
  Another important fact about $f_{a,z}$ is the following consequence of Lemma \ref{Lem_SmallDelta} \ref{Item_Smalldelta_ZerosBd}.   
  \begin{lem} \label{Lem_Theta:b_1X-to-tau^4}
    The  map $\Theta_\circ: \partial\cY \to \bS^1$ given by $ \Theta_\circ(a, e^{i\theta}) \mapsto -\theta$
    is $G$-equivarient, where $G$ acts on $\partial\cY$ by $\sigma$ and on $\bS^1$ by $\bar\varrho$. 
    Thus, it induces a well-defined map 
    \begin{align}
      \Theta_\tau: \partial\cX \to \bar\tau\,, \quad (R, (a,z))\cdot G \mapsto (R, \Theta_\circ(a,z))\cdot G \,.  \label{Equ_Theta_tau:b_+X-to-tau^4}
    \end{align}    
    Moreover, 
    \begin{align}
      \dist_{\bS^1}(\cP_\circ \circ \cM_\circ(a, e^{i\theta}), -\theta) \leq 1000^{-1} , \quad \forall\, (a, e^{i\theta})\in \partial_1\cY\,.\label{eq:1000^-1}
    \end{align}
    and hence the maps $\Theta_\tau\circ\iota$ and $\cP_\tau\circ\cM_\tau:\ \partial_1\cX\to \bar\tau$ are homotopic.
  \end{lem}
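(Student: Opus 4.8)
\emph{Overview.} The plan is to prove the three assertions of the lemma in turn; all of them are essentially formal, the only genuine input being Lemma~\ref{Lem_SmallDelta}~\ref{Item_Smalldelta_ZerosBd}. For the $G$-equivariance of $\Theta_\circ$ I would just track the $\sigma$-action on the $\partial\D$-coordinate $z=e^{i\theta}$, since $\Theta_\circ$ records nothing else. From \eqref{Equ_D_24-action_Y}, $\sigma(g_1^{-1})$ multiplies $z$ by $e^{-i\pi/3}$, so $\sigma(g_1)$ multiplies $z$ by $e^{i\pi/3}$, i.e. $e^{i\theta}\mapsto e^{i(\theta+\pi/3)}$; and $\sigma(g_2)=\sigma(g_2^{-1})$ sends $z\mapsto -\bar z$, i.e. $e^{i\theta}\mapsto e^{i(\pi-\theta)}$. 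Comparing with $\bar\varrho(g_1):\alpha\mapsto\alpha-\pi/3$ and $\bar\varrho(g_2):\alpha\mapsto\pi-\alpha$ from \eqref{Equ_D_24-action barvarrho on S^1}, one gets $\Theta_\circ(\sigma(g_i)(a,z))=\bar\varrho(g_i)(\Theta_\circ(a,z))$ in $\bS^1$ for $i=1,2$ (the $g_2$ case using $-\theta-\pi\equiv-\theta+\pi$ mod $2\pi$); since $g_1,g_2$ generate $G$ and their relations are preserved, this holds for all of $G$. By Notation~\ref{nota:fiberBundle} this equivariance is precisely what makes $(R,(a,z))\mapsto(R,\Theta_\circ(a,z))$ descend to a well-defined $\Theta_\tau:\partial\cX=SO(4)\times_\sigma\partial\cY\to SO(4)\times_{\bar\varrho}\bS^1=\bar\tau$.

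\emph{The estimate \eqref{eq:1000^-1}.} Fix $(a,e^{i\theta})\in\partial_1\cY$. By the description of $\partial_1\cY$ recalled above the lemma, $\rho(a,\cdot)\not\equiv 0$ and $\#f_{a,z}^{-1}(0)=4$, so Lemma~\ref{Lem_SmallDelta}~\ref{Item_Smalldelta_ZerosBd} applies: every zero of $f_{a,z}$ has order $1$, hence $f_{a,z}$ changes sign at each of its four zeros, the four arcs of $\bS^1\setminus f_{a,z}^{-1}(0)$ carry alternating signs, and $\ord_{f_{a,z}}(\alpha)=\Neg_{f_{a,z}}(\alpha)=1$ at every zero $\alpha$. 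Substituting into the definition \eqref{Equ_Def_cZ^-} of $\cZ^-$, the order-correction sum vanishes and $\cZ^-(f_{a,z})=\sum\mu(\{f_{a,z}<0\})$. On the other hand, by \eqref{Equ_Def_Take Roots,Midpts} and \eqref{Equ_Def_P:hConf-to-S^1}, $\cP_\circ\circ\cM_\circ(a,e^{i\theta})=\sum\mu(\{f_{a,z}<0\})$ as well. So $\cP_\circ\circ\cM_\circ(a,e^{i\theta})=\cZ^-(f_{a,z})$, and \eqref{eq:1000^-1} is exactly the bound $\dist_{\bS^1}(\cZ^-(f_{a,z}),-\theta)\le 1000^{-1}$ supplied by Lemma~\ref{Lem_SmallDelta}~\ref{Item_Smalldelta_ZerosBd}.

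\emph{The homotopy.} Both $\Theta_\circ|_{\partial_1\cY}$ and $\cP_\circ\circ\cM_\circ$ are $G$-equivariant maps $\partial_1\cY\to\bS^1$ (the latter by Lemma~\ref{Lem_cM:Y_1-to-hConf_4-D_24-equiva} combined with the $G$-equivariance of $\cP_\circ$ from Lemma~\ref{Lem_P_tau:hConf-to-tau^4}), and by the estimate they stay pointwise within $\bS^1$-distance $1000^{-1}<\pi$ of one another. Since $\bar\varrho$ acts on $\bS^1$ by isometries and any two points of $\bS^1$ at distance $<\pi$ are joined by a unique minimizing geodesic, I would set $H_t(a,z)$ to be the point dividing the minimizing geodesic from $\Theta_\circ(a,z)$ to $\cP_\circ\circ\cM_\circ(a,z)$ in ratio $t:(1-t)$; this is a continuous homotopy, and it is $G$-equivariant because isometries carry minimizing geodesics to minimizing geodesics. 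Passing to the associated bundles over $B$ via Notation~\ref{nota:fiberBundle} then produces the desired homotopy from $\Theta_\tau\circ\iota$ to $\cP_\tau\circ\cM_\tau$.

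\emph{Main obstacle.} I do not expect a genuine obstacle: the one non-formal ingredient is quoted verbatim from Lemma~\ref{Lem_SmallDelta}~\ref{Item_Smalldelta_ZerosBd}. The steps requiring care are purely bookkeeping — correctly composing the several $G$-actions $\sigma,\varrho,\bar\varrho,\hat\varrho$ and their inverses when checking equivariance and descent, and verifying that a simple zero contributes nothing to the order term in \eqref{Equ_Def_cZ^-}, so that $\cP_\circ\circ\cM_\circ$ and $\cZ^-$ agree on $\partial_1\cY$ and the two bundle maps are comparable along short geodesics.
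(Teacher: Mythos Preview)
The proposal is correct and follows essentially the same approach as the paper: verify equivariance on generators by tracking the $\partial\D$-coordinate, identify $\cP_\circ\circ\cM_\circ$ with $\cZ^-(f_{a,z})$ via the order-$1$ observation, and invoke Lemma~\ref{Lem_SmallDelta}~\ref{Item_Smalldelta_ZerosBd} for the estimate. Your homotopy step is slightly more explicit than the paper's (which just says ``pointwise close''), and your citation for the order-$1$ fact is cleaner than the paper's indirection through Lemma~\ref{Lem_Constru Psi_GenusCal of (F=0)}, but the substance is identical.
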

}
  \begin{proof}
For the $G$-equivariance, by definition (and abusing notation by writing $g_ia$) 
$$\Theta_\circ(g_1(a,(r,\theta)))=\Theta_\circ(g_1a,(r,\theta+\pi/3))= {-(\theta+\pi/3) = \bar\varrho(g_1)(-\theta)},$$
and 
$$\Theta_\circ(g_2(a,(r,\theta)))=\Theta_\circ(g_2a,(r,\pi-\theta))=  {-(\pi-\theta)\sim \pi-(-\theta)=\bar\varrho(g_2)(-\theta)},$$
as desired. 

    For any $(a,z)\in\partial_1\cY$ (so that $\Psi(a,z)$  has  genus 1), we have $\# f^{-1}_{a,z}(0)=4$ by Lemma \ref{Lem_SmallDelta} \ref{Item_Smalldelta_GenusBd}. Therefore by Lemma \ref{Lem_Constru Psi_GenusCal of (F=0)} \eqref{Item_GenusCalc_concl_genus}, every zero of $f_{a,z}$ is of order 1. Hence, the expression $\ord_{f_{a,z}}(\alpha)-\Neg_{f_{a,z}}(\alpha)$ must be zero in the definition  \eqref{Equ_Def_cZ^-} of $\cZ^-$. In other words, $\cZ^-(f_{a,z})=\sum _{\alpha\in \mu(\{f_{a,z}<0\})}\alpha$, which is equal to $\cP_\circ\circ\cM_\circ(a,z)$ by definitions  \eqref{Equ_Def_P:hConf-to-S^1} and \eqref{Equ_Def_Take Roots,Midpts}. Together with Lemma \ref{Lem_SmallDelta} \ref{Item_Smalldelta_ZerosBd}, we obtain \eqref{eq:1000^-1}, and the fact $\Theta_\tau\circ\iota\simeq\cP_\tau\circ\cM_\tau$   follows as the two maps are pointwise close by \eqref{eq:1000^-1}. 
  \end{proof}

\subsection{Hopf links through midpoints II: the map $\Upsilon$}
  Finally, we define a map $\Upsilon: \partial_1\cX\to \cE(L, S^3)$.  We will first construct a map $\check\Upsilon:\partial_1\cX\to\check\cE(L,S^3)$, and then smoothen the Hopf links to obtain $\Upsilon$. By \eqref{Item_GenusCalc_concl_innerLoop} of Lemma \ref{Lem_Constru Psi_GenusCal of (F=0)},
  \begin{align}\label{eq:betaBreve}
    \beta_{a,z}^+ := \left\{ \big(\bx_a(\alpha)+(t, t),\ \alpha\big): t\in \R,\ \alpha\in \mu(\{f_{a,z}>0\})\right\} \cap (\bD\times \bS^1)
  \end{align}
  is the union of two line segments contained in $\{F_{a,z}>0\}$. The picture here is similar to Figure \ref{fig:link}, except that now the four end points of $\beta_{a,z}^+$ may be distinct. However, the four end points naturally come in two pairs, in which each pair consists of two points close to each other. Then for each pair, we bridge the two points by  adding a segment on  the great circle $C\subset S^3$ (which is the image of $\partial \bD\times \bS^1$ under the parametrization \eqref{Equ_Constru Psi_Param S^3}), and form a loop   $\bar\beta_{a,z}^+\subset \{F_{a,z}>0\}\subset S^3$. Similarly, 
  \begin{align*}
    \beta_{a,z}^- := \left\{ \big(\bx_a(\alpha)+(t, -t),\alpha\big): t\in \R,\ \alpha\in \mu(\{f_{a,z}<0\})\right\} \cap (\bD\times \bS^1)  
  \end{align*}
  also extends to a loop $\bar\beta_{a,z}^-$ in $\{F_{a,z}<0\}\subset S^3$. As a result $\Psi(a,z)$ is unknotted and bounds the Hopf link  $\bar\beta^+_{a,z}\cup \bar\beta^-_{a,z}$ (we  introduced this notion before Proposition \ref{prop:wholeFamilyUnknotted}).

\begin{lem}
Under the notations above, the  map 
      $\check\Upsilon_\circ: \partial_1\cY \to \check\cE(L, S^3)$ given by $
      (a, z) \mapsto \bar\beta_{a,z}^+ \cup \bar\beta_{a,z}^-$
    is $G$-equivarient. Thus, it induces a well-defined map 
    \begin{align}
      \check\Upsilon: \partial_1\cX \to \cE(L, S^3)\,, (R, (a,z))\cdot G \mapsto R\big( \check\Upsilon_\circ(a,z) \big)\,.  \label{Equ_Upsilon:b_1X-to-HopfLinks}
    \end{align}
\end{lem}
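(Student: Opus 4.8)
The plan is to verify $G$-equivariance of the map $\check\Upsilon_\circ$, exactly in parallel with the proofs of Lemma \ref{Lem_cM:Y_1-to-hConf_4-D_24-equiva} and Lemma \ref{Lem_Upsilon'_tau:hConf-to-HopfLinks}, and then invoke the general principle that a $G$-equivariant map out of a $G$-space descends to a map out of the associated bundle. First I would recall the two equivariance relations established earlier: from \eqref{eq:D24_action_xa}, ${\bf x}_{g_1a}(\alpha)=g_1{\bf x}_a(\alpha-\pi/3)$ and ${\bf x}_{g_2a}(\alpha)=g_2{\bf x}_a(\pi-\alpha)$; and from \eqref{eq:D24_action_faz}, $f_{\sigma(g_1)(a,z)}(\alpha)=-f_{a,z}(\alpha-\pi/3)$ and $f_{\sigma(g_2)(a,z)}(\alpha)=-f_{a,z}(\pi-\alpha)$. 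The sign flip in $f$ interchanges $\{f>0\}$ and $\{f<0\}$, hence interchanges $\mu(\{f_{a,z}>0\})$ and $\mu(\{f_{a,z}<0\})$ after applying the reparametrization $\varrho(g_i)$.

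Next I would trace through the definition \eqref{eq:betaBreve} of $\beta_{a,z}^+$ under the substitution $(a,z)\mapsto \sigma(g_i)(a,z)$. For $g_1$: using ${\bf x}_{\sigma(g_1)(a,z)}(\alpha) = g_1{\bf x}_a(\alpha-\pi/3)$ and the fact that $g_1$ acts on the $(x_1,x_2,\alpha)$-coordinates by $(x_1,x_2,\alpha)\mapsto(-x_2,x_1,\alpha+\pi/3)$ (see \eqref{eqn:g_act2}), one computes that $g_1$ carries the segment $\{(t,t)\}$-direction through $\bx_a$ to the $\{(-t,t)\}$-direction through $g_1\bx_a$, i.e. to the $\{(t,-t)\}$-type segment. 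Combined with $\mu(\{f_{\sigma(g_1)(a,z)}>0\}) = \varrho(g_1)\big(\mu(\{f_{a,z}<0\})\big)$, this shows $\beta_{\sigma(g_1)(a,z)}^+ = g_1(\beta_{a,z}^-)$ and symmetrically $\beta_{\sigma(g_1)(a,z)}^- = g_1(\beta_{a,z}^+)$. The bridging segments along the great circle $C$ are added in a canonical way (the short arc joining each close pair of endpoints), so they are carried to each other by $g_1$ as well, since $g_1$ preserves $C$ and maps endpoints to endpoints; thus $\bar\beta_{\sigma(g_1)(a,z)}^+ = g_1(\bar\beta_{a,z}^-)$ and $\bar\beta_{\sigma(g_1)(a,z)}^- = g_1(\bar\beta_{a,z}^+)$. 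Taking the union, $\check\Upsilon_\circ(\sigma(g_1)(a,z)) = g_1(\bar\beta_{a,z}^+\cup\bar\beta_{a,z}^-) = g_1\cdot\check\Upsilon_\circ(a,z)$. The computation for $g_2$, using $(x_1,x_2,\alpha)\mapsto(-x_1,x_2,\pi-\alpha)$, is entirely analogous: $g_2$ also swaps the $(t,t)$ and $(t,-t)$ directions and matches the sign flip in $f$. Hence $\check\Upsilon_\circ$ is $G$-equivariant, where $G$ acts on $\partial_1\cY$ by $\sigma$ and on $\check\cE(L,S^3)$ by rotating links.

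Once equivariance is in hand, the descent to $\partial_1\cX = SO(4)\times_\sigma\partial_1\cY$ is formal: since $SO(4)$ acts on $\check\cE(L,S^3)$ on the left by rotation and this left action commutes with the right $G$-action used to form the associated bundle, the assignment $(R,(a,z))\mapsto R(\check\Upsilon_\circ(a,z))$ is constant on $G$-orbits, because $(R,(a,z))\cdot g = (Rg,\sigma(g^{-1})(a,z))$ is sent to $Rg\cdot\check\Upsilon_\circ(\sigma(g^{-1})(a,z)) = Rg\cdot g^{-1}\check\Upsilon_\circ(a,z) = R\check\Upsilon_\circ(a,z)$. Continuity follows from continuity of $\check\Upsilon_\circ$, which in turn follows from the continuity of $(a,z)\mapsto f_{a,z}$ (Lemma \ref{Lem_SmallDelta}), of the roots and midpoints on the open set $\partial_1\cY$ where there are exactly four simple zeros, and of the bridging construction. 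Finally, to produce $\Upsilon$ itself I would smoothen the piecewise smooth Hopf links $\check\Upsilon(\cdot)$ in a $G$-equivariant manner — exactly as in the smoothening step following Lemma \ref{Lem_Upsilon'_tau:hConf-to-HopfLinks} — fixing a $G$-equivariant smoothing of the corners near $C$; this yields the desired continuous map $\Upsilon:\partial_1\cX\to\cE(L,S^3)$. The only mildly delicate point is checking that the endpoint-bridging is genuinely canonical and $G$-equivariant (that is, that the "close pair" structure of the four endpoints is preserved under $g_1,g_2$), but this follows from the quantitative closeness estimate $|\bx_a(\alpha)-(-a_2,-a_1)|\le C(|a_3|+|a_4|)$ in Lemma \ref{Lem_SmallDelta}\ref{Item_Smalldelta_bx error} together with the fact that $g_1,g_2$ act on $C$ by isometries.
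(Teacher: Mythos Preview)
Your proposal is correct and follows essentially the same approach as the paper: both arguments invoke the equivariance relations \eqref{eq:D24_action_xa} and \eqref{eq:D24_action_faz}, observe that the sign flip in $f$ interchanges $\mu(\{f>0\})$ with $\mu(\{f<0\})$ while $g_i$ swaps the $(t,t)$ and $(t,-t)$ segment directions, and conclude that $\beta_{\sigma(g_i)(a,z)}^+\cup\beta_{\sigma(g_i)(a,z)}^- = g_i(\beta_{a,z}^+\cup\beta_{a,z}^-)$, from which the closed-up links inherit the same equivariance. Your additional remarks on the descent to the associated bundle, continuity, and equivariance of the bridging step are more explicit than the paper's terse treatment but are entirely in line with it.
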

\begin{proof}
 For the action by $g_1$, by \eqref{eq:D24_action_xa} and \eqref{eq:D24_action_faz} we have
  \begin{align*}
  \beta_{g_1(a,z)}^+&=\left\{ \big(\bx_{g_1(a)}(\alpha)+(t, t),\ \alpha\big): t\in \R,\ \alpha\in \mu(\{f_{g_1(a,z)}>0\})\right\} \cap (\bD\times \bS^1)  \\
  &=\left\{ \big(g_1{\bf x}_a(\alpha-\pi/3)+(t, t),\ \alpha\big): t\in \R,\ \alpha\in \mu(\{\alpha:-f_{a,z}(\alpha-\pi/3)>0\})\right\} \cap (\bD\times \bS^1)\\
  &=g_1\left\{ \big({\bf x}_a(\alpha)+(t, -t),\ \alpha\big): t\in \R,\ \alpha\in \mu(\{\alpha:f_{a,z}(\alpha)<0\})\right\} \cap (\bD\times \bS^1)
  \end{align*}
  and similarly
    \begin{align*}
  \beta_{g_1(a,z)}^-&=\left\{ \big(\bx_{g_1(a)}(\alpha)+(t, -t),\ \alpha\big): t\in \R,\ \alpha\in \mu(\{f_{g_1(a,z)}<0\})\right\} \cap (\bD\times \bS^1)  \\ 
  &=g_1\left\{ \big({\bf x}_a(\alpha)+(-t, -t),\ \alpha\big): t\in \R,\ \alpha\in \mu(\{\alpha:f_{a,z}(\alpha)>0\})\right\} \cap (\bD\times \bS^1).
  \end{align*}
As a result, the set $ \beta_{g_1(a,z)}^+\cup \beta_{g_1(a,z)}^-$ is equal to $g_1(\beta_{ a,z}^+\cup \beta_{ a,z}^-)$, so that $ \bar\beta_{g_1(a,z)}^+\cup \bar\beta_{g_1(a,z)}^-$ is equal to $g_1(\bar\beta_{ a,z}^+\cup \bar\beta_{ a,z}^-)$. The calculation for the action by $g_2$ is similar, so the $G$-equivariance follows.    
\end{proof}

Hence, we smoothen the Hopf links given by $\check\Upsilon$ in a $G$-equivarient way, to  define a map $\Upsilon:\partial_1\cX\to \cE(L,S^3)$. 
 
  \begin{lem} \label{Lem_Upsilon:b_1X-to-HopfLinks} Regarding the map $\Upsilon$, we can guarantee that:
    \begin{enumerate} [label={\normalfont(\roman*)}]
      \item\label{item:UpsilonBoundedByXi} for every $q\in \partial_1\cX$, $\Upsilon(q)$ is a Hopf link bounded by $\Xi(q)$;
      \item\label{item:UpsilonD0} $\Upsilon_*[\partial D_0]\neq 0$ in $H_1(\cE(L, S^3), \Z_2)$;
      \item\label{itme:Upsilon'MtauHomotopyEquiv} $\Upsilon'_\tau\circ \cM_\tau$ and $\Upsilon: \partial_1\cX\to \cE(L, S^3)$ are homotopic.
    \end{enumerate}
  \end{lem}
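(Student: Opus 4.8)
The plan is to read off \ref{item:UpsilonBoundedByXi} and \ref{itme:Upsilon'MtauHomotopyEquiv} almost directly from the constructions already made, and to reduce \ref{item:UpsilonD0} to an explicit computation on the single disc $D_0$. First, recall that $\Upsilon$ was obtained from $\check\Upsilon$ by smoothening each piecewise-smooth Hopf link in a $G$-equivariant way; since $\check\Upsilon_\circ(a,z)=\bar\beta_{a,z}^+\cup\bar\beta_{a,z}^-$ was shown (using Lemma \ref{Lem_Constru Psi_GenusCal of (F=0)} \eqref{Item_GenusCalc_concl_innerLoop}) to be a Hopf link with $\bar\beta_{a,z}^+\subset\{F_{a,z}>0\}$ and $\bar\beta_{a,z}^-\subset\{F_{a,z}<0\}$, and these two open regions are exactly the complementary components of $\Psi(a,z)=\Xi(R,(a,z))\cdot G$ up to the rotation $R$, the smoothening can be carried out within an $\bF$-small neighborhood so that the link stays in the complement of $\Psi(a,z)$. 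Hence for every $q=(R,(a,z))\cdot G\in\partial_1\cX$ the link $\Upsilon(q)=R\big(\text{smoothened }\bar\beta_{a,z}^+\cup\bar\beta_{a,z}^-\big)$ is a genuine smooth Hopf link bounded by $\Xi(q)$, giving \ref{item:UpsilonBoundedByXi}. For \ref{itme:Upsilon'MtauHomotopyEquiv}, compare the two piecewise-smooth constructions: $\check\Upsilon'_\circ(\cM_\circ(a,z))$ consists of the segments $\{(t,\pm t,\alpha):|t|\le1,\alpha\in\mu(\{f_{a,z}\gtrless0\})\}$ through the \emph{origin}, while $\check\Upsilon_\circ(a,z)$ uses the parallel segments through $\bx_a(\alpha)$; since $\bx_a(\alpha)\in\bD_{\kappa(a)/4}(-a_2,-a_1)$ depends continuously on $(a,z)$ (Lemma \ref{Lem_SmallDelta} \ref{Item_Smalldelta_bx error}), the straight-line homotopy translating $\bx_a(\alpha)\rightsquigarrow 0$ moves $\check\Upsilon_\circ(a,z)$ to $\check\Upsilon'_\circ(\cM_\circ(a,z))$ through piecewise-smooth Hopf links, $G$-equivariantly; smoothening this homotopy (again $G$-equivariantly, compatibly with the two chosen smoothenings) yields a homotopy $\Upsilon\simeq\Upsilon'_\tau\circ\cM_\tau$ on $\partial_1\cX$.

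The substantive part is \ref{item:UpsilonD0}. The plan is to restrict everything to $D_0=\partial D_0$'s $2$-disc and its boundary circle. By \eqref{eq:tildeD0}, $\partial D_0$ is the image in $\cX$ of $(0,\dots,0,1)\times\partial\D\times(1,0,0,0)\times(1,0,0,0)$, i.e.\ it is $\{[0:\cdots:0:1]\}\times\partial\D\subset\cY$ mapped by $\Psi$ without any rotation. On this locus $F_{[0:\cdots:0:1],e^{i\theta}}(x,\alpha)=x_1x_2+\rho([0:\cdots:0:1],x)\big(\cos(\theta+2\alpha)\big)$ (here $r=1$), so $\bx_a\equiv 0$, and $f_{a,e^{i\theta}}(\alpha)$ is, up to the harmless correction controlled by Lemma \ref{Lem_SmallDelta}, the trigonometric polynomial $\cos(\theta+2\alpha)$ plus constant — a degree-$2$ trig polynomial with $\#f^{-1}(0)=4$ and, by Lemma \ref{Lem_Constru Psi_Ord(Trig polyn)}, $\cZ^-(f_{a,e^{i\theta}})$ winding once around $\bS^1$ as $\theta$ traverses $[0,2\pi)$. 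Composing with $\cM_\circ$ and then $\Upsilon'_\circ$, as $\theta$ runs over the circle the midpoint sets $\mu(\{f>0\}),\mu(\{f<0\})$ each rotate (consistently with the $-\theta$ winding) all the way around $\bS^1$, so $\Upsilon'_\circ(\cM_\circ(\partial D_0))$ is a loop of Hopf links in which both component circles are carried once around the Hopf fibration direction. I would then invoke the description of $\cE(L,S^3)$ from Appendix \ref{Append_HopfLink}: this rotation loop represents a nonzero class in $H_1(\cE(L,S^3);\Z_2)$ — concretely, following one component of the link, its image under the evaluation/"track one circle" map $\cE(L,S^3)\to \UConf$-type space, or directly using that the loop is the image of a generator of $\pi_1$ of a fiber $\hConf_4(\bS^1)\to\UConf_4(\bS^1)$ that maps onto $H_1(\bS^1)$ by $(\cP_\circ)_*$ (claim \ref{Item_SpecSeq_P0_*inj} in the proof of Lemma \ref{Lem_P_tau:hConf-to-tau^4}). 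Using part \ref{itme:Upsilon'MtauHomotopyEquiv} just proved, $\Upsilon_*[\partial D_0]=(\Upsilon'_\tau)_*(\cM_\tau)_*[\partial D_0]=(\Upsilon'_\circ)_*(\cM_\circ)_*[\partial D_0]$, and the latter is nonzero because $(\cM_\circ)_*[\partial D_0]$ is a generator of $H_1(\hConf_4(\bS^1);\Z_2)$ (by the winding computation) and $\Upsilon'_\circ$ is injective on that $H_1$ (this is exactly what the topology of $\cE(L,S^3)$ in the appendix is set up to give).

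The main obstacle I expect is the last sentence: showing that $(\Upsilon'_\circ)_*$ does not kill the generating class of $H_1(\hConf_4(\bS^1);\Z_2)$, i.e.\ that the loop of Hopf links obtained by rotating the two "diagonal-segment" circles once around the $\alpha$-direction is homologically nontrivial in $\cE(L,S^3)=\Diff(S^3)/\Diff(S^3,L)$. This is where Hatcher's resolution of the Smale conjecture (cited in the sketch) and the explicit homotopy type of $\cE(L,S^3)$ worked out in Appendix \ref{Append_HopfLink} must be used: one identifies the rotation loop with a known generator (e.g.\ the image of the Hopf $S^1$-action, or an element detected by a linking/framing invariant of one component relative to the other) and checks it survives mod $2$. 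Everything else — continuity of the maps, $G$-equivariance, the straight-line homotopies, and the trig-polynomial bookkeeping — is routine given Lemmas \ref{Lem_SmallDelta}, \ref{Lem_Constru Psi_GenusCal of (F=0)}, \ref{Lem_Constru Psi_Ord(Trig polyn)} and the discussion already in this section.
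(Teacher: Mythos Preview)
Your plan for \ref{item:UpsilonBoundedByXi} and \ref{itme:Upsilon'MtauHomotopyEquiv} matches the paper almost exactly: the paper also uses the straight-line homotopy $s\mapsto s\,\bx_a(\alpha)$ (i.e.\ the family $\beta_{a,z}^\pm(s)=\{(s\,\bx_a(\alpha)+(t,\pm t),\alpha)\}$) to connect $\check\Upsilon_\circ$ to $\check\Upsilon'_\circ\circ\cM_\circ$, checks $G$-equivariance, and then smoothens.

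For \ref{item:UpsilonD0} your strategy is correct in outline but you route the argument through $\hConf_4(\bS^1)$ and then face what you rightly call the ``main obstacle'': showing $(\Upsilon'_\circ)_*$ does not kill the generator of $H_1(\hConf_4(\bS^1);\Z_2)$. The appendix does \emph{not} hand you this; Lemma \ref{Lem_Hopflink_Topology} only gives the criterion that a loop in $\cE(L,S^3)$ is null iff it lifts to labeled links \emph{and} the $+$-component lifts to oriented loops. Applying that criterion to your rotation loop still requires the concrete orientation-reversal computation, and that is precisely what the paper does, directly, bypassing $\hConf_4$ altogether. The paper observes that $\Psi|_{\partial D_0}$ is an \emph{isometric} family of tori (rotation in the $x_3x_4$-plane), that the two complementary solid tori are not interchanged after one full loop, and that the isometry from start to end induces $-\id$ on $\pi_1$ of the solid torus; hence the core curve $\beta_+(0)$ returns with reversed orientation, so the lifting criterion of Lemma \ref{Lem_Hopflink_Topology} fails and the loop is nontrivial. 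Your detour through $\hConf_4$ is valid but buys nothing --- you would still need exactly this orientation argument at the end, and the paper's direct route is shorter. (Your invocation of $(\cP_\circ)_*$ from Lemma \ref{Lem_P_tau:hConf-to-tau^4} goes the wrong way: it lands in $H_1(\bS^1)$, not in $H_1(\cE(L,S^3))$.)

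A minor inaccuracy: on $\partial D_0$ there is no ``harmless correction''. Since $a_0=\cdots=a_4=0$ one has $\bx_a\equiv(0,0)$ and $f_{a,e^{i\theta}}(\alpha)=\delta(0)\cos(\theta+2\alpha)$ exactly, with no constant term.
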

  \begin{proof}

As for the second paragraph, item \ref{item:UpsilonBoundedByXi} follows from the fact that $\bar\beta^+_{a,z}\cup \bar\beta^-_{a,z}$ is a Hopf link bounded by $\Psi(a,z)$, which is unknotted. To prove \ref{item:UpsilonD0}, it suffices to show that the loop $\Upsilon_\circ|_{\partial D_0}:\partial D_0\to\cE(L,S^3)$ is homologically trivial in $\Z_2$-coefficients. First, we label the two loops in $\Upsilon_\circ|_{\partial D_0}$ as $\beta_+$ and $\beta_-$, and orient them both. Now, viewing $\partial D_0$ as $[0,2\pi]$ with end points identified, we can obtain via $\Upsilon_\circ|_{\partial D_0}$ a $[0,2\pi]$-family of labeled, oriented Hopf links, $\beta_+(t)\cup\beta_-(t)$. Due to the explicit description of the topology of  $\cE(L,S^3)$   by  Boyd-Bregman \cite{BoydBregman2025_HopfLink} (see Lemma \ref{Lem_Hopflink_Topology}), to prove that $[\Upsilon_\circ|_{\partial D_0}]=0$ in $H_1(\cE(L,S^3);\Z_2)$, it suffices to show that $\beta_+(0)$ and $\beta_+(2\pi)$ are the same loop  {\it with opposite orientations}.

To show that $\beta_+(0)$ and $\beta_+(2\pi)$ have opposite orientations, we first recall that the family $\Psi|_{\partial D_0}$ is given by the zero sets 
$$\{x_1x_2+\rho([0:\dots:0:1],(1,\theta))\cos(\theta+2\alpha)=0\}\subset S^3,$$
where $\theta$ parametrizes $\partial D_0$. Now, this is just an isometric family of smooth tori in $S^3$. Let us fix a continuous choice of inside region for each torus such that they contain the loops $\beta_+(t)$. It can be easily checked that as $\theta$ varies from $0$ to $2\pi$, the inside  regions of $\Psi([0:...:0:1],(1,0))$ and $\Psi([0:...:0:1],(1,2\pi))$
(both are solid tori) are not exchanged, but the associated isometry map between the starting solid torus and the ending one in fact induces the $-\id$ map on $\pi_1$.   As a result, considering the core loops of the solid tori, we see that $\beta_+(0)$ and $\beta_+(2\pi)$ necessarily have different orientations, as desired.

Finally, for item \ref{itme:Upsilon'MtauHomotopyEquiv}, it suffices to construct a homotopy $H:[0,1]\x \partial_1\cY\to \check \cE(L,S^3)$ such that $H(0,\cdot)=\check\Upsilon'_\circ\circ\cM_\circ$, $H(1,\cdot)=\check\Upsilon_\circ$, and $H$ is $G$-equivariant: After that we can apply smoothening to conclude $\Upsilon'_\tau\circ \cM_\tau\simeq\Upsilon$. To this end, for each $s\in [0,1]$, we define 
    \begin{align*}
    \beta_{a,z}^+(s) &:= \left\{ \big(s\bx_a(\alpha)+(t, t),\ \alpha\big): t\in \R,\ \alpha\in \mu(\{f_{a,z}>0\})\right\} \cap (\bD\times \bS^1)\\
    \beta_{a,z}^-(s) &:= \left\{ \big(s\bx_a(\alpha)+(t, -t),\alpha\big): t\in \R,\ \alpha\in \mu(\{f_{a,z}<0\})\right\} \cap (\bD\times \bS^1),  
  \end{align*}
and then, as in the paragraph containing \eqref{eq:betaBreve}, close up $\beta_{a,z}^+(s)\cap \beta_{a,z}^-(s)$ to form a Hopf link $\bar\beta_{a,z}^+(s)\cap \bar\beta_{a,z}^-(s)$. Now, define $H(s,(a,z))$ as this Hopf link. It is clear  by definition that $H(0,\cdot)=\check\Upsilon'_\circ\circ\cM_\circ$, $H(1,\cdot)=\check\Upsilon_\circ$, and similar to the above we can check that $H$ is indeed $G$-equivariant, as desired.
\end{proof}
\iffalse
For the action by $g_2$, we have
  \begin{align*}
  \beta_{g_2(a,z)}^+&=\left\{ \big(\bx_{g_2(a)}(\alpha)+(t, t),\ \alpha\big): t\in \R,\ \alpha\in \mu(\{f_{g_2(a,z)}>0\})\right\} \cap (\bD\times \bS^1)  \\
  &=\left\{ \big(g_2{\bf x}_a(\pi-\alpha)+(t, t),\ \alpha\big): t\in \R,\ \alpha\in \mu(\{\alpha:-f_{a,z}(\pi-\alpha)>0\})\right\} \cap (\bD\times \bS^1)\\
  &=g_2\left\{ \big({\bf x}_a(\alpha)+(-t, t),\ \alpha\big): t\in \R,\ \alpha\in \mu(\{\alpha:f_{a,z}(\alpha)<0\})\right\} \cap (\bD\times \bS^1)\\
  \end{align*}
  and similarly
    \begin{align*}
  \beta_{g_2(a,z)}^-&=\left\{ \big(\bx_{g_2(a)}(\alpha)+(t, -t),\ \alpha\big): t\in \R,\ \alpha\in \mu(\{f_{g_1(a,z)}<0\})\right\} \cap (\bD\times \bS^1)  \\ 
  &=g_2\left\{ \big({\bf x}_a(\alpha)+(-t, -t),\ \alpha\big): t\in \R,\ \alpha\in \mu(\{\alpha:f_{a,z}(\alpha)>0\})\right\} \cap (\bD\times \bS^1)\\
  \end{align*}  
$${\bf x}_{\sigma(g_1)(a)}(\alpha)=g_1{\bf x}_a(\alpha-\pi/3),\quad {\bf x}_{\sigma(g_2)(a)}(\alpha)=g_2{\bf x}_a(\pi-\alpha).$$
\fi

\subsection{Proof of Proposition \ref{Lem_NontrivialLoop_Upsilon}.}\label{sect:ProofLem_NontrivialLoop_Upsilon}
{
   Item \ref{Item_Upsilon_BdedHopfLinks} and \ref{Item_Upsilon_nontrivial-bD_0} were stated in Lemma \ref{Lem_Upsilon:b_1X-to-HopfLinks} already. 
   As for \ref{Item_Upsilon_Ker-iota_*-to-0}, let $\vartheta\in H_1(\partial_1\cX, \Z_2)$ be such that $\iota_*\vartheta=0$ in $H_1(\partial\cX;\Z_2)$. 
   Since $\Theta_\tau\circ \iota \simeq \cP_\tau\circ \cM_\tau$ by Lemma \ref{Lem_Theta:b_1X-to-tau^4}, we know $(\cP_\tau\circ \cM_\tau)_*\vartheta=0$.    
   Hence, since $(\cP_\tau)_*: H_1(\hConf_4(\tau); \Z_2) \to H_1(\bar\tau; \Z_2)$ is injective by Lemma \ref{Lem_P_tau:hConf-to-tau^4}, we in fact know $(\cM_\tau)_*\vartheta=0$ in $H_1(\hConf_4(\tau); \Z_2)$. Finally, since $\Upsilon'_\tau\circ \cM_\tau \simeq \Upsilon$ by Lemma \ref{Lem_Upsilon:b_1X-to-HopfLinks}, we know that $\Upsilon_* \vartheta = 0$ in $H_1(\cE(L, S^3), \Z_2)$, as desired.
}

\appendix

\section{Metric perturbation}\label{sect:proofPerturbMetric} In this section we prove  Proposition~\ref{prop:perturbMetric}. 
    Let $(M, \bg)$ be as in Proposition~\ref{prop:perturbMetric}. Fix an integer $g_1 > g_0$.  For each metric $\bg'$ on $M$, let $\tilde \cN_{\leq g_1, \leq L}(M, \bg')$ denote the set of immersed minimal surfaces obtained as the double cover of those in $\cN_{\leq g_1, \leq L}(M, \bg')$. And we extend~\eqref{eqn:def_K_g'} to
    \[
      \tilde \cK_{\bg'}:= \left(\cO_{\leq g_1, \leq L}(M, \bg') \cup \cN_{\leq g_1, \leq L}(M, \bg') \cup \tilde \cN_{\leq g_1, \leq L}(M, \bg')\right) \setminus \cO_{g_0,\leq L}(M,\bg')\,.
    \]

    \begin{lem}\label{lem:other_min_surf_far}
        There exist $\varepsilon > 0$, and some open set $U\subset S^3$ of the form $U = \bigcup_{\Sigma \in \cO_{g_0, \leq L}(M, \bg)} B^{\bg}_\delta(\Sigma)$  for some $\delta>0$, such that for any metric $\bg'$ with $\|\bg' - \bg\|_{C^\infty} < \varepsilon$, no $\bg'$-minimal surface in $\cK_{\bg'}$ (defined in~\eqref{eqn:def_K_g'}) is contained in $U$.
    \end{lem}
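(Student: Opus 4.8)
\textbf{Proof strategy for Lemma \ref{lem:other_min_surf_far}.} The plan is to argue by contradiction via a compactness argument for minimal surfaces with bounded area and bounded genus. Suppose the conclusion fails. Then there exist sequences $\varepsilon_j\searrow 0$, $\delta_j\searrow 0$, metrics $\bg_j$ with $\|\bg_j-\bg\|_{C^\infty}<\varepsilon_j$, and $\bg_j$-minimal surfaces $\Sigma_j\in\cK_{\bg_j}$ such that $\Sigma_j\subset U_j:=\bigcup_{\Sigma\in\cO_{g_0,\leq L}(M,\bg)}B^\bg_{\delta_j}(\Sigma)$. Here one must be a little careful: the statement as phrased fixes a single $\delta$, so really one proves the contrapositive in the form ``for every $\delta>0$ there is $\varepsilon(\delta)>0$ such that no $\bg'$-minimal surface in $\cK_{\bg'}$ with $\|\bg'-\bg\|<\varepsilon(\delta)$ lies in $B^\bg_\delta(\cO_{g_0,\leq L})$'', and then one observes that a single such $\delta$ (together with its $\varepsilon(\delta)$) suffices, after possibly shrinking $\delta$ so that the $\delta$-neighborhoods of distinct elements of $\cO_{g_0,\leq L}(M,\bg)$ are disjoint tubular neighborhoods. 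So assume $\delta$ is small and fixed and we have $\bg_j\to\bg$ in $C^\infty$ with $\bg_j$-minimal $\Sigma_j\in\cK_{\bg_j}$ contained in $B^\bg_\delta(\cO_{g_0,\leq L}(M,\bg))$.

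First I would apply the compactness theorem for minimal surfaces with uniformly bounded area ($\leq L$) and uniformly bounded genus ($\leq \max\{g_1, 2g_1\}$, accounting for double covers) in the varying ambient metrics $\bg_j\to\bg$ (Choi--Schoen type compactness, in the form used throughout min-max theory; since $\bg$ is Ricci-positive there is also a uniform area lower bound and no stable minimal surfaces, which controls degeneration). Passing to a subsequence, $\Sigma_j$ converges, possibly with finitely many points of curvature concentration and with some integer multiplicity $m\geq 1$, to a smooth closed embedded $\bg$-minimal surface $\Sigma_\infty$ with $\area_\bg(\Sigma_\infty)\leq L$ and genus $\leq g_1$. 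Since each $\Sigma_j$ lies in the $\delta$-neighborhood of $\cO_{g_0,\leq L}(M,\bg)$, the limit $\Sigma_\infty$ is contained in the closure of that neighborhood; since $\delta$ was chosen small, $\Sigma_\infty$ lies in a small tubular neighborhood of a single $\Sigma_0\in\cO_{g_0,\leq L}(M,\bg)$, and a standard argument (a minimal surface in a thin tubular neighborhood of a minimal surface, graphical or a multiple cover, using that stable minimal surfaces do not exist and $\Sigma_0$ is the only minimal surface in its own small neighborhood by the non-existence of Jacobi fields in the Ricci-positive setting, or more robustly by a direct maximum-principle/graphical argument) forces $\Sigma_\infty=\Sigma_0$ as a set. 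Hence $\Sigma_\infty=\Sigma_0\in\cO_{g_0,\leq L}(M,\bg)$.

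Now I derive the contradiction from the hypothesis $\#\cO_{g_0,\leq L}(M,\bg)<\infty$ together with the absence of degenerate stable minimal surfaces. Consider the possibilities for $\Sigma_j$:
\begin{enumerate}
\item If $\Sigma_j$ converges to $\Sigma_0$ with multiplicity one and no curvature concentration, then for $j$ large $\Sigma_j$ is a normal graph of a small function over $\Sigma_0$; being $\bg_j$-minimal and $\bg_j\to\bg$, a blow-up of the graphing functions (after rescaling by their sup norm) produces a nonzero Jacobi field on $\Sigma_0$, contradicting that $\Sigma_0$ (which is orientable of genus $g_0$) is non-degenerate — here I need $\Sigma_0$ non-degenerate, which for $g_0$ not covered by Condition~\ref{cond:A}'s non-degeneracy clause must instead be obtained from the Ricci-positivity via the fact that an orientable minimal surface in a Ricci-positive $3$-manifold has no Jacobi fields unless... actually the cleanest route is: if $\Sigma_0$ were degenerate it would, after perturbation, be a limit of stable minimal surfaces, contradicting the assumed non-existence of degenerate stable minimal surfaces; alternatively one uses that $\Sigma_j\in\cK_{\bg_j}$ is disjoint from $\cO_{g_0,\leq L}(M,\bg_j)$ by definition of $\cK$, so $\Sigma_j$ is genuinely distinct from the nearby element of $\cO_{g_0,\leq L}(M,\bg_j)$, again forcing a Jacobi field.
\item If the convergence has multiplicity $m\geq 2$ or exhibits curvature concentration, then $\Sigma_j$ is (for large $j$) a small graph or an $m$-sheeted cover-like surface over $\Sigma_0$; standard min-max regularity arguments (as in \cite{WangZhou23FourMinimalSpheres}) show the multiplicity must be one in the Ricci-positive setting when the limit is two-sided, or the limit is one-sided with multiplicity two, in which case $\Sigma_0$ would be non-orientable — but then $\Sigma_0\in\cN$, contradicting $\Sigma_0\in\cO_{g_0,\leq L}(M,\bg)$. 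Genus concentration at finitely many points would lower the genus of $\Sigma_0$ below that of $\Sigma_j$, which is fine, but the limiting identity $\Sigma_\infty=\Sigma_0$ still yields a Jacobi field as in case (1).
\end{enumerate}
In every case we contradict either $\#\cO_{g_0,\leq L}(M,\bg)<\infty$ (if infinitely many distinct $\Sigma_j$ accumulate) or the non-degeneracy/no-stable-degenerate hypothesis.

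\textbf{Main obstacle.} The delicate point is case (2): ruling out higher-multiplicity convergence and one-sided limits, i.e. ensuring $\Sigma_j$ really is a small two-sided graph over $\Sigma_0$ so that the blow-up argument produces a Jacobi field on $\Sigma_0$ itself (and not on its orientation double cover, which is where the ``strongly non-degenerate'' language in Proposition~\ref{prop:perturbMetric} enters). I expect this to be handled exactly as in the proof of \cite[Proposition~6.1]{chuLi2024fiveTori}, whose generalization this lemma is: the Ricci-positivity forbids stable minimal surfaces, so the convergence cannot concentrate curvature (no catenoidal necks can form), and the multiplicity-one conclusion for the relevant components follows; the only subtlety is bookkeeping the genus bound $g_1$ and the double covers in $\tilde\cK_{\bg'}$, which I would track explicitly but which presents no new difficulty. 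The remaining ingredients — the graphical-decomposition lemma in tubular neighborhoods and the Jacobi-field blow-up — are entirely standard.
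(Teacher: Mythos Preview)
Your overall strategy (contradiction via compactness of minimal surfaces with bounded area and genus under $C^\infty$ convergence of metrics) matches the paper's, and your identification of the limit as some $\Sigma_0\in\cO_{g_0,\leq L}(M,\bg)$ is correct. However, both of your case analyses have genuine gaps.

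\textbf{Multiplicity one.} You try to derive a contradiction by producing a Jacobi field on $\Sigma_0$, and then worry (rightly) that $\Sigma_0$ is not assumed non-degenerate. Your attempted workarounds do not succeed: Proposition~\ref{prop:perturbMetric} only assumes no \emph{degenerate stable} minimal surfaces, so $\Sigma_0$ may well carry a Jacobi field, and a degenerate unstable surface need not be a limit of stable ones. The paper's argument is much simpler and avoids this entirely: if the convergence is multiplicity one, Allard regularity gives smooth graphical convergence, so for large $j$ the surface $\Sigma_j$ is \emph{diffeomorphic} to $\Sigma_0$, hence is an orientable embedded minimal surface of genus exactly $g_0$ with area $\leq L$. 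But then $\Sigma_j\in\cO_{g_0,\leq L}(M,\bg_j)$, which directly contradicts $\Sigma_j\in\tilde\cK_{\bg_j}$ by the definition of $\cK$. No Jacobi field is needed.

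\textbf{Multiplicity at least two.} Here you invoke Ricci positivity and ``standard min-max regularity arguments'' for multiplicity one. Neither applies: the hypotheses of Proposition~\ref{prop:perturbMetric} do \emph{not} assume Ricci positivity (only the weaker ``no degenerate stable minimal surface''), and the multiplicity-one theorems from min-max theory concern width-realizing sequences, not arbitrary sequences of minimal surfaces. Your claim that ``Ricci-positivity forbids stable minimal surfaces, so the convergence cannot concentrate curvature'' is also incorrect as stated. The paper instead cites Sharp \cite{Sha17}: if a sequence of closed embedded minimal surfaces with bounded area and genus converges with multiplicity $k\geq 2$ to a two-sided limit $\Sigma$, then $\Sigma$ is degenerate stable. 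This contradicts the actual hypothesis of the Proposition and closes the case cleanly.
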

    \begin{proof}
        Suppose by contradiction that such $\epsilon$ and $U$ do  not exist. Then, by~\cite[Theorem~3]{Whi87}, there exist $\Sigma \in \cO_{g_0, \leq L}(M, \bg)$, a sequence $\bg_j \to \bg$ in $C^\infty$, and a sequence  $\{\Sigma_j \in \cK_{\bg_j}\}$ such that in the varifold sense, $|\Sigma_j| \to k |\Sigma|$. Moreover, there is a finite set $Z$ such that $\Sigma_j$ converges to $\Sigma$ multi-graphically in any compact subset of $M \setminus Z$. In addition, we may assume that $\Sigma_j \in \tilde \cK_{\bg_j}$, since each $\tilde \Sigma_j \in \tilde \cN_{\bg_j}$

        If $k = 1$, by Allard's regularity~\cite{All72}, $Z = \emptyset$ and for sufficiently large $j$, $\Sigma_j$ has the same diffeomorphic type as $\Sigma$, contradicting the definition of $\tilde \cK_{\bg_i}$. Otherwise, $k \geq 2$, and by~\cite[Claim~5~and~Claim~6]{Sha17}, $\Sigma$ is degenerate stable, contradicting the assumption on $(M, \bg)$.
    \end{proof}

    Clearly, the lemma also holds with $\cK_{\bg'}$ replaced by $\tilde \cK_{\bg'}$.

    \begin{lem}
        Fix $\varepsilon$ and $U$ from Lemma~\ref{lem:other_min_surf_far}. There exists $\varepsilon' \in (0, \varepsilon)$, such that for any metric $\bg'$ with $\|\bg' - \bg\|_{C^\infty} < \varepsilon$, if $\bg'|_U = \bg|_U$, then $\cO_{g_0, \leq L}(M, \bg') = \cO_{g_0, \leq L}(M, \bg)$.
    \end{lem}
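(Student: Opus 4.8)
The plan is to establish the two inclusions $\cO_{g_0,\leq L}(M,\bg)\subseteq\cO_{g_0,\leq L}(M,\bg')$ and $\cO_{g_0,\leq L}(M,\bg')\subseteq\cO_{g_0,\leq L}(M,\bg)$ separately. The first holds for \emph{every} metric $\bg'$ with $\bg'|_U=\bg|_U$, regardless of $\varepsilon'$: if $\Sigma\in\cO_{g_0,\leq L}(M,\bg)$ then $\Sigma\subset B^{\bg}_\delta(\Sigma)\subset U$, and since vanishing of the mean curvature is a pointwise condition and the $2$-dimensional Hausdorff measure of a surface contained in $U$ depends only on $\bg|_U=\bg'|_U$, the same subset $\Sigma$ is a $\bg'$-minimal surface---still embedded, orientable, of genus $g_0$---whose $\bg'$-area equals its $\bg$-area, hence is $\leq L$; so $\Sigma\in\cO_{g_0,\leq L}(M,\bg')$.

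For the reverse inclusion I would argue by contradiction, mimicking the compactness argument in the proof of Lemma~\ref{lem:other_min_surf_far}. If no $\varepsilon'$ works then, because the first inclusion always holds (so the two sets can differ only by \emph{extra} elements of $\cO_{g_0,\leq L}(M,\bg')$), there are metrics $\bg_j\to\bg$ in $C^\infty$ with $\bg_j|_U=\bg|_U$ and surfaces $\Sigma_j\in\cO_{g_0,\leq L}(M,\bg_j)\setminus\cO_{g_0,\leq L}(M,\bg)$ (each $\Sigma_j$ being connected, of genus $g_0$). The first observation is that no $\Sigma_j$ is contained in $U$: were $\Sigma_j\subset U$, then $\bg_j|_U=\bg|_U$ would make $\Sigma_j$ a $\bg$-minimal surface of genus $g_0$ whose $\bg$-area equals its $\bg_j$-area (hence is $\leq L$), i.e.\ $\Sigma_j\in\cO_{g_0,\leq L}(M,\bg)$, a contradiction. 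Since the $\Sigma_j$ have uniformly bounded area and fixed genus $g_0$, White's compactness theorem~\cite[Theorem~3]{Whi87} (exactly as invoked in Lemma~\ref{lem:other_min_surf_far}) gives, after passing to a subsequence, that the varifolds $|\Sigma_j|$ converge to $\sum_i m_i|\Sigma^i_\infty|$, a finite sum of connected embedded $\bg$-minimal surfaces $\Sigma^i_\infty$ with positive integer multiplicities $m_i$, the convergence being smooth and graphical away from a finite set $Z$.

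I would then distinguish two cases. If every $m_i=1$, Allard's regularity~\cite{All72} forces $Z=\emptyset$ and $\Sigma_j\to\bigcup_i\Sigma^i_\infty$ smoothly; in particular, for $j$ large each connected $\Sigma_j$ is a small normal graph over $\bigcup_i\Sigma^i_\infty$, so there is a single component $\Sigma^1_\infty$ and $\Sigma_j$ is diffeomorphic to it. Then $\Sigma^1_\infty$ is a connected, orientable, $\bg$-minimal surface of genus $g_0$ and area $\leq L$, i.e.\ $\Sigma^1_\infty\in\cO_{g_0,\leq L}(M,\bg)$, so $\Sigma^1_\infty\subset B^{\bg}_\delta(\Sigma^1_\infty)\subset U$; since $U$ is open and $\Sigma_j\to\Sigma^1_\infty$ in $C^1$, we get $\Sigma_j\subset U$ for all large $j$, contradicting the first observation. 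If instead some $m_i\geq 2$, then, exactly as in the proof of Lemma~\ref{lem:other_min_surf_far} (via \cite[Claim~5 and Claim~6]{Sha17}, passing to the orientable double cover if $\Sigma^i_\infty$ is one-sided), the component $\Sigma^i_\infty$ is a degenerate stable embedded minimal surface for $\bg$, contradicting the hypothesis of Proposition~\ref{prop:perturbMetric}. In either case a contradiction arises, so for $\varepsilon'$ small enough the reverse inclusion holds; combined with the first, this proves the lemma. (The nesting $\varepsilon'\in(0,\varepsilon)$ plays no essential role here beyond keeping the hypotheses of Lemma~\ref{lem:other_min_surf_far} at hand.)

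The step I expect to be the main obstacle is the case where some $m_i\geq 2$: namely, certifying that a varifold limit of multiplicity $\geq 2$ of embedded minimal surfaces with uniformly bounded area and genus is degenerate stable. But this is precisely the ingredient of \cite{Sha17} already used in Lemma~\ref{lem:other_min_surf_far}, so apart from invoking it the present statement needs only the bookkeeping that the constraint $\bg'|_U=\bg|_U$ forces any genuinely new $\bg'$-minimal surface to escape $U$, while White's compactness forces its varifold limit to re-enter $U$.
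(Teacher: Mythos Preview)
Your argument is correct and follows essentially the same compactness-and-dichotomy approach as the paper (White's theorem, then split on whether the limiting multiplicity is $1$ or $\geq 2$, invoking Allard in the first case and \cite{Sha17} in the second). In fact your write-up is more careful than the paper's in two respects: you explicitly prove the forward inclusion $\cO_{g_0,\leq L}(M,\bg)\subseteq\cO_{g_0,\leq L}(M,\bg')$ using $\bg'|_U=\bg|_U$, and in the multiplicity-one case you correctly handle the possibility that the limit $\Sigma^1_\infty$ lands back in $\cO_{g_0,\leq L}(M,\bg)$ (forcing $\Sigma_j\subset U$ for large $j$, hence $\Sigma_j\in\cO_{g_0,\leq L}(M,\bg)$), whereas the paper asserts without justification that the limit lies in $\cK_\bg$ and derives the contradiction from a genus mismatch. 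Your version also allows the limit to be a sum $\sum_i m_i|\Sigma^i_\infty|$ rather than a single $k|\Sigma|$, which is the honest output of White's compactness.
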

    \begin{proof}
        Suppose by contradiction that such an $\varepsilon'$ does not exist, and then by~\cite[Theorem~3]{Whi87} again, there exist $\Sigma \in \cK_\bg$, a sequence $\bg_j \to \bg$ in $C^\infty$ with $\bg'|_U = \bg|_U$, and a sequence $\{\Sigma_j \in \cO_{g_0, \leq L}(M, \bg_j)\}$, such that $|\Sigma_j| \to k|\Sigma|$ in the varifold sense.

        Following the same argument in the previous proof, if $k = 1$, then for sufficiently large $j$, $\Sigma_j$ has the same diffeomorphic type as $\Sigma$, a contradiction; otherwise, $k \geq 2$ and $\Sigma$ is degenerate stable, also a contradiction.
    \end{proof}
    
    Fix $\varepsilon$, $U$, and $\varepsilon'$ from the previous two lemmas. For $3 \leq q \leq \infty$, we set
    \begin{align*}
        \Gamma^q &:= \{C^q\text{-metric }\bg'\text{ on M}\},\quad
        \Gamma^q_{U^c}:= \{\bg' \in \Gamma^q : \bg'|_U = \bg|_U\}\,,\\
        \mathcal{M}^q_{U^c, \tilde \cK} &:= \{(\bg', \Sigma') : \bg' \in \Gamma^q_{U^c},\ \Sigma' \in \tilde \cK_{\bg'}\}\,.
    \end{align*}
    The topology of $\mathcal{M}^q_{U^c, \tilde \cK}$ is induced by the product topology  of the $C^q$-topology and $C^{2, \alpha}$-topology for $\alpha \in (0, 1)$.

    Following the proof of~\cite[Proposition~6.1, Lemma~6.5]{chuLi2024fiveTori} verbatim, we can conclude that for $3 \leq q < \infty$, $\mathcal{M}^q_{U^c, \cK}$ is a separable $C^{q - 2}$ Banach manifold and $\Pi: \mathcal{M}^q_{U^c, \cK} \to \Gamma^q_{U^c}$ is a $C^{q - 2}$ Fredholm map with index $0$. 
    Let $\Gamma^q_{U^c, \text{bumpy}}\subset \Gamma^q_{U^c}$ be the subset of regular values of $\Pi$. Then $\bg'\in\Gamma^q_{U^c, \text{bumpy}}$ if and only if for every $(\bg', \Sigma') \in \mathcal{M}^q_{U^c, \tilde \cK}$, $\Sigma'$ is non-degenerate. In this case,  every $\Sigma' \in \cK_{\bg'}$ is strongly non-degenerate. In addition, $\Gamma^q_{U^c, \text{bumpy}} \cap \Gamma^\infty$ is a generic set in $\Gamma^\infty$. 

    Fix $\hat \bg \in \Gamma^q_{U^c, \text{bumpy}} \cap \Gamma^\infty$ with $\|\hat \bg - \bg\|_{C^\infty} < \varepsilon'$ and $\hat \bg \leq \bg$. Then by the previous arguments, conclusions (1) and (2) hold. Moreover, since $\# \cK_{\hat \bg} < \infty$, there is a neighborhood $\cU \subset \subset \Gamma^\infty$ of $\hat \bg$ such that (1) and (2) also hold.
    
    Finally, applying~\cite[Lemma~6.6]{chuLi2024fiveTori} to the area of minimal surfaces in $\cO_{g_0, \leq L}(M, \hat \bg)$ and following the proof~\cite[Proposition~6.1]{chuLi2024fiveTori}, we can find $\bg'$ near $\hat \bg$ such that $\bg' \leq \hat \bg$ and all the statements (1), (2) and (3) hold.

\section{A lemma on cap product}\label{sect:capProd}
In this section we prove   Lemma \ref{lem:existsThetaAlgTop}.
We consider the following diagram.
% https://q.uiver.app/#q=WzAsMTIsWzAsMCwiSF97cH0oWCxBXFxjdXAgQikiXSxbMiwwLCJIX3twLXF9KFgsQikiXSxbMCwxLCJIX3twfShDLENcXGNhcChBXFxjdXAgQikpIl0sWzEsMSwiSF57cX0oQyxDXFxjYXAgQSkiXSxbMiwxLCJIX3twLXF9KEMsQ1xcY2FwIEIpIl0sWzEsMCwiSF57cX0oWCxBKSJdLFswLDIsIkhfcChDLENcXGNhcChBXFxjdXAgQikpIl0sWzEsMiwiSF5xKEMsV18xXFxjYXAoQSBcXGN1cCBCKSkiXSxbMiwyLCJIX3twLXF9KEMsV18yXFxjYXAoQSBcXGN1cCBCKSkiXSxbMCwzLCJIX3AoQyxDKSJdLFsxLDMsIkhecShDLFdfMSkiXSxbMiwzLCJIX3twLXF9KEMsV18yKSJdLFsyLDBdLFswLDVdLFLDFdLFszLDRdLFs0LDFdLFsyLDNdLFs1LDNdLFs2LDJdLFs2LDddLFs3LDNdLFs3LDhdLFs4LDRdLFs5LDEwXSxbMTAsN10sWzEwLDExXSxbOCwxMV0sWzYsOV1d
\begin{equation}\label{eq:bigCommDiag}
\begin{tikzcd}[cramped,column sep=scriptsize]
	{H_{p}(X,A\cup B)} & {H^{q}(X,A)} & {H_{p-q}(X,B)} \\
	{H_{p}(C,C\cap(A\cup B))} & {H^{q}(C,C\cap A)} & {H_{p-q}(C,C\cap B)} \\
	{H_p(C,C\cap(A\cup B))} & {H^q(C,W_1\cap(A \cup B))} & {H_{p-q}(C,W_2\cap(A \cup B))} \\
	{H_p(C,C)} & {H^q(C,W_1)} & {H_{p-q}(C,W_2)}
	\arrow[r,phantom,"\times" description, from=1-1, to=1-2]
	\arrow[from=1-2, to=1-3]
	\arrow[from=1-2, to=2-2]
	\arrow[from=2-1, to=1-1]
	\arrow[r,phantom,"\times" description,from=2-1, to=2-2]
	\arrow[from=2-2, to=2-3]
	\arrow[from=2-3, to=1-3]
	\arrow[r,phantom,"||" description, from=3-1, to=2-1]
	\arrow[r,phantom,"\times" description,from=3-1, to=3-2]
	\arrow[from=3-1, to=4-1]
	\arrow[from=3-2, to=2-2]
	\arrow[from=3-2, to=3-3]
	\arrow[from=3-3, to=2-3]
	\arrow[from=3-3, to=4-3]
	\arrow[r,phantom,"\times" description,from=4-1, to=4-2]
	\arrow[from=4-2, to=3-2]
	\arrow[from=4-2, to=4-3]
\end{tikzcd}\end{equation}
Here, all  four horizontal arrows are given by cap products, and all the  vertical arrows   are induced by inclusions: Note that the inclusion
$$(C,C\cap A)\hookrightarrow (C,W_1\cap (A\cup B))$$ 
(for the middle arrow in the middle column)
and 
$$(C,W_2\cap (A\cup B))\hookrightarrow (C,C\cap B)$$
(for the middle arrow in the right column)
are well-defined because $W_2\cap A=\emptyset$ by assumption. Moreover, this diagram satisfies the naturality property for relative cap products (see \cite[p.241]{hatcher2002book}).

To prove Lemma \ref{lem:existsThetaAlgTop}, we need to ``pull  $[C]\frown\omega$ backward" along this chain of arrows induced by inclusions
\begin{equation}\label{eq:3maps}
    H_{p-q}(W_2,W_2\cap B)\rightarrow H_{p-q}(C,W_2\cap(A\cup B))\rightarrow H_{p-q}(C,C\cap B)\rightarrow H_{p-q}(X,B).
\end{equation}
  More precisely, we need to find some element   $\theta\in H_{p-q}(W_2,W_2\cap B)$ whose pushforward under this three maps is $[C]\frown\omega$. Hence, there are three steps.

{\bf Step 1}: Observe that $[C]\frown\omega$ has $[C]\frown\omega|_{C}\in H_{p-q}(C,C\cap B)$ (in the latter expression we treat $[C]\in H_p(C,C\cap(A\cup B))$) as a preimage, using the naturality for the first two rows of (\ref{eq:bigCommDiag}).

{\bf Step 2}: Then, noting $C\cap A=W_1\cap A\subset W_1\subset C$, we consider the exact sequence for this triple:
    \begin{equation}\label{eq:exactSeq1}
        H^q(C,W_1)\to H^q(C,C\cap A)\to H^q(W_1,C\cap A).
    \end{equation}
    Since the pullback $\omega|_{W_1}$ of $\omega$ under the inclusion $(W_1,W_1\cap A)\hookrightarrow (X,A)$ is zero by assumption, we know that $\omega|_C$ gets mapped to zero under the map
    $H^q(C,C\cap A)\to H^q(W_1,C\cap A)$ (recall $C\cap A=W_1\cap A$).
    Hence, using the exact sequence (\ref{eq:exactSeq1}), we know that under the map $H^q(C,W_1)\to H^q(C,C\cap A)$, there exists some $\mu$ that gets mapped to $\omega|_C$.
     Thus, there exists some $\mu'$ that gets mapped to $\omega|_C$ under the map $H^q(C,W_1\cap (A\cup B))\to H^q(C,C\cap A)$  
     (namely, we define $\mu'$ as the image of $\mu$ under the bottom arrow in the middle column of (\ref{eq:bigCommDiag})).
     Hence, using the second and the third row of (\ref{eq:bigCommDiag}) and by naturality, we know $[C]\frown\omega|_C$ has  $[C]\frown \mu'$ as a preimage, under the map
     $$H_{p-q}(C,W_2\cap(A\cup B))\rightarrow H_{p-q}(C,C\cap B).$$

{\bf Step 3}: Finally, based on the inclusions $W_2\cap B\subset W_2\subset C$, we consider the exact sequence
$$H_{p-q}(W_2,W_2\cap B)\to H_{p-q}(C,W_2\cap B)\to H_{p-q}(C,W_2).$$
Note the middle term here is the same as $H_{p-q}(C,W_2\cap(A\cup B))$ as $W_2\cap A=\emptyset$.
    Thus, in order to obtain the desired $\theta\in H_{p-q}(W_2,W_2\cap B)$ that gets mapped to $[C]\frown \mu'$ under the first arrow of (\ref{eq:3maps}), it suffices to show that $[C]\frown \mu'$ gets mapped to $0$ under the map
    \begin{equation}\label{eq:mapHpq}
        H_{p-q}(C,W_2\cap B)\to H_{p-q}(C,W_2).
    \end{equation}
    But now note that this map (\ref{eq:mapHpq}) is precisely the map at the bottom of the right column of (\ref{eq:bigCommDiag}). So we can deduce that $[C]\frown \mu'$  would indeed get mapped to $0$ from these facts:
    \begin{itemize}
        \item By definition $\mu$ is sent to  $\mu'$   under the map $H^q(C,W_1)\to H^q(C,W_1\cap (A\cup B))$.
        \item $H_p(C,C)=0$.
        \item The last two rows of (\ref{eq:bigCommDiag}) satisfy the naturality for cup product.
    \end{itemize}
This completes the proof of Lemma \ref{lem:existsThetaAlgTop}.

\section{Poincar\'e dual of cohomology classes via group action} \label{Append_PoincareDual}
  In this appendix, let $\tilde X$ be a compact simply connected manifold (possibly with boundary), $H$ be a finite group acting freely on $\tilde X$ from the right, and $X:= \tilde X/H$. By Hurewicz's theorem, there is a natural isomorphism $H_{ab}\otimes \Z_2=\pi_1(X)_{ab}\otimes \Z_2 \to H_1(X, \Z_2)$, called the Hurewicz map, where $H_{ab}=H/[H, H]$ denotes the abelianization of $H$. 
  
  We recall the following construction of the Poincar\'e dual of any cohomology class in $H^1(X, \Z)$ via $H$-equivarient functions on $\tilde X$.
  \begin{lem} \label{Lem_Poincare dual via line bundle}
    Given any group homomorphism $\phi: H\to O(1)=\{\pm 1\}$, one can associate a real line bundle to $\phi$ over $X$: \[
      E_\phi:= \tilde X\times_\phi \R := (\tilde X\times \R)/\left((p, v)\sim (ph, \phi(h^{-1})v) \right)\,.
    \]
    A smooth function $u\in C^\infty(\tilde X, \R)$ descends to a section of $E_\phi\to X$ if and only if
    \begin{align}
        u(ph) = \phi(h^{-1})u(p), \qquad \forall\, p\in \tilde X,\ \forall\, h\in H\,. \label{Equ_AssoBundle_u H-equivar}      
    \end{align}
    Moreover, for any such $u$, if in addition $du\neq 0$ at every point in $u^{-1}(0)$ and $d(u|_{\partial \tilde X})\neq 0$ at every point in $u^{-1}(0)\cap \partial \tilde X$, then $Z_u:= u^{-1}(0)/H$ is a smooth hypersurface of $X$ transverse to $\partial X$, whose Poincar\'e dual $\Omega_u\in H^1(X, \Z_2) \cong\Hom( H_1(X, \Z_2);\Z_2)$ is determined by 
    \begin{align}
        (-1)^{\langle \Omega_u, [h] \rangle} = \phi(h), \quad \forall\, h\in H \,; \label{Equ_PoincareDual vs gp action}
    \end{align}
    here $[h]\in H_1(X, \Z_2)$ is the image of $h\in H=\pi_1(X)$ under the Hurewicz map.
  \end{lem}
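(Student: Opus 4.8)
The plan is to prove each assertion of Lemma~\ref{Lem_Poincare dual via line bundle} in turn, all of which are essentially standard facts about real line bundles associated to representations of a deck group, together with the transversality/Poincar\'e-duality dictionary. I would organize the argument as follows.

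\textbf{Step 1: The line bundle $E_\phi$ and its sections.} First I would check that $E_\phi := (\tilde X\times\R)/H$ is a well-defined real line bundle over $X$: since $H$ acts freely on $\tilde X$ and acts on $\R$ through $\phi(H)\subset O(1)$ by isometries, the diagonal action on $\tilde X\times\R$ is free and properly discontinuous, so the quotient is a manifold, and the projection to $X=\tilde X/H$ is a locally trivial $\R$-bundle (local triviality comes from local triviality of $\tilde X\to X$). Then a section $s:X\to E_\phi$ is the same as an $H$-equivariant map $\tilde X\to \tilde X\times\R$ over $\tilde X$, i.e.\ a function $u:\tilde X\to\R$ with $u(ph)=\phi(h^{-1})u(p)$; this is just unwinding the definition of the quotient, giving \eqref{Equ_AssoBundle_u H-equivar}.

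\textbf{Step 2: The zero set is a smooth hypersurface transverse to the boundary.} Given $u$ as in \eqref{Equ_AssoBundle_u H-equivar} with $du\neq 0$ on $u^{-1}(0)$ and $d(u|_{\partial\tilde X})\neq 0$ on $u^{-1}(0)\cap\partial\tilde X$, the set $u^{-1}(0)\subset\tilde X$ is, by the (manifold-with-boundary) implicit function theorem, a smooth hypersurface meeting $\partial\tilde X$ transversely. Since $u^{-1}(0)$ is $H$-invariant (because $\phi(h^{-1})\in\{\pm1\}$, so $u(p)=0\iff u(ph)=0$) and $H$ acts freely, $Z_u:=u^{-1}(0)/H$ is a smooth hypersurface of $X$ transverse to $\partial X$. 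The transversality of $s$ (viewed as a section of $E_\phi$) to the zero section at points of $Z_u$ follows from $du\neq0$, so $Z_u$ is Poincar\'e dual to $w_1(E_\phi)\in H^1(X,\Z_2)$; equivalently, $[Z_u]\in H_{n-1}(X,\partial X;\Z_2)$ (with $n=\dim X$) is dual to the class $\Omega_u$ which is the mod-$2$ Euler/Stiefel--Whitney class of $E_\phi$.

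\textbf{Step 3: Identifying $\Omega_u=w_1(E_\phi)$ via the group action.} The class $w_1(E_\phi)\in H^1(X,\Z_2)=\Hom(H_1(X,\Z_2),\Z_2)=\Hom(\pi_1(X),\Z_2)$ is exactly the monodromy representation of the flat line bundle $E_\phi$. Since $E_\phi$ is the bundle associated to the principal $H$-bundle $\tilde X\to X$ via $\phi:H\to O(1)$, its monodromy around a loop representing $h\in\pi_1(X)=H$ is $\phi(h)$. Hence $(-1)^{\langle w_1(E_\phi),[h]\rangle}=\phi(h)$ for all $h\in H$, which is precisely \eqref{Equ_PoincareDual vs gp action}; and $\langle\Omega_u,\cdot\rangle$ factors through the Hurewicz map because we use $\Z_2$ coefficients. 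Combining Steps 2 and 3 gives that $Z_u$ represents the Poincar\'e dual of the class $\Omega_u$ determined by $\phi$, completing the proof.

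The only mild subtlety — and the step I would be most careful about — is Step~3: making precise that the Stiefel--Whitney class of the associated line bundle equals the homomorphism $\phi$ under the identification $H^1(X,\Z_2)\cong\Hom(\pi_1(X),\Z_2)$ (using simple-connectedness of $\tilde X$ so that $\pi_1(X)\cong H$). Concretely one can argue that for a loop $\gamma$ in $X$ representing $h$, its lift to $\tilde X$ ends at $p\cdot h$, so parallel transport in $E_\phi$ along $\gamma$ multiplies the fiber coordinate by $\phi(h^{-1})^{-1}=\phi(h)$; a flat line bundle is orientable along $\gamma$ iff this sign is $+1$, i.e.\ iff $w_1$ pairs to $0$ with $[\gamma]$. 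The boundary transversality hypotheses are only needed so that $Z_u$ is a relative cycle in $(X,\partial X)$ and hence has a well-defined class to serve as the Poincar\'e dual; everything else is a routine translation between the bundle-theoretic and the cellular descriptions.
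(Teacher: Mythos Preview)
Your proposal is correct. The paper's proof takes the same underlying approach but is slightly more elementary in presentation: rather than invoking the abstract identification $\Omega_u = w_1(E_\phi)$ and then computing $w_1$ via monodromy, the paper goes directly to the intersection-number definition of the Poincar\'e dual. For each $h\in H$ it lifts a representing loop $\gamma$ to a path $\tilde\gamma$ in $\tilde X$ with $\tilde\gamma(1)=\tilde\gamma(0)\cdot h$, perturbs so $\gamma$ meets $Z_u$ transversely, and observes that $\langle\Omega_u,[h]\rangle \equiv \#(u\circ\tilde\gamma)^{-1}(0)\pmod 2$, while the equivariance $u(\tilde\gamma(1))=\phi(h)u(\tilde\gamma(0))$ forces this zero count to have the parity determined by $\phi(h)$. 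This is exactly the concrete computation you outline in your ``mild subtlety'' paragraph, so your Step~3 already contains the paper's argument. The only practical difference is packaging: you route through $w_1$ and monodromy, which is cleaner conceptually; the paper unwinds this into a one-variable sign-change count, which avoids citing the Euler-class/zero-locus correspondence as a black box.
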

  \begin{proof}
    We only need to prove \eqref{Equ_PoincareDual vs gp action}, which is a  consequence of standard theory of vector bundle and Euler class. For the sake of completeness, we include a quick proof here. 
    
    For every $h\in H$, let $\tilde\gamma=\tilde\gamma_h: [0,1]\to \tilde X$ be a smooth path such that $\tilde\gamma(1)=\tilde\gamma(0)\,h$. Hence $\tilde\gamma$ descends to a loop $\gamma$ in $X$, and the induced homology $[\gamma]=[h]$ in $H_1(X, \Z_2)$. By a small perturbation, we may further assume that $\gamma$ is transverse to $Z_u$ and $\gamma(0)=\gamma(1)\notin Z_u$. Hence, $u\circ\tilde\gamma\in C^\infty([0, 1], \R)$ satisfies: 
    \begin{itemize}
      \item $(u\circ\tilde\gamma)'(t)\neq 0$ for every $t\in (u\circ\tilde\gamma)^{-1}(0)$;
      \item $u\circ\tilde\gamma(1) = u(\tilde\gamma(0)h)=\phi(h)\cdot u\circ\tilde\gamma(0)\neq 0$, where $\phi(h)\in \{\pm 1\}$;
      \item by Poincar\'e duality, $\langle\Omega_u, [h]\rangle=\#(Z_u\cap \gamma) = \# (u\circ\tilde\gamma)^{-1}(0) \mod2 \,.$
    \end{itemize}
    Considering the number of end points of the disjoint intervals $\{u\circ\tilde\gamma>0\}$, we see that \[
      \phi(h) = (-1)^{\# (u\circ\tilde\gamma)^{-1}(0)} = (-1)^{\langle\Omega_u, [h]\rangle} \,.
    \]
  \end{proof}

  \begin{cor} \label{Cor_Poincare dual via line bundles}
    For $\ell=1, \dots, m$, let $\phi_\ell: H\to O(1)=\{\pm 1\}$ be a group homomorphism, $\Omega_\ell\in H^1(X, \Z_2)=H_1(X, \Z_2)^*$ be the cohomology class determined by \[
      (-1)^{\langle\Omega_\ell, [h]\rangle} = \phi_\ell(h), \qquad \forall\, h\in H\,,
    \]
    and let $u_\ell\in C^\infty(\tilde X, \R)$ be satisfying \[
      u_\ell(ph) = \phi_\ell(h^{-1})u_\ell(p), \qquad \forall\, p\in \tilde X,\ \forall\, h\in H\,.  
    \]
    Suppose that
    \begin{itemize}
      \item $\{d u_\ell(p)\}_{\ell=1}^m$ are linearly independent in $T_p\tilde X$ for every $p\in \{u_1=\dots =u_m=0\}$;
      \item $\{d (u_\ell|_{\partial \tilde X})(p)\}_{\ell=1}^m$ are linearly independent in $T_p\tilde \partial X$ for every $p\in \{u_1=\dots =u_m=0\}\cap \partial \tilde X$.
    \end{itemize}    
    Then $Z:= \{u_1=\dots =u_m=0\}/H\subset X$ is a smooth submanifold of codimension $m$ transverse to $\partial X$, and $[(Z, \partial Z)]\in H^{\dim \cX - m}(X, \partial X; \Z_2)$ is the Poincar\'e dual of $\Omega_1\smile\dots \smile\Omega_m$.  
  \end{cor}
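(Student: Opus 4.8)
The plan is to reduce the statement to the single-function case handled by Lemma~\ref{Lem_Poincare dual via line bundle} together with the standard fact that cup product is Poincar\'e-dual to transverse intersection. First I would note that by the linear independence hypotheses, the subset $\tilde Z := \{u_1 = \dots = u_m = 0\}\subset \tilde X$ is a smooth submanifold of codimension $m$ that meets $\partial\tilde X$ transversally (with $\tilde Z \cap \partial\tilde X = \{u_1=\dots=u_m=0\}\cap\partial\tilde X$ a codimension-$m$ submanifold of $\partial\tilde X$). Since each $u_\ell$ satisfies the equivariance~\eqref{Equ_AssoBundle_u H-equivar} for $\phi_\ell$, the common zero set $\tilde Z$ is $H$-invariant, and because $H$ acts freely on $\tilde X$ it acts freely on $\tilde Z$, so $Z := \tilde Z/H$ is a smooth codimension-$m$ submanifold of $X = \tilde X/H$ transverse to $\partial X = \partial\tilde X/H$, with $\partial Z = (\tilde Z\cap\partial\tilde X)/H$.

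Next I would identify each $Z$ locally as an iterated transverse intersection. For $1\le j\le m$ set $\tilde Z_j := \{u_1 = \dots = u_j = 0\}$ and $Z_j := \tilde Z_j/H$; by the linear independence of the differentials these are smooth, mutually transverse (in the sense that $Z_j$ is cut out transversally inside $Z_{j-1}$ by $u_j$), and $Z_m = Z$. By Lemma~\ref{Lem_Poincare dual via line bundle} applied to $u_\ell$, each $Z_\ell^{\mathrm{hyp}} := \{u_\ell = 0\}/H$ is a smooth hypersurface transverse to $\partial X$ whose Poincar\'e dual is $\Omega_\ell$, the class determined by $(-1)^{\langle\Omega_\ell,[h]\rangle}=\phi_\ell(h)$. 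Then $Z = \bigcap_{\ell=1}^m Z_\ell^{\mathrm{hyp}}$, and this intersection is transverse precisely by the linear independence hypotheses (transversality along the interior comes from the first bullet, transversality of the boundary pieces inside $\partial X$ from the second). Invoking the standard compatibility of cup product with transverse intersection of closed submanifolds in an oriented (here $\Z_2$, so orientation is automatic) manifold with boundary --- e.g.\ the relative intersection formula: if $A,B$ are closed submanifolds meeting transversally and transverse to $\partial X$, then $\mathrm{PD}[A]\smile\mathrm{PD}[B] = \mathrm{PD}[A\cap B]$ in $H^*(X,\partial X;\Z_2)$ --- and iterating $m-1$ times, one concludes $[(Z,\partial Z)]$ is Poincar\'e dual to $\Omega_1\smile\dots\smile\Omega_m$.

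The one genuine subtlety, and the step I expect to require the most care, is the transverse-intersection-equals-cup-product identity in the manifold-with-boundary setting with $\Z_2$ coefficients: one must check that the transversality to $\partial X$ of each factor and of all partial intersections guarantees that the collar neighborhoods match up so that the intersection product computed in the interior extends correctly across $\partial X$, and that excision/Poincar\'e--Lefschetz duality for $(X,\partial X)$ is applied consistently. One clean way to handle this, which I would adopt to avoid re-proving intersection theory, is to instead run the argument purely cohomologically: realize $\Omega_\ell$ as the Euler class of the line bundle $E_{\phi_\ell}$ from Lemma~\ref{Lem_Poincare dual via line bundle}, with $u_\ell$ a section transverse to the zero section and to the zero section restricted to $\partial X$; then $\Omega_1\smile\dots\smile\Omega_m$ is the Euler class of $\bigoplus_\ell E_{\phi_\ell}$, and $(u_1,\dots,u_m)$ is a section of this rank-$m$ bundle transverse to the zero section (again by the linear independence hypotheses), whose zero locus is exactly $\tilde Z/H = Z$; the Thom-class/Euler-class description of the Poincar\'e dual of the zero locus of a transverse section of a vector bundle over a manifold with boundary then gives the result directly, and this is where all the transversality hypotheses are consumed. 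Either route reduces everything to already-cited facts, so no new estimates are needed; the write-up is essentially bookkeeping about boundaries.
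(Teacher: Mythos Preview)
Your first route has a gap: you invoke Lemma~\ref{Lem_Poincare dual via line bundle} for each $u_\ell$ separately, but that lemma requires $du_\ell\neq 0$ on \emph{all} of $u_\ell^{-1}(0)$ (and similarly on the boundary), whereas the corollary's hypotheses only give you linear independence of the differentials on the \emph{common} zero set $\{u_1=\dots=u_m=0\}$. So $\{u_\ell=0\}/H$ need not be a smooth hypersurface, and the transverse-intersection argument as stated does not go through. This is precisely the point the paper addresses: it perturbs each $u_\ell$ to an $H$-equivariant $\breve u_\ell$ that \emph{does} satisfy the single-function hypothesis of the lemma, while keeping the common zero set and the linear independence there unchanged; only then can one form the hypersurfaces $Z_\ell=\{\breve u_\ell=0\}/H$ and conclude $Z=\bigcap_\ell Z_\ell$ is Poincar\'e dual to $\smile_\ell\Omega_\ell$.

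Your second route (the Euler-class argument) is correct and in fact sidesteps this issue entirely: $(u_1,\dots,u_m)$ is a section of $\bigoplus_\ell E_{\phi_\ell}$, the linear-independence hypotheses say exactly that this section is transverse to the zero section (and to the zero section over $\partial X$), and the mod~2 Euler class of the direct sum is $w_1(E_{\phi_1})\smile\dots\smile w_1(E_{\phi_m})=\Omega_1\smile\dots\smile\Omega_m$. This avoids the perturbation step and is arguably cleaner than the paper's proof; the trade-off is that the paper's approach keeps everything at the level of the single-function lemma already proved, while yours imports the Thom/Euler-class machinery for higher-rank bundles over manifolds with boundary.
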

  \begin{proof}
    Since by Lemma \ref{Lem_Poincare dual via line bundle}, each $u_\ell$ descends to sections of a line bundle, we can perturb them to $\breve u_\ell$ which are still $H$-equivariant and such that
    \begin{itemize}
     \item $\{\breve u_1=\dots=\breve u_m=0\} = \{u_1=\dots=u_m=0\}$;
     \item for every $\ell$, $d \breve u_\ell\neq 0$ along $\{\breve u_\ell=0\}$, $d (\breve u_\ell|_{\partial \tilde X})\neq 0$ along $\{\breve u_\ell=0\}\cap \partial \tilde X$;
      \item $\{d \breve u_\ell(p)\}_{\ell=1}^m$ are linearly independent in $T_p\tilde X$ for every $p\in \{u_1=\dots =u_m=0\}$;
      \item $\{d (\breve u_\ell|_{\partial \tilde X})(p)\}_{\ell=1}^m$ are linearly independent in $T_p\tilde \partial X$ for every $p\in \{u_1=\dots =u_m=0\}\cap \partial \tilde X$.
    \end{itemize}
    Thus we can apply Lemma \ref{Lem_Poincare dual via line bundle} to get smooth hypersurfaces $Z_\ell:= \{\breve u_\ell=0\}/H \subset X$ transverse to $\partial X$, whose Poincar\'e duals are $\Omega_\ell$. Hence $Z=Z_1\cap \dots \cap Z_m$ has Poincar\'e dual $\Omega_1\smile\dots \smile\Omega_m$.
  \end{proof}

\section{Differential of real and imaginary part of complex polynomials} \label{Append_Diff Cplx polyn}
  In \S \ref{sect:topologyX}, we introduced some functions on $\mathbb S^3\subset \C^2$ under complex coordinates. The following lemma helps justify the surjectivity of their differentials.
  \begin{lem} \label{Lem_AssoBundle_nabla Re(f) vs partial_z f}
    For $n\geq 1$, we parametrize $\C^n=\R^{2n}$ by $(z_1, \dots, z_n)=(x_1, y_1, \dots,  x_n, y_n)$. For $f\in C^1(\C^n, \C)$, its differential is a $\C^{2n}$-valued function,
    \begin{align}
     \begin{split}
        \nabla \Re (f) + \mathbf{i} \nabla \Imag(f)  \  = \left(
         (\partial_{z_1}+\bar\partial_{z_1})f,\ \mathbf{i}(\partial_{z_1}-\bar\partial_{z_1})f,\ \dots,\ 
         (\partial_{z_n}+\bar\partial_{z_n})f,\ \mathbf{i}(\partial_{z_n}-\bar\partial_{z_n})f  
       \right) \,.
     \end{split} \label{Equ_AssoBundle_grad (Re f + Im f)}
    \end{align}
    In particular, if $1\leq k\leq n$ and $f_1, \dots, f_k\in C^1(\C^n, \C)$ satisfy $\bar\partial_z f_{\ell}=0$ for $\ell=1, \dots, k$, then for every $p\in \C^n$, the following are equivalent:
    \begin{itemize}
      \item $\{\nabla \Re(f_\ell)|_p, \nabla \Imag(f_\ell)|_p\}_{\ell=1}^k$ are $\R$-linearly independent in $\R^{2n}$;
      \item $\{(\partial_zf_{\ell})|_p\}_{\ell=1}^k$ are $\C$-linearly independent in $\C^n$, where $\partial_z f:= (\partial_{z_1}f, \dots, \partial_{z_n}f)$.
    \end{itemize}
  \end{lem}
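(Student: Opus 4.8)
The proof is elementary linear algebra combined with the Wirtinger calculus. First I would establish the formula \eqref{Equ_AssoBundle_grad (Re f + Im f)} by a componentwise computation. Writing $z_j=x_j+\mathbf{i}y_j$ and recalling $\partial_{z_j}=\tfrac12(\partial_{x_j}-\mathbf{i}\partial_{y_j})$ and $\bar\partial_{z_j}=\tfrac12(\partial_{x_j}+\mathbf{i}\partial_{y_j})$, one gets $\partial_{x_j}f=(\partial_{z_j}+\bar\partial_{z_j})f$ and $\partial_{y_j}f=\mathbf{i}(\partial_{z_j}-\bar\partial_{z_j})f$. Since the $x_j$-component of $\nabla\Re(f)+\mathbf{i}\nabla\Imag(f)$ is $\partial_{x_j}\Re(f)+\mathbf{i}\,\partial_{x_j}\Imag(f)=\partial_{x_j}f$, and its $y_j$-component is $\partial_{y_j}f$, formula \eqref{Equ_AssoBundle_grad (Re f + Im f)} follows at once.

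For the second part, abbreviate $a_\ell:=\nabla\Re(f_\ell)|_p$, $b_\ell:=\nabla\Imag(f_\ell)|_p\in\R^{2n}$, and $w_\ell:=(\partial_zf_\ell)|_p\in\C^n$. Because $a_\ell,b_\ell$ are real vectors, $\{a_\ell,b_\ell\}_{\ell=1}^k$ is $\R$-linearly independent in $\R^{2n}$ if and only if it is $\C$-linearly independent in $\C^{2n}=\R^{2n}\otimes_\R\C$; and the $\C$-span of $\{a_\ell,b_\ell\}_\ell$ coincides with that of $\{a_\ell+\mathbf{i}b_\ell,\,a_\ell-\mathbf{i}b_\ell\}_\ell$. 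Hence $\R$-independence of $\{a_\ell,b_\ell\}_\ell$ is equivalent to $\C$-independence of $\{a_\ell\pm\mathbf{i}b_\ell\}_\ell$ in $\C^{2n}$.

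Next I would use the hypothesis $\bar\partial_zf_\ell=0$. Applying \eqref{Equ_AssoBundle_grad (Re f + Im f)} to $f_\ell$, and separately to $\overline{f_\ell}$ (which satisfies $\partial_z\overline{f_\ell}=0$, $\Re(\overline{f_\ell})=\Re f_\ell$, $\Imag(\overline{f_\ell})=-\Imag f_\ell$), one finds that $a_\ell+\mathbf{i}b_\ell$ has its $y_j$-component equal to $\mathbf{i}$ times its $x_j$-component and reduces to $w_\ell$ upon deleting the $y_j$-coordinates, while $a_\ell-\mathbf{i}b_\ell$ has $y_j$-component $-\mathbf{i}$ times its $x_j$-component and reduces to $\overline{w_\ell}$. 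Writing the coordinates on $\C^{2n}$ as $(\zeta_1,\eta_1,\dots,\zeta_n,\eta_n)$, this says $\{a_\ell+\mathbf{i}b_\ell\}_\ell\subset V_+:=\{\eta_j=\mathbf{i}\zeta_j\ \forall j\}$ and $\{a_\ell-\mathbf{i}b_\ell\}_\ell\subset V_-:=\{\eta_j=-\mathbf{i}\zeta_j\ \forall j\}$, two complex $n$-planes with $V_+\oplus V_-=\C^{2n}$. By directness of this sum, $\{a_\ell\pm\mathbf{i}b_\ell\}_\ell$ is $\C$-independent if and only if $\{a_\ell+\mathbf{i}b_\ell\}_\ell$ is $\C$-independent in $V_+$ and $\{a_\ell-\mathbf{i}b_\ell\}_\ell$ is $\C$-independent in $V_-$. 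Under the $\C$-linear isomorphisms $V_\pm\xrightarrow{\ \sim\ }\C^n$ obtained by deleting the $\eta_j$-coordinates, these families map to $\{w_\ell\}_\ell$ and $\{\overline{w_\ell}\}_\ell$ respectively; since complex conjugation sends $\C$-linearly independent sets to $\C$-linearly independent sets, both conditions are equivalent to $\C$-linear independence of $\{w_\ell\}_\ell$ in $\C^n$. Chaining these equivalences completes the proof.

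There is no genuine obstacle here: the statement is a formal consequence of the Wirtinger identities and the decomposition $\C^{2n}=V_+\oplus V_-$. The only points demanding a little care are bookkeeping — fixing the signs in \eqref{Equ_AssoBundle_grad (Re f + Im f)} correctly, and being precise that the direct-sum decomposition is what licenses splitting the joint independence of $\{a_\ell+\mathbf{i}b_\ell\}_\ell\cup\{a_\ell-\mathbf{i}b_\ell\}_\ell$ into the two separate conditions.
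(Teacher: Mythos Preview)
Your proof is correct and follows essentially the same route as the paper's. The paper phrases the second part via the standard almost complex structure $J$ on $\R^{2n}$: it observes that when $\bar\partial_z f=0$ one has $J\nabla\Re(f)=\nabla\Imag(f)$, and then invokes as a ``standard fact'' that $\{u_\ell,Ju_\ell\}_\ell$ is $\R$-independent in $\R^{2n}$ if and only if $\{u_\ell+\mathbf{i}Ju_\ell\}_\ell$ is $\C$-independent in $\C^{2n}$. Your $V_+\oplus V_-$ decomposition is precisely the $\pm\mathbf{i}$-eigenspace splitting of $J$ on the complexification, so you are in effect supplying a proof of that standard fact rather than citing it; the two arguments are otherwise identical.
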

  \begin{proof}
    \eqref{Equ_AssoBundle_grad (Re f + Im f)} follows from the fact that $\partial_{x_\ell}=\partial_{z_\ell}+\bar\partial_{z_\ell}$, $\partial_{y_\ell}=\mathbf{i}(\partial_{z_\ell}-\bar\partial_{z_\ell})$.

    When $\bar\partial_z f=0$, by taking real and imaginary parts of \eqref{Equ_AssoBundle_grad (Re f + Im f)}, we have 
    \begin{align*}
      \nabla \Re(f) & = \big( 
        \Re(\partial_{z_1}f),\ -\Imag(\partial_{z_1}f),\ \dots,\  
        \Re(\partial_{z_n}f),\ -\Imag(\partial_{z_n}f) 
      \big) \,, \\
      \nabla \Imag(f) & = \big( 
        \Imag(\partial_{z_1}f),\ \Re(\partial_{z_1}f),\ \dots,\
        \Imag(\partial_{z_n}f),\ \Re(\partial_{z_n}f) 
      \big) \,.
    \end{align*} 
    Hence we see that $J\nabla \Re(f) = \nabla \Imag(f)$, where $J:\R^{2n}\to \R^{2n}$ is  the standard almost complex structure given by $J(x_1, y_1, \dots, x_n, y_n):= (-y_1, x_1, \dots, -y_n, x_n)$.
    
    When $\bar\partial_z f_\ell=1$ for all $\ell \in \{1, \dots, k\}$, the equivalence of the two bullet points then follows from \eqref{Equ_AssoBundle_grad (Re f + Im f)} and the standard fact that for vectors $u_1, \dots u_k\in\R^{2n}$, the following are equivalent:
    \begin{itemize}
      \item $\{u_\ell, Ju_\ell\}_{\ell=1}^k$ are $\R$-linearly independent in $\R^{2n}$;
      \item $\{u_\ell + \mathbf{i}Ju_\ell\}_{\ell=1}^k$ are $\C$-linearly independent in $\C^{2n}$.
    \end{itemize}
  \end{proof}

\section{Unknotted genus 1 surfaces and Hopf links}

\subsection{An extension lemma of Hopf links} \label{Append_HopfLink}

  For a closed manifold $M$, we denote 
  \begin{align*}
    \Gamma(M) & := \{\text{unoriented embedded loops in }M\}\,; \\
    \tilde\Gamma(M) & := \{\text{oriented embedded loops in }M\}\,.
  \end{align*}
  And let $\mathrm p: \tilde\Gamma(M)\to \Gamma(M)$ be the orientation forgetting map (which is a double cover). 
  Also, let $L:= \beta_+^H\sqcup \beta_-^H\subset S^3$ be the standard Hopf link in $S^3$, which consists of two great circles \[
    \beta^H_+:= \{(x,y,0,0): x^2+y^2=1\}, \qquad
    \beta^H_-:= \{(0,0,x,y): x^2+y^2=1\}\,.    
  \]
  And let
  \begin{align*}
    \cE(L, S^3) & := \{\text{unparametrized Hopf links in }S^3\} = \Diff(S^3)/\Diff(S^3, L) \,, \\
    \cE(\hat L, S^3) & := \{\text{unparametrized labeled Hopf links in }S^3\}
     = \Diff(S^3)/\Diff(S^3, \hat L)\,,
  \end{align*}
  where $\Diff(S^3, L)$ denotes the subgroup of all diffeomorphisms $\phi\in \Diff(S^3)$ such that $\phi(L)=L$, and $\Diff(S^3,\hat L)$ denotes that of all $\phi\in \Diff(S^3)$ such that $\phi(\beta^H_\pm)=\beta^H_\pm$, which constitutes a subgroup of $\Diff(S^3, L)$. Thus, every element of $\cE(\hat L, S^3)$ can be represented by an ordered pair $(\beta_+, \beta_-)$ whose union forms a Hopf link in $S^3$. 
  Let $\mathrm{q}: \cE(\hat L, S^3)\to \cE(L, S^3)$ be the label-forgetting map (which is a double cover); and $\mathrm{r}_\pm: \cE(\hat L, S^3) \to \Gamma(S^3)$ be the map given by $(\beta_+, \beta_-)\mapsto \beta_\pm$.

  The topology of $\cE(L, S^3)$ has been studied by Boyd-Bregman  in \cite{BoydBregman2025_HopfLink}.  We collect a key consequence of it:
  \begin{lem} \label{Lem_Hopflink_Topology}
    The fundamental group $\pi_1(\cE(L, S^3)) = \Z_2\oplus \Z_2$. Moreover, a loop $\gamma: \bS^1\to \cE(L,S^3)$ is trivial in $\pi_1(\cE(L, S^3))$ if and only if it lifts to a loop $\hat \gamma: \bS^1\to \cE(\hat L, S^3)$ and $\mathrm{r}_+\circ \hat\gamma$ lifts to a loop $\tilde\gamma: \bS^1\to \tilde\Gamma(S^3)$. 
  \end{lem}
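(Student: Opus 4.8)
The plan is to realize $\cE(L,S^3)$ as the base of a canonical connected $4$-fold covering built out of the two components of a Hopf link, and then to recognize that covering as the universal cover by feeding in the computation of $\pi_1(\cE(L,S^3))$ from Boyd--Bregman \cite{BoydBregman2025_HopfLink}. Precisely: $\mathrm q\colon\cE(\hat L,S^3)\to\cE(L,S^3)$ and $\mathrm p\colon\tilde\Gamma(S^3)\to\Gamma(S^3)$ are both $2$-fold coverings, so the pullback $\mathcal W:=\mathrm r_+^{*}\tilde\Gamma(S^3)$ of $\mathrm p$ along $\mathrm r_+\colon\cE(\hat L,S^3)\to\Gamma(S^3)$ is a $2$-fold covering of $\cE(\hat L,S^3)$, and by definition its total space is the space of labeled Hopf links equipped with an orientation of the first component. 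Then $\mathcal W\to\cE(\hat L,S^3)\xrightarrow{\mathrm q}\cE(L,S^3)$ is a $4$-fold covering; unwinding the pullback, a lift of a loop $\gamma$ to a loop in $\mathcal W$ is precisely the same data as a lift $\hat\gamma$ of $\gamma$ to $\cE(\hat L,S^3)$ together with a lift $\tilde\gamma$ of $\mathrm r_+\circ\hat\gamma$ to $\tilde\Gamma(S^3)$. Hence it suffices to show that $\mathcal W\to\cE(L,S^3)$ is the universal cover.

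First I would check that $\mathcal W$ is connected, using two explicit rotation loops. The coordinate swap $\tau(z_1,z_2)=(z_2,z_1)$ permutes $(x_1,x_2,x_3,x_4)\mapsto(x_3,x_4,x_1,x_2)$ and so lies in $SO(4)$; joining $\id$ to $\tau$ by a path $\phi_t$ in $\Diff(S^3)$, the loop $t\mapsto\phi_t(L)$ in $\cE(L,S^3)$ has nontrivial $\mathrm q$-monodromy, because its lift to labeled links ends at $(\tau(\beta_+^H),\tau(\beta_-^H))=(\beta_-^H,\beta_+^H)$, the other labeling of $L$. Thus the monodromy $\pi_1(\cE(L,S^3))\to\Z_2$ of $\mathrm q$ is onto, so $\cE(\hat L,S^3)$ is connected. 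Similarly, $c(z_1,z_2)=(\bar z_1,\bar z_2)=\mathrm{diag}(1,-1,1,-1)$ lies in $SO(4)$ and preserves $\beta_+^H$ and $\beta_-^H$ setwise, so joining $\id$ to $c$ by a path $\psi_t$ gives a loop $t\mapsto\psi_t(\hat L)$ in $\cE(\hat L,S^3)$; its $\mathcal W$-monodromy is nontrivial, since $c$ acts on $\beta_+^H\cong S^1$ by complex conjugation, which reverses orientation, so transporting an orientation of the first component once around the loop flips it. Hence the monodromy $\pi_1(\cE(\hat L,S^3))\to\Z_2$ of $\mathcal W$ is onto, $\mathcal W$ is connected, and $\mathcal W\to\cE(L,S^3)$ is a connected $4$-fold covering.

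Next I would invoke \cite{BoydBregman2025_HopfLink}, which gives that $\cE(L,S^3)$ is connected with $\pi_1(\cE(L,S^3))\cong\Z_2\oplus\Z_2$, a group of order $4$ (this isomorphism is also the first assertion of the lemma). Since $\mathcal W\to\cE(L,S^3)$ is a connected $4$-fold covering, the image of $\pi_1(\mathcal W)$ in $\pi_1(\cE(L,S^3))$ has index $4$, hence is trivial, so $\pi_1(\mathcal W)=1$ and $\mathcal W$ is the universal cover. The characterization then follows from standard covering-space theory: $\gamma$ is null-homotopic if and only if it lifts to a loop in its universal cover $\mathcal W$, which by the identification in the first paragraph is equivalent to $\gamma$ lifting to a loop $\hat\gamma$ in $\cE(\hat L,S^3)$ together with $\mathrm r_+\circ\hat\gamma$ lifting to a loop $\tilde\gamma$ in $\tilde\Gamma(S^3)$.

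I expect the main obstacle to be the dependence on the global input from \cite{BoydBregman2025_HopfLink} --- and hence, ultimately, on Hatcher's resolution of the Smale conjecture: the step that $\mathcal W$ is the \emph{universal} cover really requires $|\pi_1(\cE(L,S^3))|=4$, since by itself the covering-space argument only produces \emph{some} connected $4$-fold cover. The rest is routine: checking that the pullback $\mathrm r_+^{*}\tilde\Gamma(S^3)$ is again a covering space and carrying out the two monodromy computations with careful orientation bookkeeping.
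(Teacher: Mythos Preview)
Your argument is correct and nicely packaged, but it follows a genuinely different route from the paper's. The paper does not construct the $4$-fold cover $\mathcal W$ abstractly; instead it argues the two implications separately. For the ``only if'' direction it extends $\gamma$ over a disk and lifts. For the ``if'' direction it invokes Boyd--Bregman only for the homotopy equivalence $\cR(L,S^3)\hookrightarrow\cE(L,S^3)$ (great-circle Hopf links), together with Hatcher's result that $\Gamma_\circ(S^3)\hookrightarrow\Gamma(S^3)$ is a homotopy equivalence; it then homotopes $\hat\gamma$ into the great-circle model, identifies $\cR(\hat L,S^3)$ with $\Gamma_\circ(S^3)$ via $\mathrm r_+$, and observes that the further oriented lift lands in $\mathrm p^{-1}(\Gamma_\circ(S^3))\cong S^2\times S^2$, which is simply connected. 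So the paper reproves the simple connectedness of (what is your) $\mathcal W$ internally from the great-circle model, whereas you deduce it from the numerical input $|\pi_1(\cE(L,S^3))|=4$.

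What each approach buys: your covering-space formulation is cleaner and makes the logical structure transparent, and your two explicit $SO(4)$ loops (swap and conjugation) are a nice concrete witness for the monodromy. The paper's route is slightly more self-contained in that it only cites the homotopy equivalence from Boyd--Bregman and Hatcher's result, and then computes the needed simple connectedness directly; in particular it does not need to quote the value of $\pi_1$ as a black box. Your honest caveat at the end is exactly right: the step ``connected $4$-fold cover $\Rightarrow$ universal'' is where the external input enters, and that input (via Boyd--Bregman and ultimately Hatcher's Smale-conjecture work) is unavoidable in either proof.
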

  \begin{proof}
    If $[\gamma]=0$ in $\pi_1(\cE(L, S^3))$, let $f: D^2\to \cE(L, S^3)$ be a continuous extension of $\gamma$. Then by the contractibility of $D^2$, there is a lift $\hat f: D^2\to \cE(\hat L, S^3)$ of $f$ and a lift $\tilde f: D^2\to \tilde\Gamma(S^3)$ of $\mathrm{r}_+\circ \hat f$. Restricting to $\partial D^2 = \bS^1$ gives the desired lifts $\hat\gamma$ and $\tilde\gamma$.
    
    Conversely suppose the lifts $\hat\gamma$ and $\tilde\gamma$ exist. Let $\Gamma_\circ(S^3)\subset \Gamma(S^3)$ be the subspace of unoriented {\it great circles} in $S^3$, and
    \begin{align*}
      \cR(L, S^3) & := \{R(L): R\in O(4)\} \subset \cE(L, S^3) \,, \\
      \cR(\hat L, S^3) & := \mathrm{q}^{-1} (\cR(L, S^3)) \subset \cE(\hat L, S^3) \,.
    \end{align*} 
    By \cite[Theorem 5.2]{BoydBregman2025_HopfLink}, $\cR(L, S^3)\hookrightarrow \cE(L, S^3)$ is a homotopy equivalence; by \cite[Appendix (7)]{Hat83}, $\Gamma_\circ(S^3)\hookrightarrow \Gamma(S^3)$ is also a homotopic equivalence. Then by chasing the  diagram 
    \begin{equation*}
     \begin{tikzcd}[cramped,column sep=scriptsize]
       {\Gamma_\circ(S^3)} & {\cR(\hat L, S^3)} & {\cR(L, S^3)} \\
       {\Gamma(S^3)} & {\cE(\hat L, S^3)} & {\cE(L, S^3)} 
       \arrow[from=1-2, to=1-1, "\mathrm{r}_+"]
       \arrow[from=1-2, to=1-3, "\mathrm{q}"]
       \arrow[from=2-2, to=2-1, "\mathrm{r}_+"]
       \arrow[from=2-2, to=2-3, "\mathrm{q}"]
       \arrow[from=1-1, to=2-1]
       \arrow[from=1-2, to=2-2]
       \arrow[from=1-3, to=2-3]
     \end{tikzcd}
    \end{equation*}
    there exists $\hat\gamma_\circ: \bS^1\to \cR(\hat L, S^3)$ such that $\mathrm{q}\circ \hat\gamma_\circ$ is homotopic to $\gamma$ and $\mathrm{r}_+\circ \hat\gamma_\circ: \bS^1\to \Gamma_\circ(S^3)$ admits a lift into $\mathrm{p}^{-1}(\Gamma_\circ(S^3))\subset \tilde\Gamma(S^3)$. Notice that $\mathrm{p}^{-1}(\Gamma_\circ(S^3))$ is isomorphic to the space of oriented $2$-planes in $\R^4$, which is diffeomorphic to $S^2\times S^2$ and hence is simply connected. Hence $\mathrm{r}_+\circ \hat\gamma_\circ: \bS^1\to \Gamma_\circ(S^3)$ is homotopically trivial in $\Gamma_\circ(S^3)$. Noticing that $\mathrm{r}_+|_{\cR(\hat L, S^3)}$ is an isomorphism between $\cR(\hat L, S^3)$ and $\Gamma_\circ(S^3)$, we conclude that $\hat\gamma_\circ $ is homotopically trivial in $\cR(\hat L, S^3)$, so $\gamma\simeq \mathrm{q}\circ\hat\gamma_\circ$ is also homotopically trivial in $\cE(L, S^3)$.
  \end{proof}
  
   In the following, we study the relation between unknotted punctate surfaces of genus $1$ and Hopf links. We recall that the notion of a Hopf link {\it bounded by} an unknotted element $S\in \cS_1(S^3)$ was described in \S \ref{sect:XiProper} (before Proposition \ref{prop:wholeFamilyUnknotted}).
   The following lemma shows that the space of Hopf links bounded by any given such {\it smooth} $S$ is path-connected. 
   \begin{lem}\label{Lem_IsotopyHopfLink}
     Let $T\in \cS_1(S^3)$ be a smooth  surface of genus $1$ that is unknotted, and  $U_\pm$ be the two open regions bounded by $T$ (i.e. $S^3\setminus T = U_+\sqcup U_-$ such that $\partial U_\pm = T$). Suppose for $i=0, 1$, $\beta_i^\pm \subset U_\pm$ are embedded loops such that $\beta_i^+\cup \beta_i^-$ forms a Hopf link. Then there exists an isotopy $t\mapsto \beta^\pm(t)$ of embedded loops over $t\in [0, 1]$ such that
     \begin{itemize}
        \item $\beta^\pm(i) = \beta^\pm_i$, $i=0, 1$;
        \item for every $t_\pm\in [0, 1]$, $\beta^\pm(t_\pm)\subset U_\pm$ and $\beta^+(t_+)\cup\beta^-(t_-)$ is also a Hopf link.
     \end{itemize}
   \end{lem}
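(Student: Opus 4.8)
\textbf{Proof plan for Lemma \ref{Lem_IsotopyHopfLink}.}

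The plan is to reduce everything to the standard situation inside a fixed solid torus, and then invoke Hatcher's theorem on the diffeomorphism group of the solid torus. First I would use the hypothesis that $T$ is smooth and unknotted of genus $1$: this means precisely that $\overline{U_+}$ and $\overline{U_-}$ are both solid tori, and $S^3=\overline{U_+}\cup_T\overline{U_-}$ is the genus $1$ Heegaard splitting, which is unique up to isotopy. So there is a diffeomorphism $\phi$ of $S^3$ carrying $(T,U_+,U_-)$ to the standard picture, in which $U_+$ and $U_-$ become the two standard open solid tori and the standard Hopf link $L=\beta^H_+\sqcup\beta^H_-$ has $\beta^H_\pm$ a core circle of $U_\pm$. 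Applying $\phi$, I reduce to the case where $T$, $U_+$, $U_-$ are already standard, and I must connect the two given pairs $(\beta_0^+,\beta_0^-)$ and $(\beta_1^+,\beta_1^-)$ by an isotopy of the required kind.

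Next I would treat the two factors one at a time. Fix $U=U_+$, a standard open solid torus, and let $\beta_0,\beta_1\subset U$ be two embedded loops, each of which is one component of a Hopf link whose other component lies in $U_-$. The key observation is that such a loop must be isotopic \emph{within $U$} to the core circle of $U$: indeed, the linking number with a core of $U_-$ (equivalently, the class in $H_1(U;\Z)\cong\Z$) is $\pm1$ because the link is a Hopf link, and a simple closed curve in a solid torus representing a generator of $H_1$ is isotopic to the core (this is classical — an essential simple closed curve in a solid torus is either contained in a ball or isotopic to the core, and the homology condition rules out the former). Hence both $\beta_0$ and $\beta_1$ are isotopic in $U$ to the core $\gamma_U$; composing these two isotopies gives an isotopy $\beta^+(t)$ in $U_+$ from $\beta_0^+$ to $\beta_1^+$ with every $\beta^+(t)\subset U_+$, and likewise $\beta^-(t)$ in $U_-$. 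It remains only to check the last bullet: for arbitrary $t_+,t_-\in[0,1]$, $\beta^+(t_+)\cup\beta^-(t_-)$ is still a Hopf link. This follows because the union of a core of $U_+$ and a core of $U_-$ in the standard splitting is exactly the standard Hopf link, and the loops $\beta^\pm(t_\pm)$, being disjoint (they lie in disjoint open sets $U_+,U_-$) and each isotopic within its solid torus to the respective core, together form a link isotopic to core $\cup$ core; the isotopy type of a $2$-component link in $S^3$ whose components are disjoint and individually unknotted with linking number $\pm1$ and which bound disjoint solid tori is the Hopf link. More carefully, I would produce an ambient isotopy of $S^3$ preserving $U_+$ and $U_-$ (setwise) carrying $\beta^+(t_+)$ to $\gamma_{U_+}$ — this is possible by the isotopy extension theorem applied to an isotopy supported in $U_+$ — and similarly one carrying $\beta^-(t_-)$ to $\gamma_{U_-}$; composing shows $\beta^+(t_+)\cup\beta^-(t_-)$ is ambiently isotopic to $\gamma_{U_+}\cup\gamma_{U_-}=L$, hence is a Hopf link.

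The main obstacle, and the place where I would be most careful, is the last bullet point: ensuring that the \emph{product} isotopy $(t_+,t_-)\mapsto \beta^+(t_+)\cup\beta^-(t_-)$ stays within the space of Hopf links for all pairs, not just along the diagonal. The clean way around this is to observe that the space of pairs $(\beta^+,\beta^-)$ with $\beta^\pm$ an embedded loop in the \emph{fixed} open solid torus $U_\pm$ and isotopic there to the core is itself path-connected: it retracts (via the parametrized isotopy extension theorem, using $\pi_0\,\Diff(U_\pm,\partial U_\pm\text{ rel core})$, which is controlled by Hatcher's solution of the Smale conjecture / the known homotopy type of $\Diff$ of the solid torus) onto a point. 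Since every such pair with the two loops disjoint automatically gives a Hopf link (by the ambient-isotopy argument above), path-connectedness of this space of pairs delivers exactly the desired isotopy $t\mapsto(\beta^+(t),\beta^-(t))$ from $(\beta_0^+,\beta_0^-)$ to $(\beta_1^+,\beta_1^-)$ through Hopf links, with each $\beta^\pm(t)$ in $U_\pm$. Transporting back by $\phi^{-1}$ finishes the proof. A minor additional point to verify is smoothness/continuity at the endpoints and the compatibility of the initial diffeomorphism $\phi$ with the given data, but these are routine applications of the isotopy extension theorem.
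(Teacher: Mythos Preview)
Your proposal has two genuine gaps that bypass the real content of the lemma.

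First, you take ``unknotted'' to mean that $\overline{U_+}$ and $\overline{U_-}$ are both solid tori. In the paper, ``unknotted'' is defined (just before Proposition~\ref{prop:wholeFamilyUnknotted}) only as the existence of loops $\alpha\subset U_+$, $\beta\subset U_-$ forming a Hopf link; showing that this forces $T$ to be a Heegaard torus is the main work of the paper's argument. After discarding possible sphere components of $T$ (which you also overlook: an element of $\cS_1(S^3)$ may be a torus together with several spheres), one side---say $U_+$---is a solid torus with core $K$, by van Kampen and Dehn's Lemma. To conclude that $K$ is unknotted, so that $U_-$ is a solid torus too, the paper shows that $\pi_1\bigl(S^3\setminus(K\cup\beta_i^-)\bigr)$ is abelian and then invokes the nontrivial result of Xie--Zhang that a two-component link with abelian link group is a Hopf link. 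You have simply assumed the conclusion of this step.

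Second, even granting that both $U_\pm$ are solid tori, your assertion ``a simple closed curve in a solid torus representing a generator of $H_1$ is isotopic to the core'' is false: tie a local trefoil into the core inside a small ball and you obtain a winding-number-$1$ curve that is not isotopic to the core (its complement in the solid torus is not $T^2\times I$). Your parenthetical dichotomy ``contained in a ball or isotopic to the core'' is equally false. What you actually need is that $\beta^+$, \emph{given that it forms a Hopf link with some $\beta^-\subset U_-$}, is isotopic to the core of $U_+$. One correct route: the torus $T$ sits in the Hopf-link exterior $S^3\setminus N(\beta^+\cup\beta^-)\cong T^2\times I$, and the winding-number $\pm1$ condition on each side makes $T$ incompressible there, hence boundary-parallel, which yields $U_\pm\setminus N(\beta^\pm)\cong T^2\times I$. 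But this is a genuine argument using the full Hopf-link hypothesis, not the bare homological claim you state; and it still presupposes the first gap has been closed.
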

   \begin{proof}
%     By subtracting sufficiently small balls centered at the punctate set and gluing in discs, we can replace $T$ by some smooth surface of genus $1$ that bounds domains  which still satisfy the assumptions for the same $\beta_i^\pm$, hence we may assume without loss of generality that $T$ is smooth everywhere.  
     By the Schoenflies theorem in $S^3$, every connected component of $U_\pm$ whose boundary is a $2$-sphere must be contractible, and hence cannot contain either of $\beta_i^\pm$.  Also, any isotopy of closed curves in a region $U$ can be  deformed to avoid any given $3$-dimensional ball in $U$. Hence by inductively subtracting all the inner-most or outer-most spherical components (which still does not affect the assumptions on $\beta_i^\pm$), we may assume without loss of generality that $T$ is diffeomorphic to a smooth torus and $U_\pm$ are connected. 
     
     By the van Kampen theorem, one of the homomorphisms $\iota^\pm: \pi_1(T)\to \pi_1(U_\pm)$ induced by natural inclusion is not injective:  Say $\iota^+$ is not injective. Then by Dehn's Lemma, there exists an embedded nontrivial loop $\gamma\subset T$ which bounds an embedded disc $D\subset \overline{U_+}$. When $\epsilon\ll 1$, after smoothing the corners, $\overline{U_+}\setminus B_\epsilon(D)$ is a smooth domain with boundary diffeomorphic to $S^2$, hence by Schoenflies theorem in $S^3$, $\bar U_+\setminus B_\epsilon(D)$ (after smoothing out corners) is diffeomorphic to $B^3$. Therefore, $\overline{U_+}$ is diffeomorphic to $B^3$ attached with a $1$-handle, which is a solid torus $D^2\times S^1$. In summary, $U_+$ is diffeomorphic to the tubular neighborhood of some knot $K\subset U_+\subset S^3$.

     \paragraph{\textbf{Claim}} For every $i=0,1$ and every map from the disc $f:D\to \overline{U_+}\setminus \beta_i^+$ such that $f(\partial D)\subset \partial U_+ = T$, we have $f_*[\partial D] = 0$ in $H_1(T)$.

     \begin{proof}[Proof of Claim.]
       Since $U_+$ is diffeomorphic to a tubular neighborhood of $K$, $H_1(U_+)$ is generated by $[K]$.
       Set $[\beta_i^+] = m_i[K]$ in $H_1(U_+)$ for some integer $m_i$. By the Mayer–Vietoris sequence \[
         0 = H_2(S^3)\to H_1(T) \to H_1(U_-)\oplus H_1(U_+) \to H_1(S^3) = 0\,,
       \] 
       so there exists a loop $\lambda\subset T$ such that $[\lambda]\in H_1(T)$ is mapped to $(0, [K])\in H_1(U_-)\oplus H_1(U_+)$. Hence, $m_i\lambda$ is homologous to $\beta_i^+$ in $U_+$, and there is some map $\Sigma\to U_-$ from some surface $\Sigma$ such that $\partial \Sigma$ is mapped to $\lambda$. Therefore, \[
         \pm1 = \link(\beta_i^-, \beta_i^+) = \text{ algebraic intersection of }\beta_i^- \text{ and }m_i\Sigma\,.
       \]
       Thus $m_i=\pm 1$. 

       Now since $f(D)\cap \beta^+_i = \emptyset$, the algebraic intersection of $f(D)$ and $K$ is also $0$. This forces $f_*[\partial D] = 0$ in $H_1(T)$ by Poincar\'e duality.
     \end{proof}

     Now that with this claim, by the Hurewicz theorem, the natural inclusion $T\hookrightarrow \overline{U_+}\setminus \beta_i^+$ induces an injective map $\pi_1(T)\to \pi_1(U_+\setminus \beta_i^+)$, hence by the van Kampen theorem, the homomorphism induced by inclusion \[
       %\pi_1(S^3\setminus (K\cup \beta_i^-)) 
       \pi_1(U_-\setminus \beta_i^-) \to \pi_1(S^3\setminus (\beta_i^+\cup \beta_i^-))
     \]
     is also injective. Since $\beta_i^+\cup \beta_i^-$ forms a Hopf link, the right hand side is an abelian group, and so is the left hand side. And since $U_+$ is diffeomorphic to a tubular neighborhood of $K$, we know \[
       \pi_1(S^3\setminus (K\cup \beta_i^-)) = \pi_1(U_-\setminus \beta_i^-)
     \]
     is also an abelian group. By \cite[Remark 1.7]{XieZhang23_LinkGp} (see also \cite[Section 6.3]{Kawauchi96_SurveyKnot}), $K\cup\beta_i^-$ is also a Hopf link, and then $K$ is an unknot, forcing $U^- = S^3\setminus \overline{U_+}$ to be also diffeomorphic to the tubular neighborhood of some (unknotted) embedded loop. Therefore, for each $i$, there exist $\varepsilon>0$ and an embedded disc $D_i\subset S^3$ with boundary $\beta_i^-$ such that $D_i\cap B_{\varepsilon}(K)$ is an $\varepsilon$-disc $D_i'$ whose boundary is a meridian of $B_{\varepsilon}(K)$. We can then construct isotopy from $\beta_0^-$ to $\partial D_0'$ in $D_0$, from $\partial D_0'$ to $\partial D_{1}'$ in $\partial B_{\varepsilon}(K)$ and from $\partial D_{1}'$ to $\partial D_{1} = \beta_{1}^-$ in $D_{1}$. Since all these isotopies are not intersecting $B_{\varepsilon/2}(K)$, they can be concatenated and deformed to an isotopy from $\beta_0^-$ to $\beta_{1}^-$ in $U_-$. In summary, there exists an isotopy $[0, 1]\ni t\mapsto \beta^-(t)\subset U_-$ such that $\beta^-(i) = \beta^-_i$ for every $i=0,1$. The same process also produces an isotopy $[0, 1]\mapsto \beta^+(t)\subset U_+$ such that $\beta^+(i) = \beta^+_i$ for every $i=0,1$. These isotopies satisfy the desired properties.
   \end{proof}

   When $S$ is not smooth, the lemma above may not be true. The following lemma allows us to perturb a punctate surface to a smooth one that still bounds a compact family of Hopf links.

   \begin{lem}\label{Lem_Hopflink_SmoothingSurface}
{
     Let $T\in \cS_1(S^3)$ be a punctate surface of genus $1$ that is unknotted, and  $U_\pm$ be the two open regions bounded by $T$ (i.e. $S^3\setminus T = U_+\sqcup U_-$ such that $\partial U_\pm = T$). 
     Suppose $K$ is a compact metric space and $\beta^\pm:K\to \Gamma(S^3)$ are continuous maps such that $\beta^\pm(q) \subset U_\pm$ and $\beta^+(q)\cup \beta^-(q)$ forms a Hopf link for every $q\in K$.  
     Then there exists a smooth surface $\tilde T\in \cS_1(S^3)$ which bounds two open regions $\tilde U_\pm$ such that $\beta^\pm(q) \subset \tilde U_\pm$ for every $q\in K$. 
}
   \end{lem}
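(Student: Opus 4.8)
The plan is to leave $T$ unchanged away from its punctate set and to replace it, inside small balls around the punctures, by a smooth piece that still separates the two regions and carries the correct genus; compactness of $K$ will keep the family of loops bounded away from the punctures. When $K=\emptyset$ every condition on $\tilde T$ except smoothness is vacuous, so any smooth torus works; so assume $K\neq\emptyset$.

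First I would record the compactness input: since $K$ is compact and $\beta^{\pm}$ are continuous into $\Gamma(S^3)$ with the Hausdorff topology, the sets $\{(q,x):x\in\beta^{\pm}(q)\}$ are closed in $K\times S^3$, hence $\mathcal{B}^{\pm}:=\bigcup_{q\in K}\beta^{\pm}(q)$ are compact, and by hypothesis $\mathcal{B}^{\pm}\subset U_{\pm}$. Let $P=\{p_1,\dots,p_N\}$ be a punctate set for $T$; then $P\subset T$, so $P\cap(\mathcal{B}^+\cup\mathcal{B}^-)=\emptyset$. Applying Sard's theorem to the functions $\dist_{S^3}(\cdot,p_i)$ restricted to the smooth surface $T\setminus P$, and shrinking the radius, I choose $r>0$ so that the closed balls $\overline{B_i}$, $B_i:=B_r(p_i)$, are pairwise disjoint, each meets $P$ only in $p_i$, each is disjoint from $\mathcal{B}^+\cup\mathcal{B}^-$, and $\partial B_i\pitchfork(T\setminus P)$. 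Then $C_i:=T\cap\partial B_i$ is a finite disjoint union of smooth circles and $T_{\mathrm{out}}:=T\setminus\bigcup_i\operatorname{int}(B_i)$ is a compact smooth surface with $\partial T_{\mathrm{out}}=\bigsqcup_i C_i$ and no isolated points. (We may assume $T$, hence $T_{\mathrm{out}}$, is connected: since $\fg(T)=1$, any other component would have genus $0$ and would bound a ball which, by the Hopf-link condition, is disjoint from $\mathcal{B}^+\cup\mathcal{B}^-$ and may be discarded.)

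Next I would perform the local surgery. The circles $C_i$ cut $\partial B_i\cong S^2$ into regions, which I $2$-color according to whether they lie in $U_+$ or in $U_-$; since $T$ is two-sided and separating, adjacent regions receive opposite colors, so this is a checkerboard coloring. Pushing the closed union $R_i^+$ of the ``$+$''-colored regions slightly into $\overline{B_i}$ along an inward collar of $\partial B_i$, keeping $C_i$ fixed, and rounding the resulting corner, yields a smooth compact surface $\Sigma_i\subset\overline{B_i}$, properly embedded with $\partial\Sigma_i=C_i$, diffeomorphic to the planar surface $R_i^+$, for which $\overline{B_i}\setminus\Sigma_i$ has exactly two components whose closures contain $\overline{U_+}\cap\partial B_i$ and $\overline{U_-}\cap\partial B_i$ respectively. (If $p_i$ is isolated in $T$, then $C_i=\emptyset$ and one just deletes $p_i$.) With the gluing made $C^\infty$ near the $C_i$ by the collars, $\tilde T:=T_{\mathrm{out}}\cup\bigcup_i\Sigma_i$ is a smooth closed connected surface embedded in $S^3$, hence orientable and separating; labelling the two components of $S^3\setminus\tilde T$ as $\tilde U_{\pm}$ with $\tilde U_{\pm}\supseteq U_{\pm}\setminus\bigcup_i\overline{B_i}$, we get $\mathcal{B}^{\pm}\subset\tilde U_{\pm}$, i.e. $\beta^{\pm}(q)\subset\tilde U_{\pm}$ for all $q\in K$.

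The remaining point — and the one place where more than soft topology enters — is that $\tilde T$ has genus exactly $1$. As each $\Sigma_i$ is planar, this choice of filling realizes the minimal genus among all smooth fillings of $(C_i,\text{coloring})$, and by the definition of the genus of a punctate surface (\cite[Definition 2.5]{chuLiWang2025optimalFamily}) this minimum is at most $\fg(T)=1$. If $\fg(\tilde T)=0$ — which forces some $C_i\neq\emptyset$, so some $\Sigma_i$ has a part in $\operatorname{int}(B_i)$ — I reinsert the missing handle: in a tiny ball $B'\subset\operatorname{int}(B_i)$ meeting $\Sigma_i$ in a single properly embedded disc, replace that disc by a once-punctured torus with the same boundary, built by deleting two small subdiscs and joining the resulting holes by a thin trivial tube pushed to one side. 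This raises the genus by $1$, keeps $\tilde T$ connected and embedded, changes nothing near $T_{\mathrm{out}}$ or $\mathcal{B}^{\pm}$ (both disjoint from $B_i$), and preserves the two-sidedness of the complement. Finally $\tilde T$ bounds the Hopf link $\beta^+(q)\cup\beta^-(q)$ for any $q\in K$, so $\tilde T\in\cS_1(S^3)$ is smooth and unknotted with $\beta^{\pm}(q)\subset\tilde U_{\pm}$, as required. The main obstacle is precisely this genus bookkeeping across the singularities — making sure the planar filling does not overshoot $\fg(T)$ and that the handle can be reinserted without disturbing the loops — whereas the separation and smoothness follow from standard transversality and collar arguments.
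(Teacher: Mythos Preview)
Your overall strategy matches the paper's: isolate the punctures in small balls that miss the compact family of loops, replace $T$ inside those balls by a smooth filling, and check that the result still separates the loops. The paper's proof is two sentences: it caps each boundary circle of $T_{\mathrm{out}}$ by a disc, observes this gives a smooth surface $\tilde T$ agreeing with $T$ outside the balls (so $\tilde U_\pm\setminus B_\epsilon(P)=U_\pm\setminus B_\epsilon(P)$ contains all the loops), and asserts $\fg(\tilde T)=1$---this last point is immediate because the genus of a punctate surface is \emph{defined} as the genus of the disc-capped surface.

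Your filling, however, is different: you push in the checkerboard region $R_i^+\subset\partial B_i$, which is planar but in general \emph{not} a union of discs. The claim that this ``realizes the minimal genus among all smooth fillings of $(C_i,\text{coloring})$'' is false. When two boundary circles $c_1,c_2$ of $T_{\mathrm{out}}$ lie in the same ball and the region of $\partial B_i$ between them is $+$-colored, your $\Sigma_i$ connects them by an annulus---adding a handle---whereas capping $c_1,c_2$ separately by disjoint discs (which also respects the coloring) does not. Concretely: take a smooth unknotted torus, push out a finger into the outer region, and bend it so that its tip is tangent to the torus at a single point $p$; this is a genus~$1$ punctate surface, the two circles on $\partial B_\epsilon(p)$ are unnested with the region between them lying on the $+$ side, and your pushed-in annulus produces a genus~$2$ surface. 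Since you only repair the genus-$0$ case, the argument is incomplete as written.

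The fix is immediate: cap each circle of $C_i$ by its own disc (nested in $B_i$ according to the nesting on $\partial B_i$) rather than pushing in $R_i^+$. This is exactly the paper's construction and yields genus $=\fg(T)=1$ by definition; your handle-reinsertion step then becomes unnecessary.
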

   \begin{proof}
     Since $K$ is compact and punctate set of $T$ is finite, sufficiently small balls $B_\epsilon$ centered at the punctate set $P$ do not intersect any $\beta^\pm(q)$. Hence by subtracting these small balls and filling in the boundary by discs in these balls, we obtain a smooth genus $1$ surface $\tilde T\subset S^3$ that bounds open regions $\tilde U_\pm$ which agree with $U_\pm$ outside $B_\epsilon(P)$. Therefore for every $q\in K$, \[
       \beta^\pm(q) \subset U_\pm \setminus B_\epsilon(P) = \tilde U_\pm \setminus B_\epsilon(P) \subset \tilde U_\pm \,.
     \]
   \end{proof}

   The following extension lemma of Hopf links is crucial in the proof of Theorem \ref{thm:XiProper}.  

   \begin{lem} \label{Lem_Hopflink_ExtLoops}
     Suppose $W$ is a pure simplicial $2$-complex, $\Phi: W\to \cU\subset \cS_1(S^3)$ is a Simon--Smith family of unknotted punctate surfaces of genus $1$.
     Suppose $\Upsilon: \partial W\to \cE(L, S^3)$ is a continuous map such that for every $q\in \partial W$, $\Upsilon(q)$ is a Hopf link bounded by $\Phi(q)$.
     Then there exists a continuous map $\bar\Upsilon: W\to \cE(L,S^3)$ such that $\bar\Upsilon|_{\partial W} = \Upsilon$.
   \end{lem}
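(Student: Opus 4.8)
The plan is to build the extension $\bar\Upsilon$ skeleton by skeleton over $W$, using the fact that $\cE(L,S^3)$ is aspherical in the relevant range after the work of Boyd--Bregman (Lemma \ref{Lem_Hopflink_Topology}) together with the isotopy results Lemma \ref{Lem_IsotopyHopfLink} and Lemma \ref{Lem_Hopflink_SmoothingSurface}. Since $W$ is a pure $2$-complex, the obstruction to extending a map into a space $Y$ defined on $\partial W$ to all of $W$ lives in $H^1(W,\partial W;\pi_1(Y))$ and $H^2(W,\partial W;\pi_2(Y))$; by Lemma \ref{Lem_Hopflink_Topology} we have $\pi_1(\cE(L,S^3))=\Z_2\oplus\Z_2$ and, crucially, $\pi_2(\cE(L,S^3))=0$ (this follows from the same Boyd--Bregman description, since $\cE(L,S^3)$ is homotopy equivalent to the space $\cR(L,S^3)$ of round Hopf links, whose universal cover is built from $S^2\times S^2$-type pieces with no higher homotopy in degree $2$ — I would spell this out by noting $\cR(\hat L,S^3)\cong\Gamma_\circ(S^3)\cong$ the Grassmannian of oriented $2$-planes $\cong S^2\times S^2$, which has $\pi_2\ne 0$, so care is needed here). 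Because of the potential $\pi_2$ subtlety, rather than a pure obstruction-theory argument I would instead argue geometrically: extend $\bar\Upsilon$ simplex by simplex, using that over each $2$-simplex the relevant choice of a continuous family of Hopf links bounded by a given (smoothed) family of genus-$1$ surfaces is, up to homotopy rel boundary, \emph{unique}, which is exactly the content packaged by Lemma \ref{Lem_IsotopyHopfLink}.

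Concretely: first, using compactness of $W$ and the finiteness of the punctate sets together with Lemma \ref{Lem_Hopflink_SmoothingSurface}, I would cover $W$ by finitely many open sets $V_1,\dots,V_n$ (subordinate to a fine simplicial subdivision) so that on each $V_j$ the family $\Phi|_{V_j}$ admits a \emph{smooth} unknotted genus-$1$ surface $\tilde T_j$ bounding open regions $\tilde U_j^\pm$ with $\beta^\pm(q)\subset\tilde U_j^\pm$ for every $q\in V_j$ and every Hopf link bounded by $\Phi(q)$ that we will encounter; the point is that $\tilde U_j^\pm$ gives a fixed ``ambient solid torus pair'' valid over all of $V_j$. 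On the $0$-skeleton, extend $\Upsilon$ arbitrarily: for each vertex $v\notin\partial W$ pick any Hopf link bounded by $\Phi(v)$ (such a link exists since $\Phi(v)\in\cU$). On each edge $e$ of $W\setminus\partial W$ with endpoints $v_0,v_1$, contained in some $V_j$, use Lemma \ref{Lem_IsotopyHopfLink} applied to the smooth surface $\tilde T_j$ and the two boundary loops $\Upsilon(v_0),\Upsilon(v_1)$ (after isotoping them into $\tilde U_j^\pm$, which is harmless since $\tilde U_j^\pm$ agree with the regions of $\Phi(v_i)$ away from small balls) to connect them by a path of Hopf links all bounded by the nearby $\Phi$; then patch this path back to one bounded by $\Phi$ itself along $e$ using the local continuity of $\Phi$ and a standard ambient-isotopy-extension argument. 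This defines $\bar\Upsilon$ on the $1$-skeleton.

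For the $2$-simplices: given a $2$-simplex $\sigma\subset V_j$, the map $\bar\Upsilon|_{\partial\sigma}$ is a loop in $\cE(L,S^3)$ made of Hopf links bounded by $\Phi|_{\partial\sigma}$. I would show this loop is null-homotopic in $\cE(L,S^3)$ by verifying the criterion of Lemma \ref{Lem_Hopflink_Topology}: the loop lifts to $\cE(\hat L,S^3)$ (label the two components by which region $\tilde U_j^\pm$ they lie in — this is a globally consistent choice over $V_j$), and the resulting loop $\mathrm r_+\circ\hat\gamma$ in $\Gamma(S^3)$ bounds because, in the fixed solid torus $\tilde U_j^+$, all these curves are isotopic to the core and the isotopies of Lemma \ref{Lem_IsotopyHopfLink} can be upgraded to track orientations consistently (here one uses that over a contractible $\partial\sigma$-bounding disc there is no orientation monodromy, unlike the $\partial D_0$ situation in the proof of Theorem \ref{thm:XiProper}\eqref{Item:FamilyXi_2chain}). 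Hence $\bar\Upsilon|_{\partial\sigma}$ extends over $\sigma$; doing this for every $2$-simplex and checking the extensions agree on shared edges (they do, being the already-fixed $\bar\Upsilon$ on the $1$-skeleton) produces the desired $\bar\Upsilon:W\to\cE(L,S^3)$.

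The main obstacle I anticipate is the $\pi_2$ issue: because $\pi_2$ of the relevant Grassmannian $S^2\times S^2$ does not vanish, a naive obstruction-theoretic extension over $2$-cells could fail, and the whole point of routing everything through the \emph{geometric} uniqueness statement Lemma \ref{Lem_IsotopyHopfLink} (``the space of Hopf links bounded by a fixed smooth unknotted genus-$1$ surface is path-connected'') together with the null-homotopy criterion of Lemma \ref{Lem_Hopflink_Topology} is to sidestep it — one never needs to fill a $2$-sphere in $\cE(L,S^3)$, only to fill loops that manifestly satisfy the Boyd--Bregman criterion. Verifying carefully that the orientation-tracking in the criterion really does hold over each simplex of $W$ (as opposed to over $\partial D_0$, where it fails) is the technical heart of the argument and is where I would spend the most care.
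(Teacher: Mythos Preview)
Your approach is essentially the paper's: extend skeleton by skeleton, use Lemma \ref{Lem_IsotopyHopfLink} on $1$-cells, and verify the Boyd--Bregman criterion (Lemma \ref{Lem_Hopflink_Topology}) on $2$-cells. Two points are worth tightening.

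First, your obstruction-theory bookkeeping is off by one. For a pure $2$-complex the only nontrivial obstruction lives in $H^2(W,\partial W;\pi_1(\cE(L,S^3)))$; the relevant homotopy group is $\pi_1$, not $\pi_2$. So the $\pi_2$ digression is a red herring---but since $\pi_1=\Z_2\oplus\Z_2\ne 0$ you are right that pure obstruction theory does not suffice and the geometric input is essential.

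Second, your device of ``patches $V_j$ each carrying a single smooth surface $\tilde T_j$ bounding every Hopf link we will encounter'' is slightly circular: you have not yet built $\bar\Upsilon$, so you do not yet know which links to feed into Lemma \ref{Lem_Hopflink_SmoothingSurface}. The paper resolves this by first (after refinement) assigning to each $2$-cell $\sigma$ a \emph{single reference Hopf link} $\mathfrak H(\sigma)$ bounded by $\Phi(q')$ for every $q'$ in $\sigma$ and its neighbors; this discrete datum then supplies the labeling $U_\pm$ over $\sigma$ and, via Lemma \ref{Lem_Hopflink_SmoothingSurface} applied at each $0$-cell to the finite family $\cL(q)$ of nearby reference links, the smooth comparison surfaces used on $1$-cells.

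Your orientation step (``curves isotopic to the core of $\tilde U_j^+$, so orientations track'') is correct in spirit; the paper's phrasing is that the linking number of $\bar\Upsilon_+(q)$ with the fixed reference loop $\beta_-(\sigma)$ is $\pm 1$ along all of $\partial\sigma$, hence constant, hence the orientation cannot flip---the same fact read through $H_1$ of the solid torus.
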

   \begin{proof}
     The extension will be constructed cell by cell. 
     \paragraph{\textbf{Claim}} After refining $W$, there exists  a map that associates to each $2$-cell $\sigma$ of $W$ a Hopf link $\mathfrak H(\sigma)$ such that,
     \begin{enumerate} [label=\normalfont(\roman*)]
       \item\label{Item_ExtLem_BdyConti} for every $q\in \partial W$ and $q'\in W$ in the same $2$-cell of $q$, we have $\Upsilon(q)$ is bounded by $\Phi(q')$;
       \item\label{Item_ExtLem_BdyCptble} for every $2$-cell $\sigma$ such that $\sigma\cap \partial W \neq \emptyset$, there exists $q_\sigma\in \sigma\cap \partial W$ such that $
         \mathfrak H(\sigma) = \Upsilon(q_\sigma)$.
       \item\label{Item_ExtLem_Conti} for every $2$-cell $\sigma$, and for every point $q'$ in $\sigma$ or some adjacent $2$-cell of $\sigma$, we have  $\mathfrak H(\sigma)$ is bounded by $\Phi(q')$. 
     \end{enumerate}
     \begin{proof}[Proof of Claim.]
       By the definition of $\cU$, for every point $q\in W$, there exists a Hopf link $\breve\Upsilon(q)\subset S^3$ bounded by $\Phi(q)$ (of course, $q\mapsto \breve\Upsilon(q)$ need not be a continuous map), and when $q\in \partial W$, we can just choose $\breve\Upsilon(q) = \Upsilon(q)$. 
       By the closedness of $\Phi$ (in Definition \ref{def:Simon_Smith_family}), there exists $\delta_q>0$ such that $\breve\Upsilon(q)$ is bounded by $\Phi(q')$ for every $q'\in B_{\delta_q}(q)\subset W$. Since $q\mapsto \breve\Upsilon(q)$ is continuous on $\partial W$, we can even ask $\delta_{\partial W}:=\inf_{q\in\partial W}\delta_q >0$.

       We can then choose a finite subset $J\subset W$ such that $\{B_{\delta_q/10}(q)\}_{q\in J}$ forms a cover of $W$. % and $\{B_{\delta_q/6}(q)\}_{q\in J\cap \partial W}$ forms a cover of $\partial W$. 
       Hence, possibly after refining $W$, 
       \begin{itemize}
         \item every $2$-cell $\sigma$ of $W$ is contained in $B_{\delta_{q(\sigma)}/4}(q(\sigma))$ for some $q(\sigma)\in J$;
         \item every $2$-cell $\sigma$ that intersects $\partial W$ is contained in $B_{\delta_{\partial W}/4}(\partial W)$.
       \end{itemize}
       For every $2$-cell $\sigma$ of $W$, define
       \begin{align*}
         \mathfrak H(\sigma) := \begin{cases}
            \Upsilon (q) &\ \text{ if }\sigma\cap \partial W\neq \emptyset,\text{ where }q\text{ is some point in }\sigma\cap \partial W\,; \\
            \breve\Upsilon(q(\sigma)) &\ \text{ if } \sigma\cap \partial W = \emptyset\,.
         \end{cases}
       \end{align*}
       It is easy to verify from the definition that \ref{Item_ExtLem_BdyConti} - \ref{Item_ExtLem_Conti} in Claim hold. 
     \end{proof}

     Now we are ready to construct the extension $\bar\Upsilon$.
     \paragraph{\textbf{Step 1 ($0$-cells)}} For every $0$-cell $q\in W$, define\footnote{The choice is not unique, but any such choice works for later purpose.} 
     \begin{align*}
       \bar\Upsilon(q):= \begin{cases}
          \Upsilon(q), &\ \text{ if }q\in \partial W\,; \\
          \mathfrak H(\sigma), &\ \text{ if }q\notin \partial W, \ \text{ where }\sigma \text{ is a 2-cell that contains }q\,.
       \end{cases}
     \end{align*}
{
     And for each $q\in W$, since by \ref{Item_ExtLem_BdyConti} and \ref{Item_ExtLem_Conti} \[
       \cL(q):= \{\bar\Upsilon(q'): q'\in \partial W\cap \sigma, \text{ 2-cell }\sigma\ni q\}\cup \{\mathfrak H(\sigma'): \sigma' \text{ or its adjacent 2-cell contains }q\}
     \]
     is a compact family of Hopf links bounded by the punctate surface $\Phi(q)$, by Lemma \ref{Lem_Hopflink_SmoothingSurface} there exists a smooth genus $1$ surface $\tilde\Phi(q)$ that also bounds every Hopf links in $\cL(q)$\footnote{We do not ask $\tilde\Phi(q)$ to vary continuously in $q$.}. 
}
     \paragraph{\textbf{Step 2 ($1$-cells)}} For every $1$-cell $e\subset \partial W$, define $\bar\Upsilon|_e = \Upsilon_+|_e$. 
     
     If $e\not\subset \partial W$, let $q_1, q_2\in W$ be the end points of $e$ (which are $0$-cells of $W$) and $q\in W$ be the midpoint of $e$. By connectedness of the space of Hopf links bounded by  {$\tilde\Phi(q)$} (see Lemma \ref{Lem_IsotopyHopfLink}), there exists a continuous isotopy $\bar\Upsilon|_e: e\to \cE(L, S^3)$ extending $\bar\Upsilon|_{q_i}$, such that $\bar \Upsilon(q')$ is bounded by  {$\tilde\Phi(q)$} for every $q'\in e$. 
\medskip

     \paragraph{\textbf{Step 3 ($2$-cells)}} For every fixed $2$-cell $\sigma$ of $W$, Step 2 produces a map $\bar\Upsilon|_{\partial \sigma}: \partial \sigma\to \cE(L, S^3)$ which agrees with $\Upsilon$ on $\partial W$ when $\partial \sigma\cap \partial W \neq \emptyset$.
     Moreover, if we fix a lift $(\beta_+(\sigma), \beta_-(\sigma))\in \cE(\hat L, S^3)$ of $\mathfrak H(\sigma)$, then by Claim 1 \ref{Item_ExtLem_Conti}, there is a unique pair of maps $U_\pm: \sigma \to \{\text{open subsets of }S^3\}$ such that for every $q'\in \sigma$,
     \begin{align*}
       S^3\setminus \Phi(q') = U_+(q') \sqcup U_-(q'), \quad
       \partial U_\pm(q') = \Phi(q')\,; \quad
       \beta_\pm(\sigma)\subset U_\pm(q') \,.
     \end{align*}
     For every $0$-cell $q_0\in \sigma$, this then determines a unique lift $(\bar\Upsilon_+(q_0), \bar\Upsilon_-(q_0))\in \cE(\hat L, S^3)$ of $\bar\Upsilon(q_0)$ such that 
     \begin{align*}
       \bar\Upsilon_\pm(q_0)\subset U_\pm(q')\,, \qquad \forall\, q'\in \sigma \,.  
     \end{align*}
 {
     In particular for every $1$-cell $e\subset \sigma$ with midpoint $q_e$, there exist two regions $\tilde U_\pm(q_e)$ bounded by $\tilde\Phi(q_e)$ such that 
     \begin{align}
        \beta_\pm(\sigma) \subset \tilde U_\pm(q_e)\,, \quad
        \bar\Upsilon_\pm(q_0) \subset \tilde U_\pm(q_e), \quad \forall\text{ 0-cell }q_0\in \sigma\,. \label{Equ_Hopflink_Choice of label}
     \end{align}
}
     Since $\bar\Upsilon|_e$ are bounded by  {$\tilde\Phi(q_e)$}, we see that \[
       \bar\Upsilon_\pm(q_1):= \bar \Upsilon(q_1)\cap {\tilde U_\pm(q_e)}, \quad q_1\in e
     \]
     defines a continuous lift $(\bar\Upsilon_+, \bar\Upsilon_-): e\to \cE(\hat L, S^3)$ of $\bar \Upsilon|_e$, which agrees with \eqref{Equ_Hopflink_Choice of label} on the end points.
     
     Putting them together, we conclude that there exists a lift $(\bar\Upsilon_+, \bar\Upsilon_-): \partial \sigma\to \cE(\hat L, S^3)$ of $\bar \Upsilon|_{\partial\sigma}$, and by \eqref{Equ_Hopflink_Choice of label} and Lemma \ref{Lem_IsotopyHopfLink}, $\bar\Upsilon_+(q_1) \cup \beta_-(\sigma)$ forms a Hopf link for every $q_1\in \partial \sigma$. Since the linking number is never zero, this forces that $\bar\Upsilon_+|_{\partial \sigma}: \partial \sigma\to \Gamma(S^3)$ lifts to a map $\partial \sigma\to \tilde\Gamma(S^3)$. Then by Lemma \ref{Lem_Hopflink_Topology}, $\bar\Upsilon|_{\partial \sigma}$ is trivial in $\pi_1(\cE(L, S^3))$, and hence there exists a continous map $\bar\Upsilon: \sigma\to \cE(L, S^3)$ extending $\bar\Upsilon|_{\partial \sigma}$. This finishes the construction of $\bar\Upsilon$.
   \end{proof}

\subsection{Unknotted  elements in $\cS_1(S^3)$}\label{sect:UnknottedTlements}
In this section, we prove Proposition \ref{prop:wholeFamilyUnknotted}.

Let us recall the notion of ``neck-pinch surgery" first. Let $S\in\cS(M)$ and $\gamma$ be a smooth embedded loop that lies on the smooth part of $S$. Suppose $\gamma$ bounds a smooth embedded disc $D$ whose interior lies entirely in $S^3\backslash S$. We can {\it perform a neck-pinch surgery along $\gamma$, using the disc $D$}: Namely, we remove a short cylinder on $S$ around $\gamma$ (whose geometry is like $\gamma\x[-\epsilon,\epsilon]$), glue back two discs, each being close to $D$ graphically, and finally smoothen it near the two gluing curves to obtain an element in $\cS(S^3)$. 

\begin{lem}\label{lem:neckPinchPreservesU}
    Let $S\in \cS_1(S^3)$  and let $S'$ be obtained from $S$ by performing a neck-pinch surgery, such that $S'$ still has genus $1$. Then   $S\in\cU$ if and only if $S'\in\cU$. 
\end{lem}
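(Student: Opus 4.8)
The plan is to analyze how a neck-pinch surgery changes the complement of the surface and to track whether the two complementary solid-torus-like regions continue to host a Hopf link. Write $S^3\setminus S=\Omega\sqcup\Omega'$ with $\partial\Omega=\partial\Omega'=S$, and let $\gamma\subset S$ be the embedded loop along which the surgery is performed, bounding an embedded disc $D$ whose interior lies in one of the two regions, say $D\setminus\gamma\subset\Omega'$. The surgery cuts $S$ along a cylinder neighborhood of $\gamma$ and caps off with two parallel copies of $D$; topologically this amounts to attaching a $2$-handle to $\Omega$ along $\gamma$ (pushing $D$ slightly into $\Omega$) and correspondingly drilling a tunnel out of $\Omega'$ (removing a neighborhood of $D$). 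I would first record the effect on the complementary regions: if $S'$ has genus $1$, then since $S$ is a (punctate) torus and the surgery reduces the Euler characteristic of one side's boundary handle count, exactly one of the following happens — either $\gamma$ is non-separating on $S$ and the surgery produces genus $0$ (excluded by hypothesis), or $\gamma$ is separating on $S$, in which case one of the two pieces it bounds is a disc-with-punctures and the surgery just trades a trivial handle between sides; or $\gamma$ is inessential in $S$ but essential in $\Omega'$ (bounds there) — the relevant case for changing $\cU$-membership. The point is that in the genus-preserving case, the surgery is, up to isotopy, a ``trivial'' move on one side.

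The key step is the following dichotomy. Since $D\setminus\gamma\subset\Omega'$, the loop $\gamma$ bounds in $\Omega'$. Compressing $\Omega'$ along $D$ either (a) disconnects $\Omega'$ into a piece bounded by a sphere (hence a ball by Schoenflies, which after smoothing can be discarded as it contains no essential curve) plus a genus $1$ piece, or (b) turns a genus $2$ handlebody-like region into a genus $1$ one, or (c) the compression is along a trivial disc and nothing essential changes. In the genus-preserving situation, $\Omega$ gains a handle and $\Omega'$ loses one, and after deleting inessential spherical components (arguing exactly as in the proof of Lemma~\ref{Lem_IsotopyHopfLink}, by inductively removing innermost/outermost spheres, which does not affect unknottedness), both $\Omega$ and $\Omega'$ for $S$, and both $\tilde\Omega$ and $\tilde\Omega'$ for $S'$, reduce to regular neighborhoods of knots $K\subset\Omega$, $K'\subset\Omega'$ and $K_1\subset\tilde\Omega$, $K_1'\subset\tilde\Omega'$. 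I would then show $S\in\cU\iff$ the pair $(K,K')$ is a Hopf link $\iff$ $K$ (equivalently $K'$) is unknotted, using the link-group rigidity cited in the paper (the fact from \cite{XieZhang23_LinkGp} that a two-component link with abelian link group is a Hopf link, together with the fact that a complementary solid torus in $S^3$ of an unknot is again a solid torus of an unknot). The surgery along the compressing disc $D$ does not change the isotopy class of the core of the side being compressed — it exhibits $K'$ and $K_1'$, or $K$ and $K_1$, as cores of the same knot complement — so $K$ is unknotted iff $K_1$ is.

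Concretely I would carry out the steps in this order: (i) reduce to the case that $S$ and $S'$ are smooth tori with connected complementary regions, by the Schoenflies/innermost-sphere argument of Lemma~\ref{Lem_IsotopyHopfLink}, noting the surgery disc $D$ and any prescribed Hopf link can be taken disjoint from the discarded balls; (ii) identify, via van Kampen and the Mayer--Vietoris sequence $0\to H_1(S)\to H_1(\Omega)\oplus H_1(\Omega')\to 0$, that each of $\Omega,\Omega'$ is a tubular neighborhood of a knot, and similarly for $S'$; (iii) prove the characterization $S\in\cU\iff$ the core knots form a Hopf link $\iff$ the core knots are unknots, invoking \cite{XieZhang23_LinkGp} exactly as in the proof of Lemma~\ref{Lem_IsotopyHopfLink}; (iv) observe that the neck-pinch along $\gamma$ with disc $D\subset\overline{\Omega'}$ realizes $\overline{\Omega}\subset\overline{\tilde\Omega}$ (resp. $\overline{\tilde\Omega'}\subset\overline{\Omega'}$) by a handle attachment/tunnel drilling that preserves the homeomorphism type of the relevant knot exterior, hence preserves unknottedness of the core; (v) conclude $S\in\cU\iff S'\in\cU$. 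The main obstacle I anticipate is step (iv): making precise that the surgery does not knot or unknot the core of the uncompressed side, i.e. that attaching the handle coming from $D$ to $\Omega$ gives a new solid torus whose core is isotopic to the old one when (and only when) the genus is preserved — this requires carefully matching the compressing disc of the surgery with a meridian disc of the knot neighborhood, and ruling out the possibility that the surgery is done along a curve $\gamma$ that is knotted inside $\Omega'$ in a way that would change the knot type. I expect this to be handled by the same linking-number computation as in Lemma~\ref{Lem_IsotopyHopfLink} (the algebraic intersection of the capping disc with the core is $0$ in the genus-preserving case, forcing $\gamma$ to be a meridian up to isotopy).
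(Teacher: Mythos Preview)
Your overall strategy---reduce to a smooth torus, characterize $\cU$ via the Heegaard property, and argue the surgery disc is topologically trivial in the genus-preserving case---is the same as the paper's, but the execution contains several errors that would prevent the argument from going through as written.

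First, the opening topological dichotomy is confused. On a torus every separating simple closed curve already bounds a disc, so your cases ``$\gamma$ separating'' and ``$\gamma$ inessential in $S$'' are the same case, and your case (b) (``turns a genus $2$ handlebody-like region into genus $1$'') cannot occur since $\Omega'$ is bounded by a torus. Second, in step (ii) the Mayer--Vietoris sequence only gives $H_1(\Omega)\cong H_1(\Omega')\cong\Z$; it does \emph{not} show either side is a solid torus. You need the loop-theorem/Dehn's-lemma argument from Lemma~\ref{Lem_IsotopyHopfLink} to get that \emph{one} side is a solid torus, and then membership in $\cU$ (not Mayer--Vietoris) to conclude the other side is as well. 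Third, and most seriously, your closing expectation in step (iv) is backwards: if $\gamma$ bounded a meridian disc of the solid torus, the surgery would \emph{reduce} the genus to $0$. What genus-preservation actually forces is that $D$ is \emph{boundary-parallel} in the solid torus (this is the standard dichotomy for properly embedded discs in a solid torus: meridian or boundary-parallel).

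The paper's proof is more direct and avoids tracking core knot types. For the ``if'' direction it simply pushes the given Hopf link off the surgery region and glues the handle back. For ``only if'' it first desingularizes $S$ by surgeries in small balls around the punctate set to get a smooth genus-$1$ surface $\tilde S$ (still bounding the Hopf link $\alpha\cup\beta$, hence a Heegaard torus), then uses exactly the boundary-parallelism of $D$ in the solid torus $\overline{\ins(\tilde S)}$ to isotope $D$ rel $\gamma$ onto $\tilde S$; reversing this isotopy pushes $\alpha$ to a loop $\tilde\alpha$ disjoint from $D$. After undoing the desingularization, $\tilde\alpha$ and $\beta$ witness $S'\in\cU$. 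Your knot-core approach could be made to work once the errors above are fixed, but the paper's ``push $\alpha$ off $D$'' step is both shorter and avoids invoking~\cite{XieZhang23_LinkGp}.
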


\begin{proof}For  the ``if" part, by definition there exist  some  loops $\alpha\subset \ins(S')$ and $\beta\subset\out(S')$ that form a Hopf link. When we undo the neck-pinch surgery, we can first deform the two loops to avoid the surgery region, so that after gluing back  the 1-handle to obtain $S$, we still have $\alpha\subset\ins(S)$   and $\beta\subset\out(S)$. So $S\in\cU$. 

To prove the ``only if" part, we first choose some  loops $\alpha\subset \ins(S)$ and $\beta\subset\out(S)$ that form a Hopf link.  Let $\gamma\subset S$ be the smooth embedded loop   along which the surgery is performed, and suppose $\gamma$ bounds a disc $D$ in $S^3\backslash S$. Without loss  of generality, we choose an inward direction for $S$, and assume that the interior of $D$ lies in $\ins(S)$. Let $S'$ be obtained from $S$ by performing a neck-pinch surgery along $\gamma$, using the disc $D$.

{\bf Step 1:} First, by definition, $S$ has at most finitely many singularities. We let $B_1, \dots,B_n\subset S^3$ be sufficiently small balls centered at these singularities such that:
\begin{itemize}
    \item Each $B_i$ avoids $\alpha$ and $\beta$.
    \item $S\backslash \cup^n_{i=1}B_i$ is still a genus 1 surface with boundary.
  \item Each $\partial B_i$ intersects $S$ on the smooth part, and   transversely.
    \item Each $B_i$ avoids some neighborhood of $D$ such that the surgery process would leave each $B_i$ intact.
\end{itemize} 
For each ball $B_i$, the intersection $S\cap \partial B_i$ consists of a finite union of smooth loops $\{\gamma_k\}_k$. We then perform neck-pinch surgery along these loops one-by-one: Namely, whenever such a loop $\gamma_k$ bounds a disc on $\partial B_i$ that does not contain any other loop $\gamma_{k'}$, we perform a surgery along $\gamma_k$ using the disc it bounds on $\partial B_i$, and then repeat the process for all $\gamma_k$, and  for all balls $B_i$. Then we {\it remove all components lying within $\cup_i B_i$}, and denote   the resulting surface at the end by $\tilde S$. Note that by assuming the balls $B_i$ to be sufficiently small, $\tilde S$ is a smooth genus one surface: This  follows from  $\fg(S)=1$ and the definition of genus for elements in $\cS(M)$ in  \cite[Definition 2.5]{chuLiWang2025optimalFamily}.

{\bf Step 2:} Now, by the way we chose the balls $B_i$, we would still have $\alpha\subset\ins(\tilde S)$ and $\beta\subset\out(\tilde S)$. Hence, since $\alpha\cup\beta$ is a Hopf link, $\tilde S$ is a genus one Heegaard surface. Then, using the assumption $\fg(S')=1$, we know that the surface  obtained \underline{from $\tilde S$} by performing a surgery along $\gamma$, using the disc $D$, has genus 1. Hence, the disc $D$ can be ``pushed"  (i.e. deformed via smooth embeddings) within $\overline{\ins(\tilde S)}$ to become a disc $\tilde D\subset \tilde S$, while keeping the boundary $\gamma$ fixed. Then, we can use the reverse of  this pushing procedure to deform $\alpha$, and obtain a loop $\tilde\alpha\subset\ins(\tilde S)$ that does not intersect $D$.  As a result, when we  perform  surgery on $\tilde S$ along $\gamma$ using the disc $D$, the loops  $\tilde\alpha $ and $\beta $ would be left intact. Call this surface $\tilde S'$

{\bf Step 3:} Finally, on the new surface $\tilde S'$, we {\it undo} the procedure described in Step 1. Namely, for $\tilde S'$, we put back the connected components discarded, and undo all the neck-pinch surgeries (near the balls $B_i$), by gluing back the 1-handles, so that we  obtain  the   surface $S'$ with singularities: This  $S'$ is precisely the surface  of interest $S'$ in Lemma \ref{lem:neckPinchPreservesU}. It is easy to see that $\alpha\subset\ins(S')$ and $\beta\subset\out(S')$, and  $\alpha\cup\beta$ is a Hopf link, so we have $S'\in\cU$, as desired. \end{proof}

We now prove Proposition \ref{prop:wholeFamilyUnknotted}. Suppose by contradiction there exists some $\Phi(t)\notin\cU$.
Let $$T:=\inf\{t\in[0,1]:\Phi(t)\notin\cU\}.$$
First, we claim that $\Phi(T)\notin \cU$. Indeed, if $\Phi(T)\in \cU$, then we can find embedded loops $\alpha\subset \ins(\Phi(T))$ and $\beta\subset\out(\Phi(T))$ such that they form a Hopf link: Note that we can consistently choose an inward direction for each $\Phi(t)$. Then using the definition of Simon--Smith family, for any $t$ sufficiently close to $T$, we have $\alpha\subset \ins(\Phi(t))$ and $\beta\subset\out(\Phi(t))$, and thus $\Phi(t)\in \cU$ for such $t$. This contradicts the definition of $T$. 

As a result, $T>0$, since $\Phi(0)\in\cU$ by assumption. Take $t_1,t_2, \dots\uparrow T.$ By the definition of Simon--Smith family, we can choose finitely many small open balls $B_j\subset S^3$ such that $\Phi(t_i)\to \Phi(T)$ smoothly outside $  B:=\cup_j B_j$. Without loss of generality, we can assume also that:
\begin{itemize}
    \item The boundary of each $B_j$ intersects each $\Phi(t_i)$ on the smooth part only, and such intersection is transverse, for each $t_i$. 
    \item $\Phi(T)\backslash B$ is a genus 1 surface with boundary.
\end{itemize}
Now, to derive a contradiction, we just need to show that for any sufficiently large $i$, we can choose some   embedded loops $\alpha_{t_i}\subset\ins(\Phi(t_i))\backslash   B$ and $\beta_{t_i}\subset\out(\Phi(t_i))\backslash  B$ that form a Hopf link. Indeed, once we have that, for some large $i$, we can ensure $\alpha_{t_i}\subset\ins(\Phi(T))\backslash   B$ and $\beta_{t_i}\subset\out(\Phi(T))\backslash   B$ through slight deformation, using the fact that $\Phi(t_i)\to \Phi(T)$ smoothly outside $  B=\cup_j B_j$. This shows that $\Phi(T)\in\cU$, which contradicts the claim $\Phi(T)\notin \cU$ we obtained from the previous paragraph.

Now, we fix $t_i$. We will modify the surface $\Phi(t_i)$ such that it avoids $\partial  B$ as follows. For each ball $B_j$, the intersection $\Phi(t_i)\cap \partial B_j$ consists of a finite union of smooth loops $\{\gamma_k\}_k$. We then perform neck-pinch surgery along these loops one-by-one: Namely, whenever such a loop $\gamma_k$ bounds a disc on $\partial B_j$ that does not contain any other loop $\gamma_{k'}$, we perform a surgery along $\gamma_k$, using the disc it bounds on $\partial B_j$, and then repeat this for all $\gamma_k$,  for all ball $B_j$. We denote   the resulting surface at the end by ${\Phi(t_i)'}$. As a result,   $ {\Phi(t_i)'}$ avoids $\partial   B$, and  is smooth outside $  B$.

By Lemma \ref{lem:neckPinchPreservesU} and $\Phi(t_i)\in\cU$, we know $ {\Phi(t_i)'}\in\cU$. Thus there exist some    embedded loops $\alpha_{t_i}\subset\ins( \Phi(t_i)')$ and $\beta_{t_i}\subset\out( {\Phi(t_i)'})$  that form a Hopf link. Now, by the fact that $\Phi(T)\backslash B$ has genus 1, and the smooth convergence $\Phi(t_i)\to\Phi(T)$ outside $B$, we know that for $i$  large, $\Phi(t_i)'\backslash B$ is a genus 1 surface with boundary and all the components of $ {\Phi(t_i)'}$ inside $  B$ have genus 0. Thus,  we can  further assume that $\alpha_{t_i}\subset\ins( {\Phi(t_i)'})\backslash  B$ and $\beta_{t_i}\subset\out( {\Phi(t_i)'})\backslash  B$: In each ball $B_i$, we for a moment ignore the genus 0 components, and then either push $\alpha_{t_i}$ to avoid $B_i$ (in the case $\partial B_i\subset \ins(\Phi(t_i)')$) or push $\beta_{t_i}$  to avoid $B_i$ (in the case $\partial B_i\subset \out(\Phi(t_i)')$). As a result, undoing all the neck-pinch surgeries by gluing back the 1-handles, we in fact have $\alpha_{t_i}\subset\ins( {\Phi(t_i)})\backslash  B$ and $\beta_{t_i}\subset\out( {\Phi(t_i)})\backslash  B$.  This is exactly what we need for a contradiction, as explained before.

This finishes the proof of Proposition \ref{prop:wholeFamilyUnknotted}.

%Moreover, due to the existence of the loops $\alpha_T\subset\ins( {\Phi(t_i)'})\backslash  B$ and $\beta\subset\out( {\Phi(t_i)'})\backslash  B$, which have linking number 1, we know that  all the components of $ {\Phi(t_i)'}$ inside $  B$ have genus 0. So it is possible to further   assume that  $\alpha_{t_i}\subset\ins( {\Phi(t_i)'})\backslash  B$ and $\beta_{t_i}\subset\out( {\Phi(t_i)'})\backslash  B$, which is what we need for a contradiction as explained before. 
 
\printbibliography
 
\end{document}